\newtheorem{lem}{Lemma}[section]
\newtheorem{prop}[lem]{Proposition}
\newtheorem{thm}[lem]{Theorem}
\newtheorem{remark}[lem]{Remark}
\numberwithin{equation}{section}
\providecommand{\abs}[1]{\left\vert#1\right\vert}
\providecommand{\norm}[1]{\left\Vert#1\right\Vert}
\providecommand{\Rn}[1]{\mathbb{R}^{#1}}
\providecommand{\br}[1]{\left\langle #1 \right\rangle}
\providecommand{\ns}[1]{\norm{#1}^2}
\providecommand{\ip}[2]{\left(#1,#2\right)}
\providecommand{\ipp}[2]{\left( \mspace{-2.5mu} \left(#1,#2\right) \mspace{-2.5mu} \right) }
\def\Lbrack{\left \llbracket}
\def\Rbrack{\right \rrbracket}
\DeclareMathOperator{\diverge}{div}
\providecommand{\Rn}[1]{\mathbb{R}^{#1}}
\providecommand{\norm}[1]{\left\Vert#1\right\Vert}
\def\ls{\lesssim}
\def\dt{\partial_t}
\def\ddt{\frac{d}{dt}}
\def\H{{}_0H^1}
\def\Hd{(\H)^\ast}
\def\pa{\partial}
\def\rj{\Lbrack \bar{\rho} \Rbrack}
\def\EEE{\E_0^\sigma}
\def\TE{\tilde{\E}_0^\sigma}
\providecommand{\jump}[1]{\left\llbracket #1 \right\rrbracket }
\def\RRvert2{\right \vert\! \right\vert}
\def\Lvert3{\left \vert\!\left\vert\!\left\vert}
\def\Rvert3{\right \vert\!\right\vert\!\right\vert}
\def\nab{\nabla}
\def\dt{\partial_t}
\def\hal{\frac{1}{2}}
\def\ep{\varepsilon}
\def\ls{\lesssim}
\def\p{\partial}
\def\sg{\mathbb{D}}
\def\sgz{\mathbb{D}^0}
\def\da{\Delta_{\mathcal{A}}}
\def\naba{\nab_{\mathcal{A}}}
\def\diva{\diverge_{\mathcal{A}}}
\def\Sa{\S_{\mathcal{A}}}
\def\pal{\p^\alpha}
\def\a{\mathcal{A}}
\def\f{\mathcal{F}}
\def\i{\mathcal{I}}
\def\fj1{\mathcal{J}^{-1}}
\def\M{\mathcal{M}}
\def\n{\mathcal{N}}
\def\z{\mathcal{Z}}
\def\q{q}
\def\S{\mathbb{S}}
\def\E{\mathcal{E}}
\def\D{\mathcal{D}}
\def\Ef{\mathfrak{E}}
\def\Kf{\mathfrak{K}}
\def\Lf{\mathfrak{L}}
\def\Jf{\mathfrak{J}}
\def\If{\mathfrak{I}}
\def\j{\mathfrak{A}}
\def\i{\mathfrak{B}}
\def\ih{\hat{\i}}
\def\Qf{\mathfrak{Q}}
\def\Rf{\mathfrak{R}}
\def\Wf{\mathfrak{W}}
\def\zf{\mathfrak{Z}}
\def\XT{\mathfrak{X}(M_1,M_2,r,T)}
\title[Compressible viscous surface-internal waves]{The compressible viscous surface-internal wave problem: local well-posedness }
\author{Juhi Jang}
\address{
Department of Mathematics\\
University of California, Riverside\\
Riverside, CA 92521, USA
}
\email[J. Jang]{juhijang@math.ucr.edu}
\thanks{J. Jang was supported in part by NSF grants DMS-1212142 and DMS-1351898.}
\author{Ian Tice}
\address{
Department of Mathematical Sciences\\
Carnegie Mellon University\\
Pittsburgh, PA 15213, USA
}
\email[I. Tice]{iantice@andrew.cmu.edu}
\author{Yanjin Wang}
\address{
School of Mathematical Sciences\\
Xiamen University\\
Xiamen, Fujian 361005, China}
\email[Y. J. Wang]{yanjin$\_$wang@xmu.edu.cn}
\thanks{Y. J. Wang was supported by the National Natural Science Foundation of China (No. 11201389)}
\subjclass[2010]{Primary 35Q30, 35R35, 76N10; Secondary 76E17, 76E19, 76N99 }
\keywords{Free boundary problems, Viscous surface-internal waves, Compressible fluids}
\begin{document}

\begin{abstract}
This paper concerns the dynamics of two layers of compressible, barotropic, viscous fluid lying atop one another. The lower fluid is bounded below by a rigid bottom, and the upper fluid is bounded above by a trivial fluid of constant pressure.  This is a free boundary problem: the interfaces between the fluids and above the upper fluid are free to move. The fluids are acted on by gravity in the bulk, and at the free interfaces we consider both the case of surface tension and the case of no surface forces.  We prove that the problem is  locally well-posed.  Our method relies on energy methods in Sobolev spaces for a collection of related linear and nonlinear  problems.
\end{abstract}

\maketitle


\section{Introduction}
\subsection{Formulation in Eulerian coordinates}

We consider two distinct,
immiscible, viscous, compressible, barotropic fluids evolving in a moving domain $\Omega(t)=\Omega_+(t)\cup \Omega_-(t)$ for time $t\ge0$. One fluid $(+)$, called the ``upper fluid,'' fills the upper domain
\begin{equation}\label{omega_plus}
\Omega_+(t)=\{y\in  \mathrm{T}^2\times \mathbb{R}\mid \eta_-(y_1,y_2,t)<y_3< \ell +\eta_+(y_1,y_2,t)\},
\end{equation}
and the other fluid $(-)$, called the ``lower fluid,'' fills the lower domain
\begin{equation}\label{omega_minus}
\Omega_-(t)=\{y\in  \mathrm{T}^2\times \mathbb{R}\mid  -b <y_3<\eta_-(y_1,y_2,t)\}.
\end{equation}
Here we assume the domains are horizontal periodic by setting $\mathrm{T}^2=(2\pi L_1\mathbb{T}) \times (2\pi L_2\mathbb{T})$ for $\mathbb{T} = \mathbb{R}/\mathbb{Z}$ the usual 1--torus and $L_1,L_2>0$ the periodicity lengths.  We assume that $\ell,b >0$ are two fixed and given constants, but the two surface functions $\eta_\pm$ are free and unknown.  The surface $\Gamma_+(t) = \{y_3= \ell  + \eta_+(y_1,y_2,t)\}$ is the moving upper boundary of $\Omega_+(t)$ where the upper fluid is in contact with the atmosphere, $\Gamma_-(t) = \{y_3=\eta_-(y_1,y_2,t)\}$ is the moving internal interface between the two fluids, and $\Sigma_b = \{y_3=-b \}$ is the fixed lower boundary of $\Omega_-(t)$.

The two fluids are described by their density and velocity functions, which are given for each $t\ge0$ by $\tilde{\rho}_\pm
(\cdot,t):\Omega_\pm (t)\rightarrow \mathbb{R}^+$ and $\tilde{u}_\pm (\cdot,t):\Omega_\pm (t)\rightarrow \mathbb{R}^3$, respectively.  In each fluid the pressure is a function of density: $P_\pm =P_\pm(\tilde{\rho}_\pm)>0$, and the pressure function is assumed to be smooth, positive, and strictly increasing.  For a vector function $u\in \Rn{3}$ we define the symmetric gradient by $(\sg u)_{ij} =   \p_i u_j + \p_j u_i$ for $i,j=1,2,3$;  its deviatoric (trace-free) part  is then
\begin{equation}\label{deviatoric_def}
 \sgz u = \sg u - \frac{2}{3} \diverge{u} I,
\end{equation}
where $I$ is the $3 \times 3$ identity matrix.  The viscous stress tensor in each fluid is then given by
\begin{equation}
\S_\pm(\tilde{u}_\pm) := \mu_\pm \sgz \tilde u_\pm +\mu'_\pm \diverge\tilde{u}_\pm I,
\end{equation}
where $\mu_\pm$ is the shear viscosity and  $\mu'_\pm$ is the bulk viscosity; we assume these satisfy the usual physical conditions
\begin{equation}\label{viscosity}
\mu_\pm>0,\quad \mu_\pm'\ge 0.
\end{equation}
The tensor $P_\pm(\tilde{\rho}_\pm) I-\S_\pm(\tilde{u}_\pm)$ is known as the stress tensor.  The divergence of a symmetric tensor $\mathbb{M}$ is defined to be the vector with components $(\diverge \mathbb{M})_i = \p_j \mathbb{M}_{ij}$.  Note then that
\begin{equation}
 \diverge\left( P_\pm(\tilde{\rho}_\pm) I-\S_\pm(\tilde{u}_\pm) \right) = \nab P_\pm(\tilde{\rho}_\pm) - \mu_\pm \Delta \tilde{u}_\pm - \left(\frac{\mu_\pm}{3} + \mu_\pm' \right)\nab \diverge{\tilde{u}_\pm}.
\end{equation}

For each $t>0$ we require that $(\tilde{u}_\pm, \tilde{\rho}_\pm,\eta_\pm)$ satisfy the following equations:
\begin{equation}\label{ns_euler}
\begin{cases}
\partial_t\tilde{\rho}_\pm+\diverge (\tilde{\rho}_\pm \tilde{u}_\pm)=0 & \text{in }\Omega_\pm(t)
\\\tilde{\rho}_\pm    (\partial_t\tilde{u}_\pm  +   \tilde{u}_\pm \cdot \nabla \tilde{u}_\pm ) +\nab P_\pm(\tilde{\rho}_\pm) -  \diverge \S_\pm(\tilde{u}_\pm) =-g\tilde{\rho}_\pm e_3 & \text{in } \Omega_\pm(t)
\\\partial_t\eta_\pm=\tilde{u}_{3,\pm}-\tilde{u}_{1,\pm}\partial_{y_1}\eta_\pm-\tilde{u}_{2,\pm}\partial_{y_2}\eta_\pm &\hbox{on } \Gamma_\pm(t)
\\(P_+(\tilde{\rho}_+)I-\S_+(\tilde{u}_+))n_+=p_{atm}n_+-\sigma_+ \mathcal{H}_+ n_+ &\hbox{on }\Gamma_+(t)
\\ (P_+(\tilde{\rho}_+)I-\S_+( \tilde{u}_+))n_-=(P_-(\tilde{\rho}_-)I-\S_-( \tilde{u}_-))n_-+ \sigma_- \mathcal{H}_-n_- &\hbox{on }\Gamma_-(t) \\\tilde{u}_+=\tilde{u}_-  &\hbox{on }\Gamma_-(t) \\\tilde{u}_-=0 &\hbox{on }\Sigma_b.
\end{cases}
\end{equation}
In the equations $-g \tilde{\rho}_\pm e_3$ is the gravitational force with the constant $g>0$ the acceleration of gravity and $e_3$ the vertical unit vector. The constant $p_{atm}>0$ is the atmospheric pressure, and we take $\sigma_\pm\ge 0$ to be the constant coefficients of surface tension. In this paper, we let $\nabla_\ast$ denote the horizontal gradient, $\diverge_\ast$ denote the horizontal divergence and $\Delta_\ast$ denote the horizontal Laplace operator. Then the upward-pointing unit normal of $\Gamma_\pm(t)$, $n_\pm$,  is given by
\begin{equation}
n_\pm=\frac{(-\nabla_\ast\eta_\pm,1)}
{\sqrt{1+|\nabla_\ast\eta_\pm|^2}},
\end{equation}
and  $\mathcal{H}_\pm$, twice the mean curvature of the surface $\Gamma_\pm(t)$, is given by the formula
\begin{equation}
\mathcal{H}_\pm=\diverge_\ast\left(\frac{\nabla_\ast\eta_\pm}
{\sqrt{1+|\nabla_\ast\eta_\pm|^2}}\right).
\end{equation}
The third equation in \eqref{ns_euler} is called the kinematic boundary condition since it implies that the free surfaces are advected with the fluids. The  boundary equations in \eqref{ns_euler} involving the stress tensor are called the dynamic boundary conditions. Notice that on $\Gamma_-(t)$, the continuity of velocity, $\tilde{u}_+ = \tilde{u}_-$,  means that it is the common value of $\tilde{u}_\pm$ that advects the interface. For a more physical description of the equations \eqref{ns_euler} and the boundary conditions in \eqref{ns_euler}, we refer to \cite{3WL}.

To complete the statement of the problem, we must specify the initial conditions. We suppose that the initial surfaces $\Gamma_\pm(0)$ are given by the graphs of the functions $\eta_\pm(0)$, which yield the open sets $\Omega_\pm(0)$ on which we specify the initial data for the density, $\tilde{\rho}_\pm(0): \Omega_\pm(0) \rightarrow  \mathbb{R}^+$, and the velocity, $\tilde{u}_\pm(0): \Omega_\pm(0) \rightarrow  \mathbb{R}^3$. We will assume that $\ell+\eta_+(0)>\eta_-(0)>-b $ on $\mathrm{T}^2$, which means that at the initial time the boundaries do not intersect with each other.

\subsection{Equilibria}

Here we seek a steady-state equilibrium solution to \eqref{ns_euler} with $\tilde{u}_\pm=0, \eta_\pm =0$, and the equilibrium domains given by
\begin{equation}
\Omega_+=\{y\in   \mathrm{T}^2\times \mathbb{R}\mid 0 < y_3< \ell \} \text{ and }
\Omega_-=\{y\in  \mathrm{T}^2\times \mathbb{R}\mid  -b <y_3< 0 \}.
\end{equation}
Then \eqref{ns_euler} reduces to an ODE for the equilibrium densities  $\tilde\rho_\pm = \bar{\rho}_\pm(y_3)$:
\begin{equation}\label{steady}
\begin{cases}
\displaystyle\frac{d(P_+ (\bar{\rho}_+ ))}{dy_3} = -g\bar{\rho}_+, & \text{for }y_3 \in (0,\ell), \\
\displaystyle\frac{d(P_- (\bar{\rho}_- ))}{dy_3} = -g\bar{\rho}_-, & \text{for } y_3 \in (-b,0), \\
P_+(\bar{\rho}_+(\ell)) = p_{atm}, \\
P_+(\bar{\rho}_+(0))  =P_-(\bar{\rho}_-(0)).
\end{cases}
\end{equation}
The system \eqref{steady} admits a solution $\bar{\rho}_\pm >0$ if and only if we assume that the equilibrium heights $b,\ell>0$, the pressure laws $P_\pm$, and the atmospheric pressure $p_{atm}$ satisfy a collection of admissibility conditions.  These are enumerated in detail in our companion paper \cite{JTW_GWP}.  For the sake of brevity we will not mention these here, but we will assume they are satisfied so that an equilibrium exists.  The resulting function $\bar{\rho}$ is strictly positive and smooth when restricted to to $[-b,0]$ and $[0,\ell]$.

We give special names to the equilibrium density at the fluid interfaces:
\begin{equation}
 \bar{\rho}_1 = \bar{\rho}_+(\ell), \;  \bar{\rho}^+ = \bar{\rho}_+(0),\;  \bar{\rho}^- = \bar{\rho}_-(0).
\end{equation}
Notice in particular that the equilibrium density can jump across the internal interface.   The jump in the equilibrium density, which we denote by
\begin{equation}\label{rho+-}
\rj := \bar{\rho}_+(0)-\bar{\rho}_-(0)= \bar{\rho}^+ - \bar{\rho}^-,
\end{equation}
is of fundamental importance in the the analysis of solutions to \eqref{ns_euler} near equilibrium.  Indeed, if $\rj > 0$ then the upper fluid is heavier than the lower fluid along the equilibrium interface, and the fluid is susceptible to the well-known Rayleigh-Taylor gravitational instability.  This is not particularly important for the local theory developed in this paper,  but of fundamental importance in the stability theory.

In studying perturbations of the equilibrium density it will be useful throughout the paper to employ the enthalpy functions.  These are defined in terms of the pressure laws $P_\pm$ and the equilibrium density values via
\begin{equation}\label{h'}
 h_+(z) = \int_{\bar{\rho}_1}^z \frac{P'_+(r)}{r}dr \text{ and }  h_-(z) = \int_{\bar{\rho}^-}^z \frac{P'_+(r)}{r}dr.
\end{equation}

\subsection{Reformulation in flattened coordinates}

The movement of the free surfaces $\Gamma_\pm(t)$ and the subsequent change of the domains $\Omega_\pm(t)$ create numerous mathematical difficulties. To circumvent these, we will switch to a  coordinate system in which  the boundaries and the domains stay fixed in time.  In order to be consistent with our study of the nonlinear stability of the equilibrium state in \cite{JTW_GWP}, we will use the equilibrium domain. We will not use a Lagrangian coordinate transformation, but rather utilize a special flattening coordinate transformation motivated by Beale \cite{B2}.

To this end, we define the fixed domain
\begin{equation}
\Omega = \Omega_+\cup\Omega_-\text{ with }\Omega_+:=\{0<x_3<\ell \} \text{ and } \Omega_-:=\{-b<x_3<0\},
\end{equation}
for which we have written the coordinates as $x\in \Omega$. We shall write $\Sigma_+:=\{x_3= \ell\}$ for the upper boundary, $\Sigma_-:=\{x_3=0\}$ for the internal interface and $\Sigma_b:=\{x_3=-b\}$ for the lower boundary.  Throughout the paper we will write $\Sigma = \Sigma_+ \cup \Sigma_-$.   We think of $\eta_\pm$ as a function on $\Sigma_\pm$ according to $\eta_+: (\mathrm{T}^2\times\{\ell\}) \times \mathbb{R}^{+} \rightarrow\mathbb{R}$ and $\eta_-:(\mathrm{T}^2\times\{0\}) \times \mathbb{R}^{+} \rightarrow \mathbb{R}$, respectively. We will transform the free boundary problem in $\Omega(t)$ to one in the fixed domain $\Omega $ by using the unknown free surface functions $\eta_\pm$. For this we define
\begin{equation}
\bar{\eta}_+:=\mathcal{P}_+\eta_+=\text{Poisson extension of }\eta_+ \text{ into }\mathrm{T}^2 \times \{x_3\le \ell\}
\end{equation}
and
\begin{equation}
\bar{\eta}_-:=\mathcal{P}_-\eta_-=\text{specialized Poisson extension of }\eta_-\text{ into }\mathrm{T}^2 \times \mathbb{R},
\end{equation}
where $\mathcal{P}_\pm$ are defined by \eqref{P+def} and \eqref{P-def}. The Poisson extensions $\bar{\eta}_\pm$ allow us to flatten the coordinate domains via the following special coordinate transformation:
\begin{equation}\label{cotr}
\Omega_\pm \ni x\mapsto(x_1,x_2, x_3+ \tilde{b}_1\bar{\eta}_++\tilde{b}_2\bar{\eta}_-):=\Theta (t,x)=(y_1,y_2,y_3)\in\Omega_\pm(t),
\end{equation}
where we have chosen $\tilde{b}_1=\tilde{b}_1(x_3), \tilde{b}_2=\tilde{b}_2(x_3)$ to be two smooth functions in $\mathbb{R}$ that satisfy
\begin{equation}\label{b function}
\tilde{b}_1(0)=\tilde{b}_1(-b)=0, \tilde{b}_1(\ell)=1\text{ and }\tilde{b}_2(\ell)=\tilde{b}_2(-b)=0, \tilde{b}_2(0)=1.
\end{equation}
Note that $\Theta(\Sigma_+,t)=\Gamma_+(t),\ \Theta (\Sigma_-,t)=\Gamma_-(t)$ and $\Theta(\cdot,t) \mid_{\Sigma_b} = Id \mid_{\Sigma_b}$.

Note that if $\eta $ is sufficiently small (in an appropriate Sobolev space), then the mapping $\Theta $ is a diffeomorphism.  This allows us to transform the problem \eqref{ns_euler} to one in the fixed spatial domain $\Omega$ for each $t\ge 0$. In order to write down the equations in the new coordinate system, we compute
\begin{equation}\label{A_def}
\begin{array}{ll} \nabla\Theta  =\left(\begin{array}{ccc}1&0&0\\0&1&0\\A  &B  &J  \end{array}\right)
\text{ and }\mathcal{A}  := \left(\nabla\Theta
^{-1}\right)^T=\left(\begin{array}{ccc}1&0&-A   K  \\0&1&-B   K  \\0&0&K
\end{array}\right)\end{array}.
\end{equation}
Here the components in the matrix are
\begin{equation}\label{ABJ_def}
A  =\p_1\theta ,\
B  =\p_2\theta,\
J = 1 + \p_3\theta,\  K  =J^{-1},
\end{equation}
where we have written
\begin{equation}\label{theta}
\theta:=\tilde{b}_1\bar{\eta}_++\tilde{b}_2\bar{\eta}_-.
\end{equation}
Notice that $J={\rm det}\, \nabla\Theta $ is the Jacobian of the coordinate transformation. It is straightforward to check that, because of how we have defined $\bar{\eta}_-$ and $\Theta $, the matrix $\mathcal{A}$ is regular across the interface $\Sigma_-$. 

We now define the density $\rho_\pm$ and the velocity $u_\pm$ on $\Omega_\pm$ by the compositions $\rho_\pm(x,t)=\tilde \rho_\pm(\Theta_\pm(x,t),t)$ and $  u_\pm(x,t)=\tilde u_\pm(\Theta_\pm(x,t),t)$. Since the domains $\Omega_\pm$ and the boundaries $\Sigma_\pm$ are now fixed, we henceforth consolidate notation by writing $f$ to refer to $f_\pm$ except when necessary to distinguish the two; when we write an equation for $f$ we assume that the equation holds with the subscripts added on the domains $\Omega_\pm$ or $\Sigma_\pm$. To write the jump conditions on $\Sigma_-$, for a quantity $f=f_\pm$, we define the interfacial jump as
\begin{equation}
\jump{f} := f_+ \vert_{\{x_3=0\}} - f_- \vert_{\{x_3=0\}}.
\end{equation}
Then in the new coordinates, the PDE \eqref{ns_euler} becomes the following system for $(u,\rho,\eta)$:
\begin{equation}\label{ns_geometric}
\begin{cases}
\partial_t \rho-K\p_t\theta\p_3\rho +\diverge_\a (  {\rho}   u)=0 & \text{in }
\Omega  \\
\rho (\partial_t    u -K\p_t\theta\p_3 u+u\cdot\nabla_\a u  ) + \nabla_\a P ( {\rho} )    -\diva \S_\a (u) =- g\rho e_3 & \text{in }
\Omega
\\ \partial_t \eta = u\cdot \n &
\text{on }\Sigma
\\ (P ( {\rho} ) I- \S_{\a}(u))\n
=p_{atm}\n -\sigma_+  \mathcal{H} \n  &\hbox{on }\Sigma_+
 \\ \jump{P ( {\rho} ) I- \S_\a(u)}\n
= \sigma_-  \mathcal{H} \n  &\hbox{on }\Sigma_-
 \\\jump{u}=0   &\hbox{on }\Sigma_-\\  {u}_- =0 &\text{on }\Sigma_b.
\end{cases}
\end{equation}
Here we have written the differential operators $\naba$, $\diva$, and $\da$ with their actions given by
\begin{equation}
 (\naba f)_i := \a_{ij} \p_j f,\; \diva X := \a_{ij}\p_j X_i, \text{ and }\da f := \diva \naba f
\end{equation}
for appropriate $f$ and $X$.  We have also written
\begin{equation}\label{n_def}
\n := (-\p_1 \eta, - \p_2 \eta,1)
\end{equation}
for the non-unit normal to $\Sigma(t)$, and we have written
\begin{multline}\label{deviatoric_a_def}
(\sg_{\a} u)_{ij} =  \a_{ik} \p_k u_j + \a_{jk} \p_k u_i, \qquad \sgz_{\a} u = \sg_{\a} u - \frac{2}{3} \diva u I,\\
 \text{and } \S_{\a,\pm}(u): =\mu_\pm \sgz_{\a} u+ \mu_\pm' \diva u I.
\end{multline}
Note that if we extend $\diva$ to act on symmetric tensors in the natural way, then $\diva \S_{\a} u =\mu\Delta_\a u+(\mu/3+\mu')\nabla_\a \diverge_\a u$. Recall that $\a$ is determined by $\eta$ through \eqref{A_def}. This means that all of the differential operators in \eqref{ns_geometric} are connected to $\eta$, and hence to the geometry of the free surfaces.

\begin{remark}
The equilibrium state given by \eqref{steady} corresponds to the static solution  $(u,\rho, \eta)=(0,\bar{\rho},0)$ of \eqref{ns_geometric}.
\end{remark}

\subsection{Perturbation equations}

We will now rephrase the PDE \eqref{ns_geometric} in a perturbation formulation around the steady state solution $(0,\bar\rho,0)$. We define a special density perturbation by
\begin{equation}\label{q_def}
 \q=\rho-\bar\rho- \p_3\bar\rho\theta.
\end{equation}
In order to deal with the pressure term $P(\rho)=P(\bar\rho+\q+ \p_3\bar\rho\theta)$ we introduce the Taylor expansion: by \eqref{steady} we have
\begin{equation}\label{R1}
P (\bar\rho+\q+\p_3\bar\rho\theta)=P (\bar{\rho} )+P '(\bar{\rho} )(\q+\p_3\bar\rho\theta)+\mathcal{R}=P (\bar{\rho} )+P '(\bar{\rho} ) \q -g\bar\rho\theta+\mathcal{R},
\end{equation}
where the remainder term is defined via
\begin{equation}\label{R_def}
\mathcal{R} =\int_{\bar{\rho} }^{\bar{\rho} +\q+\p_3\bar\rho\theta}(\bar{\rho} +\q+\p_3\bar\rho\theta-z)  P ^{\prime\prime}(z)\,dz.
\end{equation}
The advantage of defining the perturbation in this manner is seen in the following cancellation:
\begin{equation}
\begin{split}
&\a_{ij}\p_j P ( {\rho} )+g\rho \delta_{i3}=\a_{ij}\p_j P ( {\rho} )+g\rho \a_{ij}\p_j\Theta_3
\\&\quad=\a_{ij}\p_j (P (\bar{\rho} )+P '(\bar{\rho} ) \q -g\bar\rho\theta+\mathcal{R})+g(\bar\rho+\q+\p_3\bar\rho\theta) \a_{ij}\p_j(x_3+\theta)
\\&\quad=\a_{ij}\p_j ( P '(\bar{\rho} ) \q )-g\a_{ij}\p_j  (\bar\rho \theta )+\a_{ij}\p_j\mathcal{R}+g \bar\rho  \a_{ij}\p_j\theta +g(\q+\p_3\bar\rho\theta) \a_{i3} +g( \q+\p_3\bar\rho\theta) \a_{ij}\p_j\theta
\\&\quad=\a_{ij}\p_j ( P '(\bar{\rho} ) \q ) +\a_{ij}\p_j\mathcal{R}+g \q  \a_{i3}+g( \q+\p_3\bar\rho\theta) \a_{ij}\p_j\theta
\\&\quad=\bar\rho\a_{ij}\p_j ( h '(\bar{\rho} ) \q ) +\a_{ij}\p_j\mathcal{R}+g( \q+\p_3\bar\rho\theta) \a_{ij}\p_j\theta,
\end{split}
\end{equation}
where we have used \eqref{steady} and \eqref{h'}. Recalling also \eqref{rho+-}, \eqref{b function}, and \eqref{theta}, we find that
\begin{equation}
 -g\bar\rho_+\theta=-\bar{\rho}_1  g\eta_+\text{ on }\Sigma_+,\text{ and } \jump{-g\bar\rho\theta}  =-\rj g \eta_-\text{ on }\Sigma_-.
\end{equation}

The equations \eqref{ns_geometric}  become the following system when perturbed around the equilibrium $(0,\bar{\rho},0)$:
\begin{equation}\label{geometric}
\begin{cases}
\partial_t \q +\diverge_\a((\bar{\rho} + \q+\p_3\bar\rho\theta) u ) - \p_3^2\bar\rho K \theta \p_t\theta - K\p_t\theta \pa_3  \q =0 & \text{in } \Omega
\\
( \bar{\rho} +  \q+\p_3\bar\rho\theta)\partial_t    u
+( \bar{\rho} +  \q+\p_3\bar\rho\theta) (-K\p_t\theta \pa_3  u  +   u \cdot \nab_\a  u )
+ \bar{\rho}\nabla_\a \left(h'(\bar{\rho})\q\right)  \\
\quad -\diva \S_{\a} u =- \nabla_\a\mathcal{R}-g( \q+\p_3\bar\rho\theta ) \nabla_\a \theta & \text{in }
\Omega
\\
\partial_t \eta = u\cdot \n & \text{on }\Sigma
\\
(  P'(\bar\rho)\q I- \S_{\a}(  u))\n  =  \bar{\rho}_1  g \eta \n-\sigma_+ \mathcal{H}   \n
- \mathcal{R}_+ \n
 & \text{on } \Sigma_+
 \\
 \jump{P'(\bar\rho)\q I- \S_\a(u)}\n = \rj g\eta\n +\sigma_- \mathcal{H} \n  - \jump{ \mathcal{R} }\n
 &\text{on }\Sigma_-
\\
 \jump{u}=0 &\text{on } \Sigma_-
\\
u_-=0 &\hbox{on }\Sigma_b.
\end{cases}
\end{equation}

\begin{remark}
The introduction of the special density perturbation $q$ given by \eqref{q_def} and the subsequent perturbation equations of form \eqref{geometric} is crucial for our study of the nonlinear stability in \cite{JTW_GWP}; it is not  essential for the local theory developed in this paper. Indeed, we could consider $\rho$ or $\rho-\bar\rho$ directly. We choose here to consider $q$ in order to be consistent with the study in \cite{JTW_GWP}.
\end{remark}

\section{Main results and discussion}

\subsection{Previous work}

Free boundary problems in fluid mechanics have attracted much interest in the mathematical community.  A thorough survey of the literature would prove impossible here, so we will primarily mention the work most relevant to our present setting, namely that related to layers of viscous fluid.   We refer to the review of Shibata and Shimizu \cite{ShSh} for a more proper survey of the literature.

The dynamics of a single layer of viscous incompressible fluid lying above a rigid bottom, i.e. the incompressible viscous surface wave problem, have attracted the attention of many mathematicians since the pioneering work of Beale \cite{B1}.  For the case without surface tension, Beale \cite{B1} proved the local well-posedness in the Sobolev spaces.   Hataya \cite{H} obtained the global existence of small, horizontally periodic solutions with an algebraic decay rate in time.    Guo and Tice \cite{GT_per, GT_inf,GT_lwp} developed a two-tier energy method to prove global well-posedness and decay of this problem.  They proved that if the free boundary is horizontally infinite, then the solution decays to equilibrium at an algebraic rate; on the other hand, if the free boundary is horizontally periodic, then the solution decays at an almost exponential rate.  The proofs were subsequently refined by the work of Wu \cite{Wu}.  For the case with surface tension, Beale \cite{B2} proved global well-posedness of the problem, while Allain \cite{A} obtained a local existence theorem in two dimension using a different method.  Bae \cite{B} showed the global solvability in Sobolev spaces via energy methods.  Beale and Nishida \cite{BN} showed that the solution obtained in \cite{B2} decays in time with an optimal algebraic decay rate.   Nishida, Teramoto and Yoshihara \cite{NTY}  showed the global existence of periodic solutions with an exponential decay rate in the case of a domain with a flat fixed lower boundary.  Tani \cite{Ta} and Tani and Tanaka \cite{TT} also discussed the solvability of the problem with or without surface tension by using methods developed by Solonnikov  in \cite{So,So_2,So_3}.  Tan and Wang \cite{TW} studied the vanishing surface tension limit of the problem.

There are fewer results on  two-phase incompressible problems, i.e. the incompressible viscous surface-internal wave or internal wave problems. Hataya \cite{H2} proved an existence result for a periodic free interface problem with surface tension, perturbed around Couette flow; he showed the local  existence of small  solution for any physical constants, and the existence of exponentially decaying small solution if the viscosities of the two fluids are sufficiently large and their difference is small. Pr\"uss and Simonett \cite{PS} proved the local well-posedness of a free interface problem with surface tension in which two layers of viscous fluids fill  the whole space and are separated by a horizontal interface.  For two horizontal fluids of finite depth with surface tension,  Xu and Zhang \cite{XZ} proved  the local solvability for general  data and global solvability for data near the equilibrium state using Tani and Tanaka's method.  Wang and Tice \cite{WT} and Wang, Tice and Kim \cite{WTK} adapted the two-tier energy methods of \cite{GT_per, GT_inf,GT_lwp} to develop the nonlinear Rayleigh-Taylor instability theory for the problem, proving the existence of a sharp stability criterion given in terms of the surface tension coefficient, gravity, periodicity lengths, and $\rj$.

The free boundary problems corresponding to a single horizontally periodic layer of compressible viscous fluid with surface tension have been studied by several authors.  Jin \cite{jin} and Jin-Padula \cite{jin_padula} produced global-in-time solutions using Lagrangian coordinates, and Tanaka and Tani \cite{tanaka_tani} produced global solutions with temperature dependence.  However, to the best of our knowledge, even the local existence problem for two layers of compressible viscous fluids remains unsolved.

The two-layer problem is important because it allows for the development of the classical Rayleigh-Taylor instability \cite{3R,3T}, at least when the equilibrium has a heavier fluid on top and a lighter one below and there is a downward gravitational force.  In our companion paper \cite{JTW_GWP} we identify a stability criterion and prove the existence of global solutions that decay to equilibrium.  In our companion paper \cite{JTW_nrt} we show that the stability criterion is sharp, as in the incompressible case \cite{WT, WTK}, and that the Rayleigh-Taylor instability persists at the nonlinear level (the linear analysis was developed by Guo and Tice in \cite{GT_RT}). The Rayleigh-Taylor instability is a long time phenomenon; for the local-in-time theory developed in this paper it plays no essential role.

\subsection{Local existence}

In our companion paper \cite{JTW_GWP} we deal with questions of global existence and asymptotic stability of solutions to the perturbed system \eqref{geometric}.  The analysis there is carried out in a high-regularity functional framework that is indexed by an integer $N\ge 3$ related to the decay properties of solutions.   Consequently, we must first guarantee the local existence of solutions in this framework for every $N \ge 3$.

Our main result guarantees the existence of high-regularity solutions to \eqref{geometric} under smallness conditions on the initial free surface as well as the temporal interval.   Notice, though, that there are no smallness conditions  placed on the initial velocity or density.  We refer to Appendix \ref{sec_en_dis} for the definitions of the terms ${\EEE}$, $\E$, $\D$, $\hat{\E}^\sigma$, $\hat{\D}^\sigma$, and $\Lf$ (which all depend on $N$) appearing in the statement of the theorem.

\begin{thm}\label{local_existence_intro}
Let $N \ge 3$ be an integer. Assume that either $\sigma_\pm> 0$ or else $\sigma_\pm=0$. Suppose that $(u_0,q_0,\eta_0)$ satisfy ${\EEE}<\infty$ in addition to the compatibility conditions \eqref{ccs}, and that
\begin{equation}\label{q_0_assump}
\rho_\ast\le \rho_0:=\bar\rho+q_0+\p_3\bar\rho\theta_0\le \rho^\ast
\end{equation}
for two constants $0<\rho_\ast,\rho^\ast<\infty$. There exists a universal constant $\delta_\eta >0$ and a $T_{loc} = T_{loc}({\EEE})$ such that if
\begin{equation}
\Lf[\eta_0] \le \frac{\delta_\eta}{2} \text{ and } 0 < T \le T_{loc},
\end{equation}
then there exists a triple $(u,q,\eta)$ defined on the temporal interval $[0,T]$ satisfying the following three properties.  First, $(u,q,\eta)$ achieve the initial data at $t=0$.  Second, the triple uniquely solve \eqref{geometric}.  Third, the triple obey the estimates
\begin{multline}\label{le_intro_01}
 \sup_{0\le t \le T} \left( \E[u(t)] + \E[q(t)] + \hat{\E}^\sigma[\eta(t)] + \ns{\eta(t)}_{4N+1/2}  \right)
\\
 + \int_0^T \left(  \D[u(t)] + \ns{\rho J \dt^{2N+1} u(t)}_{ \Hd} +\D[q(t)]  + \hat{\D}^\sigma[\eta(t)] \right) dt \le P({\EEE})
\end{multline}
for a universal positive polynomial $P$ with $P(0)=0$.  Also,
\begin{equation}
\frac{1}{2}\rho_\ast\le \rho(x,t) :=\bar\rho(x)+q(x,t)+\p_3\bar{\rho}(x)\theta(x,t)\le \frac{3}{2}\rho^\ast \text{ for all } x \in \Omega \text{ and } t \in [0,T]
\end{equation}
and
\begin{equation}
\Lf[\eta](T) \le \delta_\eta.
\end{equation}
Moreover, the mapping $\Theta(\cdot,t)$ defined by \eqref{cotr} is a $C^{4N-2}$ diffeomorphism for each $t \in [0,T]$.
\end{thm}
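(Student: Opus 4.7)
The plan is a Picard-type iteration built on the linear theory developed in the earlier sections of the paper: at each step one freezes the geometric coefficients $\a$, $J$, $\theta$ and the coefficient density $\bar\rho+\q+\pa_3\bar\rho\,\theta$ at the previous iterate, solves the resulting linear system for the new iterate, and extracts a fixed point via uniform high-regularity bounds and contraction in a weaker norm. The assumed smallness of $\Lf[\eta_0]$ and of $T$ enters to ensure that the linear coefficients remain in the admissible regime (in particular keeping $\a$ invertible so that $\Theta$ stays a diffeomorphism) and to absorb the nonlinear perturbations.

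\textbf{Iteration and uniform bounds.} First, I would construct a zeroth iterate $(u^0,\q^0,\eta^0)$ on $[0,T]$ from the initial data, using the compatibility conditions \eqref{ccs} to realize the correct top-order time derivatives at $t=0$. Given $(u^m,\q^m,\eta^m)$, define $\theta^m=\tilde b_1\bar{\eta}^m_+ + \tilde b_2\bar{\eta}^m_-$, form the corresponding $\a^m$, $J^m$, $\rho^m=\bar\rho+\q^m+\pa_3\bar\rho\,\theta^m$, and solve the linear analogue of \eqref{geometric} with these as given coefficients to obtain $(u^{m+1},\q^{m+1},\eta^{m+1})$. The second step is to show that there exist $M=M(\EEE)$ and $T_{loc}=T_{loc}(\EEE)>0$ such that the high-regularity norm appearing on the left side of \eqref{le_intro_01} is bounded by $M$ for the entire sequence on $[0,T_{loc}]$. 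This is a bootstrap argument: the linear estimate yields a bound of the form $\EEE$ plus a polynomial of $M$ times a positive power of $T$, and choosing $M$ large and $T_{loc}$ small closes the loop. Along the way one verifies by time integration that $\rho^{m+1}$ remains within $[\tfrac{1}{2}\rho_\ast,\tfrac{3}{2}\rho^\ast]$ and that $\Lf[\eta^{m+1}]\le\delta_\eta$.

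\textbf{Contraction, limit, and diffeomorphism.} Next I would subtract consecutive iterates and write the linear system satisfied by $(U^m,Q^m,H^m):=(u^{m+1}-u^m,\q^{m+1}-\q^m,\eta^{m+1}-\eta^m)$; the inhomogeneities are products of the differences $(U^{m-1},Q^{m-1},H^{m-1})$ with the high-regularity quantities controlled by $M$. Applying the linear energy estimates at a lower regularity level, combined with interpolation against the uniform bound at the top level, produces a contraction factor of the form $C(M)T^{\alpha}$ with $\alpha>0$ that can be made smaller than $1/2$ after shrinking $T_{loc}$ further. The resulting Cauchy sequence converges strongly at the lower regularity level and weakly-$\ast$ at the top level, and these convergences (together with Aubin--Lions-type compactness for terms containing $\dt$) suffice to pass to the limit in \eqref{geometric}, with lower semicontinuity giving \eqref{le_intro_01}. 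Uniqueness follows from the same difference estimate applied to any two solutions. Finally, $\Lf[\eta]\le\delta_\eta$ controls $\|\nab\theta\|_{L^\infty}$ and keeps $J$ bounded away from zero, so $\Theta(\cdot,t)$ is a bijection with smooth inverse; the $C^{4N-2}$ regularity comes from Sobolev embedding applied to $\ns{\eta}_{4N+1/2}$.

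\textbf{Main obstacle.} The technical heart of the argument is not the fixed-point step itself but closing the high-regularity \emph{linear} estimates that feed it. Commuting $2N$ temporal derivatives through the geometric differential operators $\naba$, $\diva$, $\da$ generates a large collection of nonlinear commutators that must be controlled uniformly at the top level, and doing so requires careful bookkeeping that simultaneously respects the transport structure of the $\q$-equation, the coupled parabolic-hyperbolic structure of the $u$-equation, and the jump conditions on $\Sigma_-$. In particular the presence of the boundary-type term $\ns{\rho J\dt^{2N+1}u}_{\Hd}$ in the dissipation forces one to work in a dual Sobolev framework at the very top order. The compatibility conditions \eqref{ccs} are indispensable to make these high-order time derivatives well-defined at $t=0$. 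The case $\sigma_\pm=0$ is additionally delicate because one loses the capillary smoothing of $\eta$ on the interfaces and must instead rely on the Rayleigh--Taylor-type gravitational structure encoded in $\hat\E^\sigma$ and $\hat\D^\sigma$ to obtain the requisite control of $\eta$.
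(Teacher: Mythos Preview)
Your overall architecture---Picard iteration, uniform high-order bounds, low-order contraction, weak limits---matches the paper's. But there is a genuine gap in your scheme for the case $\sigma_\pm>0$, and it is precisely the point the paper works hardest on.

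You propose to freeze \emph{all} the geometric coefficients at the previous iterate, in particular to treat the surface-tension term $\sigma_\pm\Delta_\ast\eta^{m}$ on $\Sigma_\pm$ as a given forcing when solving for $u^{m+1}$. To close the $u$-estimate at the dissipation level $\D[u^{m+1}]$ you then need control of $\sigma\Delta_\ast\eta^{m}$ in $H^{4N-1/2}(\Sigma)$, i.e.\ $\eta^m\in H^{4N+3/2}$. But $\eta^m$ is obtained only from the kinematic transport equation $\dt\eta^m=u^m\cdot\n^m$, which propagates at most the $H^{4N+1/2}$ regularity of $u^m$ on the boundary; it cannot produce the extra derivative needed. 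Your bootstrap therefore does not close at the top order when $\sigma_\pm>0$. The paper avoids this by \emph{not} freezing $\eta$: in the sub-problem \eqref{lame_free_bndry} the pair $(u^{n},\eta^{n})$ is solved simultaneously, with $\a^{n}$ and $\n^{n}$ determined by $\eta^{n}$ itself, so that the surface-tension term participates in the energy structure rather than appearing as forcing. Even this nonlinear sub-problem cannot be attacked directly; the paper first introduces the $\kappa$-regularization \eqref{kappa_problem}, replacing $\dt\eta=u\cdot\n$ by a heat equation $\dt\eta-\kappa\Delta_\ast\eta=u\cdot\n-\kappa\Xi$ (with a carefully chosen corrector $\Xi$ to preserve the compatibility conditions), derives $\kappa$-independent a~priori estimates, and only then sends $\kappa\to0$. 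This machinery is the ``technical heart'' you allude to, but it is qualitatively different from commutator bookkeeping.

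Two smaller points. First, the case $\sigma_\pm=0$ is \emph{easier}, not harder: with no capillary term the regularity mismatch above disappears, and one can iterate with the linear Lam\'e problem \eqref{lame} (frozen $\eta$) plus the transport equation for $\eta$ directly; the paper says exactly this. Second, there is no ``Rayleigh--Taylor-type gravitational structure'' in $\hat\E^\sigma$, $\hat\D^\sigma$; these are purely Sobolev functionals, and the paper stresses that Rayleigh--Taylor effects play no role in the local theory.
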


\begin{remark}
 The diffeomorphism condition in Theorem \ref{local_existence_intro} guarantees that the solution to \eqref{geometric} also gives rise to a solution to \eqref{ns_euler} in the original coordinate system.
\end{remark}


\begin{remark}
The temporal existence interval of Theorem \ref{local_existence_intro}  with $\sigma_\pm>0$ is  independent of $\sigma_\pm$.   As such, we can use a standard limiting argument to produce a solution to \eqref{geometric} when $\sigma_+ >0$ and $\sigma_- =0$ or $\sigma_+ =0$ and $\sigma_- >0$ (the case $\sigma_\pm=0$ is already covered by the theorem).  While this produces a solution, it does not yield a satisfactory local well-posedness theory.  Indeed, if we are given data $(u_0,q_0,\eta_0)$ satisfying the compatibility conditions \eqref{ccs} with, say $\sigma_+ >0$ and $\sigma_- =0$, we would need to produce a sequence of approximate data $(u_0^\sigma,q_0^\sigma,\eta_0^\sigma)$ satisfying \eqref{ccs} with $\sigma_\pm >0$ that converge to $(u_0,q_0,\eta_0)$ as $\sigma_- \to 0$.  While this construction might be possible, the  compatibility conditions \eqref{ccs} are sufficiently complicated that we have not pursued it in this paper.

It is noteworthy, though, that if we send both $\sigma_\pm \to 0$, then the same sort of limiting argument can provide a sort of continuity result, connecting our result with $\sigma_\pm >0$ to our result with $\sigma_\pm=0$. See \cite{JTW_GWP} for more details.
\end{remark}

Theorem \ref{local_existence_intro} is proved in a somewhat more general form (the smallness conditions on the data are more general) in Theorems \ref{local_existence} and \ref{local_existence_no_ST}.  The former handles the case $\sigma_\pm >0$, while the latter handles the case $\sigma_\pm =0$.  The proof of Theorem \ref{local_existence} is more complicated than that of Theorem \ref{local_existence_no_ST} because of the regularity requirements for dealing with the surface tension terms in \eqref{geometric}.  Therefore, most of the paper is devoted to the proof of Theorem \ref{local_existence}.  We sketch below some of the main ideas of the proof.

{ \bf Picard iteration }

We construct solutions to \eqref{geometric} with $\sigma_\pm >0$ by way of a Picard iteration scheme, which is developed in Section \ref{sec_lwp_st}.  This scheme is  built on two sub-problems: the transport problem \eqref{xport_fundamental} for $q$, and the two-phase free boundary Lam\'{e} problem \eqref{lame_free_bndry} for $(u,\eta)$.  The sequence of Picard iterates $\{(u^n,q^n,\eta^n)\}_{n=0}^\infty$ is constructed in Theorem \ref{approx_solns} under some smallness assumptions on  $\eta_0$ and time. The theorem also provides estimates of various high-regularity norms in terms of polynomials of  ${\EEE}$.

The uniform bounds are sufficient for extracting weak and weak-$\ast$ limits from the sequence of Picard iterates.  Space-time compactness results then provide strong convergence results.  However, the nature of our iteration scheme does not allow us to immediately deduce from the strong subsequential limits that a solution to \eqref{geometric} exists.  Instead, we show that the sequence of iterates actually contracts in certain  low-regularity norms, as long as further smallness conditions are imposed on $\eta_0$ and the time interval.  This is the content of Theorem \ref{contraction_thm}.

Theorems \ref{approx_solns} and \ref{contraction_thm} are then combined in Theorem \ref{local_existence} to produce the desired high-regularity solutions to \eqref{geometric}.  Indeed, the low-regularity strong convergence and the high-regularity bounds combine to show that the sequence of iterates actually converges in a sufficiently high-regularity context to pass to the limit in the sequence (without extracting a subsequence) and produce a solution to \eqref{geometric} satisfying the bounds \eqref{le_intro_01}.  The high regularity of our solutions requires that we impose the compatibility conditions mentioned in Theorem \ref{local_existence_intro}.

{ \bf The transport problem for $q$ }

We study the problem \eqref{xport_fundamental} in Section \ref{sec_xport}; the pair $(u,\eta)$ are assumed to be given, and $q$ is solved for.  The key feature of the transport equation in a bounded domain is the behavior of the normal component of the transport velocity on the boundary: its vanishing allows for the construction of solutions without imposing boundary conditions.  This vanishing occurs for \eqref{xport_fundamental} precisely because the pair  $(u,\eta)$  satisfy  the equations $\dt \eta = u \cdot \n$ on $\Sigma$, $\jump{u}=0$ on $\Sigma_-$,  and $u_-=0$ on $\Sigma_b$.

Using this condition and the high regularity of $(u,\eta)$, we can produce a solution to \eqref{xport_fundamental} using the method of characteristics.  In a $C^k$-smoothness context we could then simply apply standard a priori estimates (which again depend critically on the vanishing of the normal transport velocity) to deduce the regularity estimates for $q$ desired in our iteration scheme.  However, due to the Sobolev regularity $(u,\eta)$, it is not a priori clear that those estimates can be rigorously applied to the solution given by characteristics.  To get around this technical obstacle, we show that the problem \eqref{xport_fundamental} can be ``lifted'' to a corresponding problem on the spatial domain $\mathrm{T}^2\times \mathbb{R}$, and that the restriction of the lifted solution agrees with the solution given by characteristics.  The advantage of working with the lifted problem is that it is amenable to solution by the Friedrichs mollification method, which employs delicate properties of mollification operators that are unavailable in bounded domains.  The Friedrichs method allows for the rigorous derivation of the desired a priori Sobolev estimates, and we deduce in Theorem \ref{xport_well-posed} that solutions to \eqref{xport_fundamental} exist and obey the estimates needed for our iteration scheme.

{ \bf The two-phase free boundary Lam\'{e} problem }

The sub-problem \eqref{lame_free_bndry} takes $\rho$ as a given function in addition to various forcing terms, and produces a solution pair $(u,\eta)$.  Notice that \eqref{lame_free_bndry} is not a linear problem: the $\a$ coefficients of the differential operators and the non-unit normal $\n$ are both determined by $\eta$.  We need this nonlinearity for two crucial reasons.  First, as mentioned above, the equation $\dt \eta = u \cdot \n$ is essential in using  $(u,\eta)$  to generate the transport velocity in \eqref{xport_fundamental}.  Second, it allows us to take  advantage of the regularity gain offered by the surface tension terms in \eqref{lame_free_bndry}.  Without it, our iteration scheme would fail to control $\eta$ at the highest level of regularity.

We produce a solution to  \eqref{lame_free_bndry} in Section \ref{sec_lame_free}.  It is simple to see \eqref{lame_free_bndry} as two coupled linear problems: a parabolic problem  for $u$ with $\eta$ given, and a transport problem for $\eta$ with $u$ given.  This perspective works well without surface tension, and indeed this is how we proceed in Theorem \ref{local_existence_no_ST}.  However, with surface tension the regularity demands of the $\sigma_\pm \Delta_\ast \eta_\pm$ terms are higher than what can be recovered from the transport equation for $\eta$, given the target regularity for $u$.  Our way around this difficulty is to recover a solution to \eqref{lame_free_bndry} as a limit as $\kappa \to 0$ of the $\kappa-$approximation problem \eqref{kappa_problem}.  This is accomplished in Theorems \ref{lame_local} and \ref{lame_exist}.  The former establishes the existence of solutions to \eqref{kappa_problem} on $\kappa-$independent intervals, while the latter passes to the limit $\kappa \to 0$ to solve \eqref{lame_free_bndry} and refines some of the estimates from the former.

{ \bf The $\kappa$-approximation problem }

Theorems \ref{lame_local} and \ref{lame_exist} are predicated on the analysis in Section \ref{sec_kappa_exist}, where solutions to \eqref{kappa_problem} are constructed, and Section \ref{sec_kappa_est}, where $\kappa-$independent a priori estimates for solutions to \eqref{kappa_problem} are derived.

Solutions to \eqref{kappa_problem} are produced in Theorem \ref{kappa_contract} by way of the contraction mapping principle in an appropriate space.  The contracting map is constructed by solving the linear sub-problems \eqref{kappa_heat} and  \eqref{kappa_lame}.  The problem \eqref{kappa_heat} is a heat equation; its well-posedness is standard, but we record (without proof) the precise form of the estimates of solutions in terms of $\kappa$ in Theorem \ref{heat_estimates} of Section \ref{sec_heat}.  The only subtlety in \eqref{kappa_heat} is the appearance of the term $\Xi$.  This is inserted as a ``corrector function'' in order to force the compatibility conditions for \eqref{kappa_problem} to match those of \eqref{geometric} for every $\kappa \in (0,1)$.  The problem \eqref{kappa_lame} is studied in Section \ref{sec_lame}, where solutions are produced via a Galerkin scheme and an iteration argument in Theorems \ref{lame_strong} and \ref{lame_high}.

The analysis in Section \ref{sec_kappa_est} develops a priori estimates for \eqref{kappa_problem} that are strong enough to allow us to bound the existence time from below in a $\kappa-$independent manner and to obtain $\kappa-$independent energy and dissipation estimates.  Here again the nonlinear structure of \eqref{kappa_problem} is essential in closing the estimates (see for example the handling of the $F^{3,\alpha}$ terms in Proposition \ref{k_hor_est}).  It is noteworthy that our main estimate, Theorem \ref{kappa_apriori}, actually fails to provide an estimate of $\ns{u}_{L^2_T H^{4N+1}}$, though this term is still guaranteed to be finite.  This is due to the fact that this estimate depends on being able to simultaneously estimate $\ns{\eta}_{L_T^\infty H^{4N+1/2}}$.  The structure of \eqref{kappa_problem} only permits $\kappa-$dependent estimates of this term, and so we fail to estimate $\ns{u}_{L^2_T H^{4N+1}}$ independently of $\kappa$.  It is only later, in Theorem \ref{lame_exist}, after passing to the limit $\kappa \to 0$ that we are able to estimate  $\ns{\eta}_{L_T^\infty H^{4N+1/2}}$ and thereby recover an estimate of $\ns{u}_{L^2_T H^{4N+1}}$.

{ \bf Smallness condition}

Throughout the paper we impose smallness conditions on $\eta_0$ and the size of the temporal interval on which solutions exist.  No smallness condition is imposed on $u_0$ or $q_0$.  The smallness of $\eta_0$ is used with a small-time condition to control various norms of $\eta(\cdot,t)$, which in turn provides control of the coefficients of the differential operators in \eqref{geometric} (see  Lemma \ref{eta_small}), a Korn inequality (see Proposition \ref{korn}), and elliptic regularity estimates (see Proposition  \ref{lame_elliptic}). The smallness of time is used as above and to guarantee that various absorption arguments work throughout the paper.


\subsection{Definitions and terminology} \label{def-ter}

We now mention some of the definitions, bits of notation, and conventions that we will use throughout the paper.

{ \bf Universal constants and polynomials}

Throughout the paper we will refer to generic constants as ``universal'' if they depend on $N$, $\Omega_\pm$,  the various parameters of the problem (e.g. $g$, $\mu_\pm$, $\sigma_\pm$), $\rho_\ast,\;\rho^\ast$ and the functions $\bar{\rho}_\pm$,  with the caveat that if the constant depends on $\sigma_\pm$, then it remains bounded above as either $\sigma_\pm$ tend to $0$.  For example this allows constants of the form $g\mu_+ + 3 \sigma_-^2 + \sigma_+$ but forbids constants of the form $3 + 1/\sigma_-$.  Whenever behavior of the latter type appears we will keep track of the dependence on $\sigma_\pm$ in our estimates.   We make this choice in order to handle the vanishing surface tension limit.

We will employ the notation $a \ls b$ to mean that $a \le C b$ for a universal constant $C>0$.  We also employ the convention of saying that a polynomial $P$ is a ``universal positive polynomial'' if $P(x) = \sum_{j=0}^m C_j x^j$ for some $m \ge 0$, where each $C_j >0$ is a universal constant.  Universal constants and polynomials are allowed to change from one inequality to the next as needed.

{ \bf Norms }

We write $H^k(\Omega_\pm)$ with $k\ge 0$ and and $H^s(\Sigma_\pm)$ with $s \in \Rn{}$ for the usual Sobolev spaces.   We will typically write $H^0 = L^2$.  If we write $f \in H^k(\Omega)$, the understanding is that $f$ represents the pair $f_\pm$ defined on $\Omega_\pm$ respectively, and that $f_\pm \in H^k(\Omega_\pm)$.  We employ the same convention on $\Sigma_\pm$.  Throughout the paper we will refer to the space $\H(\Omega)$ defined as follows:
\begin{equation}
 \H(\Omega) = \{ v \in H^1(\Omega) \; \vert \; \jump{v}=0 \text{ on } \Sigma_- \text{ and } v_- = 0 \text{ on } \Sigma_b\}.
\end{equation}

To avoid notational clutter, we will avoid writing $H^k(\Omega)$ or $H^k(\Sigma)$ in our norms and typically write only $\norm{\cdot}_{k}$, which we actually use to refer to  sums
\begin{equation}
 \ns{f}_k = \ns{f_+}_{H^k(\Omega_+)} + \ns{f_-}_{H^k(\Omega_-)}   \text{ or }  \ns{f}_k = \ns{f_+}_{H^k(\Sigma_+)} + \ns{f_-}_{H^k(\Sigma_-)}.
\end{equation}
Since we will do this for functions defined on both $\Omega$ and $\Sigma$, this presents some ambiguity.  We avoid this by adopting two conventions.  First, we assume that functions have natural spaces on which they ``live.''  For example, the functions $u$, $\rho$, $q$, and $\bar{\eta}$ live on $\Omega$, while $\eta$ lives on $\Sigma$.  As we proceed in our analysis, we will introduce various auxiliary functions; the spaces they live on will always be clear from the context.  Second, whenever the norm of a function is computed on a space different from the one in which it lives, we will explicitly write the space.  This typically arises when computing norms of traces onto $\Sigma_\pm$ of functions that live on $\Omega$.  We use  $\norm{\cdot}_{L^p_TX}$ to denote the norm of the space $L^p([0,T];X)$.

Occasionally we will need to refer to the product of a norm of $\eta$ and a constant that depends on $\pm$.  To denote this we will write
\begin{equation}
\gamma \ns{\eta}_{k} = \gamma_+ \ns{\eta_+}_{H^k(\Sigma_+)} + \gamma_- \ns{\eta_-}_{H^k(\Sigma_-)}.
\end{equation}

{ \bf Derivatives }

We write $\mathbb{N} = \{ 0,1,2,\dotsc\}$ for the collection of non-negative integers.  When using space-time differential multi-indices, we will write $\mathbb{N}^{1+m} = \{ \alpha = (\alpha_0,\alpha_1,\dotsc,\alpha_m) \}$ to emphasize that the $0-$index term is related to temporal derivatives.  For just spatial derivatives we write $\mathbb{N}^m$.  For $\alpha \in \mathbb{N}^{1+m}$ we write $\pal = \dt^{\alpha_0} \p_1^{\alpha_1}\cdots \p_m^{\alpha_m}.$ We define the parabolic counting of such multi-indices by writing $\abs{\alpha} = 2 \alpha_0 + \alpha_1 + \cdots + \alpha_m.$  We will write $\nab_{\ast}f$ for the horizontal gradient of $f$, i.e. $\nab_{\ast}f = \p_1 f e_1 + \p_2 f e_2$, while $\nab f$ will denote the usual full gradient.

\section{The two-phase Lam\'{e} problem}\label{sec_lame}

In this section we are concerned with solving the two-phase Lam\'{e} problem for the velocity field $u$:
\begin{equation}\label{lame}
 \begin{cases}
  \rho \dt u - \diva \Sa u = F^1 & \text{in }\Omega \\
  -\Sa u \n = F^2_+ &\text{on } \Sigma_+ \\
  -\jump{\Sa u } \n = -F^2_- &\text{on } \Sigma_- \\
  \jump{u} =0 &\text{on } \Sigma_- \\
  u_- = 0 &\text{on } \Sigma_b \\
  u(\cdot,0) = u_0 &\text{in } \Omega.
 \end{cases}
\end{equation}
Here we assume that $\eta$ is given and determines $\a$ and $\n$ via \eqref{ABJ_def} and \eqref{n_def}, respectively, and that $\rho$ is given as well.  We also assume that $\rho$ obeys the estimate
\begin{equation}\label{rho_assump_1}
\frac{1}{2} \rho_\ast\le \rho(x,t) \le \frac{3}{2} \rho^\ast \text{ for all } x \in \Omega \text{ and } t \in [0,T].
\end{equation}

In our analysis of \eqref{lame} we will need to make various assumption about energies associated to $\eta$ and $\rho$.  We record these now, first defining a low regularity term that will be used in controlling the coefficients of the equations in \eqref{lame}:
\begin{equation}
 \mathcal{K}[\eta,\rho](T) = \sup_{0\le t\le T} \left( \ns{\eta(t)}_{9/2} + \ns{\dt \eta(t)}_{7/2}  +  \ns{\dt^2 \eta(t)}_{5/2} + \ns{\rho(t)}_{3} + \ns{\dt \rho(t)}_3 \right).
\end{equation}
We also define some high regularity terms:
\begin{multline}
 \Kf[\eta,\rho](T) = \int_0^T \left( \ns{\eta(t)}_{4N+1/2} + \sum_{j=1}^{2N+1} \ns{\dt^j \eta(t)}_{4N-2j+3/2}  \right)dt \\
 + \sup_{0\le t\le T} \left( \ns{\eta(t)}_{4N} + \sum_{j=1}^{2N} \ns{\dt^j \eta(t)}_{4N-2j+1/2} \right)
 +  \sup_{0\le t\le T}  \sum_{j=0}^{2N} \ns{\dt^j \rho(t)}_{4N-2j} ,
\end{multline}
and
\begin{equation}
 \Ef[\eta,\rho](T) =    \sup_{0\le t\le T} \left(   \sum_{j=0}^{2N} \ns{\dt^j \eta(t)}_{4N-2j}  \right) +
 \sup_{0\le t\le T} \left(  \ns{\rho(t)}_{4N} +\sum_{j=1}^{2N} \ns{\dt^j \rho(t)}_{4N-2j+1} \right).
\end{equation}

We will also need to make certain assumptions on the forcing terms.  To describe these we first define
\begin{equation}\label{F2_def}
 \f_2(t) :=  \sum_{j=0}^{2N-1} \ns{\dt^j F^1(t)}_{4N-2j-1}    + \ns{\dt^{2N} F^1(t)}_{\Hd}   + \sum_{j=0}^{2N} \ns{\dt^j F^2(t)}_{4N-2j-1/2}
\end{equation}
and
\begin{equation}\label{Finf_def}
 \f_\infty(t) := \sum_{j=0}^{2N-1} \left[ \ns{\dt^j F^1(t)}_{4N-2j-2} + \ns{\dt^j F^2(t)}_{4N-2j-3/2} \right].
\end{equation}
We will assume throughout this section that the forcing terms obey the estimate
\begin{equation}\label{F12_assump}
 \sup_{0 \le t \le T} \f_\infty(t) + \int_0^T \f_2(t) dt < \infty.
\end{equation}

We define $\dt u(\cdot,0)$ via
\begin{equation}\label{u_data_1}
 (\rho \dt u) \vert_{t=0}  = (\diva \Sa u + F^1)\vert_{t=0},
\end{equation}
and then for $j=0,\dotsc,2N-1$ we inductively define
\begin{equation}\label{u_data_2}
 (\rho \dt^{j+1} u)\vert_{t=0} = - \sum_{k=0}^{j-1} C_{j}^k (\dt^{j-k} \rho \dt^{k+1} u)\vert_{t=0} + \dt^j (\diva \Sa u + F^1)\vert_{t=0}.
\end{equation}
We must assume that $u_0$, $F^1(\cdot,0)$, and $F^2(\cdot,0)$ satisfy the following $2N$ systems of compatibility conditions:
\begin{equation}\label{lame_ccs}
 \begin{cases}
  -\dt^j(\Sa u \n) \vert_{t=0} = \dt^j F^2_+\vert_{t=0} &\text{on } \Sigma_+ \\
  -\jump{\dt^j(\Sa u\n) } \vert_{t=0} = -\dt^j F^2_- \vert_{t=0} &\text{on } \Sigma_- \\
  \jump{\dt^j u}\vert_{t=0} =0 &\text{on } \Sigma_- \\
  \dt^j u_-\vert_{t=0} = 0 &\text{on } \Sigma_b
 \end{cases}
\end{equation}
for $j=0,\dotsc,2N-1$.

\subsection{Weak solutions}

Weak solutions will only arise in our analysis in the context of the highest-order time derivatives of solutions to \eqref{lame}, where they will arise as a byproduct of the regularity assumptions on the forcing terms.  As such, we will only discuss the meaning of weak solution and provide a simple uniqueness result, neglecting a construction of weak solutions.

We define the time-dependent inner-product on $\H$ according to
\begin{equation}
 \ipp{u}{v} = \int_\Omega J(t) \left( \frac{\mu}{2} \sg^0_{\a(t)} u : \sg^0 _{\a(t)} v  + \mu' \diverge_{\a(t)} u \diverge_{\a(t)} v  \right),
\end{equation}
where $J(t)>0$. Notice that for the sake of notational brevity we have neglected to include $t$ in   $\ipp{u}{v}$ even though the inner-product actually changes in time.  With this definition in hand we can give the meaning of weak solutions.

We say $u$ is a weak solution to \eqref{lame} if
\begin{equation}\label{weak_def_1}
 u \in L^\infty([0,T]; H^0) \cap L^2([0,T]; \H) \text{ and }\rho J \dt u \in L^2([0,T];  \Hd),
\end{equation}
$u(\cdot,0) = u_0$, and
\begin{equation}\label{weak_def_2}
 \br{\rho J \dt u,v}_{\ast} + \ipp{u}{v} = \br{J F^1,v}_{\ast} - \br{F^2,v}_{-1/2,\Sigma}
\end{equation}
for almost every $t \in [0,T]$ and for every $v \in \H$.  Here $\br{\cdot,\cdot}_\ast$ denotes the dual pairing for $\H$, while $\br{\cdot,\cdot}_{-1/2,\Sigma}$ denotes the dual-pairing on $H^{1/2}(\Sigma)$.

Next we show that weak solutions are unique.

\begin{prop}\label{weak_unique}
 Weak solutions to \eqref{lame} are unique.
\end{prop}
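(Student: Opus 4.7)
The approach is the standard parabolic energy method: form the difference $w = u^1 - u^2$ of two putative weak solutions, test against $w$ itself, and close with Gronwall. Since both $u^i$ satisfy \eqref{weak_def_1}--\eqref{weak_def_2} with the same initial datum and forcing, $w \in L^\infty([0,T]; H^0) \cap L^2([0,T]; \H)$ with $\rho J \dt w \in L^2([0,T]; \Hd)$, $w(\cdot,0) = 0$, and
\begin{equation*}
\br{\rho J \dt w, v}_\ast + \ipp{w}{v} = 0 \quad \text{for a.e.\ } t \in [0,T], \; \forall\, v \in \H.
\end{equation*}

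The crux is to justify the weighted integration-by-parts identity
\begin{equation*}
\br{\rho J \dt w, w}_\ast = \frac{1}{2}\frac{d}{dt}\int_\Omega \rho J |w|^2 - \frac{1}{2}\int_\Omega \dt(\rho J)|w|^2 \quad \text{for a.e.\ } t,
\end{equation*}
which lets me take $v = w(\cdot,t)$ in the weak equation. Under the standing hypotheses on $\rho$ and $\eta$ (in particular the control on $\dt \rho$ and $\dt J$ built into $\k[\eta,\rho](T)$, together with Sobolev embedding into $L^\infty$), the weight $\rho J$ is Lipschitz in time. The identity then follows from a standard Friedrichs mollification in $t$: mollify $w$ and $\rho J \dt w$ in time, check the identity pointwise for the smoothed functions using $(\rho J \dt w)_\varepsilon - \rho J \dt w_\varepsilon \to 0$ in $\Hd$, and pass to the limit.

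Once this is in place, I would integrate the weak equation from $0$ to $t$, use $w(0)=0$, drop the non-negative term $\int_0^t \ipp{w}{w}\, ds$ (positivity holds because $\mu>0$, $\mu'\ge 0$, $J>0$ under the smallness of $\eta$, and each quadratic form $\sgz_\a w : \sgz_\a w$ and $(\diva w)^2$ is pointwise non-negative), and invoke the lower bound $\rho J \ge c > 0$ coming from \eqref{rho_assump_1} and the smallness of $\eta$. This produces
\begin{equation*}
c\, \ns{w(t)}_{0} \le \int_0^t \norm{\dt(\rho J)(s)}_{L^\infty} \ns{w(s)}_{0} \, ds,
\end{equation*}
and Gronwall forces $w \equiv 0$.

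The only genuine obstacle is the rigorous derivation of the weighted integration-by-parts identity for the time-dependent pairing $\br{\rho J \dt w, w}_\ast$; everything after that is a textbook energy/Gronwall argument.
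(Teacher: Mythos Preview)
Your proposal is correct and follows essentially the same route as the paper: test the weak formulation against the difference $w$, obtain the weighted energy identity $\frac{d}{dt}\int_\Omega \rho J |w|^2/2 + \ipp{w}{w} = \int_\Omega \frac{\dt(\rho J)}{2}|w|^2$, and close with Gronwall using $\rho J \ge c > 0$. The paper additionally invokes Korn's inequality to extract $\ns{w}_1$ from the dissipation before dropping it, whereas you simply discard $\ipp{w}{w}$ by sign; for uniqueness this distinction is immaterial.
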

\begin{proof}
If $u$ is a weak solution, then we choose $v = u$ as a test function in \eqref{weak_def_2}.  Employing Proposition \ref{korn}, we may easily derive the differential inequality
\begin{equation}
 \ddt \int_\Omega \rho J \frac{\abs{u}^2}{2} + C \ns{u}_1 \ls  \int_\Omega \frac{\dt(\rho J)}{\rho J} \rho J \frac{\abs{u}^2}{2}  + \ns{J F^1}_{\ast} + \ns{F^2}_{-1/2}
\end{equation}
for some universal $C>0$.  Then Gronwall implies that
\begin{equation}\label{w_u_1}
 \int_\Omega \rho J \frac{\abs{u}^2}{2}(t) \le e^{\alpha t} \left( \int_\Omega \rho(0) J(0) \frac{\abs{u_0}^2}{2}  + \int_0^t \ns{J F^1}_{\ast} + \ns{F^2}_{-1/2}\right)
\end{equation}
for $\alpha = C + \norm{\dt(\rho J)/(\rho J) }_{L^\infty}$, with $C$ universal.

Then if $u_1, u_2$ are two weak solutions, then $u = u_1 - u_2$ is a weak solution with $u(0) =0$, $F^1=0$, and $F^3=0$.  The equality $u_1 =u_2$ then follows from \eqref{w_u_1} and the fact that $\min{\rho J} > 0$.
\end{proof}

\subsection{Strong solutions}

Now we turn to the construction of strong solutions to \eqref{lame}.  We say that $u$ is a strong solution to \eqref{lame} on $[0,T]$ if
\begin{equation}
\begin{cases}
 u \in L^\infty([0,T];H^2\cap \H) \cap L^2([0,T]; H^3)   \\
 \dt u \in  L^\infty([0,T]; H^0)\cap L^2 ([0,T];\H),
\end{cases}
\end{equation}
$u(\cdot, 0) = u_0$, and $u$ satisfies \eqref{lame} almost everywhere (with respect to Lebesgue measure on $\Omega$ and Hausdorff measure on $\Sigma_\pm$ and $\Sigma_b$).

Our next result establishes the existence of strong solutions.

\begin{thm}\label{lame_strong}
Suppose  that $\rho$ satisfies \eqref{rho_assump_1} and that $\Lf[\eta](T) < \delta_0$, where $\delta_0$ is given by Proposition \ref{lame_elliptic}.   Suppose that $u_0 \in H^2$, $F^1 \in L^\infty_T H^0  \cap L^2_T H^1$, $\dt F^1 \in L^2_T (\H)^\ast$,  $F^2 \in L^\infty_T H^{1/2} \cap L^2_T H^{3/2}$, $\dt F^2 \in L^2_T H^{-1/2}$, and that $u_0$ and $F^2(\cdot,0)$ satisfy the system of compatibility conditions
\begin{equation}\label{ls_01}
 \begin{cases}
 - \Sa u \n \vert_{t=0} = F^2_+ \vert_{t=0} &\text{on } \Sigma_+ \\
  -\jump{\Sa u  } \n \vert_{t=0} = -F^2_- \vert_{t=0} &\text{on } \Sigma_- \\
  \jump{ u}\vert_{t=0} =0 &\text{on } \Sigma_- \\
   u_-\vert_{t=0} = 0 &\text{on } \Sigma_b.
 \end{cases}
\end{equation}
Then there exists a unique $u$ that is a strong solution to \eqref{lame}.  Additionally, $\dt u$ is a weak solution to
\begin{equation}\label{ls_02}
\begin{cases}
   \rho \dt(\dt u) - \diva \Sa (\dt u) =  F^{1,1}  & \text{in }\Omega \\
  -\Sa (\dt u) \n = F^{2,1}_+ &\text{on } \Sigma_+ \\
  -\jump{\Sa (\dt u) } \n = -F^{2,1}_- &\text{on } \Sigma_- \\
  \jump{\dt u} =0 &\text{on } \Sigma_- \\
  \dt u_- = 0 &\text{on } \Sigma_b \\
  \dt u(\cdot,0) = [\rho^{-1}(\diva \Sa u + F^1)] \vert_{t=0}   &\text{in } \Omega,
\end{cases}
\end{equation}
where
\begin{equation}
\begin{split}
F^{1,1} &= \dt F^1  -\dt \rho \dt u  + \diverge_{\dt \a}\Sa u + \diva{\S_{\dt \a} u} \\
F^{2,1}_+& = \dt F^{2}_+ + \S_{\dt \a} u \n + \Sa u \dt \n \\
F^{2,1}_- &= \dt F^{2}_+ - \jump{\S_{\dt \a} u} \n - \jump{\Sa u} \dt \n.
\end{split}
\end{equation}
The solution obeys the estimate
\begin{multline}\label{ls_03}
 \ns{u}_{L^\infty_T H^2} + \ns{u}_{L^2_T H^3} +\ns{\dt u}_{L^\infty_T H^0} + \ns{\dt u}_{L^2_T H^1} + \ns{\rho J \dt^2 u}_{L^2_T \Hd} \\
\ls (1+ P(\mathcal{K}[\eta,\rho](T))) \exp(T(1+ P(\mathcal{K}[\eta,\rho](T))))
\left( \ns{u_0}_2 + \ns{F^1}_{L^\infty_T H^0}  \right. \\
\left. + \ns{F^1}_{L^2_T H^1} + \ns{\dt F^1}_{L^2_T (\H)^\ast}
+ \ns{F^2}_{L^\infty_T H^{1/2}}  + \ns{F^2}_{L^2_T H^{3/2}} + \ns{\dt F^2}_{L^2_T H^{-1/2}}\right),
\end{multline}
where $P$ is a positive universal polynomial such that $P(0)=0$.

\end{thm}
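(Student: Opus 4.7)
The plan is to construct $u$ via a Galerkin approximation in $\H$, obtain two layers of energy estimates (the second by testing with the time derivative of the Galerkin solution), and then invoke the stationary elliptic regularity from Proposition \ref{lame_elliptic} to upgrade the spatial regularity of $u$ to $L^\infty_T H^2 \cap L^2_T H^3$. Uniqueness of $u$ as a strong solution follows from Proposition \ref{weak_unique} once we know $\dt u \in L^2_T \H$, and the same proposition also delivers uniqueness of $\dt u$ as the weak solution to \eqref{ls_02}.

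For the construction, fix a countable $\{w_k\}_{k=1}^\infty \subset \H$ that is orthonormal in $H^0(\Omega)$ and dense in $\H$, chosen for example as eigenfunctions of a time-frozen Korn-type eigenproblem. Writing $u^n(t) = \sum_{k=1}^n c_k^n(t) w_k$ and projecting \eqref{weak_def_2} onto $\mathrm{span}(w_1,\ldots,w_n)$ gives a linear ODE system that is globally solvable on $[0,T]$ because $\rho J$ is uniformly positive. Testing with $u^n$, using Proposition \ref{korn} and Cauchy-Schwarz, yields the first-level bound on $\ns{u^n}_{L^\infty_T H^0} + \ns{u^n}_{L^2_T \H}$ by the data via Gronwall. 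The harder second-level bound is obtained by differentiating the projected weak equation in $t$ and testing with $\dt u^n$; the initial datum $\dt u^n(\cdot,0)$ is the $H^0$-projection of $\rho^{-1}(\diva\Sa u+F^1)\vert_{t=0}$, which lives in $H^0$ because $u_0 \in H^2$ and $F^1(\cdot,0)\in H^0$, while the compatibility conditions \eqref{ls_01} are exactly what ensure that the boundary terms produced by the $t$-differentiated integration by parts match $\dt F^2$ and do not generate uncontrolled traces of $\dt u$. The resulting differential inequality has the form
\begin{equation}
\ddt \int_\Omega \rho J \frac{|\dt u^n|^2}{2} + c\ns{\dt u^n}_1 \le C(\mathcal{K}[\eta,\rho])\ns{\dt u^n}_0 + \text{forcing} + \text{commutators in }\dt\a,\, \dt J,\, \dt\n,\, \dt\rho,
\end{equation}
and Gronwall closes the bound on $\ns{\dt u^n}_{L^\infty_T H^0} + \ns{\dt u^n}_{L^2_T \H}$ uniformly in $n$. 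The $L^2_T \Hd$ estimate on $\rho J \dt^2 u^n$ then follows by reading off the differentiated equation and estimating its right-hand side in $(\H)^*$.

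Weak-$\ast$ and weak compactness together with an Aubin-Lions argument produce a subsequential limit $u$ with $\dt u$ of the appropriate regularity, and passing to the limit in the Galerkin identities gives \eqref{lame} for $u$ in the weak sense and \eqref{ls_02} for $\dt u$ in the weak sense. At this point $u$ is only known to be in $L^\infty_T \H$, but for a.e.\ $t$ the pair $(u(\cdot,t),\eta(\cdot,t))$ solves a stationary two-phase Lam\'e system with right-hand side $F^1(\cdot,t)-\rho(\cdot,t)\dt u(\cdot,t) \in L^2$ and boundary data $F^2(\cdot,t) \in H^{1/2}$ or $H^{3/2}$; the smallness assumption $\Lf[\eta]<\delta_0$ is precisely the hypothesis needed to apply Proposition \ref{lame_elliptic} pointwise in $t$, which upgrades $u$ to $L^\infty_T H^2 \cap L^2_T H^3$ with a quantitative estimate. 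The main obstacle in the entire argument is the second energy estimate: one must absorb every coefficient-derivative commutator into the dissipation provided by the frozen Korn inequality without breaking $n$-uniformity, and this is precisely why the $\mathcal{K}[\eta,\rho]$ package was designed to control the relevant norms of $\eta$, $\dt\eta$, $\dt^2\eta$, $\rho$, and $\dt\rho$ that appear when these commutators are expanded. The bookkeeping is routine but tedious, and is responsible for the polynomial factor $P(\mathcal{K}[\eta,\rho](T))$ in \eqref{ls_03}.
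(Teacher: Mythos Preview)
Your proposal is correct and follows essentially the same approach as the paper's proof: a Galerkin scheme in $\H$, two levels of energy estimates (the second obtained by time-differentiating the projected equation and testing with $\dt u^n$, with the compatibility conditions \eqref{ls_01} used to control $\dt u^n(0)$), passage to the limit via weak compactness, and then elliptic regularity to upgrade $u$ to $L^\infty_T H^2 \cap L^2_T H^3$. The only minor imprecision is that the elliptic estimate needed here is not literally Proposition~\ref{lame_elliptic} (which is stated for Dirichlet data) but the variant adapted to the stress boundary conditions, which the paper notes is proved in the same way.
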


\begin{proof}

The proof is a fairly standard application of the Galerkin method.  For the sake of brevity we will provide only a  terse sketch.  For more details we refer to the incompressible case (where the analysis is actually somewhat more complicated): see for example \cite{GT_lwp}, \cite{WTK}, or \cite{TW}.

Step 1 -- The Galerkin approximation

Let $\{w_k\}_{k=1}^\infty$ be a basis of $\H$ and for $m \ge 1$ set $u^m(x,t) = a^m_k(t) w_k(x)$, where the $k$ summation runs over $k=1,\dotsc,m$.  Using the theory of linear ODEs we can choose the $a^m_k$ such that
\begin{equation}\label{ls_1}
 \ip{\rho J \dt u^m}{w_k}_0 + \ipp{u^m}{w_k} = \ip{J F^1}{w_k}_0 - \ip{F^2}{w_k}_{0,\Sigma}
\end{equation}
and $\ipp{u^m(0)}{w_k}  = \ipp{u_0}{w_k}$ for $k=1,\dotsc,m$. Here $\ip{\cdot}{\cdot}_0$ denotes the $L^2$ inner product on $\Omega$, while $\ip{\cdot}{\cdot}_{0,\Sigma}$ denotes the $L^2$ inner product on $\Sigma$. We then take linear combinations of \eqref{ls_1} so that $w_k$ may be replaced by $u^m$.  This results in the basic energy equality
\begin{equation}\label{ls_2}
 \frac{d}{dt} \int_\Omega \rho J \frac{\abs{u^m}^2}{2} + \ipp{u^m}{u^m} = \int_\Omega \dt(\rho J) \frac{\abs{u^m}^2}{2} + \ip{J F^1}{u^m}_0 - \ip{F^2}{u^m}_{0,\Sigma}.
\end{equation}
Notice that the assumption on  $\Lf[\eta](T)$ guarantees that that the smallness conditions of Lemma \ref{eta_small} and Proposition \ref{korn} are satisfied.  Korn's inequality, Proposition \ref{korn}, allows us to bound
\begin{equation}
 \ns{u^m}_{1} \ls \ipp{u^m}{u^m},
\end{equation}
while Lemma \ref{eta_small} and \eqref{rho_assump_1} provide the estimates
\begin{equation}
 \ns{u^m}_0 \ls \int_\Omega \rho J \frac{\abs{u^m}^2}{2} \ls \ns{u^m}_0 \text{ and } \ip{J F^1}{u^m}_0 \ls \norm{F^1}_0 \int_\Omega \rho J \frac{\abs{u^m}^2}{2},
\end{equation}
and trace theory provides the estimate
\begin{equation}\label{ls_3}
 - \ip{F^2}{u^m}_{0,\Sigma} \ls \norm{F^2}_0 \norm{u^m}_1.
\end{equation}
We combine \eqref{ls_2}--\eqref{ls_3} with Cauchy's inequality to absorb the $\norm{u^m}_1$ term onto the left; applying Gronwall's lemma to the resulting inequality results in the bound
\begin{equation}\label{ls_4}
 \ns{u^m}_{L^\infty_T H^0} + \ns{u^m}_{L^2_T H^1} \ls  \exp(T(1+ P(\mathcal{K}[\eta,\rho](T))) )
\left( \ns{u_0}_0 + \ns{F^1}_{L^2_T H^0}  + \ns{F^2}_{L^2_T H^{0}}  \right).
\end{equation}

Next we differentiate \eqref{ls_1} in time and use a similar argument to get estimates of the sum $\ns{\dt u^m}_{L^\infty_T H^0} + \ns{\dt u^m}_{L^2_T H^1}$.  Two complications arise.  The first is that differentiating \eqref{ls_1} introduces a number of commutators.  These may be dealt with via the estimates \eqref{ls_4} and the usual Sobolev embeddings.  The second is that we must estimate $\ns{\dt u^m(0)}_0$ in terms of $u_0$.  This can be accomplished by using \eqref{ls_1}, evaluated at $t=0$, in conjunction with the choice of initial condition for $u^m(0)$ and the compatibility conditions \eqref{ls_01}.  The resulting estimate bounds  $\ns{\dt u^m}_{L^\infty_T H^0} + \ns{\dt u^m}_{L^2_T H^1}$ by the right side of \eqref{ls_03}.

Step 2 -- Passing to the limit and higher regularity.

The estimates from Step 1 are uniform in $m$ and hence allow for the extraction of weak and weak-$\ast$ limits.  Passing to the limit yields a function such that $u, \dt u \in L^\infty_T H^0 \cap L^2 \H$ and
\begin{equation}\label{ls_5}
 \ip{\rho J \dt u}{w}_0 + \ipp{u}{w} = \ip{J F^1}{w}_0 - \ip{F^2}{w}_{0,\Sigma}
\end{equation}
for a.e. $t \in [0,T]$ and every $w \in \H$.  The $L^\infty_T H^0$ and $L^2_T H^1$ norms of  $u$ and $\dt u$ are controlled by the right side of \eqref{ls_03} by virtue of lower semicontinuity.  We then apply the elliptic regularity result for the Lam\'{e} problem (a variant of  Proposition \ref{lame_elliptic} adapted to the boundary conditions involving $\S$, which can be proved in a similar manner by appealing to Section 3 of \cite{WTK}) to deduce that $\ns{u}_{L^\infty_T H^2} + \ns{ u}_{L^2_T H^3}$ is also bounded by the right side of \eqref{ls_03}.  The only term remaining to estimate in \eqref{ls_03} is $\ns{\rho J \dt^2 u}_{L^2_T (\H)^\ast}$.  This can be accomplished by using \eqref{ls_5} and the existing estimates.  The equation \eqref{ls_02} is seen to hold by differentiating \eqref{ls_5} in time.
\end{proof}

\subsection{Higher regularity}

Our aim now is to show that the strong solutions of Theorem \ref{lame_strong} are of higher regularity when the forcing terms are more regular.  We first define the forcing terms that appear for the time differentiated versions of \eqref{lame}.  We set
\begin{equation}
 F^{1,0} = F^1, \; F^{2,0}_+ = F^2_+, \;\text{ and }F^{2,0}_- = F^2_-,
\end{equation}
and then for $j=1,\dotsc,2N$ we iteratively define
\begin{align}
 F^{1,j} &= \dt F^{1,j-1}  -\dt \rho \dt^j u  + \diverge_{\dt \a}\Sa \dt^{j-1} u + \diva{\S_{\dt \a} \dt^{j-1}u}
   \label{F1j_def} \\
 F^{2,j}_+ &= \dt F^{2,j-1}_+ + \S_{\dt \a} \dt^{j-1} u \n + \Sa \dt^{j-1} u \dt \n
   \label{F2j_def} \\
 F^{2,j}_- &= \dt F^{2,j-1}_+ - \jump{\S_{\dt \a} \dt^{j-1} u} \n - \jump{\Sa \dt^{j-1} u} \dt \n.
   \label{F3j_def}
\end{align}

The following lemma will be used in conjunction with an iteration argument to improve the regularity of strong solutions to \eqref{lame} when the forcing terms are of high regularity.  It is  a simple variant of  Lemma 3.2 of \cite{TW}, which is   a technical refinement of Lemma 4.5 of \cite{GT_lwp}.  As such, we omit the proof.

\begin{lem}\label{lame_forcing_est}
For $m=1,\dotsc,2N-1$ and $j=1,\dotsc,m$, we have the following estimates, where $P$ is a positive universal polynomial such that $P(0)=0$:
 \begin{multline}
  \ns{ F^{1,j}}_{L^2_T H^{2m-2j+1}}       +  \ns{ F^{2,j}}_{L^2_T H^{4N-2j+3/2}}
\ls (1+ P(\Kf[\eta,\rho](T))) \\
\times \left( \int_0^T \f_2(t)dt    + \ns{\dt^j u}_{L^2_T H^{2m-2j+1}}  +\sum_{\ell=0}^{j-1} \ns{\dt^\ell u}_{L^2_T H^{2m-2j+3}} + \ns{\dt^\ell u}_{L^\infty_T H^{2m-2j+2}}
 \right),
\end{multline}
\begin{multline}
 \ns{F^{1,j}}_{L^\infty_T H^{2m-2j}}  +  \ns{F^{2,j}}_{L^\infty_T H^{2m-2j+1/2}}
\\
\ls (1+ P(\Kf[\eta,\rho](T))) \left(  \sup_{0 \le t \le T} \f_\infty(t) + \ns{\dt^j u}_{L^\infty_T H^{2m-2j}}  + \sum_{\ell=0}^{j-1} \ns{\dt^\ell u}_{L^\infty_T H^{2m-2j+2}} \right),
\end{multline}
and  if we denote the functional $v \mapsto \ip{J F^{1,m}}{v}_0 - \ip{F^{3,m}}{v}_{\Sigma}$ in $\Hd$ by $\hat{F}^m$, then
\begin{multline}
 \ns{\dt \hat{F}^{m} }_{L^2_T \Hd}
\ls(1+ P(\Kf[\eta,\rho](T))) \left(  \int_0^T \f_2(t)dt   + \sum_{\ell=0}^{m-1} \ns{\dt^\ell u}_{L^\infty_T H^{2}} + \ns{\dt^\ell u}_{L^2_T H^{3}} \right. \\
\left. +\ns{\dt^m u}_{L^2_T H^{2}} + \ns{\dt^{m+1} u}_{L^2_T H^0}
\right).
\end{multline}

\end{lem}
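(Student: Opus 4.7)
\medskip

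\noindent\textbf{Proof proposal.}  The plan is to unroll the recursive definitions \eqref{F1j_def}--\eqref{F3j_def} so that, after $j$ iterations, the quantities $F^{1,j}$ and $F^{2,j}_\pm$ are written as finite sums of products whose factors fall into three groups: (i) a time derivative $\dt^{j-\ell} F^{1,0}$ or $\dt^{j-\ell} F^{2,0}_\pm$ of the original forcing, (ii) a ``coefficient'' built from the matrix $\mathcal{A}$, the normal $\n$, and the density $\rho$ together with their time derivatives up to order $j$ (all of which are smooth functions of $\eta$, $\dt\eta$, $\dots$, $\dt^j\eta$, $\rho$, $\dt\rho$ via \eqref{A_def}, \eqref{n_def}, and the chain rule), and (iii) a time derivative $\dt^\ell u$ with $0 \le \ell \le j$.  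With this structural description in place, each term is then estimated by the standard Sobolev product inequalities on $\Omega$ or on $\Sigma$, and the coefficients produced in step (ii) are absorbed into a universal polynomial of $\Kf[\eta,\rho](T)$ (or of $\sup \f_\infty$ for the $L^\infty_T$ bound).

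For the first two estimates one simply counts regularity.  The $L^2_T H^{2m-2j+1}$ bound on $F^{1,j}$ controls $\dt^{j-\ell}F^1$ by the definition of $\f_2$, contributes $\ns{\dt^j u}_{L^2_T H^{2m-2j+1}}$ from the $\dt\rho\,\dt^j u$ term, and yields the sum $\sum_{\ell<j}\ns{\dt^\ell u}_{L^2_T H^{2m-2j+3}}$ from the two $\Sa$--type terms, with an extra $L^\infty_T H^{2m-2j+2}$ piece appearing from the ``low/high'' split in the product estimate for terms in which the coefficient factor itself carries spatial derivatives.  The surface terms $F^{2,j}_\pm$ are treated identically using the trace theorem for the $\n$ factor and noting that $\dt^k \n$ is just a polynomial in $\dt^k \nabla_\ast \eta, \ldots, \dt\nabla_\ast\eta$.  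The $L^\infty_T$ estimates are proved in exactly the same way, but measuring coefficients in $\mathcal{K}$--type norms and measuring the forcing in $\sup_{[0,T]} \f_\infty$.

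For the third inequality one writes $\dt \hat{F}^m(v) = \ip{\dt(JF^{1,m})}{v}_0 - \ip{\dt F^{2,m}}{v}_{0,\Sigma}$ and uses the recursion solved for $\dt F^{1,m}$ and $\dt F^{2,m}$:
\begin{equation*}
\dt F^{1,m} = F^{1,m+1} + \dt\rho\,\dt^{m+1}u - \diverge_{\dt\a}\Sa\dt^{m}u - \diva \S_{\dt\a}\dt^{m}u,
\end{equation*}
and analogously for $F^{2,m}$.  Every piece except $\dt\rho\,\dt^{m+1}u$ is bounded in $L^2_T H^0$ (resp.\ $L^2_T H^{1/2}$ on $\Sigma$) by the first two estimates applied at level $m+1$, so its pairing with $v \in \H$ is controlled in $\Hd$ by exactly the right-hand side of the stated bound.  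The leftover term $\dt\rho\,\dt^{m+1}u$ is handled by duality: it pairs against $v \in \H$ as a bounded functional since $\|\dt\rho\,\dt^{m+1}u\,v\|_{L^1(\Omega)} \lesssim \|\dt\rho\|_{L^\infty}\|\dt^{m+1}u\|_0 \|v\|_0$, producing the $\ns{\dt^{m+1}u}_{L^2_T H^0}$ term on the right.

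The main obstacle will be the careful bookkeeping in step (i)--(iii) of the expansion: at each iteration of the recurrence one must track both the time order of $u$ factors and the spatial order of the coefficient factors so that, at the end, every term lands in the product estimate with a total count of derivatives matching $2m - 2j + 1$ (or $2m - 2j$) and so that the ``coefficient factors'' are all measured in norms that are subsumed by $\Kf$ (or by $\mathcal{K}$), which is why the polynomial $P$ may be taken universal.  Once this combinatorial accounting is written down, the three inequalities drop out of repeated application of the inequalities $\|fg\|_{H^s} \lesssim \|f\|_{H^{s_1}}\|g\|_{H^{s_2}}$ valid on $\Omega$ and its trace to $\Sigma$ for the relevant exponent triples.
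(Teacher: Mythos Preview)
Your proposal is correct and follows precisely the standard approach; the paper itself omits the proof entirely, merely noting that the lemma is ``a simple variant of Lemma 3.2 of \cite{TW}, which is a technical refinement of Lemma 4.5 of \cite{GT_lwp}.''  Your plan of unrolling the recursions \eqref{F1j_def}--\eqref{F3j_def}, bookkeeping the time and space orders of each factor, and then applying Sobolev product rules with the coefficient factors absorbed into $P(\Kf[\eta,\rho](T))$ is exactly what those references do.  One minor wording slip: the trace theorem is needed for the $\dt^\ell u$ factors appearing in the $\Sigma$ terms, not for $\n$ (which already lives on $\Sigma$); and in the dual estimate note that the paper's $F^{3,m}$ in the definition of $\hat F^m$ is a typo for $F^{2,m}$, which you have correctly interpreted.
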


We can now state our higher regularity result.

\begin{thm}\label{lame_high}
Assume that $u_0 \in H^{4N}$, that $F^1,F^2$ satisfy \eqref{F12_assump}, and that the compatibility conditions \eqref{lame_ccs} are satisfied.  Assume that $\rho$ satisfies \eqref{rho_assump_1}, and that $\Lf[\eta](T) < \delta_0$, where $\delta_0$ is given by Proposition \ref{lame_elliptic}.  Then there exists a unique $u$ satisfying $\dt^j u \in L^\infty([0,T];H^{4N-2j}) \cap L^2([0,T];H^{4N-2j+1})$ for $j=0,\dotsc,2N$ and $\rho J \dt^{2N+1}u \in L^2([0,T];\Hd)$ so that $\dt^j u$ solves
\begin{equation}\label{lh_01}
\begin{cases}
   \rho \dt(\dt^j u) - \diva \Sa (\dt^j u) =  F^{1,j}  & \text{in }\Omega \\
  -\Sa (\dt^j u) \n = F^{2,j}_+ &\text{on } \Sigma_+ \\
  -\jump{\Sa (\dt^j u) } \n = -F^{2,j}_- &\text{on } \Sigma_- \\
  \jump{\dt^j u} =0 &\text{on } \Sigma_- \\
  \dt^j u_- = 0 &\text{on } \Sigma_b \\
\end{cases}
\end{equation}
strongly for $j=0,\dotsc,2N-1$ and weakly for $j=2N$.  The solution obeys the estimates
\begin{multline}\label{lh_02}
 \sum_{j=0}^{2N}  \ns{\dt^j u}_{L^2_T H^{4N-2j+1}}  + \ns{\rho J \dt^{2N+1} u}_{L^2_T \Hd}
\\
 \ls (1+P(\Kf[\eta,\rho](T))) \exp\left((1+P(\Ef[\eta,\rho](T)))T \right)  \left( \sum_{j=0}^{2N} \ns{\dt^j u(\cdot,0)}_{4N-2j}   + \int_0^T \f_2(t) dt \right)
\end{multline}
and
\begin{multline}\label{lh_03}
 \sum_{j=0}^{2N}  \ns{\dt^j u}_{L^\infty_T H^{4N-2j}}
 \ls (1+P(\Kf[\eta,\rho](T))) \exp\left((1+P(\Ef[\eta,\rho](T)))T \right) \\
 \times \left( \sum_{j=0}^{2N} \ns{\dt^j u(\cdot,0)}_{4N-2j}  + \sup_{0\le t \le T} \f_\infty(t) + \int_0^T \f_2(t) dt \right),
\end{multline}
where $P$ is a universal positive polynomial such that $P(0)=0$.
\end{thm}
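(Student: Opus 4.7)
The proof combines an iterative Galerkin construction of time derivatives with a backward-in-$j$ elliptic regularity bootstrap. Since Theorem \ref{lame_strong} produces strong solutions only at the $H^2/H^3$ level, I trade the $2N$ time derivatives for the $4N$ spatial derivatives: first control every $\dt^j u$ at the $H^0/H^1$ level for $j \le 2N$ via a Galerkin scheme, then cascade the regularity gain backwards through the stationary Lam\'e problem at each fixed time. Before any estimates, I define the initial data $\dt^j u(\cdot,0) \in H^{4N-2j}$ inductively by \eqref{u_data_1}--\eqref{u_data_2}, with norms controlled by $\ns{u_0}_{4N}$, $\sup_t \f_\infty(t)$, and polynomials in $\Ef[\eta,\rho](0)$; the compatibility conditions \eqref{lame_ccs} are exactly what is needed so that the traces of these data at $t=0$ match the boundary conditions of \eqref{lh_01} for each $j = 0,\dots,2N-1$.

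\textbf{Galerkin step and passage to the limit.} I construct a Galerkin approximation $u^m = a^m_k(t) w_k(x)$ as in Theorem \ref{lame_strong}, with smooth-in-time coefficients and initial time derivatives obtained by projecting the $\dt^j u(\cdot,0)$. Differentiating the Galerkin ODE $j$ times produces an equation for $\dt^j u^m$ whose right-hand side is precisely the forcing organized in \eqref{F1j_def}--\eqref{F3j_def}. Testing against $\dt^j u^m$, using Proposition \ref{korn}, and bounding the forcing via Lemma \ref{lame_forcing_est}, an induction on $j$ combined with Gronwall's inequality yields uniform-in-$m$ bounds for
\begin{equation*}
\sum_{j=0}^{2N}\bigl( \ns{\dt^j u^m}_{L^\infty_T H^0} + \ns{\dt^j u^m}_{L^2_T H^1}\bigr) + \ns{\rho J \dt^{2N+1} u^m}_{L^2_T \Hd}
\end{equation*}
by the right-hand side of \eqref{lh_02}. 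Passing to weak and weak-$\ast$ limits produces a limit $u$ that solves \eqref{lh_01} weakly for $j = 2N$ and strongly for $j < 2N$, with the low-regularity estimates preserved by lower semicontinuity; uniqueness of the top weak solution is supplied by Proposition \ref{weak_unique} applied to the difference of two candidate solutions.

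\textbf{Backward elliptic bootstrap and main obstacle.} To upgrade spatial regularity I recast \eqref{lh_01} as the stationary Lam\'e system
\begin{equation*}
-\diva \Sa(\dt^j u) = F^{1,j} - \rho\, \dt^{j+1} u \text{ in } \Omega, \qquad -\Sa(\dt^j u)\n = F^{2,j}_+ \text{ on } \Sigma_+,
\end{equation*}
together with the jump condition on $\Sigma_-$ and the Dirichlet condition on $\Sigma_b$. The assumption $\Lf[\eta](T) < \delta_0$ places $\a$ in the perturbative regime where the Lam\'e variant of Proposition \ref{lame_elliptic} used in Theorem \ref{lame_strong} provides a two-derivative gain. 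Starting at $j = 2N-1$, where $\dt^{2N} u \in L^\infty_T H^0 \cap L^2_T H^1$ is known from the Galerkin step, this yields $\dt^{2N-1} u \in L^\infty_T H^2 \cap L^2_T H^3$; inducting downward in $j$, with $\dt^{j+1} u$ controlled in $L^\infty_T H^{4N-2j-2} \cap L^2_T H^{4N-2j-1}$ and Lemma \ref{lame_forcing_est} bounding $F^{1,j}, F^{2,j}$ in the matching spaces, elliptic regularity promotes $\dt^j u$ to $L^\infty_T H^{4N-2j} \cap L^2_T H^{4N-2j+1}$. Combining this with the Gronwall bound of the Galerkin step yields \eqref{lh_02}--\eqref{lh_03}: the pre-factor $(1 + P(\Kf))$ comes from Lemma \ref{lame_forcing_est}, the exponential $\exp((1 + P(\Ef))T)$ comes from Gronwall, and $\sup_t \f_\infty$ appears only in the $L^\infty_T$ bound \eqref{lh_03}. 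The main obstacle is the accounting in the downward induction: the forcing $F^{1,j}, F^{2,j}$ is a cascade of nonlinear products of $\dt^k u$ and $\dt^k \eta$ (entering through $\a$ and $\n$), and the product estimates must be calibrated so that every $\eta$-factor is absorbed by the $\Kf, \Ef$ pre-factors while each $u$-factor sits at a regularity level already established in an earlier iteration. Lemma \ref{lame_forcing_est} is engineered precisely for this, but at each step $j$ one must verify that the two-derivative elliptic gain exactly matches the losses introduced by the product estimates; a secondary subtlety is that $\dt^{2N+1} u$ is only meaningful as an element of $L^2_T \Hd$, so its estimate must be extracted from the weak form rather than by inverting $\rho J$ pointwise.
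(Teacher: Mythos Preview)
Your proposal is correct and reaches the same conclusion as the paper via a closely related but structurally distinct route. The paper proceeds by \emph{iteratively applying Theorem~\ref{lame_strong}}: for each $m=0,\dots,2N-1$ it treats $\dt^m u$ as the unknown in a fresh strong-solution problem with forcing $(F^{1,m},F^{2,m})$, so that each pass already delivers $\dt^m u$ at the $L^\infty_T H^2 \cap L^2_T H^3$ level (and $\dt^{m+1}u$ at $L^\infty_T H^0 \cap L^2_T H^1$), with Lemma~\ref{lame_forcing_est} controlling the forcing in exactly the norms Theorem~\ref{lame_strong} requires. Your version instead runs a \emph{single} Galerkin scheme, differentiates the finite-dimensional ODE $2N$ times, and extracts all $\dt^j u$ at the $H^0/H^1$ level in one shot before any spatial improvement. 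Both approaches then finish with the identical backward elliptic bootstrap via the Lam\'e variant of Proposition~\ref{lame_elliptic}. The paper's route is more modular (it reuses Theorem~\ref{lame_strong} as a black box and Lemma~\ref{lame_forcing_est} verbatim), while yours is more economical (one Galerkin instead of $2N$).

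One small imprecision worth flagging: at your pure $H^0/H^1$ Galerkin stage you cannot invoke Lemma~\ref{lame_forcing_est} as stated, since for $m=j$ its hypotheses ask for $\dt^\ell u$ in $L^2_T H^3 \cap L^\infty_T H^2$ for $\ell<j$, which you do not yet have. What actually closes your induction is integrating the commutator terms $\diverge_{\dt\a}\Sa\dt^{j-1}u^m$ and $\diva\S_{\dt\a}\dt^{j-1}u^m$ by parts against the test function $\dt^j u^m$, which only needs $\dt^{j-1}u^m \in H^1$. This is standard and does not affect the validity of the argument, but you should say it explicitly rather than citing the lemma. The paper's iteration avoids this wrinkle precisely because each call to Theorem~\ref{lame_strong} already upgrades the previous time derivative to $H^2/H^3$, so Lemma~\ref{lame_forcing_est} applies directly at every step.
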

\begin{proof}
 The proof follows the same basic outline (though the arguments here are somewhat simpler), as that of Theorem 4.8 of \cite{GT_lwp}: we iteratively apply Theorem \ref{lame_strong},  Lemma \ref{lame_forcing_est} to estimate the forcing terms appearing for each $j$,  and the elliptic regularity result. We omit further details for the sake of brevity.
\end{proof}

\section{Heat estimates}\label{sec_heat}

Suppose that $0 < \kappa < 1$.  In this section we are concerned with the problem
\begin{equation}\label{heat}
 \begin{cases}
  \dt \eta - \kappa \Delta_\ast \eta = f & \text{in }\Sigma \times (0,T) \\
  \eta(\cdot,0) = \eta_0 & \text{in }\Sigma
 \end{cases}
\end{equation}
where $\eta_0 \in H^{4N+1}(\Sigma)$ and $f$ satisfies
\begin{equation}\label{eta_f_assump}
\int_0^T \sum_{j=0}^{2N} \ns{\dt^j f(t)}_{4N-2j } dt < \infty.
\end{equation}
A simple interpolation argument (see for example Lemma A.4 of \cite{GT_lwp}) allows us to deduce from \eqref{eta_f_assump} that $\dt^j f \in C^0([0,T];H^{4N-2j-1}(\Sigma))$ for $j=0,\dotsc,2N-1$.  We may use this in \eqref{heat} to inductively define $\dt^j \eta(\cdot,0)\in H^{4N-2j+1}(\Sigma)$ for $j=1,\dotsc,2N$

We now state a result on the well-posedness of \eqref{heat} and estimates of its solutions.  The proof is standard and thus omitted.

\begin{thm}\label{heat_estimates}
Suppose that $\eta_0 \in H^{4N+1}(\Sigma)$, that $f$ satisfies \eqref{eta_f_assump}, and that the data $\dt \eta(\cdot,0) \in H^{4N-2j+1}(\Sigma)$ are determined as above for $j=1,\dotsc,2N$.  The problem \eqref{heat} admits a unique solution that achieves the initial data $\dt^j\eta(\cdot,0)$ for $j=0,\dotsc,2N$ and that satisfies the estimate
\begin{multline}\label{he_01}
\sum_{j=0}^{2N} \ns{\dt^j \eta}_{L^\infty_T H^{4N-2j}} + \sum_{j=1}^{2N+1} \ns{\dt^j \eta}_{L^2_T H^{4N-2j+2}} \\
+ \kappa \left[ \sum_{j=0}^{2N} \ns{\dt^j \eta}_{L^\infty_T H^{4N-2j+1}} + \sum_{j=0}^{2N} \ns{\dt^j \eta}_{L^2_T H^{4N-2j+1}} \right] + \kappa^2 \ns{\eta}_{L^2_T H^{4N+2}}
\\
\ls e^T \sum_{j=0}^{2N} \ns{\dt^j \eta(\cdot,0)}_{4N-2j+1} + \int_0^T e^{T-t} \sum_{j=0}^{2N}  \ns{\dt^j f(t)}_{4N-2j } dt.
\end{multline}

\end{thm}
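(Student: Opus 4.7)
Existence and uniqueness for \eqref{heat} on the disjoint flat tori $\Sigma_\pm = \mathrm{T}^2$ are standard: the problem decouples between $\Sigma_+$ and $\Sigma_-$, and on each torus I would build the solution mode-by-mode in Fourier via Duhamel's formula, or equivalently by a Galerkin scheme using the eigenbasis of $-\Delta_\ast$. The regularity hypothesis on $f$ and the inductive definition of $\dt^j\eta(\cdot,0)$ ensure that all data live in the spaces asserted. The real content of the theorem is the $\kappa$-dependent estimate \eqref{he_01}, which I would establish by coupling three layers of energy estimates.

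Applying $\dt^j$ to \eqref{heat} shows that $\dt^j\eta$ solves the same heat equation with data $\dt^j\eta(\cdot,0) \in H^{4N-2j+1}(\Sigma)$ and forcing $\dt^j f$. Testing against $\dt^j\eta$ in the $H^{4N-2j}$ inner product and using Parseval (rewriting $|\xi|^2(1+|\xi|^2)^s = (1+|\xi|^2)^{s+1}-(1+|\xi|^2)^s$, absorbing the resulting $\kappa\ns{\dt^j\eta}_{4N-2j}$ against $\kappa<1$, and applying Cauchy--Schwarz in Fourier) yields
$$
\ddt \ns{\dt^j\eta}_{4N-2j} + 2\kappa \ns{\dt^j\eta}_{4N-2j+1} \ls \ns{\dt^j\eta}_{4N-2j} + \ns{\dt^j f}_{4N-2j}.
$$
Gronwall then gives the $\ns{\dt^j\eta}_{L^\infty_T H^{4N-2j}}$ bound, and integrating in time gives the $\kappa\ns{\dt^j\eta}_{L^2_T H^{4N-2j+1}}$ bound with the $e^T$-weighted right-hand side of \eqref{he_01}.

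The second layer is the key trick. Test $\dt^j\eta$ against itself in the $H^{4N-2j+1}$ inner product and handle the forcing via the Parseval duality $(f,u)_{H^{s+1}} \le \norm{f}_{s}\,\norm{u}_{s+2}$ with $s = 4N-2j$, followed by Young's inequality with weight $\kappa$ (which produces a $1/\kappa$ coefficient on $\ns{\dt^j f}_{4N-2j}$). Multiplying the resulting inequality by $\kappa$ \emph{before} invoking Gronwall cancels the $1/\kappa$ and yields
$$
\kappa \ns{\dt^j\eta}_{L^\infty_T H^{4N-2j+1}} + \kappa^2 \ns{\dt^j\eta}_{L^2_T H^{4N-2j+2}} \ls e^T \kappa \ns{\dt^j\eta(\cdot,0)}_{4N-2j+1} + \int_0^T e^{T-\tau} \ns{\dt^j f(\tau)}_{4N-2j}\, d\tau.
$$
Taking $j=0$ gives, in particular, the $\kappa^2\ns{\eta}_{L^2_T H^{4N+2}}$ term. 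To recover the unweighted $\ns{\dt^j\eta}_{L^2_T H^{4N-2j+2}}$ for $1\le j \le 2N+1$, I would use the equation algebraically: $\dt^j\eta = \kappa\Delta_\ast \dt^{j-1}\eta + \dt^{j-1}f$, so
$$
\ns{\dt^j\eta}_{L^2_T H^{4N-2j+2}} \ls \kappa^2 \ns{\dt^{j-1}\eta}_{L^2_T H^{4N-2(j-1)+2}} + \ns{\dt^{j-1}f}_{L^2_T H^{4N-2(j-1)}},
$$
and the first term on the right is controlled by the previous step applied at order $j-1$, while the second is directly part of the hypothesis on $f$. Summing over $j$ then assembles \eqref{he_01}.

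The only subtlety is tracking the $\kappa$-dependence: parabolic regularity buys one extra spatial derivative per factor of $\kappa^{1/2}$, so one must weight the energy inequalities by the correct powers of $\kappa$ (in particular the $\kappa$-multiplication in the second layer) to avoid spurious $\kappa^{-1}$ factors that would violate the universality condition on constants. Everything else is routine Fourier bookkeeping on the torus.
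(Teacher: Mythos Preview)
Your proposal is correct and is precisely the kind of standard argument the paper has in mind; indeed the paper explicitly omits the proof, stating only that it ``is standard and thus omitted.'' Your three-layer scheme (energy at level $H^{4N-2j}$, energy at level $H^{4N-2j+1}$ premultiplied by $\kappa$, then algebraic use of the equation to recover the unweighted $L^2_T H^{4N-2j+2}$ norms for $j\ge 1$) correctly tracks the $\kappa$-dependence and produces exactly the right-hand side of \eqref{he_01}.
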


\section{The $\kappa-$approximation problem}\label{sec_kappa_exist}

Our goal in this section is to produce a solution pair $(u,\eta)$ to the problem
\begin{equation}\label{kappa_problem}
\begin{cases}
  \rho \dt u - \diva \Sa u = F^1 & \text{in }\Omega \\
  \dt \eta - \kappa \Delta_\ast \eta = u \cdot \n  - \kappa \Xi &\text{on }\Sigma \\
  -\Sa u \n = - \sigma_+ \Delta_\ast \eta  \n +  F^2_+   &\text{on } \Sigma_+ \\
  -\jump{\Sa u } \n =  \sigma_- \Delta_\ast \eta \n - F^2_- &\text{on } \Sigma_- \\
  \jump{u} =0 &\text{on } \Sigma_- \\
  u_- = 0 &\text{on } \Sigma_b \\
  u(\cdot,0) = u_0, \eta(\cdot, 0) = \eta_0.
 \end{cases}
\end{equation}
Here we assume that $u_0 \in H^{4N}(\Omega)$, $\eta_0 \in H^{4N+1}(\Sigma)$, $F^1$, $F^2$ satisfy \eqref{F12_assump}.  Notice that the problem \eqref{kappa_problem} is nonlinear: the terms $\a$ and $\n$ are viewed as being generated by $\eta$ itself.  We will again assume that $\rho$ is a given function on the given time interval  $[0,T_\ast]$  and satisfies \eqref{rho_assump_1} (with $T$ replaced by $T_\ast$) in addition to the bound
\begin{equation}\label{rho_assump_2}
 \Ef[\rho](T_*) :=  \sup_{0 \le t \le T_\ast} \Ef_\infty[\rho(t)] := \sup_{0\le t \le T_\ast} \left(\ns{\rho(t)}_{4N} + \sum_{j=1}^{2N} \ns{\dt^j \rho(t)}_{4N-2j+1} \right) \ls 1+ P({\EEE}).
\end{equation}

We now show how to construct the data $(\dt^j u(\cdot,0),\dt^j\eta(\cdot,0))$ for $j=1,\dotsc,2N$ from the pair $(u_0,\eta_0)$ in a manner that is consistent with the compatibility conditions for the problem \eqref{geometric}. Given $(\dt^j u(\cdot,0),\dt^j\eta(\cdot,0))$ for some $j=0,\dotsc,2N-1$ we first use the first equation in \eqref{kappa_problem} to solve for $\dt^{j+1} u(\cdot,0)$.  Then we set
\begin{equation}\label{eta_data}
\dt^{j+1} \eta(\cdot,0) = \dt^j(u \cdot \n)\vert_{t=0}.
\end{equation}
In this way we iteratively define all the time-differentiated data for $j=1,\dotsc,2N$.  However, the only way this can be consistent with the natural compatibility conditions needed to study \eqref{kappa_problem} is if $\Xi:\Sigma \times [0,\infty)$ is chosen so that
\begin{equation}\label{Xi_prop}
 \begin{split}
  \Xi(\cdot,0) &= \Delta_\ast \eta_0 \in H^{4N-1}(\Sigma), \text{ and }\\
  \dt^j \Xi(\cdot,0) &= \left. \Delta_\ast \left(\dt^{j-1}(u\cdot \n) \right) \right\vert_{t=0}  \in H^{4N-2j-1/2}(\Sigma) \text{ for }j=1,\dotsc,2N.
 \end{split}
\end{equation}
 According to Proposition \ref{extension_Xi}, there exists a $\Xi$ satisfying \eqref{Xi_prop} as well as the bounds
\begin{multline}\label{Xi_bound}
 \sum_{j=0}^{2N-1} \sup_{t>0} \ns{\dt^j \Xi(t)}_{4N-2j-1} + \sum_{j=0}^{2N} \int_0^\infty \ns{\dt^j \Xi(t)}_{ 4N-2j}dt
\\
\ls \ns{\Delta_\ast \eta_0}_{4N-1} + \sum_{j=1}^{2N} \ns{\Delta_\ast \left(\dt^{j-1}(u\cdot \n) \right) (0)}_{4N-2j-1}
\end{multline}

In order to produce a high regularity solution to \eqref{kappa_problem} we must assume that the $u_0$, $\eta_0$, and the forcing terms $F^1$, $F^2$ satisfy the compatibility conditions
\begin{equation}\label{kappa_ccs}
\begin{cases}
  -\dt^j(\Sa u \n) \vert_{t=0} = \dt^j\left(    - \sigma_+ \Delta_\ast \eta  \n +  F^2_+  \right)  \vert_{t=0} &\text{on } \Sigma_+ \\
  -\jump{\dt^j(\Sa u\n) } \vert_{t=0} = \dt^j \left(  \sigma_- \Delta_\ast \eta \n - F^2_- \right) \vert_{t=0} &\text{on } \Sigma_- \\
  \jump{\dt^j u}\vert_{t=0} =0 &\text{on } \Sigma_- \\
  \dt^j u_-\vert_{t=0} = 0 &\text{on } \Sigma_b
 \end{cases}
\end{equation}
for $j=0,\dotsc,2N-1$.

For $0<T$ we define
\begin{equation}
\begin{split}
 \j_2[u](T) &= \int_0^T \left( \sum_{j=0}^{2N} \ns{\dt^j u(t)}_{4N-2j+1} + \ns{\rho J \dt^{2N+1} u(t)}_{\Hd} \right) dt \\
 \j_\infty[u](T) &= \sup_{0 \le t \le T} \sum_{j=0}^{2N} \ns{\dt^j u(t)}_{4N-2j}.
\end{split}
\end{equation}
Similarly, we write
\begin{equation}
   \i_2[\eta](T) = \int_0^T \sum_{j=0}^{2N+1} \ns{\dt^j \eta (t) }_{4N-2j+2} dt \text{ and }
   \i_\infty[\eta](T) = \sup_{0\le t \le T} \sum_{j=0}^{2N} \ns{\dt^j \eta(t)}_{4N-2j+1}.
\end{equation}
We then define
\begin{equation}\label{ab_def}
 \j[u](T) = \j_2[u](T) + \j_\infty[u](T) \text{ and } \i[\eta](T) = \i_2[\eta](T) + \i_\infty[\eta](T).
\end{equation}
Finally, we also define
\begin{equation}\label{ij_def}
 \Jf[u](T) = \sup_{0\le t \le T} \sum_{j=0}^{2N-1} \ns{\dt^j u(t)}_{4N-2j-1}  \text{ and } \If[\eta](T) = \sup_{0\le t \le T} \sum_{j=0}^{2N-1} \ns{\dt^j \eta(t)}_{4N-2j-1}.
\end{equation}

\subsection{Constructing the mapping}

We will employ a fixed point argument in order  to produce a solution to the nonlinear $\kappa-$approximation problem \eqref{kappa_problem}.  We thus begin with the development of an appropriate metric space in which to work.

Notice that  if $\j_2[v](T) + \i_2[\zeta](T)< \infty$, then the maps $[0,T]\ni t \mapsto \dt^j v(\cdot,t) \in H^{4N-2j}(\Omega)$ and $[0,T]\ni t \mapsto \dt^j \zeta(\cdot,t) \in H^{4N-2j}(\Sigma)$ are absolutely continuous for $j=0,\dotsc,2N$.
In particular, $\dt^j v(\cdot,0)$ and $\dt^j \zeta(\cdot,0)$ are well-defined for $j=0,\dotsc,2N$.  This then allows us to  define the metric space, for $0 < T \le T_\ast$,  $M_1,M_2>0$ and  $r\ge 2$ an integer,
\begin{multline}\label{metric_space_def}
 \XT  = \{ (v,\zeta) \;\vert\; \j[v](T) \le M_1 \kappa^{-r},  \i[\zeta](T)  \le M_2 \kappa^{-2},  \\
\text{ and } \dt^j v(\cdot, 0) = \dt^j u(\cdot,0), \dt^j \zeta(\cdot,0) = \dt^j \eta(\cdot,0)  \text{ for }j=0,\dots,2N\}.
\end{multline}
The space $\XT$ is complete due to the fact that $\{ (v,\zeta) \;\vert\; \j[v](T) + \i[\zeta](T)< \infty\}$ is a Banach space.

We now define a mapping $\M : \XT \to \XT$ as $\M(v,\zeta) = (u,\eta)$, where $\eta$ and $u$ are determined through the following two steps.  In these steps we will assume that $0<T<1$ and that $\Lf[\eta_0] < \delta_0/2$, where $\delta_0$ is given by Proposition \ref{lame_elliptic}.

\emph{Step 1  -- The $\eta$ equation}

 Note that
\begin{equation}\label{s1_1}
 \sum_{j=0}^{2N}  \int_0^T \ns{\dt^j (v\cdot \n[\zeta])(t)}_{4N-2j} dt \ls (1+ \i_\infty[\zeta](T)) \sum_{j=0}^{2N} \int_0^T \ns{\dt^j v(t)}_{4N-2j+1/2}dt.
\end{equation}
The usual Sobolev interpolation allows us to estimate
\begin{equation}
 \ns{\dt^j v(t)}_{4N-2j+1/2} \ls \norm{\dt^j v(t)}_{4N-2j} \norm{\dt^j v(t)}_{4N-2j+1}
\end{equation}
when $j =0,\dotsc,2N$, and hence
\begin{multline}\label{s1_1_5}
 \sum_{j=0}^{2N} \int_0^T \ns{\dt^j v(t)}_{4N-2j+1/2}dt \ls  \sum_{j=0}^{2N} \int_0^T \norm{\dt^j v(t)}_{4N-2j} \norm{\dt^j v(t)}_{4N-2j+1}dt \\
\ls \left(\sum_{j=0}^{2N} \int_0^T  \ns{\dt^j v(t)}_{4N-2j}  dt \right)^{1/2} \left(\sum_{j=0}^{2N} \int_0^T \ns{\dt^j v(t)}_{4N-2j+1} dt \right)^{1/2} \\
\ls \sqrt{T} \sqrt{\j_\infty[v](T)} \sqrt{\j_2[v](T)}.
\end{multline}
Combining the  estimates \eqref{s1_1} and \eqref{s1_1_5}, we find that
\begin{equation}\label{s1_1_6}
  \sum_{j=0}^{2N}  \int_0^T \ns{\dt^j (v\cdot \n[\zeta])(t)}_{4N-2j} dt \ls (1+ \i_\infty[\zeta](T)) \sqrt{T} \sqrt{\j_\infty[v](T)} \sqrt{\j_2[v](T)}.
\end{equation}

Estimate \eqref{s1_1_6} and \eqref{Xi_bound} allow us to use  Theorem \ref{heat_estimates} with $f = v\cdot \n[\zeta] - \kappa \Xi$ and  $\Xi$  defined as in \eqref{Xi_prop}--\eqref{Xi_bound} to produce a unique solution $\eta$ to
\begin{equation}\label{kappa_heat}
 \begin{cases}
 \dt \eta - \kappa \Delta_\ast \eta = v \cdot \n[\zeta] - \kappa \Xi & \text{in }\Sigma \times (0,T) \\
  \eta(\cdot,0) = \eta_0 & \text{in }\Sigma.
 \end{cases}
\end{equation}
The theorem guarantees that $\dt^{j+1} \eta(\cdot,0) = \dt^j (v \cdot \n)\vert_{t=0} = \dt^j (u\cdot \n)\vert_{t=0}$ for $j=0,\dotsc,2N-1$, with the understanding that this last quantity is the initial data computed previously in \eqref{eta_data}, and that $\eta$ obeys the estimate (utilizing \eqref{s1_1_6})
\begin{multline}\label{s1_2}
 \kappa^2 \left(\i_2[\eta](T) + \i_\infty[\eta](T) \right)\ls e^T \sum_{j=0}^{2N} \ns{\dt^j \eta(\cdot,0)}_{4N-2j+1}
\\
+ \int_0^T e^{T-t} \sum_{j=0}^{2N}  \ns{\dt^j (v \cdot \n[\zeta] -\kappa \Xi)(t)}_{4N-2j } dt
\\
\ls e^T \left( \ns{\eta_0}_{4N+1} + \sum_{j=0}^{2N-1} \ns{\dt^j (u \cdot \n)\vert_{t=0}}_{4N-2j-1} \right)
+   (1+ \i_\infty[\zeta](T)) \sqrt{T} \sqrt{\j_\infty[v](T)} \sqrt{\j_2[v](T)}
\\
\ls \ns{\eta_0}_{4N+1} + \sum_{j=0}^{2N-1} \ns{\dt^j (u \cdot \n)\vert_{t=0}}_{4N-2j-1} +  \left(1+\frac{M_2}{\kappa^2}\right)  \frac{M_1}{\kappa^r} \sqrt{T} ,
\end{multline}
where in the last line we have used the bounds built into the definition of $\XT$ and the assumption  that $T<1$.

From \eqref{s1_2} we see that if $M_2$ is sufficiently large (depending on the data, $\sigma$, etc) and  $T$ is sufficiently small (depending on $\kappa$ and $M_1,M_2$),  then
\begin{equation}\label{s1_3}
 \i[\eta](T) \le \frac{M_2}{\kappa^2},
\end{equation}
which is the requirement for $\eta$ to be part of a pair belonging in $\XT$.  On the other hand, from Lemma \ref{time_interp}
\begin{equation}\label{s1_4}
\Lf[\eta](T) \le \Lf[\eta_0] + \int_0^T  \ih_\infty[\eta(t)]dt \le \Lf[\eta_0] + T \i_\infty[\eta](T) \le \frac{\delta_0}{2} + \frac{TM_2}{\kappa^2} < \delta_0
\end{equation}
if $T$ is further restricted.

\emph{Step 2  -- The $u$ equation}

Now we seek to solve the problem
\begin{equation}\label{kappa_lame}
\begin{cases}
  \rho \dt u - \diverge_{\a[\eta]} \S_{\a[\eta]} u = F^1 & \text{in }\Omega \\
  -\S_{\a[\eta]}  u \n[\eta] =   -\sigma_+ \Delta_\ast \eta \n[\eta] +  F^2_+ &\text{on } \Sigma_+ \\
  -\jump{\S_{\a[\eta]}  u } \n[\eta] = \sigma_- \Delta_\ast \eta \n[\eta] -F^2_- &\text{on } \Sigma_- \\
  \jump{u} =0 &\text{on } \Sigma_- \\
  u_- = 0 &\text{on } \Sigma_b \\
  u(\cdot,0) = u_0,
 \end{cases}
\end{equation}
where $\eta$ is the function constructed in Step 1 and $\n[\eta], \a[\eta]$ are determined in terms of $\eta$ as in \eqref{ABJ_def} and \eqref{n_def}.  Let us write $\tilde{F}^1 = F^1$, $\tilde{F}^2_+ =-\sigma_+ \Delta_\ast \eta\n[\eta] +  F^2_+$, and $\tilde{F}^2_- = \sigma_- \Delta_\ast \eta\n[\eta] -F^2_-$.  Then  we write $\tilde{\f}_2(t)$ and $\tilde{\f}_\infty(t)$ as in \eqref{F2_def} and \eqref{Finf_def}, except with $\tilde{F}^1$ and $\tilde{F}^2$ in place of $F^1,F^2$.  Then clearly
\begin{equation}\label{s2_1}
\begin{split}
 \tilde{\f}_\infty(t) &\ls \f_\infty(t) +
 \left(1+\sum_{j=0}^{2N} \ns{\dt^j \eta(t)}_{4N-2j+1} \right) \sum_{j=0}^{2N} \ns{\dt^j \eta(t)}_{4N-2j+1} ,    \\
 \tilde{\f}_2(t) &\ls \f_2(t) + (1+\sigma^2) \left(1 + \sum_{j=0}^{2N} \ns{\dt^j \eta(t)}_{4N-2j+1} \right) \sum_{j=0}^{2N}\ns{\dt^j \eta(t)}_{4N-2j+3/2}.
\end{split}
\end{equation}

A simple interpolation argument, in conjunction with the Cauchy-Schwarz inequality, provides us with the estimate
\begin{multline}\label{s2_1_5}
  \int_0^T \sum_{j=0}^{2N}\ns{\dt^j \eta(t)}_{4N-2j+3/2} dt \ls  \int_0^T \sum_{j=0}^{2N}\norm{\dt^j \eta(t)}_{4N-2j+1} \norm{\dt^j \eta(t)}_{4N-2j+2}dt \\
\ls \sqrt{T} \sqrt{ \i_\infty[\eta](T)}  \sqrt{\i_2[\eta](T) } \ls \sqrt{T} \left( \i_2[\eta](T) + \i_\infty[\eta](T)\right).
\end{multline}
We then deduce from \eqref{s2_1} and \eqref{s2_1_5} that
\begin{equation}\label{s2_2_1}
  \sup_{0\le t \le T} \tilde{\f}_\infty(t)  \ls \sup_{0\le t \le T} \f_\infty(t) + (1+\i_\infty[\eta](T))  \i_\infty[\eta](T)
\end{equation}
and
\begin{multline}\label{s2_2_2}
\int_0^T \tilde{\f}_2(t) dt \ls    \int_0^T \f_2(t)dt  +(1+\sigma^2)(1+\i_\infty[\eta](T))  \sqrt{T} \left( \i_2[\eta](T) + \i_\infty[\eta](T)\right)   .
\end{multline}

Now, owing to the bounds \eqref{s2_2_1}--\eqref{s2_2_2}, the compatibility conditions \eqref{kappa_ccs}, and the estimate \eqref{s1_4}, we may apply Theorem \ref{lame_high} with $\tilde{F}^1$ and $\tilde{F}^2$ in place of $F^1,F^2$.  This yields a unique solution $u$ to \eqref{kappa_lame} satisfying (after combining \eqref{lh_02}--\eqref{lh_03} with \eqref{s2_2_1}--\eqref{s2_2_2})
\begin{multline}\label{s2_3}
\j_2[u](T)  \ls (1+ P(\i[\eta](T)   +\Ef[\rho](T) )) \exp\left(T(1+ P(\i_\infty[\eta](T) +\Ef[\rho](T))) \right) \\
\times \left( \sum_{j=0}^{2N} \ns{\dt^j u(\cdot,0)}_{4N-2j} +  \int_0^T \f_2(t)dt
+(1+\sigma^2)\sqrt{T}  \i[\eta](T)     \right),
\end{multline}
and
\begin{multline}\label{s2_4}
\j_\infty[u](T) \ls (1+ P(\i[\eta](T)   +\Ef[\rho](T)))\exp\left(T(1+P(\i_\infty[\eta](T) +\Ef[\rho](T))) \right)
\\
\times
\left( \sum_{j=0}^{2N} \ns{\dt^j u(\cdot,0)}_{4N-2j}  + \sup_{0\le t \le T} \f_\infty(t) + \i_\infty[\eta](T) +  \int_0^T \f_2(t)dt  +(1+\sigma^2)\sqrt{T}  \i[\eta](T)  \right)
\end{multline}
for some universal positive polynomial $P(\cdot)$ with $P(0)=0$.

Recall that we assume the bound \eqref{rho_assump_2}.  Then by restricting $T$ further and employing the estimate \eqref{s1_3}, we may deduce from \eqref{s2_3} and \eqref{s2_4} that there exists a natural number $\ell$ such that
\begin{equation}\label{s2_5}
\j[u](T)  \ls \left(  \frac{1+P({\EEE})+M_2^\ell}{\kappa^\ell}    \right)
\left( \sum_{j=0}^{2N} \ns{\dt^j u(\cdot,0)}_{4N-2j}   + \sup_{0\le t \le T} \f_\infty(t) + \int_0^T \f_2(t)dt
+1     \right).
\end{equation}
If $M_1$ is sufficiently large and $r$ is set to $\ell$, then
\begin{equation}\label{s2_6}
 \j[u](T) \le \frac{M_1}{\kappa^r},
\end{equation}
which means that $u$ satisfies the estimate required to belong to a pair in $\XT$.

\emph{Step 3 -- Inclusion in $\XT$.}

In particular, \eqref{s1_3} and \eqref{s2_6} imply that if $M_1,M_2>0$ are taken to be sufficiently large, and $0<T<1$ is taken to be sufficiently small, then $(u,\eta) \in \XT$.    This allows us to set $\M(v,\zeta) = (u,\eta) \in \XT$, and so the mapping $\M: \XT \to \XT$ is well-defined.

\subsection{Fixed point}

Now we show that the mapping constructed above has a fixed point.

\begin{thm}\label{kappa_contract}
Assume that $\rho$ is given on the time interval $[0,T_\ast]$ and satisfies \eqref{rho_assump_1} and \eqref{rho_assump_2}.  Let $\delta_0$ be given by Proposition \ref{lame_elliptic}, and assume that $\delta \in (0,\delta_0)$.  Suppose that  that $\Lf[\eta_0] \le \delta/2$, where $\Lf$ is given by \eqref{l0_def}.  There exist $M_1,M_2 >0$, $r \in \mathbb{N}$, and  $T_0 = T_0(\kappa,\sigma,\delta,\TE) \in (0,T_\ast]$ such that if $0 < T \le T_0$ then $\M : \XT \to \XT$ is a contraction, and therefore admits a unique fixed point $(u,\eta) = \M(u,\eta)$.  In particular, there exists a unique pair $(u,\eta)$ satisfying
\begin{equation}\label{kc_01}
\j[u](T) \le M_1 \kappa^{-r} \text{ and } \i[\eta](T) \le M_2 \kappa^{-2}
\end{equation}
that solve \eqref{kappa_problem} and achieve the initial data $\dt^j u(\cdot,0)$ and $\dt^j \eta(\cdot,0)$ for $j=0,\dotsc,2N$.  Moreover,
\begin{equation}\label{kc_02}
 \Lf[\eta](T) \le    \delta \text{ and } \If[\eta](T) + \Jf[u](T) \ls \tilde{\E}^0[u_0,\eta_0] + 1,
\end{equation}
where $\tilde{\E}^0[u_0,\eta_0]$ is as defined in \eqref{TE_def2}.

\end{thm}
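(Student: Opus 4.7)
The plan is to apply the Banach contraction mapping principle to the mapping $\M$ whose construction has been completed above. The previous subsection already guarantees, for sufficiently large $M_1,M_2$ and a natural number $r$, and for $T$ small enough depending on $\kappa,\sigma,\TE$, that $\M : \XT \to \XT$ is well-defined. What remains is to equip $\XT$ with a complete metric in which $\M$ is a strict contraction, and to extract the fixed-point estimates \eqref{kc_02} from the structure of $\XT$.

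The natural choice is a low-regularity distance, for example
\begin{equation*}
 d((v_1,\zeta_1),(v_2,\zeta_2)) := \sup_{0\le t \le T}\Bigl( \ns{v_1(t)-v_2(t)}_{0} + \ns{\zeta_1(t)-\zeta_2(t)}_{1} \Bigr) + \int_0^T \ns{v_1(t)-v_2(t)}_{1}\,dt.
\end{equation*}
Completeness of $\XT$ under $d$ follows from a weak-compactness argument: the uniform high-regularity bounds $\j[v_k]\le M_1\kappa^{-r}$ and $\i[\zeta_k]\le M_2\kappa^{-2}$ along a $d$-Cauchy sequence permit extraction of weak/weak-$\ast$ limits inheriting the same bounds by lower semicontinuity, while strong $d$-convergence identifies the limit and preserves the pinned initial data.

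For the contraction estimate, fix $(v_i,\zeta_i)\in\XT$ and let $(u_i,\eta_i):=\M(v_i,\zeta_i)$. The differences $\eta:=\eta_1-\eta_2$ and $u:=u_1-u_2$ solve linear problems with vanishing initial data: $\eta$ obeys a heat equation whose forcing is $v_1\cdot(\n[\zeta_1]-\n[\zeta_2]) + (v_1-v_2)\cdot\n[\zeta_2]$, while $u$ solves a Lam\'{e} problem with coefficients $\a[\eta_1]$, surface-tension contributions $\sigma_\pm\Delta_\ast\eta\,\n$, and forcing encoding the differences $(\S_{\a[\eta_1]}-\S_{\a[\eta_2]})u_2$ plus the analogous boundary stress differences. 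Using the high-regularity bounds inside $\XT$ to control the variable coefficients by Sobolev embedding, an $L^2$ energy estimate on each difference equation (in the spirit of Proposition \ref{weak_unique} for $u$, together with a parabolic estimate akin to Theorem \ref{heat_estimates} for $\eta$), and Cauchy-Schwarz in time, yields a bound of the form
\begin{equation*}
 d(\M(v_1,\zeta_1),\M(v_2,\zeta_2)) \le C(\kappa,\sigma,M_1,M_2,\TE)\sqrt{T}\, d((v_1,\zeta_1),(v_2,\zeta_2)).
\end{equation*}
Choosing $T_0$ small enough that this constant is at most $1/2$, $\M$ is a contraction; the Banach theorem then produces the unique fixed point $(u,\eta)\in\XT$ with $\M(u,\eta)=(u,\eta)$, which on unwinding the definitions of Steps 1--2 is precisely the unique solution of \eqref{kappa_problem} achieving the prescribed time-differentiated data, and the bounds \eqref{kc_01} are built into $\XT$.

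For the two remaining estimates, $\Lf[\eta](T)\le\delta$ is immediate from the calculation \eqref{s1_4} after further restricting $T_0\le\delta\kappa^2/(2M_2)$. For the low-regularity pointwise-in-time bound, I would apply the fundamental theorem of calculus: for $j=0,\dotsc,2N-1$ and $t\in[0,T]$,
\begin{equation*}
 \ns{\dt^j u(t)}_{4N-2j-1} \le 2\ns{\dt^j u(0)}_{4N-2j-1} + 2T\int_0^T \ns{\dt^{j+1}u(s)}_{4N-2j-1}\,ds,
\end{equation*}
and similarly for $\dt^j\eta$. Summing in $j$ and invoking $\j_2[u]\le M_1\kappa^{-r}$, $\i_2[\eta]\le M_2\kappa^{-2}$ bounds the time integrals by an absolute constant after a final restriction on $T_0$, while the initial-data terms sum into $\TE[u_0,\eta_0]$, giving $\If[\eta](T)+\Jf[u](T)\ls\TE[u_0,\eta_0]+1$.

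The main obstacle is the contraction step: one must verify that the constant $C$ in the difference estimate is genuinely $T$-independent (so that $\sqrt{T}$ drives the contraction) while tolerating $\kappa,\sigma,M_1,M_2,\TE$-dependence. This hinges on using the surface-tension regularization to absorb the $\sigma_\pm\Delta_\ast\eta$ boundary terms at the level of $\ns{\zeta_1-\zeta_2}_1$ and its parabolic gain, and on carefully tracking how differences of the geometric coefficients $\a[\eta_i],\n[\zeta_i],J[\eta_i]$ couple against the high-regularity factors $v_i,u_i$ that are merely bounded uniformly in $\XT$.
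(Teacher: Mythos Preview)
Your approach is correct in outline but takes a genuinely different route from the paper. The paper does \emph{not} introduce a low-regularity metric; it contracts $\M$ directly in the high-regularity norm $\j[\cdot]+\i[\cdot]$ that defines $\XT$. Completeness is then immediate, since $\XT$ is a closed ball (with pinned initial data) in a Banach space. For the contraction estimate the paper simply reapplies the estimates \eqref{s1_2}, \eqref{s2_3}, \eqref{s2_4} from Steps~1--2 to the difference problems \eqref{kc_1}--\eqref{kc_2}, which have zero initial data. The same $\sqrt{T}$ factors that made $\M$ a self-map now drive the contraction: one obtains \eqref{kc_3} for $\i[\eta_1-\eta_2]$ and \eqref{kc_9} for $\j[u_1-u_2]$, and combining them gives \eqref{kc_10}, from which a further restriction on $T$ yields the $\tfrac12$-contraction in $\j+\i$.

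Your low-regularity contraction with weak-compactness completeness is a standard and viable alternative, and your sketch correctly identifies the two-step chain (heat equation for $\eta$ feeding the Lam\'e problem for $u$) and the need to buy regularity on $\sigma_\pm\Delta_\ast\eta$ from the $\kappa$-parabolic smoothing. The trade-off: the paper's approach is shorter here because the full-regularity linear theory (Theorems~\ref{heat_estimates} and \ref{lame_high}) is already in hand and applies verbatim to differences, so no new energy estimates are needed and the bookkeeping of commutator forcings $H^1,H^2_\pm$ is done once at top order. Your approach avoids invoking that heavy linear machinery for the contraction step, at the cost of redoing low-order energy estimates and a separate completeness argument. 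Your derivation of \eqref{kc_02} via the fundamental theorem of calculus matches the paper's (which cites Lemma~\ref{time_interp} for the same purpose).
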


\begin{proof}

Let $(v_i,\zeta_i) \in \XT$, $(u_i,\eta_i) = \M(v_i,\zeta_i) \in \XT$ for $i=1,2$.  Then $u:= u_1 - u_2$ and $\eta := \eta_1 -\eta_2$ satisfy
\begin{equation}\label{kc_1}
\begin{cases}
  \rho \dt u - \diverge_{\a[\eta_1]} \S_{\a[\eta_1]} u = H^1 & \text{in }\Omega \\
  -\S_{\a[\eta_1]}  u \n[\eta_1] =  -\sigma_+ \Delta_\ast \eta \n[\eta_1] + H^2_+  &\text{on } \Sigma_+ \\
  -\jump{\S_{\a[\eta_1]}  u } \n[\eta_1] =  \sigma_- \Delta_\ast \eta \n[\eta_1] + H^2_- &\text{on } \Sigma_- \\
  \jump{u} =0 &\text{on } \Sigma_- \\
  u_- = 0 &\text{on } \Sigma_b \\
  u(\cdot,0) = 0
 \end{cases}
\end{equation}
and
\begin{equation}\label{kc_2}
 \begin{cases}
  \dt \eta - \kappa \Delta_\ast \eta = (v_1 - v_2)\cdot \n[\zeta_1] + v_2 \cdot (\n[\zeta_1] - \n[\zeta_2]) \\
  \eta(\cdot,0) = 0,
 \end{cases}
\end{equation}
where
\begin{equation}
\begin{split}
H^1 &= \diverge_{\a[\eta_1] - \a[\eta_2]} \S_{\a[\eta_2]} u_2 + \diverge_{\a[\eta_1]} \S_{\a[\eta_1] - \a[\eta_2]} u_2  \\
H^2_+ &=   -\sigma_+ \Delta_\ast \eta_2 (\n[\eta_1] - \n[\eta_2])  -\S_{\a[\eta_1]}u_2 (\n[\eta_1] - \n[\eta_2])
 - \S_{\a[\eta_1] - \a[\eta_2]} u_2 \n[\eta_2] \\
H^2_- &=  \sigma_- \Delta_\ast \eta_2 (\n[\eta_1] - \n[\eta_2])  -\jump{\S_{\a[\eta_1]}u_2} \cdot (\n[\eta_1] - \n[\eta_2]) - \jump{\S_{\a[\eta_1] - \a[\eta_2]} u_2 } \n[\eta_2].
\end{split}
\end{equation}
Notice that the $\Xi$ terms cancel in \eqref{kc_2} since they are both computed from the same initial data.

From \eqref{s1_1_6} in Step 1 above, we know that
\begin{multline}
   \sum_{j=0}^{2N}  \int_0^T \ns{\dt^j ((v_1-v_2)\cdot \n[\zeta_1])(t)}_{4N-2j} dt
\\
\ls (1+ \i_\infty[\zeta_1](T)) \sqrt{T} \sqrt{\j_\infty[v_1-v_2](T)} \sqrt{\j_2[v_1-v_2](T)}
\\
\ls \left(1 + \frac{M_2}{\kappa^2}  \right) \sqrt{T} \j[v_1-v_2](T).
\end{multline}
A similar argument shows that
\begin{multline}
   \sum_{j=0}^{2N}  \int_0^T \ns{\dt^j (v_2\cdot (\n[\zeta_1]) -\n[\zeta_2])(t)}_{4N-2j} dt
\\
\ls \i_\infty[\zeta_1-\zeta_2](T) \sqrt{T} \sqrt{\j_\infty[v_2](T)} \sqrt{\j_2[v_2](T)}
\ls \frac{M_1}{\kappa^r} \sqrt{T} \i_\infty[\zeta_1-\zeta_2](T).
\end{multline}
Then, as in \eqref{s1_2}, we have the estimate
\begin{equation}\label{kc_3}
 \i_2[\eta](T) + \i_\infty[\eta](T) \ls  \left(\frac{1}{\kappa^2} + \frac{M_2}{\kappa^4}  \right) \sqrt{T} \j[v_1-v_2](T)
+ \frac{M_1}{\kappa^{r+2}} \sqrt{T} \i_\infty[\zeta_1-\zeta_2](T).
\end{equation}

From \eqref{s2_3} in Step 2 above, we know that
\begin{multline}\label{kc_4}
 \j_2[u](T) \ls (1+ P(\i[\eta](T)   +\Ef[\rho](T))) \exp\left(T(1+P(\i_\infty[\eta](T) +\Ef[\rho](T))) \right) \\
\times \left(  \int_0^T \mathcal{H}_2(t)dt
+(1+\sigma^2)\sqrt{T}  \i[\eta](T)     \right)
\end{multline}
for some universal positive polynomial $P$ with $P(0)=0$, where we have written
\begin{equation}
 \mathcal{H}_2(t) =  \sum_{j=0}^{2N-1} \ns{\dt^j H^1(t)}_{4N-2j-1}    + \ns{\dt^{2N} H^1(t)}_{\Hd}   + \sum_{j=0}^{2N} \ns{\dt^j H^2(t)}_{4N-2j-1/2}.
\end{equation}
Similarly,  \eqref{s2_3} in Step 2 yields
\begin{multline}\label{kc_5}
\j_\infty[u](T) \ls  ( 1+P(\i[\eta](T)    +\Ef[\rho](T) ))\exp\left(T(1+P(\i_\infty[\eta](T) +\Ef[\rho](T))) \right)
\\
\times
\left( \sup_{0\le t \le T} \mathcal{H}_\infty(t)  +  \int_0^T \mathcal{H}_2(t)dt + \i_\infty[\eta](T) +(1+\sigma^2)\sqrt{T}  \i[\eta](T)  \right),
\end{multline}
where we have written
\begin{equation}\
 \mathcal{H}_\infty(t) := \sum_{j=0}^{2N-1} \left[ \ns{\dt^j H^1(t)}_{4N-2j-2} + \ns{\dt^j H^2(t)}_{4N-2j-3/2} \right].
\end{equation}

We now seek to estimate the $\mathcal{H}$ terms appearing on the right side of \eqref{kc_4} and \eqref{kc_5}.  Standard nonlinear estimates lead us to the bounds
\begin{multline}\label{kc_7}
 \int_0^T \mathcal{H}_2(t)dt \ls  \i_\infty[\eta](T)  (1+P(\i_\infty[\eta_1](T) +\i_\infty[\eta_2](T) ) ) \j_2[u_2](T)
+  \i_\infty[\eta](T) \i_2[\eta_2](T)
\\ \ls
P\left(\frac{M_2}{\kappa^r} +\frac{M_1 M_2}{\kappa^{r+2}} + \frac{M_1}{\kappa^2}\right)\i_\infty[\eta](T)
\end{multline}
and
\begin{multline}\label{kc_8}
 \sup_{0\le t \le T} \mathcal{H}_\infty(t) \ls \i_\infty[\eta](T)  (1+\i_\infty[\eta_1](T) +\i_\infty[\eta_2](T)  ) \j_\infty[u_2](T) +  \i_\infty[\eta](T) \i_\infty[\eta_2](T)
\\
\ls P\left(\frac{M_2}{\kappa^r} +\frac{M_1 M_2}{\kappa^{r+2}} + \frac{M_1}{\kappa^2}\right)\i_\infty[\eta](T).
\end{multline}

Now we sum the estimates \eqref{kc_4} and \eqref{kc_5} and then combine the resulting estimate with \eqref{kc_7} and \eqref{kc_8} to deduce that
\begin{multline}\label{kc_9}
 \j[u](T) \ls \left( 1+ P\left(\frac{M_2}{\kappa^r} \right)\right) \exp\left(T \left(1+ P\left(\frac{M_2}{\kappa^2}\right) \right) \right)
\\
\times \left[
 P \left(\frac{M_2}{\kappa^r} +\frac{M_1 M_2}{\kappa^{r+2}} + \frac{M_1}{\kappa^2}\right)\i_\infty[\eta](T)
+(1+\sigma^2)\sqrt{T}  \i[\eta](T)
   \right]
\end{multline}
for some universal positive polynomial with $P(0)=0$.  Combining  \eqref{kc_3} and \eqref{kc_9} then yields the estimate
\begin{multline}\label{kc_10}
  \j[u](T) +  \i[\eta](T)   \ls    \left( 1+ P\left(\frac{M_2}{\kappa^r} \right)\right) \exp\left(T \left(1+ P\left(\frac{M_2}{\kappa^2}\right)\right) \right) \\
  \times
\left(  P\left( \frac{M_2}{\kappa^r} +\frac{M_1 M_2}{\kappa^{r+2}} + \frac{M_1}{\kappa^2} \right)  + (1+\sigma^2)\sqrt{T} \right)
\\
\times
\left[
 \left(\frac{1}{\kappa^2} + \frac{M_2}{\kappa^4}  \right) \sqrt{T} \j[v_1-v_2](T)
+ \frac{M_1}{\kappa^{r+2}} \sqrt{T} \i_\infty[\zeta_1-\zeta_2](T)
\right].
\end{multline}
Finally, from \eqref{kc_10} we see that if we further restrict $T$ in terms of $M_1,$ $M_2,$ $\ell,$ $\sigma,$ and $\kappa$, then
\begin{equation}
\j[u_1-u_2](T) +  \i[\eta_1-\eta_2](T)  \le \hal \left( \j[v_1-v_2](T) +  \i[\zeta_1-\zeta_2](T) \right),
\end{equation}
which implies that the mapping $\M : \XT \to \XT$ is a contraction.

The existence of a fixed point satisfying \eqref{kc_01} is then an easy consequence.  The estimates \eqref{kc_02} follow from \eqref{kc_01}, Lemma \ref{time_interp}, and standard estimates of the data.
\end{proof}

\section{Estimates for the $\kappa-$problem \eqref{kappa_problem}} \label{sec_kappa_est}

Our goal in this section is to derive $\kappa-$independent estimates for the problem \eqref{kappa_problem}.  These will eventually allow us to pass to the limit $\kappa \to 0$.  In this section and the next we must work with a slightly weaker form of the dissipation for $u$, which is defined as
\begin{equation}\label{dissipation_weak_u}
 \check{\D}[u] =  \ns{u}_{4N} + \ns{\nab_{\ast,0}^{4N-1} u}_{2} + \sum_{j=1}^{2N} \ns{\dt^j u }_{4N-2j+1}.
\end{equation}
Here and at several points in this section we employ the notational convention
\begin{equation}\label{horiz_sum}
 \ns{\nab_{\ast,0}^{m} u}  = \sum_{\substack{ \alpha \in \mathbb{N}^2 \\ \abs{\alpha} \le m }} \ns{\pal u}
\end{equation}
for any $m\ge 0$ and any norm $\norm{\cdot}$.

Recall that $\Xi$ is the function defined in \eqref{Xi_prop}--\eqref{Xi_bound} in terms of the data.  We will need to refer to the following functional:
\begin{equation}\label{Xi_fnal}
 \zf := \sum_{j=0}^{2N-1} \sup_{t>0} \ns{\dt^j \Xi(t)}_{4N-2j-1} + \sum_{j=0}^{2N} \int_0^\infty \ns{\dt^j \Xi(t)}_{ 4N-2j}dt.
\end{equation}
It's easy to see from \eqref{Xi_bound} that
\begin{equation}\label{Xi_data_est}
  \zf \ls \frac{1}{\sigma} \ns{\sqrt{\sigma} \nab_\ast \eta_0}_{4N} + P(\TE[u_0,\eta_0])
\end{equation}
for a positive universal polynomial $P$ such that $P(0)=0$.

\subsection{Preliminaries}

 Rather than work directly with the solutions from Theorem \ref{kappa_contract} we will prove our estimates in a somewhat more general context.  We assume the following for some parameter $\delta >0$ and time interval $[0,T_\ast]$.
\begingroup
\allowdisplaybreaks
\begin{align}
  &\text{The parameter } \delta>0 \text{ satisfies } \delta \le \delta_0 \text{ where }\delta_0 \text{ is given by Proposition } \ref{lame_elliptic}. \label{kap_assump_1}\\
  &\text{The initial data } \dt^j u(\cdot,0) \text{ and }\dt^j \eta(\cdot,0) \text{ for }j=0,\dotsc,2N \text{ are given as before and}  \\
& \quad\text{  satisfy the compatibility conditions  } \eqref{kappa_ccs}. \nonumber \\
&  \text{The data satisfy }\Lf[\eta_0] \le \delta/2, \text{ where }\Lf[\eta_0] \text{ is given by } \eqref{l0_def}. \\
&  \text{A  solution }(u,\eta) \text{ to }\eqref{kappa_problem} \text{ exists on the interval } [0,T_\ast] \text{ and achieves the initial data}. \\
&  \text{The solution satisfies the estimate }\j[u](T_\ast) + \i[\eta](T_\ast) < \infty, \text{ where } \j[u] \text{ and } \i[\eta] \nonumber \\
&\quad\text{are as defined in }\eqref{ab_def}. \label{kap_assump_2} \\
&  \text{We have the estimates }   \Lf[\eta](T_\ast) < \delta \text{ and }\If[\eta](T_\ast) + \Jf[u](T_\ast) \ls P(\TE) + 1  \nonumber \\
&\quad \text{for a universal positive polynomial }P\text{ such that }P(0)=0. \text{ Here } \If[\eta], \Jf[u]  \nonumber \\
&\quad\text{are defined by } \eqref{ij_def}. \\
& \text{The forcing terms satisfy } \sup_{0\le t\le T_\ast} \f_\infty(t) + \int_0^{T_\ast} \f_2(t)dt \ls 1+ P(\TE) \nonumber \\
& \quad \text{for a universal positive polynomial }P\text{ such that }P(0)=0.  \label{kap_assump_4}\\
& \text{The function } \rho \text{ is defined on }[0,T_\ast] \text{ and satisfies } \eqref{rho_assump_1} \text{ and } \eqref{rho_assump_2}. \label{kap_assump_3}
\end{align}
\endgroup

The assumption \eqref{kap_assump_2}  guarantees that $(u,\eta)$ are regular enough for us to apply $\pal$ to \eqref{kappa_problem} for $\alpha \in \mathbb{N}^{1+2}$ with $\abs{\alpha} \le 4N$.  This results in

\begin{equation}\label{kappa_deriv}
\begin{cases}
  \rho \dt \pal u - \diva \Sa \pal u = \pal F^1 + F^{1,\alpha} & \text{in }\Omega \\
  \dt \pal \eta - \kappa \Delta_\ast \pal \eta = (\pal u) \cdot \n  - \kappa \pal \Xi + F^{3,\alpha} &\text{on }\Sigma \\
  -\Sa \pal u \n = - \sigma_+ \Delta_\ast \pal\eta  \n +  \pal F^2_+ + F^{2,\alpha}_+  &\text{on } \Sigma_+ \\
  -\jump{\Sa \pal u } \n = \sigma_- \Delta_\ast \pal \eta  \n - \pal F^2_- - F^{2,\alpha}_- &\text{on } \Sigma_- \\
  \jump{\pal u} =0 &\text{on } \Sigma_- \\
  \pal u_- = 0 &\text{on } \Sigma_b,
 \end{cases}
\end{equation}
where for $i=1,2,3$,
\begin{multline}\label{kF1_def}
 F^{1,\alpha}_i = -\sum_{\beta < \alpha} C_{\alpha,\beta} \p^{\alpha-\beta} \rho \dt \p^\beta u_i + \sum_{\beta < \alpha} C_{\alpha,\beta} \p^{\alpha-\beta} \a_{\ell m} \p^\beta \p_m(\mu \a_{\ell k} \p_k u_i)
\\
+ \sum_{\beta < \alpha} C_{\alpha,\beta} \left( \mu' + \frac{\mu}{3}\right) \left( \a_{ik} \p_k (\p^{\alpha-\beta} \a_{\ell m} \p^\beta \p_m u_\ell ) + \p^{\alpha-\beta} \a_{ik} \p^{\beta} \p_k(\a_{\ell m} \p_m u_\ell)         \right),
\end{multline}
\begin{multline}
 F^{2,\alpha}_{+,i} = \sum_{\beta < \alpha} C_{\alpha,\beta} \left( \mu \p^{\alpha-\beta} (\n_\ell \a_{ik}) \p^\beta \p_k u_\ell + \mu \p^{\alpha-\beta}(\n_\ell \a_{\ell k} )\p^\beta \p_k u_i  \right)
\\
+    \sum_{\beta < \alpha} C_{\alpha,\beta} \left(\left( \mu' - \frac{2\mu}{3}\right) \p^{\alpha-\beta}(\n_i \a_{\ell k}) \p^\beta \p_k u_\ell  - \sigma_+ \p^{\alpha-\beta}\n_i \p^\beta  \Delta_\ast \eta     \right)
\end{multline}
and
\begin{multline}
 -F^{2,\alpha}_{-,i} =   \sum_{\beta < \alpha} C_{\alpha,\beta} \left( \ \p^{\alpha-\beta} (\n_\ell \a_{ik}) \jump{\mu \p^\beta \p_k u_\ell} +  \p^{\alpha-\beta}(\n_\ell \a_{\ell k} ) \jump{\mu \p^\beta \p_k u_i } \right)
\\
+    \sum_{\beta < \alpha} C_{\alpha,\beta} \left(\p^{\alpha-\beta}(\n_i \a_{\ell k}) \jump{\left( \mu' - \frac{2\mu}{3}\right)  \p^\beta \p_k u_\ell} +\sigma_- \p^{\alpha-\beta}\n_i \p^\beta \Delta_\ast \eta    \right).
\end{multline}
Also,
\begin{equation}\label{kF3_def}
 F^{3,\alpha} = \sum_{\beta < \alpha} C_{\alpha,\beta} \p^{\alpha-\beta} \n \p^\beta u.
\end{equation}

\subsection{The estimates}

We begin with a basic energy identity.

\begin{lem}\label{kappa_en_ident}
Assume \eqref{kap_assump_1}--\eqref{kap_assump_3}.  Let $\alpha \in \mathbb{N}^{1+2}$ satisfy $\abs{\alpha} \le 4N$.  Then
\begin{multline}\label{kei_01}
 \ddt \left(  \int_\Omega \frac{\rho J}{2} \abs{\pal u}^2 + \int_{\Sigma} \frac{1}{2} \abs{\pal \eta}^2 + \frac{\sigma}{2} \abs{\nab_\ast \pal \eta}^2   \right)
+ \int_\Omega \frac{\mu J}{2} \abs{\sgz_{\a} \pal u}^2 + J \mu' \abs{\diva \pal u}^2
\\
+ \kappa \int_{\Sigma}  \abs{\nab_\ast \pal \eta}^2 + \sigma \abs{\Delta_\ast \pal \eta}^2
= \int_\Omega \frac{\dt(\rho J)}{\rho J} \frac{\rho J}{2} \abs{\pal u}^2  + J \pal F^1 \cdot \pal u - \int_\Sigma \pal F^2 \cdot \pal u \\
+ \int_\Omega J F^{1,\alpha} \cdot \pal u - \int_\Sigma F^{2,\alpha} \cdot \pal u +  \int_{\Sigma} \pal \eta \pal u \cdot \n
+  \int_{\Sigma} (\kappa \pal \Xi - F^{3,\alpha} ) (- \pal  \eta + \sigma \Delta_\ast \pal  \eta).
\end{multline}
\end{lem}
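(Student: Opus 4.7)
The identity in \eqref{kei_01} is obtained by energy-testing the two equations in \eqref{kappa_deriv}, one in the bulk and one on the interface, with multipliers chosen so that the cross terms involving $\sigma_\pm \Delta_\ast \pal \eta$ cancel upon addition.

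In the bulk, I would multiply the momentum equation in \eqref{kappa_deriv} by $J \pal u$ and integrate over $\Omega$. The time-derivative term yields $\ddt \int_\Omega \frac{\rho J}{2}\abs{\pal u}^2$ after subtracting $\int_\Omega \frac{\dt(\rho J)}{2}\abs{\pal u}^2$, which is moved to the right side in the form $\int_\Omega \frac{\dt(\rho J)}{\rho J}\, \frac{\rho J}{2}\abs{\pal u}^2$. For the viscous term I would apply the geometric divergence identity $\int_\Omega J (\diva X) \cdot \phi = -\int_\Omega J X : \nab_\a \phi + (\text{boundary contributions})$ separately on $\Omega_\pm$; the contribution on $\Sigma_b$ vanishes because $\pal u_-\vert_{\Sigma_b} = 0$, the contributions on $\Sigma_-$ combine into a single integral weighted by the jump $\jump{\Sa \pal u}\n$ (using $\jump{\pal u}=0$), and the bulk contraction $\Sa \pal u : \nab_\a \pal u$ reduces via symmetry and the trace/trace-free decomposition to $\frac{\mu J}{2}\abs{\sgz_\a \pal u}^2 + J\mu' \abs{\diva \pal u}^2$. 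The remaining boundary integrals at $\Sigma_\pm$ are rewritten using the stress boundary conditions in \eqref{kappa_deriv}, which produce the cross term $-\int_\Sigma \sigma \Delta_\ast \pal \eta\, (\n \cdot \pal u)$ on the left and the forcing contributions $-\int_\Sigma \pal F^2 \cdot \pal u - \int_\Sigma F^{2,\alpha} \cdot \pal u$ on the right.

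On the interface, I would test the kinematic equation in \eqref{kappa_deriv} against the combined multiplier $\pal \eta - \sigma \Delta_\ast \pal \eta$ and integrate over the closed surface $\Sigma$. Integration by parts on $\Sigma$ converts the $\dt$ contributions into $\ddt \int_\Sigma \bigl( \tfrac{1}{2}\abs{\pal \eta}^2 + \tfrac{\sigma}{2}\abs{\nab_\ast \pal \eta}^2 \bigr)$, and the $-\kappa \Delta_\ast \pal \eta$ term pairs with the two pieces of the multiplier to produce the interfacial dissipation. Pairing the source $\pal u \cdot \n$ with the multiplier yields the two terms $\int_\Sigma \pal \eta\, \pal u \cdot \n$ and $-\sigma \int_\Sigma \Delta_\ast \pal \eta\, (\pal u \cdot \n)$, while the remaining $-\kappa \pal \Xi + F^{3,\alpha}$ source contributes exactly $\int_\Sigma (\kappa \pal \Xi - F^{3,\alpha})(-\pal\eta + \sigma \Delta_\ast \pal\eta)$.

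Adding the two identities, the cross term $-\int_\Sigma \sigma \Delta_\ast \pal \eta\, (\n \cdot \pal u)$ obtained from the viscous boundary term is exactly matched by $+\sigma \int_\Sigma \Delta_\ast \pal \eta\, (\pal u \cdot \n)$ after moving the analogous contribution from the right side of the interface identity to the left side, so both drop out and what remains is \eqref{kei_01}. I expect the main bookkeeping obstacle to be keeping the sign and orientation conventions straight at $\Sigma_-$: the geometric divergence theorem produces a jump term with a specific orientation of $\n$, which must be paired with the jump boundary condition $-\jump{\Sa \pal u}\n = \sigma_- \Delta_\ast \pal \eta \n - \pal F^2_- - F^{2,\alpha}_-$ so that the $\sigma_-$-cross term enters with the sign required for cancellation, and so that the $F^2$ and $F^{2,\alpha}$ contributions collapse into the single surface integrals written on the right side of \eqref{kei_01}. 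Once these conventions are tracked and the decomposition of $\Sa \pal u : \nab_\a \pal u$ is verified, the identity follows by direct assembly.
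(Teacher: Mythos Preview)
Your proposal is correct and follows essentially the same approach as the paper: test the bulk equation in \eqref{kappa_deriv} by $J\pal u$, test the kinematic equation on $\Sigma$ by an appropriate multiplier, and add so that the $\sigma \Delta_\ast \pal\eta\,(\pal u\cdot\n)$ cross terms cancel. The only cosmetic difference is that the paper substitutes $\pal u\cdot\n = \dt\pal\eta - \kappa\Delta_\ast\pal\eta + \kappa\pal\Xi - F^{3,\alpha}$ directly into the bulk cross term (yielding the $\sigma$ energy and $\kappa\sigma$ dissipation there) and then tests the kinematic equation only by $\pal\eta$, whereas you test the kinematic equation once by the combined multiplier $\pal\eta - \sigma\Delta_\ast\pal\eta$; the two procedures are algebraically identical.
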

\begin{proof}
We multiply the first equality in \eqref{kappa_deriv} by $J\pal u$ and integrate over $\Omega$.  After integrating by parts and using the boundary conditions, we find that
\begin{multline}\label{kei_1}
 \ddt \left(  \int_\Omega \frac{\rho J}{2} \abs{\pal u}^2 + \int_{\Sigma}  \frac{\sigma}{2} \abs{\nab_\ast \pal \eta}^2  \right)
+ \int_\Omega \frac{\mu J}{2} \abs{\sgz_{\a} \pal u}^2 + J \mu' \abs{\diva \pal u}^2
+ \kappa \int_{\Sigma} \sigma \abs{\Delta_\ast \pal\eta}^2
\\
= \int_\Omega \frac{\dt(\rho J)}{\rho J} \frac{\rho J}{2} \abs{\pal u}^2  + J \pal F^1 \cdot \pal u - \int_\Sigma \pal F^2 \cdot \pal u
\\+ \int_\Omega J F^{1,\alpha} \cdot \pal u - \int_\Sigma F^{2,\alpha} \cdot \pal u
+  \int_{\Sigma} (\kappa \pal \Xi - F^{3,\alpha} )  \sigma  \Delta_\ast \pal  \eta.
\end{multline}
Also, we multiply the second equality in \eqref{kappa_deriv}  by $\pal \eta$ and integrate over $\Sigma$.   After again integrating by parts, we find that
\begin{equation}\label{kei_2}
 \ddt \int_{\Sigma} \hal \abs{\pal \eta}^2 + \kappa \int_{\Sigma} \abs{\nab_\ast \pal \eta}^2 = \int_{\Sigma} \pal \eta (\pal u \cdot \n) + (-\kappa \pal \Xi    + F^{3,\alpha})   \pal \eta.
\end{equation}
Summing \eqref{kei_1} and \eqref{kei_2} yields \eqref{kei_01}.
\end{proof}

Our next result provides some estimates of the  forcing terms that appear in the equations \eqref{kappa_deriv} as a result of commutators with $\p^\alpha$.  The proof is similar to that of Lemma \ref{lame_forcing_est} and the $H$ forcing estimates of Theorem \ref{kappa_contract}, and is similarly omitted.  We recall that $\Ef_\infty[\rho]$ is defined in \eqref{rho_assump_2}.

\begin{lem}\label{k_F_ests}
Let $\alpha \in \mathbb{N}^{1+2}$ satisfy $\abs{\alpha} \le 4N$ and let $F^{i,\alpha}$ be given by \eqref{kF1_def}--\eqref{kF3_def}.  Then there exists a polynomial $P$ with universal positive coefficients and $P(0)=0$ such that
\begin{equation}\label{kfE_01}
 \ns{F^{1,\alpha}}_0 + \ns{F^{2,\alpha}}_{-1/2}
 \ls   ( P(\E^0[\eta]) + \Ef_\infty[\rho]) (\E[u] + \E^0[\eta]) + P(\Lf[\eta]) \check{\D}[u].
\end{equation}
Additionally,
\begin{equation}\label{kfE_02}
  \ns{(F^{3,\alpha} + (\pal \nab_\ast \eta) \cdot u ) }_{0} \ls \E^0[\eta] \E[u] + \Lf[\eta]\check{\D}[u],
\end{equation}
and
\begin{equation}\label{kfE_03}
 \ns{\nab_\ast (F^{3,\alpha} + (\pal \nab_\ast \eta) \cdot u ) }_0 \ls  \sigma^{-1}\E^\sigma[\eta] \E[u] + \Lf[\eta]\check{\D}[u].
\end{equation}


\end{lem}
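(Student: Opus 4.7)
The plan is to estimate each of the three quantities by expanding the Leibniz-style commutator sums defining $F^{1,\alpha}$, $F^{2,\alpha}$, and $F^{3,\alpha}$ term by term, and then bounding each resulting product using Sobolev product/embedding inequalities, placing factors with many derivatives against factors with few derivatives. Throughout, the parabolic counting $\abs{\alpha} = 2\alpha_0 + \alpha_1 + \alpha_2 \le 4N$ and the constraint $\beta < \alpha$ mean that for every summand $\partial^{\alpha-\beta}(\text{coeff}) \, \partial^\beta u$ at least one derivative falls on the coefficient (built from $\rho$, $\mathcal{A}$, or $\mathcal{N}$) and strictly fewer than $\abs{\alpha}$ derivatives fall on $u$. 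This will let me place the top-order factor on $u$ against a lower-order factor on the coefficient (which can be absorbed by $\E^0[\eta]$ or $\Ef_\infty[\rho]$ in $L^\infty$), and conversely when many derivatives hit the coefficient, fewer derivatives remain on $u$ and those get placed in $L^\infty$ using $\E[u]$.

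For \eqref{kfE_01}, I would split the sum in $F^{1,\alpha}$ and $F^{2,\alpha}$ into two regimes. When $\abs{\beta} \le \abs{\alpha}/2$ (low derivative count on $u$), place $\partial^\beta u$ and $\dt \partial^\beta u$ in $L^\infty$ via Sobolev embedding, bounded by $\E[u]$, and the coefficient factor in $L^2$, bounded by $\E^0[\eta] + \Ef_\infty[\rho]$ (recalling that $\mathcal{A}$ and $\mathcal{N}$ are smooth functions of $\eta$, so repeated derivatives produce polynomial combinations estimated by $P(\E^0[\eta])$). When $\abs{\beta} > \abs{\alpha}/2$ (high derivatives on $u$), I put the coefficient in $L^\infty$ via the low-regularity control $\Lf[\eta]$ (this is where the smallness sits) and put $\partial^\beta u$ or $\dt \partial^\beta u$ in $L^2$; these are precisely the norms appearing in the weakened dissipation $\check{\D}[u]$ defined in \eqref{dissipation_weak_u}. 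The boundary term $\ns{F^{2,\alpha}}_{-1/2}$ is handled by duality with $H^{1/2}$, so one more derivative is available on the test function and the interior product estimates transfer directly via the trace theorem, giving the same bound.

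For \eqref{kfE_02}, the key observation is that $\n = (-\partial_1 \eta, -\partial_2 \eta, 1)$, so since $\abs{\alpha} \ge 1$ the constant component is annihilated and $\partial^\alpha \n = -\partial^\alpha \nab_\ast \eta \, (e_1 \text{ or } e_2)$ pointwise. Therefore the $\beta = 0$ term of $F^{3,\alpha}$ is exactly $-\partial^\alpha \nab_\ast \eta \cdot u$, and the quantity $F^{3,\alpha} + (\partial^\alpha \nab_\ast \eta) \cdot u$ is a sum over $0 < \beta < \alpha$ of products $\partial^{\alpha-\beta} \nab_\ast \eta \cdot \partial^\beta u$. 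Each such product has at least one derivative on both factors, so I again split into $\abs{\beta} \le \abs{\alpha}/2$ (use $\E[u]$ in $L^\infty$ against $\E^0[\eta]$ in $L^2$ on $\Sigma$ via trace) and $\abs{\beta} > \abs{\alpha}/2$ (use $\Lf[\eta]$ in $L^\infty$ against $\check{\D}[u]$ in $L^2$, noting that the trace of $\partial^\beta u$ is controlled by $H^{4N-2\beta_0-\abs{\beta_{\text{sp}}}+1/2}(\Sigma)$, which sits inside the $\check{\D}[u]$-norm).

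Estimate \eqref{kfE_03} follows the same pattern after applying one horizontal derivative $\nab_\ast$ to the corrected sum; this simply raises the derivative count by one in the spatial direction on $\Sigma$. The extra derivative either lands on $\nab_\ast \eta$ (producing $\nab_\ast^2 \eta$, which is controlled by $\sigma^{-1}\E^\sigma[\eta]$ since the surface-tension-enhanced energy $\E^\sigma[\eta]$ carries $\sigma$ times two extra horizontal derivatives of $\eta$) or on $u$ (which is still controlled by $\check{\D}[u]$ times the $\Lf[\eta]$ factor). The main obstacle will be bookkeeping: verifying that in the high-derivative regime the quantity that actually appears on $u$ matches one of the specific norms included in the definition of $\check{\D}[u]$ in \eqref{dissipation_weak_u}, which uses the horizontal-only gradient $\nab_{\ast,0}^{4N-1}$ at the top temporal level. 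This is precisely why $\check{\D}$ rather than a full $\D$ appears, and checking that the top-level horizontal-derivative estimate coming from $F^{3,\alpha}$ is compatible with this weakened dissipation is what drives the specific form of the bounds.
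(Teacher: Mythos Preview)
Your approach is correct and is exactly the standard argument the paper has in mind; indeed, the paper omits the proof entirely, remarking only that it ``is similar to that of Lemma \ref{lame_forcing_est} and the $H$ forcing estimates of Theorem \ref{kappa_contract}, and is similarly omitted.'' Your identification of the cancellation $F^{3,\alpha} + (\pal \nab_\ast \eta)\cdot u = \sum_{0<\beta<\alpha} C_{\alpha,\beta}\, \p^{\alpha-\beta}\n \cdot \p^\beta u$ (from $\p^\alpha \n = (-\p^\alpha \nab_\ast \eta,0)$ when $\abs{\alpha}\ge 1$) is the key structural point, and your high/low splitting of $\beta$ with the $P(\Lf[\eta])\check{\D}[u]$ versus $P(\E^0[\eta])(\E[u]+\E^0[\eta])$ dichotomy is precisely what is needed.
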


With this lemma in hand, we can turn the energy identity of Lemma \ref{kappa_en_ident} into a useful estimate.  Note that in this proposition we employ the notation defined in \eqref{horiz_sum}.

\begin{prop}\label{k_hor_est}
Assume \eqref{kap_assump_1}--\eqref{kap_assump_3}.  Then for $0 \le T \le T_\ast$ we have the estimate
\begin{multline}\label{khe_01}
  \sum_{j=0}^{2N} \ns{\dt^j \nab_{\ast,0}^{4N-2j} u}_{L^\infty_T H^0} + \ns{\dt^j \nab_{\ast,0}^{4N-2j} u}_{L^2_T H^1} + \ns{\dt^j  \eta}_{L^\infty_T H^{4N-2j}} + \sigma \ns{\dt^j \nab_\ast  \eta}_{L^\infty_T H^{4N-2j}} \\
+ \kappa \sum_{j=0}^{2N} \ns{\dt^j \nab_\ast \eta}_{L^2_T H^{4N-2j}} + \sigma \ns{\dt^j \Delta_\ast \eta}_{L^2_T H^{4N-2j}}
\ls e^{\gamma T} \TE[u_0,\eta_0]
\\
+ e^{\gamma T} \int_0^T \left(  \f_2(t)  +  ( P(\E^0[\eta]) + \Ef_\infty[\rho]) (\E[u] + \E^0[\eta]) + P(\Lf[\eta]) \check{\D}[u]
  \right) dt,
\end{multline}
where $ \gamma = C \left( 1 +   \If[\eta](T) + \Ef[\rho](T) + \Jf[u](T) )  \right)$, and $P$ is a  polynomial with positive universal coefficients satisfying $P(0)=0$.

\end{prop}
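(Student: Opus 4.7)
My plan is to apply the identity of Lemma~\ref{kappa_en_ident} to every multi-index $\alpha \in \mathbb{N}^{1+2}$ with $\abs{\alpha} \le 4N$ (so $\pal$ involves only temporal and horizontal derivatives), sum over all such $\alpha$, integrate over $[0,T]$, and close via Gronwall. After summation, the left-hand side of \eqref{kei_01} produces the norms on the left of \eqref{khe_01}, once I invoke Korn's inequality (Proposition~\ref{korn}) to bound $\int_\Omega \mu J \abs{\sgz_{\a} \pal u}^2 \gtrsim \ns{\pal u}_1$ and use the lower bound on $\rho J$ from \eqref{rho_assump_1} to extract $\ns{\pal u}_0$. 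The smallness assumption $\Lf[\eta](T_\ast) < \delta \le \delta_0$ ensures these structural inequalities hold uniformly on $[0,T]$.

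Most of the right-hand side of \eqref{kei_01} is routine. The coefficient term $\int_\Omega \frac{\dt(\rho J)}{\rho J} \cdot \frac{\rho J}{2}\abs{\pal u}^2$ contributes a prefactor bounded by $\If[\eta](T) + \Ef[\rho](T) + \Jf[u](T)$ that I fold into $\gamma$. The inhomogeneous forcings $\int_\Omega J \pal F^1 \cdot \pal u$ and $\int_\Sigma \pal F^2 \cdot \pal u$ are bounded by Cauchy--Schwarz (the latter via $H^{1/2}/H^{-1/2}$ duality plus trace), producing the $\f_2$ integrand and a small multiple of $\ns{\pal u}_1$ for absorption; the commutator forcings $\int_\Omega J F^{1,\alpha}\cdot \pal u$ and $\int_\Sigma F^{2,\alpha}\cdot \pal u$ are handled by \eqref{kfE_01}; and the term $-\int_\Sigma(\kappa \pal \Xi - F^{3,\alpha})\pal \eta$ by \eqref{kfE_02}. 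The pure coupling $\int_\Sigma \pal \eta \pal u \cdot \n$ I treat by the same duality/trace trick, leaving a residual $C \ns{\pal \eta}_0$ that feeds Gronwall through the $L^\infty_T H^{4N-2j}$ norm of $\dt^j \eta$ on the LHS.

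The main obstacle is the surface tension term $\sigma \int_\Sigma(\kappa \pal \Xi - F^{3,\alpha})\Delta_\ast \pal \eta$. To absorb $\tfrac{\sigma}{2}\ns{\Delta_\ast \pal \eta}_0$ into the dissipation one needs to control $\sigma\ns{\nab_\ast F^{3,\alpha}}_0$ after integration by parts, but $F^{3,\alpha}$ contains a top-order piece $-C_{\alpha,0}(\pal \nab_\ast \eta) \cdot u_\ast$ (the $\beta=0$ summand) that is not bounded by the energy alone. My plan is to integrate by parts in $\Sigma$, then use the algebraic decomposition
\begin{equation*}
\nab_\ast F^{3,\alpha} = \nab_\ast\bigl(F^{3,\alpha} + (\pal \nab_\ast \eta) \cdot u_\ast\bigr) - \nab_\ast\bigl((\pal \nab_\ast \eta) \cdot u_\ast\bigr),
\end{equation*}
and estimate the first piece by the refined bound \eqref{kfE_03}, which is engineered precisely to control the combined quantity by $\sigma^{-1} \E^\sigma[\eta] \E[u] + \Lf[\eta] \check{\D}[u]$; after multiplication by $\sigma$ this yields exactly the integrand on the RHS of \eqref{khe_01}. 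For the second piece I integrate by parts once more against $\nab_\ast \pal \eta$, producing $-\sigma \int_\Sigma (\pal \nab_\ast \eta) \cdot u_\ast \, \Delta_\ast \pal \eta$; a Young inequality peels off another $\tfrac{\sigma}{4}\ns{\Delta_\ast \pal \eta}_0$ for absorption and leaves the product $\norm{u_\ast}_{L^\infty}^2 \cdot \sigma\ns{\pal \nab_\ast \eta}_0$, which feeds Gronwall since $\norm{u_\ast}_{L^\infty}^2 \ls \Jf[u](T)$ by Sobolev embedding and the $\sigma\ns{\pal \nab_\ast \eta}_0$ factor is already on the LHS of \eqref{khe_01}.

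Collecting everything produces an integral inequality $X(T) + D(T) \le C\,\TE[u_0,\eta_0] + \int_0^T \gamma(t) X(t)\,dt + \int_0^T R(t)\,dt$, where $X(T)$ and $D(T)$ denote the $L^\infty_T$ and $L^2_T$ pieces of the LHS of \eqref{khe_01} and $R$ is the integrand on its RHS; Gronwall then yields the advertised factor $e^{\gamma T}$. The initial-data contribution is bounded by $\TE[u_0,\eta_0]$ via the iterative constructions of $\dt^j u(\cdot,0)$ and $\dt^j \eta(\cdot,0)$ in \eqref{u_data_2}, \eqref{eta_data} together with the estimate \eqref{Xi_bound} on $\Xi$.
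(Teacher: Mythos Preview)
Your overall strategy matches the paper's, but there is a genuine gap in your treatment of the top-order piece $(\pal \nab_\ast \eta)\cdot u$ against the surface-tension multiplier $\sigma \Delta_\ast \pal \eta$.  You propose to return (after integrating by parts) to $\sigma \int_\Sigma (\pal \nab_\ast \eta)\cdot u\, \Delta_\ast \pal \eta$ and then apply Young to peel off $\tfrac{\sigma}{4}\ns{\Delta_\ast \pal \eta}_0$ ``for absorption.''  But in the identity \eqref{kei_01} the only dissipative control of $\Delta_\ast \pal \eta$ is the term $\kappa \int_\Sigma \sigma \abs{\Delta_\ast \pal \eta}^2$, which carries the prefactor $\kappa$.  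Since the entire purpose of this section is to derive $\kappa$-independent estimates (used in Theorem~\ref{lame_exist} to send $\kappa \to 0$), you cannot absorb a term of order $\tfrac{\sigma}{4}\ns{\Delta_\ast \pal \eta}_0$ into $\kappa \sigma \ns{\Delta_\ast \pal \eta}_0$ once $\kappa < 1/4$.

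The fix is to exploit the transport structure of the bad term rather than attempt absorption.  Writing
\[
\sigma \int_\Sigma (\pal \nab_\ast \eta)\cdot u\, \Delta_\ast \pal \eta
= -\sigma \sum_{i,j} \int_\Sigma \p_i \pal \eta\,\bigl(\p_i u_j\, \p_j \pal \eta + u_j\, \p_i \p_j \pal \eta\bigr),
\]
the second summand equals $-\sigma \int_\Sigma u \cdot \nab_\ast \tfrac{1}{2}\abs{\nab_\ast \pal \eta}^2$, which after one more integration by parts becomes $\sigma \int_\Sigma (\diverge_\ast u)\,\tfrac{1}{2}\abs{\nab_\ast \pal \eta}^2$.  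Both summands are now bounded by $\norm{\nab_\ast u}_{L^\infty}\, \sigma \ns{\nab_\ast \pal \eta}_0 \ls \Jf[u](T)\int_\Sigma \tfrac{\sigma}{2}\abs{\nab_\ast \pal \eta}^2$.  This lands on the \emph{energy} term $\sigma\ns{\nab_\ast \pal \eta}_0$ already present on the left of \eqref{khe_01}, not on the $\kappa$-weighted dissipation, and so feeds cleanly into Gronwall with $\Jf[u](T)$ contributing to $\gamma$.  The same device is needed for $-\int_\Sigma (\pal \nab_\ast \eta)\cdot u\, \pal \eta$, which you also owe: note that \eqref{kfE_02} bounds only the \emph{combined} quantity $F^{3,\alpha} + (\pal \nab_\ast \eta)\cdot u$, not $F^{3,\alpha}$ itself, so your appeal to \eqref{kfE_02} for $\int_\Sigma F^{3,\alpha}\pal\eta$ leaves precisely this residual.
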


\begin{proof}

Let $\alpha \in \mathbb{N}^{1+2}$ satisfy $\abs{\alpha} \le 4N$.  Taking \eqref{kei_01} of Lemma \ref{kappa_en_ident} as our starting point, we seek to estimate each term on the right hand side and to estimate non-time-derivative term on the left.  In our subsequent analysis we will rewrite $F^{3,\alpha} = (F^{3,\alpha} + (\pal \nab_\ast \eta)\cdot u) - (\pal \nab_\ast \eta)\cdot u$.

First note that we may use Proposition \ref{korn} to bound
\begin{equation}\label{khe_1}
\ns{\pal u}_{1} \ls  \int_\Omega \frac{\mu J}{2} \abs{\sgz_{\a} \pal u}^2 + J \mu' \abs{\diva \pal u}^2.
\end{equation}
This will allow us to absorb most of the $\pal u$ terms appearing on the right side of \eqref{kei_01}.   Using the definition of $\f_2$, given in \eqref{F2_def}, in conjunction with the estimates \eqref{kfE_01} and \eqref{kfE_02} of Lemma \ref{k_F_ests} and the bound $0 < \kappa <1$, we may  bound
\begin{multline}\label{khe_3_5}
 \ns{\pal F^1}_0 + \ns{\pal F^2}_0  + \ns{F^{1,\alpha}}_0 + \ns{F^{2,\alpha}}_{-1/2}
+ \ns{(F^{3,\alpha} + (\pal \nab_\ast \eta) \cdot u ) }_{0} \ + \kappa^2 \ns{\pal \Xi}_0
\\
\ls  \f_2(t)
+  ( P(\E^0[\eta]) + \Ef_\infty[\rho]) (\E[u] + \E^0[\eta]) + P(\Lf[\eta]) \check{\D}[u]
+ \kappa \sum_{j=0}^{2N} \ns{\dt^j \Xi}_{4N-2j},
\end{multline}
where $P$ is a universal positive polynomial with $P(0)=0$.  Next, we use Cauchy's inequality, Proposition \ref{korn}, trace theory, and the estimates $\ns{J}_{L^\infty} + \ns{\n}_{L^\infty} \ls 1$ (which follow from Lemma \ref{eta_small} and the bounds on $\Lf[\eta](T)$)  and \eqref{khe_3_5} to bound, for any $0 < \ep_1 < 1$,
\begin{multline}\label{khe_2}
\int_\Omega J \pal F^1 \cdot \pal u - \int_\Sigma \pal F^2 \cdot \pal u + \int_\Omega J F^{1,\alpha} \cdot \pal u - \int_\Sigma F^{2,\alpha} \cdot \pal u \\
+  \int_{\Sigma} \pal \eta (\pal u \cdot \n)
+  \int_{\Sigma} (\kappa \pal \Xi - (F^{3,\alpha} + (\pal \nab_\ast \eta) \cdot u ))  \pal  \eta
\\
\ls \ep_1 \ns{\pal u}_1
+\frac{1}{\ep_1} \left( \ns{ \pal \eta}_0 +  \ns{\pal F^1}_0 + \ns{\pal F^2}_0  + \ns{F^{1,\alpha}}_0 + \ns{F^{2,\alpha}}_{-1/2}   \right)
 \\
+ \ns{F^{3,\alpha} + (\pal \nab_\ast \eta) \cdot u }_0 + \kappa^2 \ns{\pal \Xi}_0
\ls \ep_1 \ns{\pal u}_1
+\frac{1}{\ep_1} \ns{ \pal \eta}_0
\\
 +\frac{1}{\ep_1}  \left(
\f_2(t)  + ( P(\E^0[\eta]) + \Ef_\infty[\rho]) (\E[u] + \E^0[\eta]) + P(\Lf[\eta]) \check{\D}[u]
 + \kappa\sum_{j=0}^{2N} \ns{\dt^j \Xi}_{4N-2j}
\right).
\end{multline}

For the remaining $\Xi$ terms on the right side of \eqref{kei_01} we estimate
\begin{equation}\label{khe_3}
  \int_{\Sigma} \kappa \pal \Xi  \sigma \Delta_\ast \pal  \eta \le \hal \int_\Sigma \kappa \sigma \abs{\Delta_\ast \pal \eta}^2 +
  \frac{\kappa\sigma}{2}\sum_{j=0}^{2N} \ns{\dt^j \Xi}_{4N-2j}
\end{equation}
with the aim of absorbing the $\Delta_\ast \pal \eta$ term onto the left of \eqref{kei_01}.   Next we handle the $(F^{3,\alpha} + (\pal \nab_\ast \eta)\cdot u)  ( \sigma \Delta_\ast \pal  \eta)$ terms on the right side of \eqref{kei_01}.   We integrate by parts and then use \eqref{kfE_03} of Lemma \ref{k_F_ests} to see that
\begin{multline}\label{khe_4}
 \int_\Sigma -(F^{3,\alpha} + (\pal \nab_\ast \eta)\cdot u)  \sigma \Delta_\ast \pal  \eta =   \int_\Sigma \nab_\ast (F^{3,\alpha} + (\pal \nab_\ast \eta)\cdot u) \cdot  \sigma \nab_\ast \pal  \eta
\\
\le \frac{\sigma}{2} \ns{\nab_\ast (F^{3,\alpha} + (\pal \nab_\ast \eta)\cdot u)}_0 +  \int_\Sigma \frac{\sigma}{2} \abs{\nab_\ast \pal \eta}^2 \ls \E^0[\eta] \E[u] + \Lf[\eta]\check{\D}[u]+ \int_\Sigma \frac{\sigma}{2} \abs{\nab_\ast \pal \eta}^2.
\end{multline}

It remains to handle the term
\begin{equation}\label{khe_5}
 \int_{\Sigma}  (\pal \nab_\ast \eta)\cdot u  ( - \pal  \eta + \sigma  \Delta_\ast \pal  \eta) .
\end{equation}
We have that
\begin{multline}\label{khe_6}
 -\int_{\Sigma }  (\pal \nab_\ast \eta)\cdot u   \pal  \eta
=- \int_{\Sigma }    \frac{1}{2}  u\cdot \nab_\ast \abs{\pal  \eta}^2
= \int_{\Sigma }   \frac{1}{2}  \diverge_\ast u    \abs{\pal  \eta}^2
\\
\ls \norm{\diverge_\ast u}_{L^\infty} \int_\Sigma \frac{1}{2} \abs{\pal \eta}^2 \ls  \Jf[u](T) \int_\Sigma  \frac{1}{2} \abs{\pal \eta}^2.
\end{multline}
Similarly,
\begin{multline}\label{khe_7}
 \int_{\Sigma}  (\pal \nab_\ast \eta)\cdot u  \sigma \Delta_\ast \pal  \eta = -  \sum_{i,j=1}^2\int_{\Sigma}  \sigma \p_i \pal \eta  (\p_i u_j \p_j \pal \eta + u_j \p_i \p_j \pal \eta)
\\ =  - \int_{\Sigma} \sigma u\cdot \nab_\ast \frac{\abs{\nab_\ast \pal \eta}^2 }{2} +  \sum_{i,j=1}^2  \sigma \p_i \pal \eta  \p_i u_j \p_j \pal \eta
\\
\ls \norm{\nab_\ast u}_{L^\infty} \int_\Sigma \frac{\sigma}{2} \abs{\nab_\ast \pal \eta}^2 \ls \Jf[u](T)  \ \int_\Sigma \frac{\sigma}{2} \abs{\nab_\ast \pal \eta}^2.
\end{multline}
Combining \eqref{khe_5}--\eqref{khe_7}, we find that
\begin{equation}\label{khe_8}
 \int_{\Sigma} (\pal \nab_\ast \eta)\cdot u  (- \pal  \eta + \sigma \Delta_\ast \pal  \eta)
\ls  \Jf[u](T)  \ \int_\Sigma  \frac{1}{2} \abs{\pal \eta}^2 + \frac{\sigma}{2} \abs{ \nab_\ast \pal \eta}^2.
\end{equation}

Now we employ the estimates \eqref{khe_1}, \eqref{khe_2}, \eqref{khe_3}, \eqref{khe_4}, and \eqref{khe_8}  in \eqref{kei_01}.  By choosing $\ep_1$ small enough (but universal), we may absorb the $\ep_1 \ns{\pal u}_1$ term in \eqref{khe_2} onto the left.  This results in the differential inequality
\begin{multline}\label{khe_9}
 \ddt \left(  \int_\Omega \frac{\rho J}{2} \abs{\pal u}^2 + \int_{\Sigma} \frac{1}{2} \abs{\pal \eta}^2 + \frac{\sigma}{2} \abs{\nab_\ast \pal \eta}^2   \right)
+ C \ns{\pal u}_{1}
+ \frac{\kappa}{2} \int_{\Sigma}   \abs{\nab_\ast \pal \eta}^2 + \sigma \abs{\Delta_\ast \pal \eta}^2
\\
\ls \left(1 + \norm{\dt(\rho J)(\rho J)^{-1}}_{L^\infty} + \Jf[u](T) \right) \left( \int_\Omega \frac{\rho J}{2} \abs{\pal u}^2 + \int_\Sigma \frac{1}{2} \abs{\pal \eta}^2 +\frac{\sigma}{2} \abs{\nab_\ast \pal \eta}^2 \right)
\\
+\f_2(t)  +  ( P(\E^0[\eta]) + \Ef_\infty[\rho]) (\E[u] + \E^0[\eta]) + P(\Lf[\eta]) \check{\D}[u]
 +\kappa \sum_{j=0}^{2N} \ns{\dt^j \Xi}_{4N-2j}.
\end{multline}
Notice that
\begin{equation}
\sup_{0\le t \le T}  \norm{\dt(\rho J)(\rho J)^{-1}}_{L^\infty} \ls \If[\eta](T) + \Ef[\rho](T).
\end{equation}
We may then apply Gronwall's inequality to \eqref{khe_9} and sum over $\alpha$ to deduce the bound
\begin{multline}\label{khe_10}
  \sum_{j=0}^{2N} \ns{\dt^j \nab_{\ast,0}^{4N-2j} u}_{L^\infty_T H^0} + \ns{\dt^j \nab_{\ast,0}^{4N-2j} u}_{L^2_T H^1} + \ns{\dt^j  \eta}_{L^\infty_T H^{4N-2j}} + \sigma \ns{\dt^j \nab_\ast  \eta}_{L^\infty_T H^{4N-2j}} \\
+ \kappa \sum_{j=0}^{2N} \ns{\dt^j \eta}_{L^2_T H^{4N-2j}} + \sigma \ns{\dt^j \nab_\ast \eta}_{L^2_T H^{4N-2j}}
\ls e^{\gamma T}\TE[u_0,\eta_0]
\\
+ e^{\gamma T} \int_0^T \left(  \f_2(t)  +  ( P(\E^0[\eta]) + \Ef_\infty[\rho]) (\E[u] + \E^0[\eta]) + P(\Lf[\eta]) \check{\D}[u] +\kappa \sum_{j=0}^{2N} \ns{\dt^j \Xi}_{4N-2j}  \right) dt,
\end{multline}
where $ \gamma = C \left( 1 +   \If[\eta](T) + \Ef[\rho](T) + \Jf[u](T) )  \right). $  Then \eqref{khe_01} follows from \eqref{khe_10} and \eqref{Xi_bound}.
\end{proof}

Next we employ various elliptic estimates to gain control of all derivatives of $(u,\eta)$.

\begin{prop}\label{kappa_improved}
Assume \eqref{kap_assump_1}--\eqref{kap_assump_3}.  Then for $0 \le T \le T_\ast$ we have the estimate
\begin{multline}\label{ki_01}
  \sum_{j=0}^{2N} \ns{\dt^j  u}_{L^\infty_T H^{4N-2j}} +   \ns{u}_{L^2_T H^{4N}} +  \ns{\nab_{\ast,0}^{4N-1} u }_{L^2_T H^{2}} +\sum_{j=1}^{2N} \ns{\dt^j u}_{L^2_T H^{4N-2j+1}}
  \\
  +  \ns{\rho J \dt^{2N+1} u}_{L^2_T \Hd} +
 \sum_{j=0}^{2N} \ns{\dt^j  \eta}_{L^\infty_T H^{4N-2j}} +  \sum_{j=0}^{2N} \sigma \ns{\dt^j \nab_\ast  \eta}_{L^\infty_T H^{4N-2j}} \\
  + \sigma^2 \ns{\eta}_{L^2_T H^{4N+3/2}}
  + \ns{\dt \eta}_{L^2_T H^{4N-1/2}} + \sum_{j=2}^{2N+1} \ns{\dt^j \eta}_{L^2_T H^{4N-2j+2}}
\\+ \kappa \sum_{j=0}^{2N} \ns{\dt^j \nab_\ast \eta}_{L^2_T H^{4N-2j}}
+ \sigma \ns{\dt^j \Delta_\ast \eta}_{L^2_T H^{4N-2j}}
\ls e^{\gamma T}\TE[u_0,\eta_0]
+\sup_{0\le t \le T} \f_\infty(t)  \\
+ T \sup_{0 \le t \le T} \E^0[\eta(t)] + e^{\gamma T} \int_0^T \left(  \f_2(t)  +  ( P(\E^0[\eta]) + P(\Ef_\infty[\rho])) (\E[u] + \E^0[\eta]) + P(\Lf[\eta]) \D[u]
  \right) dt \\
+\int_0^T   \E[u] \frac{1}{\sigma} \E^\sigma[\eta] dt + \kappa^2 \zf,
\end{multline}
where $P$ is a polynomial with positive universal coefficients such that $P(0)=0$ and $\gamma = C \left( 1 +   \If[\eta](T)+ \Ef[\rho](T) + \Jf[u](T) )  \right).$
\end{prop}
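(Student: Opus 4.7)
The plan is to start from the horizontal-derivative bound \eqref{khe_01} of Proposition \ref{k_hor_est} and then upgrade it in two stages: a two-phase Lam\'e elliptic regularity step that recovers all spatial derivatives of $u$ and, via the surface-tension boundary condition, extracts additional regularity of $\eta$; and a kinematic-equation step on $\Sigma$ that controls the time derivatives of $\eta$. Proposition \ref{k_hor_est} already delivers the $L^\infty_T H^0$ and $L^2_T H^1$ control of $\dt^j \nab_{\ast,0}^{4N-2j} u$ and the horizontal $\eta$ bounds, and the $\kappa^2 \zf$ term on the right of \eqref{ki_01} is accounted for by \eqref{Xi_bound} together with the definition \eqref{Xi_fnal}.

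The main upgrade step applies Proposition \ref{lame_elliptic} (applicable because $\Lf[\eta](T_\ast) < \delta_0$ by \eqref{kap_assump_1}--\eqref{kap_assump_2}) to the stationary two-phase Lam\'e problem satisfied by $\dt^j u$ for each $j = 0, \dots, 2N$. The right-hand side is $-\rho\, \dt^{j+1} u + \dt^j F^1 + F^{1,\alpha}$ in the bulk, with surface-tension data $\mp \sigma_\pm \Delta_\ast \dt^j \eta\, \n + \dt^j F^2 + F^{2,\alpha}$ on $\Sigma_\pm$. The elliptic estimate simultaneously yields full spatial regularity $\ns{\dt^j u}_{H^{4N-2j}}$ and, by reading the dynamic boundary condition as $\sigma_\pm \Delta_\ast \dt^j \eta \approx (\text{trace of stress}) + (\text{lower order})$, the weighted bound $\sigma^2 \ns{\dt^j \eta}_{H^{4N-2j+3/2}}$. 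The commutator forcings $F^{i,\alpha}$ are estimated as in Lemma \ref{k_F_ests}, so that the right-hand side is bounded in $L^2_T$ by $\f_2(t) + (P(\E^0[\eta]) + P(\Ef_\infty[\rho]))(\E[u] + \E^0[\eta]) + P(\Lf[\eta]) \D[u]$ plus lower-order contributions. The sharp $\sigma^2 \ns{\eta}_{L^2_T H^{4N+3/2}}$ estimate is obtained by applying $\nab_{\ast,0}^{4N-1}$ to the $j=0$ boundary condition: the trace of the right-hand side lies in $L^2_T H^{1/2}(\Sigma)$ because $\nab_{\ast,0}^{4N-1} u$ was already bounded in $L^2_T H^2$ by Proposition \ref{k_hor_est}.

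The time derivatives of $\eta$ with $j \ge 1$ come from reading the kinematic/heat equation as $\dt \eta = u \cdot \n - \kappa \Xi + \kappa \Delta_\ast \eta$: differentiating $j-1$ times, every term on the right has already been estimated in the preceding steps (the $u\cdot \n$ piece by trace theory from the $u$ bounds, the $\kappa \Delta_\ast \eta$ piece from \eqref{khe_01}, and the $\Xi$ piece via \eqref{Xi_bound}). For the weak top-order bound $\ns{\rho J \dt^{2N+1}u}_{L^2_T \Hd}$, we test the $2N$-th time-differentiated momentum equation against $v \in \H$,
\begin{equation*}
 \langle \rho J \dt^{2N+1} u, v\rangle_{\ast} = -\ipp{\dt^{2N} u}{v} + \langle J(\dt^{2N} F^1 + F^{1,2N}), v\rangle - \langle \dt^{2N} F^2 + F^{2,2N}, v\rangle_{-1/2,\Sigma},
\end{equation*}
and control the right in $L^2_T$ using the preceding estimates, Lemma \ref{k_F_ests}, and assumption \eqref{kap_assump_4}.

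The main obstacle is the absorption in the elliptic step: the right-hand side of the elliptic estimate contains $P(\Lf[\eta])\D[u]$, which overlaps with the dissipation we are trying to control. The absorption is legitimate only because \eqref{kap_assump_2} guarantees $\Lf[\eta](T_\ast) < \delta_0$, so that $P(\Lf[\eta])$ is small. A closely related subtlety is why only $\ns{u}_{L^2_T H^{4N}}$ and $\ns{\nab_{\ast,0}^{4N-1} u}_{L^2_T H^2}$ appear in \eqref{ki_01} instead of the full $\ns{u}_{L^2_T H^{4N+1}}$: gaining the extra half-derivative would require $\sigma \ns{\eta}_{L^\infty_T H^{4N+1/2}}$, which our horizontal-only information cannot supply $\kappa$-uniformly (this is precisely the loss discussed in the introduction). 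Finally, Gronwall in time produces the prefactor $e^{\gamma T}$ and the $T \sup_{0 \le t \le T} \E^0[\eta]$ contribution on the right absorbs a residual boundary-stress term via a short-time argument.
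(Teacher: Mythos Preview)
Your overall architecture (horizontal estimates $\to$ elliptic upgrade $\to$ kinematic/heat equation for $\eta$ $\to$ duality for $\dt^{2N+1}u$) matches the paper, but the elliptic step is misconfigured and this creates a circularity.

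\textbf{The main gap.} Proposition \ref{lame_elliptic} is stated for \emph{Dirichlet} data, not for the stress boundary condition you describe. The paper exploits this deliberately: from \eqref{khe_01} one only knows $\dt^j \nab_{\ast,0}^{4N-2j} u \in L^2_T H^1$, and the first move is the trace estimate
\[
\sum_{j=0}^{2N}\ns{\dt^j u}_{L^2_T H^{4N-2j+1/2}(\Sigma)} \ls \sum_{j=0}^{2N}\ns{\dt^j \nab_{\ast,0}^{4N-2j} u}_{L^2_T H^1},
\]
which then feeds the Dirichlet problem \eqref{lam_dir} and yields $\ns{u}_{L^2_T H^{4N}}+\sum_{j\ge 1}\ns{\dt^j u}_{L^2_T H^{4N-2j+1}}$ via \eqref{ld_01}. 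The Neumann route you sketch would require $\sigma\Delta_\ast \dt^j\eta\in H^{4N-2j-1/2}$ as boundary data, which for $j=0$ is exactly the $\sigma^2\ns{\eta}_{H^{4N+3/2}}$ bound you are trying to prove. Relatedly, you assert that ``$\nab_{\ast,0}^{4N-1}u$ was already bounded in $L^2_T H^2$ by Proposition \ref{k_hor_est}''; this is false---\eqref{khe_01} gives only $H^1$, and the extra normal derivative comes \emph{after} the Dirichlet elliptic step, applied to the horizontally differentiated system \eqref{kappa_deriv} for $\alpha\in\mathbb{N}^2$ with $|\alpha|\le 4N-1$. Only then can one read the dynamic boundary condition to get $\sigma^2\ns{\eta}_{L^2_T H^{4N+3/2}}$ as in \eqref{ki_1}. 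So the order is: trace $\to$ Dirichlet elliptic $\to$ $\nab_{\ast,0}^{4N-1}u\in L^2_T H^2$ $\to$ boundary identity for $\sigma^2\eta$.

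\textbf{Secondary points.} (i) No absorption takes place in this proposition: the term $P(\Lf[\eta])\check{\D}[u]$ remains on the right of \eqref{ki_01}, and the smallness of $\Lf[\eta]$ is used only later in Theorem \ref{kappa_apriori}. (ii) The reason $\ns{u}_{L^2_T H^{4N+1}}$ is not claimed is the coefficient regularity in \eqref{ld_02}, which would introduce $\ns{\eta}_{4N+1/2}$ (unweighted), not a surface-tension term. (iii) The $L^\infty_T$ bounds for $\dt^j u$ are obtained through Lemma \ref{time_interp} applied to the $L^2_T$ dissipation, not by a second elliptic pass; the top-order $\ns{u}_{L^\infty_T H^{4N}}$ is then recovered from Dirichlet elliptic regularity with the $L^\infty_T H^{4N-1/2}(\Sigma)$ trace. (iv) Gronwall is already inside Proposition \ref{k_hor_est}; here you only combine with $\z$ and pick up the extra $\sup\f_\infty$, $T\sup\E^0[\eta]$, $\sigma^{-1}\E^\sigma[\eta]\E[u]$, and $\kappa^2\zf$ contributions arising in the elliptic and heat steps.
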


\begin{proof}
The argument is very similar to one used in \cite{JTW_GWP}, so we will only provide a sketch.  Let us write $\z$ to denote a term of the same form as the right hand side of \eqref{khe_01}.  From line to line we will let the polynomials vary as well as the universal constant $C>0$ appearing in $\gamma$.

First we use trace estimates to bound
\begin{equation}\label{ki_20}
 \sum_{j=0}^{2N} \ns{\dt^j u}_{L^2_T H^{4N-2j+1/2}(\Sigma)} \ls \sum_{j=0}^{2N} \ns{\dt^j\nab_{\ast,0}^{4N-2j} u}_{L^2_T H^1} \ls \z.
\end{equation}
Then we use elliptic regularity for the Lam\'{e} system with Dirichlet boundary conditions, Proposition \ref{lame_elliptic}, to deduce that
\begin{equation}\label{ki_11}
 \ns{u}_{L^2_T H^{4N}} + \sum_{j=1}^{2N}   \ns{\dt^j u}_{L^2_T H^{4N-2j+1}} \ls \z.
\end{equation}
Notice here that we have only used the estimate \eqref{ld_01} in order to avoid the introduction of the term $\ns{\eta}_{4N+1/2}$.  This causes no difficulty in getting $4N-2j+1$ estimates when  $j=1,\dotsc,2N$ but prevents us from obtaining an estimate of $\ns{u}_{4N+1}$ at this point.  Instead we sum the elliptic estimate for $\p^\alpha u$ obtained from \eqref{kappa_deriv} with $\alpha \in \mathbb{N}^2$ and $\abs{\alpha}\le 4N-1$ to obtain the estimate
\begin{equation}\label{ki_19}
 \ns{\nab_{\ast,0}^{4N-1} u}_2 \ls \z .
\end{equation}

Now we derive the $L^\infty$ in time estimate for $u$ and its time derivatives.  Lemma \ref{time_interp} provides us with  $L^\infty$ in time estimates at lower regularity:
\begin{equation}
 \sum_{j=1}^{2N-1} \ns{\dt^j u}_{L^\infty_T H^{4N-2j}} \ls \TE[u_0] +  \sum_{j=1}^{2N} \ns{\dt^j u}_{L^2_T H^{4N-2j+1}},
\end{equation}
and hence \eqref{ki_11} and the bound of $\ns{\dt^{2N} u }_{L^\infty_T H^0}$ provided by \eqref{khe_01}  imply that
\begin{equation}\label{ki_5}
 \sum_{j=1}^{2N} \ns{\dt^j u}_{L^\infty_T H^{4N-2j}} \ls \z.
\end{equation}
Similarly, Lemma \ref{time_interp} and \eqref{ki_20} imply that
\begin{equation}\label{ki_21}
   \ns{u}_{L^\infty_T H^{4N-1/2}(\Sigma)} \ls \TE[u_0] + \sum_{j=0}^{1} \ns{\dt^j u}_{L^2_T H^{4N-2j+1/2}(\Sigma)} \ls \z.
\end{equation}
We may then use \eqref{ki_21} and  the elliptic estimate of Proposition \ref{lame_elliptic} to bound
\begin{multline}\label{ki_22}
 \ns{u}_{L^\infty_T H^{4N}} \ls \ns{\rho \dt u}_{L^\infty_T H^{4N-2}} + \ns{F^1}_{L^\infty_T H^{4N-2}} + \ns{u}_{L^\infty_T H^{4N-1/2}(\Sigma)} \\
 \ls \Ef[\rho](T)   \ns{ \dt u}_{L^\infty_T H^{4N-2}} +\sup_{0\le t \le T} \f_\infty(t) +  \ns{u}_{L^\infty_T H^{4N-1/2}(\Sigma)}  \\
 \ls (1+\Ef[\rho](T)) \z +\sup_{0\le t \le T} \f_\infty(t)  \ls \z +\sup_{0\le t \le T} \f_\infty(t).
\end{multline}
Note that in the last inequality we have used the bound $\Ef[\rho](T) \z \ls \z$, which holds since we may increase the constant $C>0$ appearing in $\gamma$.

Next we derive the $L^2$ in time estimates for $\eta$ and its derivatives.  We first use the dynamic boundary condition and \eqref{ki_19} to estimate
\begin{multline}\label{ki_1}
\sigma^2  \ns{ \eta}_{L^2_T H^{4N+3/2}} \ls \sigma^2 \ns{\eta}_{L^2_T H^0} + \sigma^2  \ns{\Delta_\ast  \eta}_{L^2_T H^{4N-1/2}} \\
 \ls T \sup_{0\le t \le T} \E^0[\eta(t)] + \z   + \int_0^T \left((1+P(\E^0[\eta]) \ns{\nab_\ast \eta}_{4N-1/2}  \E[u]\right) dt \\
\ls  T \sup_{0\le t \le T} \E^0[\eta(t)] + \z   + \int_0^T \left((1+P(\E^0[\eta]) \frac{1}{\sigma} \E^\sigma[\eta]  \E[u]\right) dt.
\end{multline}
Then we use the parabolic modification of the kinematic boundary condition to get improved $L^2$ in time estimates for $\dt^j \eta$ for $j\ge 1$.  Standard heat equation estimates yield the bounds
\begin{equation}\label{ki_12}
 \ns{\dt \eta}_{L^2_T H^{4N-1}} + \sum_{j=2}^{2N+1} \ns{\dt^j \eta}_{L^2_T H^{4N-2j+2}} \ls \z + \int_0^T   \E[u] \frac{1}{\sigma} \E^\sigma[\eta] dt + \kappa^2 \zf.
\end{equation}

We then combine the estimates \eqref{khe_01} of Proposition \ref{k_hor_est} with \eqref{ki_11}, \eqref{ki_19}, \eqref{ki_5}, \eqref{ki_22}, \eqref{ki_1}, and \eqref{ki_12}  to deduce all estimates in \eqref{ki_01}  except that of $\ns{\rho J \dt^{2N+1} u}_{L^2_T \Hd}$.  To recover this estimate we simply appeal to \eqref{kappa_deriv} with $\pa = \dt^{2N}$.  A standard duality argument then allows us to estimate $\ns{\rho J \dt^{2N+1} u}_{L^2_T \Hd}$ in terms of all of the existing terms in \eqref{ki_01}; this yields \eqref{ki_01} in the form written.
\end{proof}

Finally, we combine Propositions \ref{k_hor_est} and \ref{kappa_improved}  in order to obtain $\kappa-$independent estimates.

\begin{thm}\label{kappa_apriori}
Assume \eqref{kap_assump_1}--\eqref{kap_assump_3}.  Assume also that $0<\kappa < \min\{1,\sigma_+,\sigma_-\}$.   There exists a universal constant $0 < \delta_1 < \delta_0 /2$ (where $\delta_0$ is from Proposition \ref{lame_elliptic}) and  a $0 < T_1 = T_1(\TE,\sigma) \le 1$  such that if $0<\delta < \delta_1$ in \eqref{kap_assump_1}--\eqref{kap_assump_3} and $0\le T \le \min\{T_1,T_\ast\}$, then
\begin{multline}\label{ka_01}
 \sup_{0\le t \le T} (\E[u(t)] + \E^\sigma[\eta(t)]) + \int_0^T (\check{\D}[u(t)] +\ns{\rho J \dt^{2N+1} u(t)}_{ \Hd} +\D^\sigma[\eta(t)] )dt
\\
+ \kappa  \sum_{j=0}^{2N} \int_0^T \ns{\dt^j \nab_\ast \eta(t)}_{4N-2j}  +\sigma \ns{\dt^j \Delta_\ast \eta(t) }_{4N-2j}  dt
\ls P(\TE[u_0,\eta_0]) +\sup_{0\le t \le T} \f_\infty(t)  + \int_0^T  \f_2(t) dt
\end{multline}
for a positive universal polynomial $P$ such that $P(0)=0$.
\end{thm}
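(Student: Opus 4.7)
The plan is to derive \eqref{ka_01} directly from the estimate \eqref{ki_01} of Proposition \ref{kappa_improved}, by exploiting the smallness of $\delta$, $T$, and the hypothesis $\kappa < \min\{\sigma_+,\sigma_-\}$ to absorb the problematic right-hand side terms either into the left-hand side of \eqref{ki_01} or into the target right-hand side of \eqref{ka_01}. Proposition \ref{kappa_improved} has already done the Gronwall-in-time work; what remains is a careful ``closing'' argument using the standing assumptions \eqref{kap_assump_1}--\eqref{kap_assump_3}.

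The first step is to tame the exponential prefactor $e^{\gamma T}$ appearing in \eqref{ki_01}, where $\gamma = C(1 + \If[\eta](T) + \Ef[\rho](T) + \Jf[u](T))$. Under our hypotheses one has $\If[\eta](T) + \Jf[u](T) \ls P(\TE)+1$ by \eqref{kap_assump_2} and $\Ef[\rho](T) \ls 1+P(\EEE) \ls 1+P(\TE)$ by \eqref{kap_assump_3}, so $\gamma$ is bounded by a universal polynomial in $\TE$. Choosing $T_1=T_1(\TE,\sigma)\le 1$ sufficiently small then yields $e^{\gamma T}\le 2$ on $[0,T_1]$, reducing the prefactor to a universal constant.

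Next I would absorb the nonlinear right-hand side terms of \eqref{ki_01}. The integrand $P(\Lf[\eta])\D[u]$ is controlled by choosing the universal threshold $\delta_1 < \delta_0/2$ small enough (relative to the polynomial $P$ produced by Proposition \ref{kappa_improved}) so that $P(\Lf[\eta](T))\le P(\delta_1)$ is a small universal constant, and this contribution is absorbed into the $\int_0^T \check\D[u]\,dt$ term on the left. For $(P(\E^0[\eta])+P(\Ef_\infty[\rho]))(\E[u]+\E^0[\eta])$ I factor out the suprema of the outer factors, estimate $\int_0^T(\E[u]+\E^0[\eta])\,dt\le T\sup_{[0,T]}(\E[u]+\E^\sigma[\eta])$, and absorb into the $\sup$ terms on the left by further restricting $T_1(\TE,\sigma)$. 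The term $T\sup\E^0[\eta]$ is handled identically, and the cross term is bounded by
\begin{equation*}
\int_0^T \E[u]\,\sigma^{-1}\E^\sigma[\eta]\,dt \le T\sigma^{-1}\bigl(\sup_{[0,T]}\E[u]\bigr)\bigl(\sup_{[0,T]}\E^\sigma[\eta]\bigr),
\end{equation*}
which is absorbed by taking $T_1$ small enough; this is where the $\sigma$-dependence of $T_1$ enters. The $\kappa^2\zf$ term is handled via \eqref{Xi_data_est} together with the bound $\kappa^2/\sigma \le \kappa \le 1$, giving
\begin{equation*}
\kappa^2\zf \ls \frac{\kappa^2}{\sigma}\ns{\sqrt{\sigma}\nab_\ast\eta_0}_{4N} + \kappa^2 P(\TE) \ls P(\TE[u_0,\eta_0]),
\end{equation*}
which is already of the form allowed on the right of \eqref{ka_01}. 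The dissipation $\ns{\rho J\dt^{2N+1}u}_{\Hd}$ is carried along from \eqref{ki_01} with no extra work.

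The main obstacle I anticipate is the simultaneous absorption step: one must choose $\delta_1$ universally small, then $T_1(\TE,\sigma)$ small enough to defeat the three independent ``bad'' mechanisms ($T\sup\E^0[\eta]$, the integrated polynomial prefactor, and the $\sigma^{-1}$-weighted cross term), all while avoiding any circular dependence in which the quantities being absorbed carry back hidden copies of the left-hand side. The key technical point is that after the $\gamma T$ prefactor is rendered universal, each remaining right-hand side nonlinearity is either (i) genuinely small in $\delta_1$, (ii) proportional to $T$ with a coefficient polynomial in $\TE$ and $\sigma^{-1}$, or (iii) data-only; so a single consistent choice of $\delta_1$ and $T_1(\TE,\sigma)$ closes the estimate and produces \eqref{ka_01} with right-hand side $P(\TE[u_0,\eta_0]) + \sup\f_\infty + \int_0^T \f_2\,dt$, as required.
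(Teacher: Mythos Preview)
Your overall strategy is right, but there is a genuine gap in the absorption step, precisely at the point you flag as the ``main obstacle.'' You assert that, after taming $e^{\gamma T}$, each remaining nonlinearity on the right of \eqref{ki_01} is either small in $\delta_1$, data-only, or ``proportional to $T$ with a coefficient polynomial in $\TE$ and $\sigma^{-1}$.'' That last claim is false. The term
\[
\int_0^T P(\E^0[\eta])\bigl(\E[u]+\E^0[\eta]\bigr)\,dt \;\le\; T\,\Bigl(\sup_{[0,T]} P(\E^0[\eta])\Bigr)\,\sup_{[0,T]}\bigl(\E[u]+\E^\sigma[\eta]\bigr)
\]
carries the unknown prefactor $\sup P(\E^0[\eta])$, which is \emph{not} controlled by $\TE$ through any of the standing assumptions \eqref{kap_assump_1}--\eqref{kap_assump_3} (the hypothesis on $\If[\eta]$ is at strictly lower regularity than $\E^0[\eta]$, and \eqref{kap_assump_2} only gives finiteness). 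The same circularity afflicts your treatment of the cross term $\int_0^T \E[u]\,\sigma^{-1}\E^\sigma[\eta]\,dt$, whose absorption coefficient $T\sigma^{-1}\sup\E[u]$ again depends on the solution. Since $T_1$ may depend only on $\TE$ and $\sigma$, you cannot choose it small relative to these unknown quantities; the absorption is circular exactly as you feared.

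The paper closes this gap with a continuity argument rather than direct absorption. After the reductions you describe (taming $e^{\gamma T}$, absorbing $P(\Lf[\eta])\check\D[u]$ via small $\delta_1$, and using $\sqrt{T}\le\sigma$ to remove the $\sigma^{-1}$), the estimate is recast abstractly as
\[
X(T) \;\le\; C\,Z(T) + \sqrt{T}\,P\bigl(X(T)\bigr),
\]
where $X(T)$ is the full left side of \eqref{ka_01} and $Z(T)$ collects the data and forcing terms. One then argues: either $X(T)\le 2CZ(T)$ on all of $[0,\min\{T_2,T_\ast\}]$, or there is a first time $T_3$ of equality, at which point substituting back gives $CZ(T_3)\le \sqrt{T_3}\,P(2CZ(T_3))$, hence a \emph{lower} bound $\sqrt{T_3}\ge \tfrac12\min_{z\in[0,2CZ_{\max}]} z/P(z)$. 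Since $Z_{\max}$ is controlled by $\TE$ via \eqref{kap_assump_4} and \eqref{Xi_data_est} (this is where $\kappa<\min\{\sigma_+,\sigma_-\}$ is used), this lower bound is a positive function of $\TE$ alone, yielding the required $T_1(\TE,\sigma)$. Your proposal is missing this bootstrap step; once you insert it in place of the direct absorption of the $P(\E^0[\eta])$ and cross terms, the argument closes.
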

\begin{proof}
We first notice that
\begin{equation}
 \gamma = C \left( 1 +   \If[\eta](T)+ \Ef[\rho](T) + \Jf[u](T) )  \right) \le C(1 + P(\TE)),
\end{equation}
and so we can make $T_2$ small in terms of $\TE$, $\sigma$, and a universal constant so that $e^{\gamma T} \le 2$, $\sqrt{T} P(\Ef[\rho](T)) \le 1$, and $\sqrt{T}  \le \sigma$ whenever $T \le \min\{T_2,T_\ast\}$.  Then Proposition \ref{kappa_improved} yields the estimate
\begin{multline}\label{ka_1}
 \sup_{0\le t \le T} (\E[u(t)] + \E^\sigma[\eta(t)]) + \int_0^T (\check{\D}[u(t)] +\ns{\rho J \dt^{2N+1} u(t)}_{ \Hd} +\D^\sigma[\eta(t)] )dt
\\
+ \kappa  \sum_{j=0}^{2N} \int_0^T \ns{\dt^j \nab_\ast \eta(t)}_{4N-2j}  + \sigma \ns{\dt^j \Delta_\ast \eta(t) }_{4N-2j}  dt
\le C\TE[u_0,\eta_0] + C \sup_{0\le t\le T} \f_\infty(t)
\\
+ C\int_0^T \f_2(t) dt
+ C\kappa^2 \zf + \sqrt{T} P\left(  \sup_{0\le t \le T} (\E[u(t)] + \E^\sigma[\eta(t)]  \right) +   C\sup_{0\le t \le T} \Lf[\eta(t)] \int_0^T \check{\D}[u(t)]dt
\end{multline}
for every $T \le \min\{T_2, T_\ast\}$, where   $P$ is a universal positive polynomial such that $P(0)=0$, and $C\ge 1$ is a universal constant.

Since
\begin{equation}
 \sup_{0\le t \le T} \Lf[\eta(t)] \le  \delta \le \delta_1,
\end{equation}
we may choose a universal $\delta_1>0$ such that $C \delta_1 =1/2$.  We may then absorb the last term on the right side of \eqref{ka_1} onto the left.  This yields the estimate
\begin{multline}\label{ka_2}
 \sup_{0\le t \le T} (\E[u(t)] + \E^\sigma[\eta(t)]) + \hal \int_0^T (\check{\D}[u(t)] +\ns{\rho J \dt^{2N+1} u(t)}_{ \Hd} +\D^\sigma[\eta(t)] )dt
\\
+ \kappa  \sum_{j=0}^{2N} \int_0^T \ns{\dt^j \nab_\ast \eta(t)}_{4N-2j}  + \sigma\ns{\dt^j \Delta_\ast \eta(t) }_{4N-2j}  dt
\\
\le C\TE[u_0,\eta_0] + C \sup_{0\le t\le T} \f_\infty(t) + C\int_0^T \f_2(t) dt + C \kappa^2 \zf
\\
+ \sqrt{T} P\left(  \sup_{0\le t \le T} (\E[u(t)] + \E^\sigma[\eta(t)]) \right)
\end{multline}
for every $0 \le T \le \min\{T_2,T_\ast\}$.

We may view \eqref{ka_2} abstractly as an inequality of the form
\begin{equation}\label{ka_3}
 X(T) \le C Z(T) + \sqrt{T} P(X(T))
\end{equation}
for $X,Z:[0,\min\{T_2,T_\ast\}] \to [0, \infty)$ continuous functions such that $Z$ is non-decreasing and $X(0) \le C Z(0)$.  By continuity we know that either $X(T) \le 2C Z(T)$ for all $0\le T \le \min\{T_2,T_\ast\}$, or else there exists a first time $0 < T_3 < \min\{T_2,T_\ast\}$ such that $X(T_3) = 2C Z(T_3).$  Plugging this equality into \eqref{ka_3} implies that
\begin{equation}
 2C Z(T_3) = X(T_3) \le C Z(T_3) + \sqrt{T_3} P( X(T_3)) = C Z(T_3) + \sqrt{T_3} P( 2C Z(T_3) ),
\end{equation}
and hence
\begin{equation}
 C Z(T_3) \le \sqrt{T_3}  P( 2C Z(T_3) ).
\end{equation}
From this we deduce that
\begin{equation}
 \sqrt{T_3} \ge \hal  \frac{2 C Z(T_3)}{ P( 2C Z(T_3) )} \ge \hal \min_{z \in [0,2C Z_{max}]} \frac{z}{P(z)},
\end{equation}
where
\begin{equation}
Z_{max} = \max_{0 \le T \le \min\{T_2,T_\ast\}} Z(T) = Z(\min\{T_2,T_\ast\}) \le  P(\TE) + \tilde{C}.
\end{equation}
The last estimate follows because of assumption \eqref{kap_assump_4}, from which the universal constant $\tilde{C} >0$ comes, and \eqref{Xi_data_est} combined with the bound $\kappa < \min\{1,\sigma_+,\sigma_-\}$.  Since $P(0)=0$, we then find that
\begin{equation}
 \sqrt{T_3} \ge  \hal \min_{z \in [0,2C(P({\TE}) + \tilde{C} ) ]} \frac{z}{P(z)} =: \sqrt{T_4} > 0.
\end{equation}
This leads us to define
\begin{equation}
 T_1 =T_1(\TE,\sigma) = \min\{T_2,T_4 \}
\end{equation}
so that if $0 \le T \le \min\{T_1,T_\ast\}$ we have the estimate
\begin{equation}
 X(T) \le 2C Z(T).
\end{equation}
Removing the abstraction, this implies that
\begin{multline}\label{ka_4}
 \sup_{0\le t \le T} (\E[u(t)] + \E^\sigma[\eta(t)]) + \hal \int_0^T (\check{\D}[u(t)] +\ns{\rho J \dt^{2N+1} u(t)}_{ \Hd} +\D^\sigma[\eta(t)] )dt
\\
+ \kappa  \sum_{j=0}^{2N} \int_0^T \ns{\dt^j \nab_\ast \eta(t)}_{4N-2j}  + \sigma \ns{\dt^j \Delta_\ast \eta(t) }_{4N-2j}  dt
\\
\le 2C\TE[u_0,\eta_0] + 2C \sup_{0\le t\le T} \f_\infty(t) + 2C\int_0^T \f_2(t) dt + 2C\kappa^2 \zf
\end{multline}
for $0 \le T \le \min\{T_1,T_\ast\}$.  This and \eqref{Xi_data_est} yield \eqref{ka_01}.
\end{proof}

 \section{The two-phase free boundary Lam\'{e} problem }\label{sec_lame_free}

Our goal in this section is to produce a solution to
\begin{equation}\label{lame_free_bndry}
\begin{cases}
  \rho \dt u - \diva \Sa u = F^1 & \text{in }\Omega\\
  \dt \eta  = u \cdot \n   &\text{on }\Sigma \\
  -\Sa u \n = - \sigma_+ \Delta_\ast \eta \n +  F^2_+   &\text{on } \Sigma_+ \\
  -\jump{\Sa u } \n =   \sigma_- \Delta_\ast \eta  \n - F^2_- &\text{on } \Sigma_- \\
  \jump{u} =0 &\text{on } \Sigma_- \\
  u_- = 0 &\text{on } \Sigma_b \\
  u(\cdot,0) = u_0, \eta(\cdot, 0) = \eta_0
 \end{cases}
\end{equation}
by passing to the limit $\kappa \to 0$ in the $\kappa-$approximation problem \eqref{kappa_problem}.

Our strategy is as follows.  First, we will combine the local existence result of Theorem \ref{kappa_contract} with the a priori estimates of Theorem \ref{kappa_apriori} to produce solutions on a time interval that is independent of $\kappa$ and that satisfy $\kappa-$independent estimates of the form \eqref{ka_01}.  Second, we will pass to the limit $\kappa \to 0$ to recover a solution to \eqref{lame_free_bndry}.

\subsection{Local existence on $\kappa-$independent intervals}

Our goal now is to combine Theorems \ref{kappa_contract} and \ref{kappa_apriori}.  We first describe some assumptions on the data and forcing terms that will be needed.

We say the data and forcing satisfy $\mathfrak{P}(\delta)$ on the time interval $[0,T_\ast]$ if the following hold.
\begingroup
\allowdisplaybreaks
\begin{align}
  &\text{The initial data } \dt^j u(\cdot,0) \text{ and }\dt^j \eta(\cdot,0) \text{ for }j=0,\dotsc,2N \text{ are given as before and}  \label{lame_assump_1}\\
& \quad\text{  satisfy the compatibility conditions } \eqref{kappa_ccs}. \nonumber  \\
&  \text{The data satisfy }\Lf[\eta_0] < \delta/2, \text{ where }\Lf[\eta_0] \text{ is given by }\eqref{l0_def}. \label{lame_assump_2}\\
 & \text{The forcing terms satisfy } \sup_{0\le t\le T_\ast} \f_\infty(t) + \int_0^{T_\ast} \f_2(t)dt \le P(\TE) \nonumber \\
& \quad \text{for a universal positive polynomial }P\text{ such that }P(0)=0.  \label{lame_assump_3}\\
& \text{The function } \rho \text{ is defined on }[0,T_\ast] \text{ and satisfies } \eqref{rho_assump_1} \text{ and } \eqref{rho_assump_2}. \label{lame_assump_4}
\end{align}
\endgroup

\begin{thm}\label{lame_local}
Assume that $0<\kappa < \min\{1,\sigma_+,\sigma_-\}$.  Let $0 < \delta_1$ be the universal constant and $0 < T_1 = T_1(\TE,\sigma)$ be from Theorem \ref{kappa_apriori}. Assume that the data and forcing satisfy $\mathfrak{P}(\delta_1)$ on the time interval $[0,T_\ast]$.  Then there exists $0 < T_2 = T_2(\TE,\sigma) \le T_1(\TE,\sigma)$  such that if $0 < T \le \min\{T_\ast,T_2\}$, then  the following hold.
\begin{enumerate}
 \item A unique  solution $(u,\eta)$  to \eqref{kappa_problem}  exists on the interval $[0,T]$  and achieves the initial data.
 \item The solution satisfies the estimate
\begin{equation}\label{lam_loc_01}
\j[u](T) + \i[\eta](T) < \infty,
\end{equation}
 where $\j[u]$ and  $\i[\eta]$ are as defined in \eqref{ab_def}.
 \item We have the estimate
\begin{multline} \label{lam_loc_02}
 \sup_{0\le t \le T} (\E[u(t)] + \E^\sigma[\eta(t)]) + \int_0^{T}(\check{\D}[u(t)] +\ns{\rho J \dt^{2N+1} u(t)}_{ \Hd} +\D^\sigma[\eta(t)] )dt
\\
+ \kappa  \sum_{j=0}^{2N} \int_0^{T} \ns{\dt^j \nab_\ast \eta(t)}_{4N-2j}  + \sigma\ns{\dt^j \Delta_\ast \eta(t) }_{4N-2j}  dt
\ls P(\TE[u_0,\eta_0]) +\sup_{0\le t \le T} \f_\infty(t)  + \int_0^{T}  \f_2(t) dt
\end{multline}
for a universal positive polynomial $P$ such that $P(0)=0$.  Here we recall the notation $\check{\D}$ defined in \eqref{dissipation_weak_u}.

 \item We also have the estimates
\begin{equation}\label{lam_loc_03}
 \Lf[\eta](T)  < \delta_1
\end{equation}
and
\begin{equation}\label{lam_loc_04}
 \If[\eta](T) + \Jf[u](T) \ls P(\TE) + 1
\end{equation}
for a universal positive polynomial $P$ such that $P(0)=0$, where $\Lf$ is defined by \eqref{l_def} and  $\If$ and $\Jf$ are defined by \eqref{ij_def}.

 \item One of the following is true.  Either $T_2 = T_1$, or else there exists a universal positive polynomial $P$ satisfying $P(0)=0$ and a universal constant $C>0$ such that
\begin{equation}\label{lam_loc_05}
\frac{\sigma \delta_1}{ C (1+ P({\EEE}))} \le T_2.
\end{equation}

\end{enumerate}
\end{thm}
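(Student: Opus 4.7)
The plan is to combine the short-time existence of Theorem \ref{kappa_contract} with the $\kappa$-independent a priori bounds of Theorem \ref{kappa_apriori} through a continuation argument whose only obstruction to extension is the smallness of $\Lf[\eta]$. First I would invoke Theorem \ref{kappa_contract} with $\delta = \delta_1$ to obtain a solution on some (possibly very short, $\kappa$-dependent) interval $[0,T_0]$ with $\Lf[\eta](T_0) \le \delta_1$. On this interval the hypotheses \eqref{kap_assump_1}--\eqref{kap_assump_3} of Theorem \ref{kappa_apriori} all hold, since the forcing assumption \eqref{kap_assump_4} is guaranteed by \eqref{lame_assump_3} and the density assumption is \eqref{lame_assump_4}.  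Hence the $\kappa$-independent estimate \eqref{ka_01} applies on $[0,\min\{T_0,T_1\}]$; in particular $\sup_t \E[u(t)]$, $\sup_t \E^\sigma[\eta(t)]$, and $\int \D^\sigma[\eta]$ are bounded by a universal polynomial in $\TE$.

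Next I would set up the continuation. Define
\[
 T^\star := \sup\{\, T \in (0,\min\{T_\ast, T_1\}] : \text{a solution to \eqref{kappa_problem} exists on } [0,T] \text{ with } \Lf[\eta](T) < \delta_1 \,\}.
\]
For every $T < T^\star$, Theorem \ref{kappa_apriori} yields bounds in \eqref{ka_01} that are uniform in both $T$ and $\kappa$.  If we had $T^\star < \min\{T_\ast,T_1\}$ with $\Lf[\eta(T^\star)] < \delta_1$ strictly, then applying Theorem \ref{kappa_contract} with initial time shifted to $T^\star$ and initial data $(u(T^\star),\eta(T^\star))$ (whose compatibility conditions \eqref{kappa_ccs} follow automatically since the PDE is satisfied) would extend the solution past $T^\star$, contradicting maximality.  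Thus the only way to have $T^\star < \min\{T_\ast, T_1\}$ is $\Lf[\eta(T^\star)] = \delta_1$.

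The quantitative heart of the proof is to bound the growth of $\Lf[\eta]$. Using the kinematic condition $\dt\eta = u\cdot\n - \kappa\Xi$ and the pointwise identity $\eta(T) = \eta_0 + \int_0^T \dt\eta(t)\,dt$, together with trace theory and standard product estimates applied to the norms comprising $\Lf$, one finds
\[
 \Lf[\eta](T) \le \Lf[\eta_0] + C\int_0^T \left( \frac{1}{\sigma^{1/2}}\sqrt{\E[u(t)] + \E^\sigma[\eta(t)]} + \kappa\, \|\Xi(t)\| \right) dt.
\]
The factor $\sigma^{-1/2}$ reflects that controlling $\Lf$ from a norm natural to $\E^\sigma$ loses half a power of $\sigma$, and is the source of the $\sigma$ in \eqref{lam_loc_05}. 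Inserting the $\kappa$-uniform bounds \eqref{ka_01} together with \eqref{Xi_data_est} and the assumption $\kappa \le 1$, the right side is at most $\Lf[\eta_0] + CT(1+P(\TE))/\sigma$ for a universal polynomial $P$. Since $\Lf[\eta_0] < \delta_1/2$, this yields $\Lf[\eta](T) < \delta_1$ whenever $T \le \sigma\delta_1/(2C(1+P(\TE)))$.

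Defining $T_2 := \min\{T_1,\ \sigma\delta_1/(2C(1+P(\TE)))\}$ then delivers the dichotomy of statement (5): either $T_2 = T_1$, or the explicit lower bound \eqref{lam_loc_05} holds. Statements (1)--(2) follow from the continuation together with \eqref{kc_01}, statement (3) is \eqref{ka_01} restricted to $[0,T_2]$, the first half of (4) follows by construction, and \eqref{lam_loc_04} follows from \eqref{kc_02} (whose right side is $\kappa$-independent) combined with the continuation argument. The main obstacle I anticipate is the careful bookkeeping at each extension step: one must verify that the compatibility conditions \eqref{kappa_ccs} and the hypotheses of both Theorem \ref{kappa_contract} and Theorem \ref{kappa_apriori} remain intact under restart, with constants in $\TE$ that do not deteriorate as $\kappa \to 0$. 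In particular, it is crucial that the bound on $\Lf[\eta]$ derived above uses only the $\kappa$-uniform part \eqref{ka_01} and not the $\kappa$-dependent bound \eqref{kc_01}.
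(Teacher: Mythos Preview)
Your overall strategy is correct and matches the paper's: a continuation argument where the only obstruction is $\Lf[\eta]$ reaching $\delta_1$, with Theorem \ref{kappa_apriori} providing $\kappa$-independent bounds on any interval where the solution persists with $\Lf[\eta] < \delta_1$.  One correction: the second equation in \eqref{kappa_problem} reads $\dt\eta - \kappa\Delta_\ast\eta = u\cdot\n - \kappa\Xi$, not $\dt\eta = u\cdot\n - \kappa\Xi$, so your growth estimate for $\Lf[\eta]$ via the fundamental theorem of calculus would have to absorb an extra $\kappa\Delta_\ast\eta$ term.  The paper sidesteps this entirely by using Lemma \ref{time_interp} instead of the equation: from $\ns{\eta(T)}_{4N-1/2} \le \Lf[\eta_0] + C\int_0^T(\ns{\eta}_{4N} + \ns{\dt\eta}_{4N-1})\,dt$ one bounds the integrand by $\tfrac{1}{\sigma}\E^\sigma[\eta]$ (the $\sigma^{-1}$ coming from $\sigma\ns{\nab_\ast\dt\eta}_{4N-2}$ inside $\E^\sigma[\eta]$), and then invokes \eqref{ka_01}.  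This yields exactly your final inequality $\Lf[\eta](T) \le \delta_1/2 + CT\sigma^{-1}(1+P(\TE))$ without touching the parabolic equation for $\eta$.
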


\begin{proof}
For $r >0$ let $\mathfrak{S}(r)$ denote the proposition that the following three conditions hold.  First, a unique solution to \eqref{kappa_problem} exists on $[0,r]$ and achieves the initial data.  Second, the solution satisfies \eqref{lam_loc_01} with $T$ replaced by $r$.  Third, the solution satisfies \eqref{lam_loc_03} and \eqref{lam_loc_04} with $T$ replaced by $r$.  Define the set
\begin{equation}
 \mathfrak{R} = \{ r \in [0,\min\{T_\ast,T_1\}] \;\vert \;  \mathfrak{S}(r) \text{ holds} \}.
\end{equation}
Theorem \ref{kappa_apriori} guarantees that $\delta_1 \le \delta_0/2$, where $\delta_0>0$ is defined by Proposition \ref{lame_elliptic}.  We may then apply Theorem \ref{kappa_contract} to see that $\mathfrak{R} \neq \varnothing$.  Let $T_{\mathfrak{R}} = \sup \mathfrak{R} \in (0, T_\ast]$.

If $T_{\mathfrak{R}} = \min\{T_\ast,T_1\}$ then we set $T_2 = T_1$, and we are done.  Indeed, the hypotheses of Theorem \ref{kappa_apriori} are satisfied, and so \eqref{lam_loc_02} follows.   We may assume then that $T_{\mathfrak{R}}  < \min\{T_\ast,T_1\}$ throughout the rest of the proof.

If $T_{\mathfrak{R}}= \max \mathfrak{R}$, then a standard continuation argument, employing Theorem \ref{kappa_contract} to extend the solutions, yields a contradiction, and so we deduce that $T_{\mathfrak{R}} \notin \mathfrak{R}$.  This means that $\mathfrak{S}(T_{\mathfrak{R}})$ fails.  We claim that in fact it is only the third condition in $\mathfrak{S}(T_{\mathfrak{R}})$ that can fail, i.e. the first two conditions remain true at $T_{\mathfrak{R}}$.

We know that $\mathfrak{S}(T_{\mathfrak{R}} -\ep)$ is true for $\ep$ sufficiently small.  The a priori estimates of Theorem \ref{kappa_apriori} are then valid and provide $\ep-$independent  control of $\E(T_{\mathfrak{R}} -\ep)$ in terms of $\TE$.  We may then use Theorem \ref{kappa_contract} to extend the solutions to $T_3 = T_{\mathfrak{R}} - \ep + T_0(\kappa,\sigma,\delta,\TE)$.  When $\ep$ is sufficiently small we have that $T_3 > T_{\mathfrak{R}}$, and so the first condition of $\mathfrak{S}(T_{\mathfrak{R}})$ must hold.  Additionally, the functional setting of Theorem \ref{kappa_contract} guarantees that the second condition of $\mathfrak{S}(T_{\mathfrak{R}})$ must also hold.  We deduce then that it is the third condition of $\mathfrak{S}(T_{\mathfrak{R}})$ that fails at $T_{\mathfrak{R}}$, proving the claim.

From Lemma \ref{time_interp}, Theorem \ref{kappa_apriori}, and \eqref{lame_assump_3} we may estimate
\begin{equation}
\If[\eta](T_{\mathfrak{R}}) + \Jf[u](T_{\mathfrak{R}}) \ls \TE[u_0,\eta_0] + \int_0^{T_{\mathfrak{R}}}  (\E[u(t)] + \E^0[\eta(t)] )dt \ls  \TE[u_0,\eta_0] + \int_0^{T_{\mathfrak{R}}}  P(\TE) dt,
\end{equation}
and since $T_{\mathfrak{R}} \le T_1 \le 1$, we then have that  $\If[\eta](T_2) + \Jf[u](T_2) \ls P(\TE) + 1.$ This means that estimate \eqref{lam_loc_04} remains true at $T_{\mathfrak{R}}$, so it is actually estimate \eqref{lam_loc_03} that fails at time $T_{\mathfrak{R}}$.  Arguing similarly, we  deduce that
\begin{multline}
 \delta_1 =  \Lf[\eta(T_{\mathfrak{R}})] = \ns{\eta(T_{\mathfrak{R}})}_{4N-1/2} \le \Lf[\eta_0] + C\int_0^{T_{\mathfrak{R}}} (\ns{\eta(t)}_{4N} + \ns{\dt \eta(t)}_{4N-1}) dt \\
\le \frac{\delta_1}{2} + \frac{C}{\sigma} \int_0^{T_{\mathfrak{R}}} \E^\sigma[\eta(t)] dt \le \frac{\delta_1}{2} + \frac{C T_{\mathfrak{R}} }{\sigma}(1 + P({\EEE})),
\end{multline}
and hence that
\begin{equation}\label{lam_loc_1}
\frac{\sigma \delta_1}{2 C (1+ P({\EEE}))} \le T_{\mathfrak{R}}.
\end{equation}
To conclude the proof we then set $T_2 = T_{\mathfrak{R}} /2$, apply Theorem \ref{kappa_apriori}, and use \eqref{lam_loc_1} to produce \eqref{lam_loc_05}.
\end{proof}

\subsection{Sending $\kappa \to 0$}

Our aim now is to use Theorem \ref{lame_local} to send $\kappa \to 0$ in  \eqref{kappa_problem} in order to produce a solution to \eqref{lame_free_bndry}.

\begin{thm}\label{lame_exist}
Let $\delta_1 >0$ be the universal constant  from Theorem \ref{kappa_apriori}, and assume that the data and forcing satisfy $\mathfrak{P}(\delta_1)$ on the time interval $[0,T_\ast].$  There exists a $0 < T_3 = T_3(\TE)$ such that if  $0 < T \le \min\{T_\ast,T_3\}$, then  the following hold.
\begin{enumerate}
 \item A unique  solution $(u,\eta)$  to \eqref{lame_free_bndry}  exists on the interval $[0,T]$  and achieves the initial data.

 \item We have the estimate
\begin{multline} \label{lamex_01}
 \sup_{0\le t \le T} (\E[u(t)] + \E^\sigma[\eta(t)]) + \int_0^{T}(\D[u(t)] +\ns{\rho J \dt^{2N+1} u(t)}_{ \Hd} +\D^\sigma[\eta(t)] )dt
\\
\ls P(\TE[u_0,\eta_0]) +\sup_{0\le t \le T} \f_\infty(t)  + \int_0^{T}  \f_2(t) dt
\end{multline}
for a universal positive polynomial $P$ such that $P(0)=0$.

 \item We have the estimates
\begin{equation}\label{lamex_02}
 \Lf[\eta](T)  < \delta_1
\text{ and }
 \If[\eta](T) + \Jf[u](T) \ls P(\TE) + 1
\end{equation}
for a universal positive polynomial $P$ such that $P(0)=0$, where $\Lf[\eta]$ is defined by \eqref{l_def} and  $\If[\eta]$ and $\Jf[u]$ are defined by \eqref{ab_def}.

 \item We  have the estimate
\begin{multline}\label{lamex_04}
 \sup_{0\le t \le T} \sum_{j=1}^{2N} \ns{\dt^j \eta(t) }_{4N-2j+3/2} + \int_0^T \left( \ns{\dt \eta}_{4N-1/2} +  \sum_{j=2}^{2N+1} \ns{\dt^j \eta(t) }_{4N-2j+5/2} \right)dt\\
\ls P(\TE) +\sup_{0\le t \le T} \f_\infty(t)  + \int_0^{T}  \f_2(t) dt
\end{multline}
for a universal positive polynomial $P$ such that $P(0)=0$.

 \item We also have the estimate
\begin{multline}\label{lamex_05}
 \sup_{0\le t \le T} \ns{\eta(t)}_{4N+1/2} \le \exp\left(C T \int_0^T \ns{u(t)}_{H^{4N+1/2}(\Sigma)} dt \right) \\
 \times \left(\ns{\eta_0}_{4N+1/2} + T \int_0^T  \ns{u(t)}_{H^{4N+1/2}(\Sigma)} dt \right).
\end{multline}

\end{enumerate}
\end{thm}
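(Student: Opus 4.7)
My plan is to pass to the limit $\kappa \to 0$ in the $\kappa$-approximation problem \eqref{kappa_problem} using the family of solutions $\{(u^\kappa, \eta^\kappa)\}$ from Theorem \ref{lame_local}. The key observation is that all of the quantitative bounds \eqref{lam_loc_02}--\eqref{lam_loc_04} and the existence interval from \eqref{lam_loc_05} are independent of $\kappa \in (0, \min\{1, \sigma_+, \sigma_-\})$. Accordingly, setting $T_3(\TE) \sim \sigma \delta_1 / (C(1+P(\TE)))$ (with $\sigma$ fixed, absorbed into the polynomial structure), I would fix a sequence $\kappa_n \downarrow 0$, work with $(u^n, \eta^n) := (u^{\kappa_n}, \eta^{\kappa_n})$ on the common interval $[0, T_3]$, and use \eqref{lam_loc_02} to extract weak-$\ast$ and weak subsequential limits at the top regularity level. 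Successive Aubin--Lions-style compactness applied to the time derivatives yields strong convergence in slightly lower Sobolev norms, which is enough to pass to the limit in the nonlinear coefficients $\a[\eta^n]$ and $\n[\eta^n]$.

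The two $\kappa$-dependent terms $\kappa_n \Delta_\ast \eta^n$ and $\kappa_n \Xi^n$ both vanish: the uniform bound $\sigma \int_0^{T_3} \ns{\Delta_\ast \eta^n}_{4N} \ls 1$ coming from \eqref{lam_loc_02} together with $\sigma$ fixed gives $\kappa_n \Delta_\ast \eta^n \to 0$, and \eqref{Xi_bound} bounds $\Xi^n$ uniformly so that $\kappa_n \Xi^n \to 0$. The limit $(u, \eta)$ therefore solves \eqref{lame_free_bndry} and attains the initial data. Lower semicontinuity under the weak-$\ast$/weak limits immediately delivers \eqref{lamex_02} from \eqref{lam_loc_03}--\eqref{lam_loc_04} and a preliminary version of \eqref{lamex_01} with $\check{\D}[u]$ in place of $\D[u]$. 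Uniqueness follows from a standard energy argument applied to the difference of two solutions, analogous to Proposition \ref{weak_unique} but supplemented by a Gronwall-type transport estimate for the difference of the $\eta$'s.

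The remaining estimates I would derive from the transport structure of \eqref{lame_free_bndry}. For \eqref{lamex_04}, the kinematic condition $\dt \eta = u \cdot \n$ together with the trace inequality $\norm{u}_{H^{4N-1/2}(\Sigma)} \ls \norm{u}_{4N}$ and the control of $\dt^j u$ from the preliminary estimate gives the half-derivative improvement of each $\dt^j \eta$ over the pointwise $\eta$ bound; iterating in time then closes the full statement. For the top-order bound \eqref{lamex_05}, I would apply a fractional horizontal operator $\Lambda^{4N+1/2}$ to the integrated identity $\eta(t) = \eta_0 + \int_0^t u(s) \cdot \n(s)\,ds$, control $\Lambda^{4N+1/2}(u \cdot \n)$ by Kato--Ponce-type commutator estimates in terms of $\norm{u}_{H^{4N+1/2}(\Sigma)}$ and $\ns{\eta}_{4N+1/2}$, apply Cauchy--Schwarz in time (producing the outer factor of $T$), and close via Gronwall's inequality in the form claimed. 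Once $\eta \in L^\infty_T H^{4N+1/2}$ is available, the dynamic boundary term $\sigma_\pm \Delta_\ast \eta$ lies in $L^\infty_T H^{4N-3/2}(\Sigma)$, which through Proposition \ref{lame_elliptic} upgrades the interior regularity of $u$ from $H^{4N}$ to $H^{4N+1}$ in $L^2_T$, promoting $\check{\D}[u]$ to $\D[u]$ and yielding the full \eqref{lamex_01}.

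The chief technical obstacle is \eqref{lamex_05}, because the natural a priori estimate for the $H^{4N+1/2}$-transport of $\eta$ requires knowing the trace of $u$ in $H^{4N+1/2}(\Sigma)$, which by trace theory requires $u \in H^{4N+1}(\Omega)$---precisely what $\check{\D}[u]$ does not yet provide. The asymmetric form of \eqref{lamex_05}, in which $\norm{u}_{H^{4N+1/2}(\Sigma)}$ appears only inside a time integral carrying an additional $T$ prefactor, is exactly what makes the circular dependence self-consistent: the weaker $L^2_T$-trace control of $u$ available through $\check{\D}[u]$ suffices to close the Gronwall loop at this intermediate stage, and only afterward does the improved control of $\eta$ feed back into the Lam\'e elliptic estimate to deliver $\D[u]$.
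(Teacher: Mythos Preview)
Your approach to passing to the limit $\kappa \to 0$ is essentially the same as the paper's, and your resolution of the circularity between $\ns{\eta}_{4N+1/2}$ and $\ns{u}_{4N+1}$ is exactly right: the paper uses the trace bound $\ns{u}_{H^{4N+1/2}(\Sigma)} \ls \ns{\nab_{\ast,0}^{4N-1} u}_{2}$, which is already contained in $\check{\D}[u]$, to close \eqref{lamex_05} via the Danchin-type transport estimate you describe, and only then feeds $\eta \in L^\infty_T H^{4N+1/2}$ back into the elliptic estimate (the paper uses the Dirichlet form \eqref{ld_02} with the trace of $u$ as data, rather than the dynamic boundary condition you suggest, but either route works).

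There is, however, a genuine gap in your handling of the existence time. You set $T_3 \sim \sigma \delta_1 / (C(1+P(\TE)))$ and then assert that $\sigma$ can be ``absorbed into the polynomial structure.'' It cannot: the paper's convention for universal constants explicitly forbids dependence that blows up as $\sigma_\pm \to 0$, and a time interval that shrinks to zero with $\sigma$ is exactly what the theorem statement $T_3 = T_3(\TE)$ rules out. The paper handles this in a separate second step: once the theorem is established with $T_3 = T_3(\TE,\sigma)$, one runs a continuation argument whose key input is the $\sigma$-independent estimate \eqref{lamex_04}. The point is that \eqref{lamex_04} controls $\sup_t \ns{\dt \eta(t)}_{4N-1/2}$ without any $1/\sigma$ factor, so the fundamental theorem of calculus gives a $\sigma$-free bound on the growth of $\Lf[\eta](T) = \sup_t \ns{\eta(t)}_{4N-1/2}$. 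This replaces the crude estimate behind \eqref{lam_loc_05} (which used $\frac{1}{\sigma}\E^\sigma[\eta]$ to bound $\ns{\eta}_{4N}$) and shows that the obstruction $\Lf[\eta](T) \ge \delta_1/2$ cannot occur before a time bounded below purely in terms of $\TE$. Without this second step your $T_3$ genuinely depends on $\sigma$, and the theorem as stated is not proved.
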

\begin{proof}

We divide the proof into two steps.  In the first we will initially prove the theorem with the weaker condition that $T_3 = T_3(\TE,\sigma)$, i.e. that $T_3$ is allowed to depend on $\sigma$ as well as the data, but not on $\kappa$.  In the second we will use the first step and some auxiliary arguments to remove the $\sigma$ dependence.

Step 1 -- $T_3 = T_3(\TE,\sigma)$.

For each $0 < \kappa  < \min\{1,\sigma_+,\sigma_-\}$ Theorem \ref{lame_local} provides us with a pair $(u_\kappa,\eta_\kappa)$ solving \eqref{kappa_problem} on $(0,T_2)$ and satisfying the conclusions of the theorem.  We shall consider $\kappa$ to index a sequence of values chosen in $(0,\min\{1,\sigma_+,\sigma_-\})$ that decrease to $0$.  The $\kappa-$independent estimates of \eqref{lam_loc_02} show that the sequence $\{(u_\kappa,\eta_\kappa)\}_\kappa$ is bounded uniformly in the function space determined by the first line of \eqref{lam_loc_02} (i.e. the left hand side of the inequality with $\kappa=0$).  These uniform bounds allow us to argue as in  Theorem 6.3 of \cite{GT_lwp}, using weak and weak-$\ast$ compactness arguments together with interpolation and lower semi-continuity arguments, to extract a subsequence (still denoted by $\kappa$) such that
\begin{equation}\label{lamex_1}
\begin{split}
 \dt^j u_\kappa &\to \dt^j u \text{ in } C^0([0,T];H^{4N-2j-2}(\Omega)) \text{ for }j=0,1,2 \\
 \dt^j \eta_\kappa &\to \dt^j \eta \text{ in } C^0([0,T];H^{4N-2j-1}(\Sigma)) \text{ for }j=0,1,2,
 \end{split}
\end{equation}
where $(u,\eta)$ satisfies \eqref{lam_loc_02} and achieves the initial data.  Note here that the convergence \eqref{lamex_1} can be improved to a larger range of $j$ and to higher regularity for the various $\eta$ terms.  We state \eqref{lamex_1} as is because it is more than sufficient to pass to the limit in \eqref{kappa_problem} and deduce that $(u,\eta)$ are a strong solution to \eqref{lame_free_bndry}.
The estimates \eqref{lamex_02}  follow from \eqref{lam_loc_03} and \eqref{lam_loc_04} using similar lower semi-continuity arguments.

It remains to derive the improved estimates.  This entails improving \eqref{lam_loc_02} to \eqref{lamex_01} by obtaining an estimate for $\ns{u}_{L^2_T H^{4N+1}}$ and also proving  \eqref{lamex_04}.  To accomplish the first we will need \eqref{lamex_05}.   This estimate is proved in Step 1 of Theorem 5.4 of \cite{GT_lwp} by using estimates developed in \cite{danchin} for solutions to the kinematic transport equation \eqref{lame_xport}.

With \eqref{lamex_05} in hand, we use \eqref{lam_loc_02} and \eqref{lame_assump_3} in conjunction with the simple estimate
\begin{equation}
 \ns{u}_{H^{4N+1/2}(\Sigma)} \ls \ns{\nab_{\ast,0}^{4N-1} u}_{H^{3/2}(\Sigma)} \ls \ns{\nab_{\ast,0}^{4N-1} u}_{2}
\end{equation}
in order to bound
\begin{equation}
  \sup_{0\le t \le T} \ns{\eta(t)}_{4N+1/2} \ls \exp\left(C T  P(\TE) \right)
  \left(\TE + T  P(\TE) \right).
\end{equation}
Then if $T_3 \le T_2$ is taken sufficiently small, we may bound
\begin{equation}
 \sup_{0\le t \le T} \ns{\eta(t)}_{4N+1/2} \ls P(\TE).
\end{equation}
We then appeal to the elliptic estimate \eqref{ld_02} of Proposition \ref{lame_elliptic} to bound
\begin{multline}
 \int_0^T \ns{u(t)}_{4N+1} dt \ls \int_0^T \left(  \ns{\rho \dt u(t)}_{4N-1} + \ns{F^1(t)}_{4N-1} + \ns{u(t)}_{H^{4N+1/2}(\Sigma)} \right) dt \\
 +  \sup_{0\le t \le T} \ns{\eta(t)}_{4N+1/2} \int_0^T \left( \ns{\rho \dt u(t)}_{2} + \ns{F^1(t)}_{2} + \ns{u(t)}_{H^{7/2}(\Sigma)} \right) dt.
\end{multline}
Hence \eqref{rho_assump_1}, \eqref{rho_assump_2}, and \eqref{lam_loc_02} allow us to bound
\begin{multline}\label{lamex_3}
  \int_0^T \ns{u(t)}_{4N+1} dt \ls \int_0^T \left(\check{D}[u(t)] + \f_2(t) \right) dt + T(1+P(\TE) ) \sup_{0 \le t \le T} \left( \E[u(t)] + \f_\infty(t) \right)
 \\ \ls P(\TE) +\sup_{0\le t \le T} \f_\infty(t)  + \int_0^{T}  \f_2(t) dt
\end{multline}
once we further restrict $T_3$ so that $T_3 (1+P(\TE) )\le 1$.  Summing \eqref{lamex_3} and \eqref{lam_loc_02} then yields \eqref{lamex_01}.

Finally,  we prove the improved estimates \eqref{lamex_04}.  For this we will use the fact that $(u,\eta)$ satisfy the kinematic equation
\begin{equation}\label{lame_xport}
 \dt \eta = u_3 +  \nabla_\ast \eta\cdot u\text{ on }\Sigma.
\end{equation}
We may use this equality as in Theorem 5.4 of \cite{GT_lwp}, using the usual estimates of products in Sobolev spaces, to deduce that
\begin{multline}\label{lamex_2}
  \sup_{0\le t \le T} \sum_{j=1}^{2N} \ns{\dt^j \eta(t) }_{4N-2j+3/2} +   \int_0^T \left( \ns{\dt \eta}_{4N-1/2} +  \sum_{j=2}^{2N+1} \ns{\dt^j \eta(t) }_{4N-2j+5/2} \right)dt \\
\le P\left(  \sup_{0\le t \le T} (\E[u(t)] + \E^\sigma[\eta(t)]) + \int_0^{T}(\D[u(t)]  +\D^\sigma[\eta(t)] )dt   \right)
\end{multline}
for some universal positive polynomial with $P(0)=0$.  Then from \eqref{lamex_2}, \eqref{lamex_01}, and \eqref{lame_assump_3} we deduce that \eqref{lamex_04} holds.

Step 2 -- Improvement to $T_3 = T_3(\TE)$

With the Theorem in hand for a $T_3 = T_3(\TE,\sigma)$, we can employ a continuation argument in order to remove the dependence on $\sigma$.  The argument is similar to the one used in Theorem \ref{lame_local}, so in the interest of brevity we will only point out the key point.  This lies in the fact that our result from Step 1 requires $\mathfrak{P}(\delta_1)$ to hold on $[0,T_\ast]$, which in turn demands that $\ns{\eta_0}_{4N-1/2} < \delta_1/2$.  The estimate \eqref{lamex_04} provides us with an estimate of $\sup_{0 \le t \le T} \ns{ \dt \eta(t)}_{4N-1/2}$, which when coupled to the fundamental theorem of calculus, allows us to estimate $\mathfrak{L}[\eta](T)$.  Using this, we can prove that the theorem remains true on a time interval $[0, \min\{T_\ast, T_3\}]$, where either $T_3 = T_\ast$ or else $T_3$ is when $\ns{\eta(T_3)}_{4N-1/2}  \ge \frac{\delta_1}{2}.$  We may then argue as in \eqref{lam_loc_1}, using the estimate for $\ns{\dt \eta}_{4N-1/2}$ in place of Lemma \ref{time_interp}, to show that $T_3$ is bounded below by a positive function of $\TE$ that is independent of $\sigma$.  Hence $T_3 = T_3(\TE)$.
\end{proof}

\begin{remark}
 The improved $\eta$ estimates of \eqref{lamex_04} are the key to eliminating the dependence of the existence interval on $\sigma$.  These in turn depend on the structure of the equation $\dt \eta = u \cdot \n$ in place of the $\kappa$ approximation.  We thus see the importance of passing to the limit $\kappa \to 0$ before attempting to remove the dependence on $\sigma$.
\end{remark}

\section{The transport problem}\label{sec_xport}


Our goal in this section is to study the transport problem
\begin{equation}\label{xport_fundamental}
\begin{cases}
  \dt q  +(\a^T u - K \dt \theta e_3) \cdot \nab q + \diva(u) q = f & \text{in } \Omega \times (0,T) \\
 q(\cdot, 0) = q_0 &\text{in }\Omega,
\end{cases}
\end{equation}
where $u$, $\eta$ (and hence $\a$, etc), and $f$ are given.  To simplify the structure of \eqref{xport_fundamental} we will initially study the more general problem
\begin{equation}\label{xport_proto}
 \dt q + v \cdot \nab q +  c q = f,
\end{equation}
where $v$, $c$, and $f$ are given.

The key to the analysis of a transport problem \eqref{xport_proto} on a domain with boundaries is the behavior of $v$ at the boundary.  For the $v$ arising in \eqref{xport_fundamental} with $(u,\eta)$ satisfying  \eqref{lame_free_bndry} we have the following crucial identities: on $\Sigma_\pm$ we have
\begin{equation}\label{xport_v1}
v \cdot e_3 =  (\a^T u - K \dt \theta e_3)\cdot e_3 = K(u \cdot J\a e_3 - \dt \eta) = K (u_\pm \cdot\n_\pm - \dt \eta_\pm) =0,
\end{equation}
while on $\Sigma_b$ we have
\begin{equation}\label{xport_v2}
 v \cdot e_3 =  (\a^T u - K \dt \theta e_3)\cdot e_3 = K(u_- \cdot J \a e_3 - \tilde{b}_1 \dt \bar{\eta}_+ - \tilde{b}_2 \dt \bar{\eta}_1) =0
\end{equation}
since $\tilde{b}_1 = \tilde{b}_2 =0$  and $u_- =0$ on $\Sigma_b$. Note that the condition $u_- =0$ on $\Sigma_b$ could  be weakened to $u_{3,-}=0$ on $\Sigma_b$ since $J \a e_3=e_3$ on $\Sigma_b$.  The upshot of these identities is that we may assume that the vector field $v$ satisfies
\begin{equation}\label{xport_v}
 v \cdot e_3 =0 \text{ on } \p \Omega.
\end{equation}

Notice that we do not couple the transport equation to boundary conditions, and in fact the equations in $\Omega_+$ and $\Omega_-$ are decoupled from one another.  This allows us to solve the equations in each domain separately.  To this end we will let $\Upsilon = \Omega_+$ or $\Upsilon = \Omega_-$ and discuss the transport equation in $\Upsilon$.   Let us now define various functionals that will appear in our analysis of the transport problem.

For a function $g$  we define
\begin{equation}\label{rqe_def}
 \Qf_e[g] = \sum_{j=1}^{2N} \ns{\dt^j g}_{4N-2j+1} \text{ and } \Rf_e[g] = \sum_{j=0}^{2N-1} \ns{\dt^j g}_{4N-2j-1}.
\end{equation}
and
\begin{equation}\label{rqd_def}
 \Qf_d[g] = \ns{\dt g}_{4N-1} + \sum_{j=2}^{2N+1} \ns{\dt^j g}_{4N-2j+2} \text{ and } \Rf_d[g] = \ns{g}_{4N-1} +  \sum_{j=1}^{2N} \ns{\dt^j g}_{4N-2j}.
\end{equation}

We will assume that $v$, $c$, and $f$ satisfy
\begin{equation}\label{xport_reg}
 \sup_{0 \le t \le T} \left( \Rf_e[v(t)] + \Rf_e[c(t)] + \Rf_e[f(t)]  \right) + \int_0^T \left( \Rf_d[v(t)] + \Rf_d[c(t)] + \Rf_d[f(t)] \right) dt <\infty.
\end{equation}

\subsection{Solution by characteristics}

Consider the transport problem
\begin{equation}\label{xport_eqn}
 \begin{cases}
  \dt q + v \cdot \nab q + c q = f & \text{in } \Upsilon \times (0,T) \\
  q(\cdot,0) = q_0 &\text{in }\Upsilon.
 \end{cases}
\end{equation}
Here we assume that $v$, $c$, and $f$ satisfy \eqref{xport_reg}.  In particular the usual Sobolev embeddings require that $v$, $c$, and $f$ are all $C^1(\Upsilon \times [0,T])$.

To produce a solution to \eqref{xport_eqn} we use the method of characteristics.  We let $\zeta_t(x)$ denote the solution to the ODE
\begin{equation}\label{xport_char_1}
\begin{cases}
 \dt \zeta_t(x) = v(\zeta_t(x),t)  \\
 \zeta_0(x) = x
\end{cases}
\end{equation}
for $x \in \Upsilon$ and $t \in [0,T]$.  The identity \eqref{xport_v} is essential here, since it guarantees that for each $t \in [0,T]$, $\zeta_t : \Upsilon \to \Upsilon$ is a diffeomorphism.   Let us define the map $\omega : [0,T]^2 \times \Upsilon \to \Upsilon$ via
\begin{equation}\label{xport_char_2}
 \omega(s,t,x) = \zeta_s(\zeta^{-1}_t(x)).
\end{equation}

Then the solution to \eqref{xport_eqn} is
\begin{multline}\label{xport_soln}
 q(x,t) = q_0(\omega(0,t,x)) \exp\left[ - \int_0^t c(\omega(s,t,x),s)ds \right] \\
+ \int_0^t f(\omega(s,t,x),s) \exp\left[ - \int_s^t c(\omega(r,t,x),r)dr \right]ds.
\end{multline}
While this explicit form of the solution is nice, it is not convenient for making higher regularity estimates.  In particular it is not immediately obvious from \eqref{xport_soln} that $q$ belongs to the space defined by \eqref{energy_def_q} and \eqref{dissipation_def_q}, and it is not clear that we can justify applying $\p^\alpha$ to \eqref{xport_eqn} and performing a priori estimates.  The usual solution to this difficulty is the Friedrichs mollification method: we first solve a mollified version of \eqref{xport_eqn} so that the solution is smooth enough to justify the a priori estimates, then we derive the a priori estimates in a manner independent of the mollification parameter, and then finally we pass to the limit.  This works well when $\p \Upsilon=\varnothing$, but the mollification procedure runs into technical obstructions when $\p \Upsilon \neq \varnothing$, which is the case here.

Two options then present themselves.  The first is  to modify the mollification procedure in a manner that makes sense in our $\Upsilon$ but does not destroy the structure of the a priori estimates.  The second is to transfer the problem \eqref{xport_eqn} to a new problem on a set without boundary in such a way that the estimates from Friedrichs' method carry over to \eqref{xport_eqn}.  We have chosen to go for the second option since Friedrichs' method is so well-known.  Our goal then is to justify the transfer of the problem and the estimates.

\subsection{Transfer}

Consider the transport problem \eqref{xport_eqn} where  we only assume for now that $q_0 \in L^2(\Upsilon)$ and $f \in L^2([0,T];L^2(\Upsilon))$.  We say that $q \in L^2([0,T];L^2(\Upsilon))$ is a weak solution to \eqref{xport_eqn} if
\begin{equation}\label{xport_weak}
 \int_0^T \int_\Upsilon -q \left( \dt \varphi + \diverge(v \varphi ) - c\varphi\right) = \int_0^T \int_\Upsilon \varphi f + \int_\Upsilon q_0 \varphi(\cdot,0)
\end{equation}
for all $\varphi \in C_c^1(\Upsilon \times [0,T))$.  The identity \eqref{xport_weak} is clearly satisfied by any regular solution to \eqref{xport_eqn}, and in particular by the solution given by \eqref{xport_soln}.

We have the following simple lemma on the uniqueness of weak solutions, which we state without proof.

\begin{lem}\label{xport_unique_char}
 The following are equivalent.
\begin{enumerate}
 \item For every $f \in L^2([0,T];L^2(\Upsilon))$ and $q_0 \in L^2(\Upsilon)$, there exists at most one function $q \in L^2([0,T];L^2(\Upsilon))$ that is a weak solution to \eqref{xport_eqn}.
 \item If $q \in L^2([0,T];L^2(\Upsilon))$ satisfies
\begin{equation}\label{xuc_01}
  \int_0^T \int_\Upsilon -q \left( \dt \varphi + \diverge(v \varphi ) - c\varphi\right) =0
\end{equation}
for every $\varphi \in C_c^1(\Upsilon \times [0,T))$, then $q=0$.
\end{enumerate}
\end{lem}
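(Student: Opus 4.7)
The claim is a standard linearity-based equivalence, so the plan is essentially to unpack the weak formulation on both sides and verify that (1) and (2) are related by taking differences of weak solutions.

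The direction $(1)\Rightarrow(2)$ is immediate. If $q \in L^2([0,T];L^2(\Upsilon))$ satisfies \eqref{xuc_01} for every $\varphi \in C_c^1(\Upsilon \times [0,T))$, then, unpacking the weak formulation \eqref{xport_weak}, $q$ is a weak solution to \eqref{xport_eqn} with data $f=0$ and $q_0=0$. The zero function $0 \in L^2([0,T];L^2(\Upsilon))$ is trivially another weak solution with the same data. By the uniqueness statement (1), these two weak solutions must coincide, so $q=0$.

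The direction $(2)\Rightarrow(1)$ uses linearity of the weak formulation. Suppose $q_1, q_2 \in L^2([0,T];L^2(\Upsilon))$ are two weak solutions corresponding to the same data $(f,q_0)$. Let $q = q_1 - q_2 \in L^2([0,T];L^2(\Upsilon))$. Subtracting the two instances of \eqref{xport_weak} and using the linearity of the pairing in $q$ gives
\begin{equation*}
\int_0^T \int_\Upsilon -q\bigl(\dt \varphi + \diverge(v\varphi) - c\varphi\bigr) = 0
\end{equation*}
for every $\varphi \in C_c^1(\Upsilon\times[0,T))$; the right-hand sides cancel because $f$ and $q_0$ appear identically in the two equations. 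Thus $q$ satisfies the hypothesis of (2), which forces $q=0$, i.e.\ $q_1=q_2$.

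There is no real obstacle here: the only point one needs to take a little care with is that $\varphi$ in \eqref{xport_weak} is not required to vanish at $t=0$, so the initial-data contribution $\int_\Upsilon q_0 \varphi(\cdot,0)$ genuinely cancels upon subtraction, and the test-function class $C_c^1(\Upsilon\times[0,T))$ is the same in both \eqref{xport_weak} and \eqref{xuc_01}. Both implications are therefore just linearity arguments, and no regularity or characteristic information about $v, c, f$ enters the proof.
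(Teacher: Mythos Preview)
Your proof is correct and is exactly the standard linearity argument one would expect here. The paper itself does not give a proof of this lemma at all; it simply calls it a ``simple lemma on the uniqueness of weak solutions'' and states it without proof, so there is nothing to compare against beyond noting that your argument is the intended elementary one.
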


Our next goal is to verify that the second item of Lemma \ref{xport_unique_char} holds, which means that weak solutions to \eqref{xport_eqn} are unique.  To this end we first study the adjoint problem determined by what appears in parentheses in \eqref{xuc_01}.

Let $\psi \in C_c^\infty(\Upsilon \times (0,T))$.  We want to find $\varphi \in C_c^1(\Upsilon \times [0,T))$ satisfying the adjoint problem
\begin{equation}\label{xport_adjoint}
\begin{cases}
  \dt \varphi + \diverge(v \varphi) - c \varphi = \psi &\text{in }\Upsilon \times (0,T) \\
  \varphi(\cdot,T) = 0 &\text{in }\Upsilon.
\end{cases}
\end{equation}
Note that this is a terminal value problem; we seek to solve this so that $\varphi$ can be taken to be compactly supported in $\Upsilon \times [0,T)$.

To solve the adjoint problem \eqref{xport_adjoint} we  use $\zeta_t$ given by \eqref{xport_char_1} to reduce to an ODE along the characteristics.  Recall the function $\omega :[0,T]^2 \times \Upsilon \to \Upsilon$ given by  \eqref{xport_char_2}; we may use it to derive  the solution
\begin{equation}
\varphi(x,t) = - \int_t^T \psi(\omega(s,t,x),s) \exp\left[ \int_t^r \left( \diverge{v}(\omega(r,t,x),r)  - c(\omega(r,t,x),r) \right)dr   \right] ds.
\end{equation}
From this formula and the inclusion $\psi \in C_c^\infty(\Upsilon \times (0,T))$ it is easy to see that $\varphi \in C_c^1(\Upsilon \times [0,T))$, as desired.  We have thus proved the following lemma.

\begin{lem}\label{xport_adjoint_soln}
 Let $\psi \in C_c^\infty(\Upsilon \times (0,T))$.  Then there exists $\varphi \in C_c^1(\Upsilon \times [0,T))$ solving \eqref{xport_adjoint}.
\end{lem}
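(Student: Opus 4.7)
The plan is to construct $\varphi$ explicitly by the method of characteristics along the lines already sketched by the authors, and then to verify the regularity and compact support assertions.

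First I would set up the characteristic flow. Because the Sobolev regularity assumed in \eqref{xport_reg} together with the standing hypothesis $N\ge 3$ is high enough to embed $v$ and $c$ into $C^1(\overline{\Upsilon}\times[0,T])$, the ODE \eqref{xport_char_1} defines a unique $C^1$ flow $\zeta_t:\overline{\Upsilon}\to\overline{\Upsilon}$. The crucial input is the boundary identity $v\cdot e_3=0$ on $\p\Upsilon$, which renders $\p\Upsilon$ flow-invariant; by the usual uniqueness argument for Lipschitz ODE solutions, interior trajectories cannot cross $\p\Upsilon$, so $\zeta_t$ restricts to a $C^1$ diffeomorphism $\Upsilon\to\Upsilon$. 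Set $\omega(s,t,x)=\zeta_s(\zeta_t^{-1}(x))$; it is jointly $C^1$ in $(s,t,x)$ on $[0,T]^2\times\overline{\Upsilon}$.

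Next I would derive the formula. For fixed $(t,x)$ and any prospective classical solution $\varphi$, put $\Phi(s)=\varphi(\omega(s,t,x),s)$. The chain rule together with $\p_s\omega(s,t,x)=v(\omega(s,t,x),s)$ and the PDE $\dt\varphi+v\cdot\nab\varphi+(\diverge v-c)\varphi=\psi$ (equivalent to \eqref{xport_adjoint}) give the scalar ODE
\begin{equation*}
\Phi'(s)+\bigl(\diverge v-c\bigr)(\omega(s,t,x),s)\,\Phi(s)=\psi(\omega(s,t,x),s),
\end{equation*}
with terminal condition $\Phi(T)=\varphi(\omega(T,t,x),T)=0$. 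Integrating backward from $T$ to $t$ with the standard integrating factor yields exactly the formula the authors state. Conversely, defining $\varphi$ by that formula, the same chain-rule computation (now legitimate since the integrand is $C^1$) shows that $\varphi$ solves \eqref{xport_adjoint} and that $\varphi(\cdot,T)\equiv 0$. The $C^1$ regularity of $\varphi$ on $\overline{\Upsilon}\times[0,T]$ follows from differentiation under the integral, using the joint $C^1$ regularity of $\omega$, $v$, $c$, and the $C^\infty$ regularity of $\psi$.

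The one genuinely nontrivial point—and what I expect to be the main obstacle—is verifying that $\varphi$ has compact support in $\Upsilon\times[0,T)$, since the formula itself provides only regularity, not localization. Because $\mathrm{supp}\,\psi$ is compact in $\Upsilon\times(0,T)$, there exist $\delta>0$ and a compact $K\subset\Upsilon$ with $\mathrm{supp}\,\psi\subset K\times[\delta,T-\delta]$. For $t\ge T-\delta$ the integrand in the formula vanishes identically, so $\varphi(\cdot,t)\equiv 0$ on $[T-\delta,T]$; this already places the temporal support inside $[0,T-\delta]\subset[0,T)$. For spatial compactness, $\varphi(x,t)\ne 0$ forces $\omega(s,t,x)\in K$ for some $s\in[t,T-\delta]$, i.e.\ $x\in\zeta_t(\zeta_s^{-1}(K))$. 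The set $\widetilde K:=\bigcup_{s,t\in[0,T]}\zeta_t(\zeta_s^{-1}(K))$ is the continuous image of the compact set $[0,T]^2\times K$ under the map $(t,s,y)\mapsto\zeta_t(\zeta_s^{-1}(y))$, hence compact in $\overline{\Upsilon}$. It remains to check $\widetilde K\subset\Upsilon$. This is where flow-invariance of $\p\Upsilon$ is used again: since $K$ lies at positive distance from $\p\Upsilon$ and $\zeta_s^{-1}$ is a homeomorphism of $\Upsilon$ preserving $\p\Upsilon$, the images $\zeta_s^{-1}(K)$ remain in $\Upsilon$, and by the same argument so do $\zeta_t(\zeta_s^{-1}(K))$; continuity on the compact parameter set $[0,T]^2$ then yields a uniform positive lower bound on the distance to $\p\Upsilon$. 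Thus $\mathrm{supp}\,\varphi\subset\widetilde K\times[0,T-\delta]\subset\subset\Upsilon\times[0,T)$, which gives $\varphi\in C_c^1(\Upsilon\times[0,T))$ and completes the proof.
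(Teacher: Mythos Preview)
Your proposal is correct and follows exactly the paper's approach: solve the adjoint terminal-value problem by the method of characteristics using the flow $\omega(s,t,x)=\zeta_s(\zeta_t^{-1}(x))$, arriving at the same explicit formula for $\varphi$. The paper simply asserts that from this formula and the compact support of $\psi$ ``it is easy to see that $\varphi\in C_c^1(\Upsilon\times[0,T))$''; you have supplied precisely those details, and your compact-support argument (flow-invariance of $\partial\Upsilon$ plus the compact-image characterization of $\widetilde K$) is the natural way to make that step rigorous.
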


With Lemma \ref{xport_adjoint_soln} in hand we can prove that weak solutions to \eqref{xport_eqn} are unique.

\begin{prop}\label{xport_weak_unique}
 Let $f \in L^2([0,T];L^2(\Upsilon))$ and $q_0 \in L^2(\Upsilon)$.  Then there exists at most one function $q \in L^2([0,T];L^2(\Upsilon))$ that is a weak solution to \eqref{xport_eqn} in the sense of \eqref{xport_weak}.
\end{prop}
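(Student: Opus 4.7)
The plan is to prove uniqueness by invoking the equivalence established in Lemma \ref{xport_unique_char}, which reduces matters to showing that if $q \in L^2([0,T];L^2(\Upsilon))$ satisfies
\begin{equation*}
  \int_0^T \int_\Upsilon -q \left( \dt \varphi + \diverge(v \varphi ) - c\varphi\right) = 0
\end{equation*}
for every $\varphi \in C_c^1(\Upsilon \times [0,T))$, then $q=0$ almost everywhere.

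To carry this out, I would fix an arbitrary test function $\psi \in C_c^\infty(\Upsilon \times (0,T))$. By Lemma \ref{xport_adjoint_soln}, the adjoint terminal-value problem \eqref{xport_adjoint} with right-hand side $\psi$ admits a solution $\varphi \in C_c^1(\Upsilon \times [0,T))$. This $\varphi$ is an admissible test function in the homogeneous weak identity above, and by construction $\dt \varphi + \diverge(v\varphi) - c\varphi = \psi$. Substituting, we obtain
\begin{equation*}
  \int_0^T \int_\Upsilon q\, \psi = 0.
\end{equation*}
Since $C_c^\infty(\Upsilon \times (0,T))$ is dense in $L^2(\Upsilon \times (0,T))$, this forces $q = 0$ as an element of $L^2([0,T];L^2(\Upsilon))$.

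The uniqueness of weak solutions to \eqref{xport_eqn} then follows from Lemma \ref{xport_unique_char}: given two weak solutions $q_1, q_2$ corresponding to the same data $(f,q_0)$, their difference $q = q_1 - q_2$ satisfies the homogeneous identity and hence vanishes. The only subtle point, which is already handled by Lemma \ref{xport_adjoint_soln}, is that the adjoint solution must be compactly supported in $\Upsilon \times [0,T)$ so that it qualifies as a test function; this is precisely where the boundary condition \eqref{xport_v} on $v$ enters, by ensuring the characteristic flow $\zeta_t$ preserves $\Upsilon$ and hence the support of $\psi$ is transported to a set bounded away from $\p\Upsilon \times \{0\}$. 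No further obstacles arise since the existence of $\varphi$ is already granted.
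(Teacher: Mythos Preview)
Your proof is correct and follows essentially the same route as the paper: reduce to the homogeneous case via Lemma \ref{xport_unique_char}, solve the adjoint problem for an arbitrary $\psi \in C_c^\infty(\Upsilon \times (0,T))$ using Lemma \ref{xport_adjoint_soln}, and deduce $\int q\psi = 0$ for all such $\psi$, hence $q=0$. The additional remarks you include about density and the role of \eqref{xport_v} are accurate elaborations, but the core argument matches the paper's exactly.
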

\begin{proof}
 To prove uniqueness we will show that the second item of Lemma \ref{xport_unique_char} holds.  Suppose that $q \in L^2([0,T];L^2(\Upsilon))$ satisfies \eqref{xuc_01} for every $\varphi \in C_c^1(\Upsilon \times [0,T))$.  For any  $\psi \in C_c^\infty(\Upsilon \times (0,T))$ we may use Lemma \ref{xport_adjoint_soln} to find $\varphi \in C_c^1(\Upsilon \times [0,T))$ solving \eqref{xport_adjoint}.  Using this $\varphi$ in \eqref{xuc_01} yields the equality
\begin{equation}
 \int_0^T \int_\Upsilon q \psi =0.
\end{equation}
Since $\psi$ was arbitrary, we deduce that $q =0$.  Hence the second item of Lemma \ref{xport_unique_char} holds.
\end{proof}

Next we define the extended problem, which is easier to handle with Friedrichs' method.  Let $\Gamma = \mathrm{T}^2\times \mathbb{R}$ denote the extended domain in which we will pose the extended problem.  Let $E$ denote a Sobolev extension operator such that $E : H^m(\Upsilon) \to H^m(\Gamma)$ for every $m=0,\dotsc,M$, where $M >0$  is large enough to surpass every regularity index in the energy and dissipation defined in \eqref{energy_def_q} and \eqref{dissipation_def_q}.  We then define
\begin{equation}
 \bar{v} = E v, \; \bar{c} = Ec, \;\bar{q}_0 = Eq_0, \text{ and } \bar{f} = E f.
\end{equation}

We say that $\bar{q} \in L^2([0,T];L^2(\Gamma))$ is a weak solution to
\begin{equation}\label{xport_extended}
 \begin{cases}
  \dt \bar{q} + \bar{v} \cdot \nab \bar{q} + \bar{c} \bar{q} = \bar{f} & \text{in } \Gamma \times (0,T) \\
  \bar{q}(\cdot,0) = \bar{q}_0 &\text{in }\Gamma
 \end{cases}
\end{equation}
if
\begin{equation}\label{xport_weak_extended}
 \int_0^T \int_\Gamma -\bar{q} \left( \dt \varphi + \diverge(\bar{v} \varphi ) - \bar{c}\varphi\right) = \int_0^T \int_\Gamma \varphi \bar{f} + \int_\Gamma \bar{q}_0 \varphi(\cdot,0)
\end{equation}
for all $\varphi \in C_c^1(\Gamma \times [0,T))$.

\begin{lem}\label{xport_transfer}
 Suppose that $\bar{q} \in L^2([0,T];L^2(\Gamma))$ is a weak solution to \eqref{xport_extended} in the sense of \eqref{xport_weak_extended}.  Let $q$ denote the restriction of $\bar{q}$ to $\Upsilon \times (0,T)$.  Then $q$ is the unique  weak solution to \eqref{xport_eqn} in the sense of \eqref{xport_weak}.
\end{lem}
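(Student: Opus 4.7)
The plan is to show that $q$ satisfies the weak formulation \eqref{xport_weak} by restriction/extension of test functions and then invoke Proposition \ref{xport_weak_unique} for uniqueness. First, I would note that $q \in L^2([0,T]; L^2(\Upsilon))$ is immediate since $\bar q \in L^2([0,T]; L^2(\Gamma))$ and $\Upsilon \subset \Gamma$, so the required regularity for a weak solution is automatic.

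Next I would fix an arbitrary test function $\varphi \in C_c^1(\Upsilon \times [0,T))$ and extend it by zero to a function $\tilde\varphi$ on $\Gamma \times [0,T)$. The key observation is that $\varphi$ has compact support in the (relatively) open set $\Upsilon \times [0,T)$, hence its support is at a positive distance from $\partial\Upsilon \times [0,T)$; therefore the zero extension satisfies $\tilde\varphi \in C_c^1(\Gamma \times [0,T))$ and is an admissible test function for the extended problem \eqref{xport_extended}.

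Then I would substitute $\tilde\varphi$ into \eqref{xport_weak_extended}. Since $\tilde\varphi$ vanishes identically outside $\Upsilon$, each integral over $\Gamma$ reduces to the corresponding integral over $\Upsilon$. On $\Upsilon$ we have $\bar v = v$, $\bar c = c$, $\bar f = f$, $\bar q_0 = q_0$ by construction of the extension operator $E$, and $\bar q = q$ by definition of the restriction. Replacing $\tilde\varphi$ with $\varphi$ on $\Upsilon$ then yields precisely the identity
\begin{equation*}
\int_0^T \int_\Upsilon -q \left( \dt \varphi + \diverge(v \varphi ) - c\varphi\right) = \int_0^T \int_\Upsilon \varphi f + \int_\Upsilon q_0 \varphi(\cdot,0),
\end{equation*}
which is exactly \eqref{xport_weak}. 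Since $\varphi$ was arbitrary, $q$ is a weak solution to \eqref{xport_eqn}.

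Finally, uniqueness of $q$ follows immediately from Proposition \ref{xport_weak_unique}, which already established that weak solutions to \eqref{xport_eqn} in this sense are unique given the data. There is no serious obstacle here: the only point requiring a moment of care is justifying that the zero extension of a compactly supported $C^1$ test function across the boundary of $\Upsilon$ remains $C^1$, and this is guaranteed by the support being separated from $\partial\Upsilon$.
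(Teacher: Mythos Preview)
Your proof is correct and follows essentially the same approach as the paper: test with $\varphi \in C_c^1(\Upsilon \times [0,T))$ (viewed, via zero extension, as an admissible test function on $\Gamma$), reduce the integrals using that $\bar v, \bar c, \bar f, \bar q_0$ restrict to $v, c, f, q_0$ on $\Upsilon$, and then invoke Proposition~\ref{xport_weak_unique} for uniqueness. Your write-up simply spells out the zero-extension step and the regularity of $q$ more explicitly than the paper does.
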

\begin{proof}
 Since \eqref{xport_weak_extended} holds for all $\varphi \in C_c^1(\Gamma \times [0,T))$ it must also hold for all $\varphi \in C_c^1(\Upsilon \times [0,T))$.  For such $\varphi$ the equality \eqref{xport_weak_extended} is identical to \eqref{xport_weak} because $\bar{v},$ $\bar{c}$,$\bar{q}_0$, and $\bar{f}$ are  extensions of $v$, $c$, $\bar{q}_0$, and $f$ to $\Gamma$.  Hence $q$ is a weak solution to to \eqref{xport_eqn} in the sense of \eqref{xport_weak}.  Uniqueness follows from Proposition \ref{xport_weak_unique}.
\end{proof}

The upshot of Lemma \ref{xport_transfer} is that if we can produce a solution to the extended problem \eqref{xport_extended} that obeys the estimates we seek for the original problem \eqref{xport_eqn}, then we know that those estimates are also valid for the solution \eqref{xport_soln} given by characteristics on $\Upsilon$.  This leads us to study the extended problem \eqref{xport_extended}.

\subsection{The extended problem \eqref{xport_extended} }

\subsubsection{The mollified problem }

We begin by defining  space and time mollification operators.  Since the horizontal directions in $\Gamma$ are periodic, it is convenient to decompose our spatial mollifiers into horizontal and  vertical parts.    Let $\vartheta \in C_c^\infty(\Rn{})$ be a standard mollifier.  For $n \in \mathbb{N}$ and $i=1,2$, let $F_n^i \in C^\infty(2\pi L_i \mathbb{T})$ denote the Fej\'{e}r kernel.  Then we define the spatial mollifier via
\begin{equation}
 K_\ep^{sp} g(x) =   \int_\Gamma    g(x-y) F_{\lfloor 1/\ep\rfloor}^1(y_1) F_{\lfloor 1/\ep\rfloor}^2(y_2) \frac{1}{\ep} \vartheta\left(\frac{y_3}{\ep}\right)  dy,
\end{equation}
where $\lfloor z \rfloor$ denotes the integer part of $z$.  To define the temporal mollification operator we must first define a temporal extension.  For a function $g$ defined on $\Gamma \times (0,T)$ we first extend to $\tilde{g}$  defined on $\Gamma \times \Rn{}$ via
\begin{equation}
 \tilde{g}(x,t) =
\begin{cases}
 g(x,t) & \text{if }t \in [0,T] \\
 0 & \text{if }t \notin [0,T].
\end{cases}
\end{equation}
Then the temporal mollification is
\begin{equation}
 K_\ep^{te} g(x,t) = \int_{\Rn{}} \tilde{g}(x,t-s) \frac{1}{\ep}\vartheta\left(\frac{s}{\ep}\right) ds.
\end{equation}
The operators $K_\ep^{sp}$ and $K_\ep^{te}$ satisfy all the usual properties of a mollification operators.

The mollified problem studied in Friedrichs' method is
\begin{equation}\label{xport_mollified}
 \begin{cases}
 \dt \bar{q}_\ep + K_\ep^{sp}[ (K_\ep^{te}\bar{v}) \cdot \nab (K_\ep^{sp} \bar{q}_\ep)  ] + (K_\ep^{te} \bar{c}) \bar{q}_\ep = K_\ep^{te} \bar{f} &\text{in } \Gamma \times (0,\infty) \\
 \bar{q}_\ep(\cdot,0) =\bar{q}_0 &\text{in }  \Gamma.
 \end{cases}
\end{equation}
Notice that $K_\ep^{te}\bar{v}$, $K_\ep^{te}\bar{c}$ and $K_\ep^{te}\bar{f}$ belong to $C^\infty(\Rn{};H^{4N}(\Gamma))$.  Then the theory of linear ODEs in Banach spaces  provides us with a unique solution $\bar{q}_\ep \in C^\infty([0,\infty);H^{4N}(\Gamma))$ to \eqref{xport_mollified}.

Our next goal is to produce $\ep-$independent estimates to the solutions to \eqref{xport_mollified}.  To this end we note that for $\alpha \in \mathbb{N}^{3}$ we may apply $\p^\alpha$ to \eqref{xport_mollified} to find that $\p^\alpha \bar{q}_\ep$ solves
\begin{equation}\label{xport_mollified_diff}
 \begin{cases}
 \dt \p^\alpha \bar{q}_\ep + K_\ep^{sp}[ (K_\ep^{te}\bar{v}) \cdot \nab (K_\ep^{sp} \p^\alpha \bar{q}_\ep)  ] + (K_\ep^{te} \bar{c}) \p^\alpha\bar{q}_\ep = f^\alpha_\ep &\text{in } \Gamma \times (0,\infty) \\
 \p^\alpha\bar{q}_\ep(\cdot,0) =\p^\alpha \bar{q}_0 &\text{in }  \Gamma,
 \end{cases}
\end{equation}
where
\begin{equation}
 f^\alpha_\ep = \p^\alpha K_\ep^{te} \bar{f} - \sum_{0 < \beta \le \alpha }C_{\alpha, \beta} \left(  K_\ep^{sp}[ (K_\ep^{te} \p^\beta \bar{v}) \cdot \nab (K_\ep^{sp} \p^{\alpha-\beta} \bar{q}_\ep)  ]  + (K_\ep^{te} \p^\beta \bar{c}) \p^{\alpha-\beta}\bar{q}_\ep  \right).
\end{equation}

\subsubsection{Estimates }

By multiplying the first equation in  \eqref{xport_mollified_diff} by $\p^\alpha \bar{q}_\ep$ and integrating by parts over $\Gamma$ we may derive the basic energy identity
\begin{equation}
 \frac{d}{dt} \int_{\Gamma} \hal \abs{\p^\alpha \bar{q}_\ep}^2 + \int_\Gamma (K_\ep^{te}\bar{c}) \abs{\p^\alpha \bar{q}_\ep}^2- (K_\ep^{te} \diverge \bar{v} )\abs{K_\ep^{sp} \p^\alpha \bar{q}_\ep}^2 = \int_\Gamma f_\ep^\alpha \p^\alpha \bar{q}_\ep.
\end{equation}
We may use standard Sobolev embeddings and properties of mollifiers to estimate
\begin{equation}
 \int_\Gamma f_\ep^\alpha \p^\alpha \bar{q}_\ep \ls \norm{\bar{f}}_{4N} \norm{\bar{q}_\ep}_{4N} + \left( \norm{\bar{v}}_{4N} + \norm{\bar{c}}_{4N} \right) \norm{\bar{q}_\ep}_{4N}^2.
\end{equation}
Then
\begin{equation}
 \frac{d}{dt} \norm{\bar{q}_\ep}_{4N}^2 \ls \norm{\bar{f}}_{4N} \norm{\bar{q}_\ep}_{4N} + \left( \norm{\bar{v}}_{4N} + \norm{\bar{c}}_{4N} \right) \norm{\bar{q}_\ep}_{4N}^2,
\end{equation}
which leads us to the fundamental estimate
\begin{equation}\label{xport_main_1}
 \sup_{0\le t \le T} \norm{\bar{q}_\ep(t)}_{4N} \ls  \exp\left( C\int_0^T \left( \norm{\bar{v}(t)}_{4N} + \norm{\bar{c}(t)}_{4N} \right)dt \right)  \left(\norm{\bar{q}_0}_{4N} + \int_0^T \norm{\bar{f}(t)}_{4N} dt \right).
\end{equation}
Here the constants on the right-hand side do not depend on $\ep$.

In order to record other estimates we  recall the functionals $\Qf_e$, $\Rf_e$ given by \eqref{rqe_def} and $\Qf_d$, $\Rf_d$ given by \eqref{rqd_def}.  Here the norms are understood to be computed over $\Gamma$.  With the estimate \eqref{xport_main_1} in hand we may use the equation \eqref{xport_mollified} to directly estimate $\norm{\dt \bar{q}_\ep}_{4N-1}^2$:
\begin{equation}
 \norm{\dt \bar{q}_\ep}_{4N-1}^2 \ls \ns{\bar{f}}_{4N-1} + \left(  \ns{\bar{v}}_{4N-1} + \ns{\bar{c}}_{4N-1} \right)\ns{\bar{q}_\ep}_{4N}.
\end{equation}
We may iteratively apply $\dt$ to \eqref{xport_mollified} to estimate higher-order temporal derivatives; this leads us to the estimate
\begin{equation}
 \Qf_e[\bar{q}_\ep] \ls (1+ P(\Rf_e[\bar{v}] + \Rf_e[\bar{c}])) \Rf_e[\bar{f}] + P(\Rf_e[\bar{v}] + \Rf_e[\bar{c}]) \ns{\bar{q}_\ep}_{4N}
\end{equation}
for some universal positive polynomial $P$ with $P(0)=0$.   Hence
\begin{multline}\label{xport_main_2}
\sup_{0\le t\le T}  \E[\bar{q}_\ep(t)] \ls  \sup_{0\le t\le T} \left(1+ P(\Rf_e[\bar{v}(t)] + \Rf_e[\bar{c}(t)])) \Rf_e[\bar{f}(t)] \right) \\
+ \left(\sup_{0\le t\le T}  \left(1+P(\Rf_e[\bar{v}(t)] + \Rf_e[\bar{c}(t)]) \right) \right) \\
\times \exp\left( C T \int_0^T \left( \ns{\bar{v}(t)}_{4N} + \ns{\bar{c}(t)}_{4N} \right)dt \right)  \left(\ns{\bar{q}_0}_{4N} + T\int_0^T \ns{\bar{f}(t)}_{4N} dt \right).
\end{multline}

Similarly, we may derive the bound
\begin{equation}
 \Qf_d[\bar{q}_\ep] \ls (1+ P(\Rf_e[\bar{v}] + \Rf_e[\bar{c}])) \Rf_d[\bar{f}] + P(\Rf_e[\bar{v}] + \Rf_e[\bar{c}]) \left( \ns{\bar{q}_\ep}_{4N} + (\Rf_d[\bar{v}] + \Rf_d[\bar{c}])\E[\bar{q}_\ep] \right).
\end{equation}
This leads us to the estimate
\begin{multline}
 \int_0^T \D[\bar{q}_\ep(t)]dt \le \sup_{0\le t\le T}  \left(1+ P(\Rf_e[\bar{v}(t)] + \Rf_e[\bar{c}(t)]) \right) \int_0^T \Rf_d[\bar{f}(t)]dt \\
+ \sup_{0\le t\le T} \left( 1+ P(\Rf_e[\bar{v}(t)] + \Rf_e[\bar{c}(t)]) \right)  \int_0^T \ns{\bar{q}_\ep(t)}_{4N} dt  \\
+  \left(\sup_{0\le t\le T}   P(\Rf_e[\bar{v}(t)] + \Rf_e[\bar{c}(t)]) \right) \left( \sup_{0\le t \le T} \E[\bar{q}_\ep(t)] \right) \int_0^T \left( \Rf_d[\bar{v}(t)] + \Rf_d[\bar{c}(t)]\right) dt.
\end{multline}
Combining with \eqref{xport_main_1} and \eqref{xport_main_2} then leads to the bound
\begin{multline}\label{xport_main_3}
  \int_0^T \D[\bar{q}_\ep(t)]dt  \le  \sup_{0\le t\le T}  \left(1+ P(\Rf_e[\bar{v}(t)] + \Rf_e[\bar{c}(t)]) \right) \int_0^T \Rf_d[\bar{f}(t)]dt \\
+  \sup_{0\le t\le T}  \left( P(\Rf_e[\bar{v}(t)] + \Rf_e[\bar{c}(t)]) \right)  \left( \int_0^T \left( \Rf_d[\bar{v}(t)] + \Rf_d[\bar{c}(t)]\right) dt \right)  \sup_{0\le t\le T}  \left( \Rf_e[\bar{f}(t)]dt \right) \\
+ \left[ T  \sup_{0\le t\le T}  \left(1+ P(\Rf_e[\bar{v}(t)] + \Rf_e[\bar{c}(t)]) \right) \right. \\
\left. +  \sup_{0\le t\le T}  \left( P(\Rf_e[\bar{v}(t)] + \Rf_e[\bar{c}(t)]) \right) \int_0^T \left( \Rf_d[\bar{v}(t)] + \Rf_d[\bar{c}(t)]\right) dt \right]
\\
\times    \exp\left( CT \int_0^T \left( \ns{\bar{v}(t)}_{4N} + \ns{\bar{c}(t)}_{4N} \right)dt \right)  \left(\ns{\bar{q}_0}_{4N} + T\int_0^T \ns{\bar{f}(t)}_{4N} dt \right).
\end{multline}

\subsubsection{Passing to the limit }

The estimates \eqref{xport_main_2} and \eqref{xport_main_3} provide us with estimates of
\begin{equation}
 \sup_{0\le t \le T} \E[\bar{q}_\ep(t)] + \int_0^T \D[\bar{q}_\ep(t)]dt
\end{equation}
that are independent of $\ep$.  We may then extract a subsequence of $\ep$ values such that $\bar{q}_\ep$ converges to some $\bar{q}$, which by virtue of lower semicontinuity, must also obey the estimates \eqref{xport_main_2} and \eqref{xport_main_3}.  Multiplying \eqref{xport_mollified} by $\varphi \in C_c^1(\Gamma \times [0,T))$ and integrating by parts leads to the identity
\begin{equation}
  \int_0^T \int_\Gamma -\bar{q}_\ep \left( \dt \varphi + K_\ep^{sp} \diverge((K_\ep^{te}\bar{v}) \varphi ) - (K_\ep^{te}\bar{c})\varphi\right) = \int_0^T \int_\Gamma \varphi K_\ep^{te}\bar{f} + \int_\Gamma \bar{q}_0 \varphi(\cdot,0).
\end{equation}
We may then send $\ep \to 0$  to deduce that $\bar{q}$ is weak solution to \eqref{xport_extended} in the sense of \eqref{xport_weak_extended}.  This proves the following proposition.

\begin{prop}\label{xport_ext_exist}
 There exists a weak solution $\bar{q}$ to \eqref{xport_extended} obeying the estimates \eqref{xport_main_2} and \eqref{xport_main_3}.
\end{prop}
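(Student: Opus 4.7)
The plan is to produce $\bar{q}$ as a weak-$\ast$ subsequential limit of the smooth approximants $\bar{q}_\ep$ constructed just above via the mollified problem \eqref{xport_mollified}. First I would observe that the estimates \eqref{xport_main_2} and \eqref{xport_main_3}, already derived in a manner independent of $\ep$, guarantee that
\[
\sup_{\ep>0} \left( \sup_{0 \le t \le T} \E[\bar q_\ep(t)] + \int_0^T \D[\bar q_\ep(t)]\,dt \right) < \infty,
\]
so that $\bar{q}_\ep$, together with its time derivatives measured in the norms collected in $\Qf_e$ and $\Qf_d$, forms a bounded family in the relevant weakly-$\ast$ closed spaces.

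Next I would apply the Banach--Alaoglu theorem together with a standard diagonal argument to extract a subsequence, still indexed by $\ep$, that converges weakly-$\ast$ in $L^\infty([0,T]; H^{4N}(\Gamma))$, along with analogous weak convergence of the time derivatives in the $\Qf_e$ and $\Qf_d$ spaces, to some limit $\bar{q}$. Lower semicontinuity of the norms under these modes of convergence then transfers the bounds \eqref{xport_main_2} and \eqref{xport_main_3} from $\bar{q}_\ep$ to $\bar{q}$ with no change of constants.

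The final step is to verify that $\bar{q}$ solves \eqref{xport_extended} in the weak sense \eqref{xport_weak_extended}. For this I would pair \eqref{xport_mollified} with an arbitrary $\varphi \in C_c^1(\Gamma \times [0,T))$, integrate by parts in space and time, and use the self-adjointness of $K_\ep^{sp}$ (and a simple time-reflection for $K_\ep^{te}$) to move the mollifiers onto the coefficients and onto $\varphi$. This produces exactly the identity displayed immediately before the statement of the proposition. Passing $\ep \to 0$ then reduces to pairing the weakly convergent $\bar{q}_\ep$ against strongly convergent quantities: $K_\ep^{sp}\varphi \to \varphi$ in $C^1$, while $K_\ep^{te}\bar v$, $K_\ep^{te}\bar c$, and $K_\ep^{te}\bar f$ converge strongly in $L^2([0,T]; L^2(\Gamma))$ by the regularity hypothesis \eqref{xport_reg} on the extensions.

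The only (mild) obstacle is the divergence pairing $\int_0^T \int_\Gamma \bar{q}_\ep\, K_\ep^{sp}\diverge((K_\ep^{te}\bar v)\varphi)$, where I would combine the weak-times-strong principle with the uniform $W^{1,\infty}$ bound on the product $(K_\ep^{te}\bar v)\varphi$ to pass to the limit; since both horizontal and vertical mollifier factors commute with spatial differentiation on $\Gamma$, the product rule applied to $(K_\ep^{te}\bar v)\varphi$ yields terms each of which converges strongly by the analogous mollifier properties. Uniqueness of the weak solution is not asserted here and need not be addressed, so the subsequential extraction suffices.
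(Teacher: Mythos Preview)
Your proposal is correct and matches the paper's own argument essentially step for step: extract a weak-$\ast$ subsequential limit from the uniformly bounded family $\{\bar q_\ep\}$, transfer \eqref{xport_main_2}--\eqref{xport_main_3} by lower semicontinuity, and pass to the limit in the integrated-by-parts identity displayed just before the proposition. One small remark: the temporal mollifier $K_\ep^{te}$ is only applied to $\bar v$, $\bar c$, $\bar f$ and never to $\bar q_\ep$, so no ``time-reflection'' manipulation is needed---you only have to move $K_\ep^{sp}$ by self-adjointness and then use strong convergence of $K_\ep^{te}\bar v$, $K_\ep^{te}\bar c$, $K_\ep^{te}\bar f$ to their unmollified counterparts.
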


\subsection{Estimates for the solution to \eqref{xport_fundamental} }

We are now ready to return to the problem \eqref{xport_fundamental}.

We begin with estimates of the $v$ and $c$ terms that arise in \eqref{xport_fundamental}.  They are simple variants of previous nonlinear estimates (for example Lemmas \ref{lame_forcing_est} and \ref{k_F_ests}, Theorem \ref{kappa_contract}, and Proposition \ref{kappa_improved}), so we omit the proof.

\begin{prop}\label{xport_nonlins}
Let $v$ and $c$ be given on $\Omega$ as in \eqref{xport_fundamental} and let $\bar{v}$, $\bar{c}$ denote their bounded extensions from $\Upsilon = \Omega_\pm$ to $\Gamma$.  Then we have the following estimates, where $P$ is a universal positive polynomial with $P(0)=0$:
\begin{equation}
 \Rf_e[\bar{v}] + \Rf_e[\bar{c}] \le P( \E^0[\eta]  ) \left( \E[u] + \hat{\E}^0[\eta]    \right),
\end{equation}
\begin{equation}\label{xp_n_01}
 \Rf_d[\bar{v}] + \Rf_d[\bar{c}] \le P( \hat{\E}^0[\eta]  ) \left( \D[u] + \hat{\E}^0[\eta] + \hat{\D}^0[\eta]    \right),
\end{equation}
and
\begin{equation}\label{xp_n_02}
  \ns{\bar{v}}_{4N} + \ns{\bar{c}}_{4N} \le P( \E[u] + \hat{\E}^0[\eta]  ) \left(  \D[u] + \hat{\D}^0[\eta]  + \ns{\eta}_{4N+1/2} \right).
\end{equation}
\end{prop}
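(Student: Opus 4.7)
The plan is to prove the estimates for $v$ and $c$ on $\Omega$ first and then transfer them to $\bar v,\bar c$ on $\Gamma$; since the extension operator $E$ is bounded $H^m(\Omega_\pm)\to H^m(\Gamma)$ for each $m\le 4N$, the three stated inequalities on $\Gamma$ follow immediately from the same inequalities with $\|\cdot\|$ computed on $\Omega_\pm$. Thus I reduce to estimating $v=\a^T u - K\dt\theta\,e_3$ and $c=\a_{ij}\p_j u_i$ on $\Omega$, where $\a$, $J$, $K$ are the smooth rational functions of $\nab\theta=\tilde b_1\nab\bar\eta_++\tilde b_2\nab\bar\eta_-$ given by \eqref{A_def}--\eqref{ABJ_def}.

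The main technical ingredient is a composition/Moser estimate for the coefficients. The bound $\Lf[\eta](T)<\delta$ keeps $J=1+\p_3\theta$ bounded away from $0$, so for $s\ge 0$ one has
\begin{equation*}
\|\a(t)\|_{H^s}+\|K(t)\|_{H^s}+\|J(t)\|_{H^s}\le P\bigl(\|\nab\theta(t)\|_{H^s}\bigr),
\end{equation*}
and similarly for mixed space-time derivatives $\dt^k\a$, $\dt^k K$, $\dt^k J$ in terms of $\dt^\ell\nab\theta$ with $\ell\le k$. The standard Poisson-extension bound $\|\bar\eta_\pm\|_{H^{s+1/2}(\Omega)}\ls\|\eta_\pm\|_{H^s(\Sigma_\pm)}$ then converts these into bounds involving only $\dt^\ell\eta$ traces, which is exactly what $\hat{\E}^0[\eta]$, $\hat{\D}^0[\eta]$, and the $\|\eta\|_{4N+1/2}$ term in the third estimate are designed to control.

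Next I would expand each $\dt^j v$ and $\dt^j c$ by the Leibniz rule into sums of products $(\dt^{k_1}\a)(\dt^{k_2}u)$ or $(\dt^{k_1}K)(\dt^{k_2+1}\theta)$ with $k_1+k_2=j$ (and a parallel expansion for $c$), and apply the usual Sobolev product inequality $\|fg\|_{H^s}\ls\|f\|_{H^s}\|g\|_{H^a}+\|f\|_{H^a}\|g\|_{H^s}$ for $a>3/2$, placing the highest-regularity derivative on whichever factor matches the target regularity and putting the other factor in an $H^a$ ($a\ge 2$) slot, which is always available since $4N\ge 12$. In each case one factor is controlled by a polynomial in $\E^0[\eta]$ (respectively $\hat{\E}^0[\eta]$) and the other by $\E[u]+\hat{\E}^0[\eta]$ (respectively $\D[u]+\hat{\E}^0[\eta]+\hat{\D}^0[\eta]$); summing over $\alpha,j$ and taking the supremum or time integral produces the first two estimates, with the second using the fact that $\Rf_d$ always has one factor available in a dissipation norm. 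For the third estimate, the top derivative in $\|v\|_{4N}$ either hits $u$ (requiring $\|u\|_{H^{4N+1}}^2\le\D[u]$) or hits $\theta$ through $\a$, $K$, or $\dt\theta$ (requiring $\|\nab\theta\|_{H^{4N}}\ls\|\eta\|_{H^{4N+1/2}(\Sigma)}$), which is exactly the source of the explicit $\ns{\eta}_{4N+1/2}$ term.

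The main obstacle is the combinatorial bookkeeping: one must verify that every term produced by Leibniz's rule has at least one factor with regularity $\ge H^2(\Omega)$ to serve as a pointwise multiplier, and that the remaining factor's norm is indexed precisely as in $\E[u]$, $\hat{\E}^0[\eta]$, $\D[u]$, $\hat{\D}^0[\eta]$, or $\ns{\eta}_{4N+1/2}$. Because $N\ge 3$, a straightforward case analysis on the pair $(k_1,k_2)$ (split according to whether $k_1\le N$ or $k_2\le N$) always produces an admissible split. Beyond this combinatorics, the argument is a routine application of Moser, Sobolev product, and Poisson-extension estimates, which is why the authors omit it.
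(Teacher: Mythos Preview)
Your proposal is correct and follows exactly the approach the paper has in mind: the paper omits the proof entirely, noting only that it is a simple variant of previous nonlinear estimates (Lemmas \ref{lame_forcing_est}, \ref{k_F_ests}, Theorem \ref{kappa_contract}, Proposition \ref{kappa_improved}), and your sketch via extension boundedness, Moser-type coefficient bounds on $\a,K,J$, Poisson-extension control of $\theta$ by $\eta$, and a Leibniz/Sobolev-product case split is precisely this standard machinery. Your identification of why the $\hat{\E}^0[\eta]$ term appears in \eqref{xp_n_01} and why $\ns{\eta}_{4N+1/2}$ appears in \eqref{xp_n_02} also matches the paper's own remark following the proposition.
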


\begin{remark}
 The term $\hat{\E}^0[\eta]$ is added onto the right side of \eqref{xp_n_01} only because $\hat{\D}^0[\eta]$ provides no control of $\eta$ itself.  The $\eta$ term in $\hat{\E}^0[\eta]$ is enough to make up for this deficit.  The term $\ns{\eta}_{4N+1/2}$ is added on the right side of \eqref{xp_n_02} for a similar reason, but in this case the regularity demands require more than $\hat{\E}^0[\eta]$.
\end{remark}

Now we prove that the solution produced by the method of characteristics in \eqref{xport_soln} obeys various useful estimates.

\begin{thm}\label{xport_well-posed}
 Suppose that $u$ and  $\eta$ are given and satisfy
\begin{equation}
 \sup_{0 \le t \le T} \left( \E[u(t)] + \hat{\E}^0[\eta(t)]  \right) + \int_0^T \left(\D[u(t)] + \hat{\D}^0[\eta(t)]\right)dt < \infty
\end{equation}
and
\begin{equation}\label{xwp_03}
\dt \eta = u \cdot \n \text{ on } \Sigma, \jump{u}=0 \text{ on } \Sigma_-,  \text{ and } u_- =0 \text{ on } \Sigma_b.
\end{equation}
Let $q$ be given by \eqref{xport_soln}.  Then $q$ is the unique solution to \eqref{xport_fundamental}.  Moreover, the solution obeys the following estimates for some universal positive polynomial $P$ with $P(0)=0$:
\begin{multline}\label{xwp_01}
\sup_{0\le t\le T}  \E[q(t)] \ls  \sup_{0\le t\le T} \left(1+ P(\E[u(t)] + \hat{\E}^0[\eta(t)]) \right) \sup_{0\le t\le T}\left( \Rf_e[f(t)] \right) \\
+ \left(\sup_{0\le t\le T}  \left(1 + P(\E[u(t)] + \hat{\E}^0[\eta(t)]) \right) \right) \Xi(T)
\left(\ns{q_0}_{4N} + T\int_0^T \ns{f(t)}_{4N} dt \right),
\end{multline}
and
\begin{multline}\label{xwp_02}
 \int_0^T \D[q(t)]dt  \le  \sup_{0\le t\le T} \left(1+ P(\E[u(t)] + \hat{\E}^0[\eta(t)]) \right) \int_0^T \Rf_d[f(t)]dt \\
+  \sup_{0\le t\le T} \left( P(\E[u(t)] + \hat{\E}^0[\eta(t)]) \right) \left( \int_0^T \left( \D[u(t)] + \hat{D}^0[\eta(t)] + \hat{\E}^0[\eta(t)] \right) dt \right)  \sup_{0\le t\le T}  \left( \Rf_e[f(t)]dt \right) \\
+ \Xi(T) \left(\ns{q_0}_{4N} + T\int_0^T \ns{f(t)}_{4N} dt \right) \left[ T \sup_{0\le t\le T} \left(1+ P(\E[u(t)] + \hat{\E}^0[\eta(t)]) \right) \right. \\
\left. +  \sup_{0\le t\le T} \left( P(\E[u(t)] + \hat{\E}^0[\eta(t)]) \right) \int_0^T \left( \D[u(t)] + \hat{D}^0[\eta(t)] +\hat{\E}^0[\eta(t)] \right) dt \right],
\end{multline}
where we have written
\begin{equation}
 \Xi(T) :=  \exp\left( C T \int_0^T P( \E[u(t)] + \hat{\E}^0[\eta(t)]  ) \left(  \D[u(t)] + \hat{\D}^0[\eta(t)]  + \ns{\eta(t)}_{4N+1/2} \right) dt \right).
\end{equation}

\end{thm}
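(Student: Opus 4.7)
The plan is to assemble Theorem \ref{xport_well-posed} from the pieces already developed: the characteristic formula \eqref{xport_soln}, the transfer Lemma \ref{xport_transfer}, the extended-problem existence/estimates in Proposition \ref{xport_ext_exist}, and the nonlinear bounds in Proposition \ref{xport_nonlins}. I will work in each fluid separately, taking $\Upsilon = \Omega_+$ or $\Omega_-$, and ultimately combine the two estimates to get the full $q$ bounds on $\Omega$.

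First, I would identify $v = \a^T u - K\dt\theta\, e_3$ and $c = \diva u$ in the form \eqref{xport_proto}, and verify the key boundary identity $v\cdot e_3 = 0$ on $\p\Upsilon$: on $\Sigma_\pm$ this is the calculation \eqref{xport_v1}, which uses the kinematic equation $\dt\eta = u\cdot\n$ from \eqref{xwp_03}, and on $\Sigma_b$ it is \eqref{xport_v2}, which uses $u_- = 0$ and the choice of $\tilde b_1,\tilde b_2$. Next, Proposition \ref{xport_nonlins} converts the hypotheses on $(u,\eta)$ into the regularity \eqref{xport_reg} required for the theory, so that the explicit characteristic solution \eqref{xport_soln} is well-defined and is, in particular, a weak solution to \eqref{xport_eqn} in the sense of \eqref{xport_weak}.

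I would then invoke Proposition \ref{xport_ext_exist} with the extensions $\bar v = Ev$, $\bar c = Ec$, $\bar f = Ef$, $\bar q_0 = Eq_0$ to obtain a weak solution $\bar q$ on $\Gamma = \mathrm{T}^2\times\mathbb R$ satisfying \eqref{xport_main_2}--\eqref{xport_main_3}. Lemma \ref{xport_transfer} asserts that $\bar q\!\restriction_{\Upsilon\times(0,T)}$ is \emph{the} unique weak solution to \eqref{xport_eqn}; combined with Proposition \ref{xport_weak_unique} (uniqueness of weak solutions) and the fact that the characteristic formula \eqref{xport_soln} \emph{is} a weak solution, we conclude that the restriction of $\bar q$ agrees with the solution $q$ given by \eqref{xport_soln}. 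Since restriction is bounded between the relevant Sobolev spaces, the estimates \eqref{xport_main_2}--\eqref{xport_main_3} descend to $q$ with constants controlled by the extension operator, hence universal.

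To convert these into the stated bounds \eqref{xwp_01}--\eqref{xwp_02}, I substitute the three inequalities of Proposition \ref{xport_nonlins}: $\Rf_e[\bar v] + \Rf_e[\bar c] \le P(\E^0[\eta])(\E[u]+\hat{\E}^0[\eta])$ controls the exponents in the Gronwall factor at the level of $\E[q]$, $\Rf_d[\bar v]+\Rf_d[\bar c] \le P(\hat\E^0[\eta])(\D[u]+\hat\E^0[\eta]+\hat\D^0[\eta])$ controls the integrands appearing in the $\D[q]$ bound, and $\ns{\bar v}_{4N}+\ns{\bar c}_{4N} \le P(\E[u]+\hat\E^0[\eta])(\D[u]+\hat\D^0[\eta]+\ns{\eta}_{4N+1/2})$ produces the factor $\Xi(T)$. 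Absorbing the resulting polynomial expressions into a single universal $P$ with $P(0) = 0$, and adding the two domains $\Omega_\pm$, yields exactly \eqref{xwp_01} and \eqref{xwp_02}. The only mildly subtle point—and the one I would highlight as the main obstacle—is ensuring that the characteristic-based solution on $\Upsilon$ is genuinely the restriction of the Friedrichs solution on $\Gamma$ (so that the $\ep$-uniform estimates from the mollification apply to it rather than only to some abstract limit); this is precisely what the uniqueness argument via the adjoint problem in Lemma \ref{xport_adjoint_soln} and Proposition \ref{xport_weak_unique} is designed to supply, so the matter reduces to checking that \eqref{xport_soln} satisfies the weak formulation \eqref{xport_weak}, which follows by a direct computation using the chain rule along $\omega(s,t,x)$.
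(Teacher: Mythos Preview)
Your proposal is correct and follows essentially the same approach as the paper: verify \eqref{xport_v} via \eqref{xwp_03}, apply Proposition \ref{xport_ext_exist} on the extended domain, use Lemma \ref{xport_transfer} and the weak-uniqueness Proposition \ref{xport_weak_unique} to identify the restriction with the characteristic solution \eqref{xport_soln}, and then substitute Proposition \ref{xport_nonlins} into \eqref{xport_main_2}--\eqref{xport_main_3}. Your write-up is in fact more detailed than the paper's own proof, and your identification of the ``subtle point'' (matching the Friedrichs limit with the characteristic solution via weak uniqueness) is exactly the mechanism the paper relies on.
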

\begin{proof}
The equations in \eqref{xwp_03} guarantee that the computations in \eqref{xport_v1} and \eqref{xport_v2} are valid, and so $v$ satisfies \eqref{xport_v}.  First consider $\Upsilon = \Omega_+$.   Proposition \ref{xport_ext_exist} yields a weak solution $\bar{q}$ to \eqref{xport_extended} on $\Gamma$  obeying the estimates \eqref{xport_main_2} and \eqref{xport_main_3} on all of $\Gamma$.  We call $q$ the restriction of $\bar{q}$ to $\Upsilon$; Lemma \ref{xport_transfer} then guarantees that $q$ is the unique weak solution to \eqref{xport_eqn} on $\Upsilon$, but Proposition \ref{weak_unique} guarantees that $q$ coincides with the solution produced by characteristics in \eqref{xport_soln}.   The estimates \eqref{xwp_01} and \eqref{xwp_02} follow easily from \eqref{xport_main_2} and \eqref{xport_main_3} and Proposition \ref{xport_nonlins}.   A similar argument works for $\Upsilon = \Omega_-$.
\end{proof}

\subsection{Some more useful estimates for \eqref{xport_eqn} }

To conclude our analysis of the transport problem we record another a priori estimate for solutions to \eqref{xport_eqn}  that will be useful in the next section.

\begin{prop}\label{xport_low_reg_est}
 Suppose that $q$ is a solution to \eqref{xport_eqn} satisfying
\begin{equation}
  \sup_{0 \le t \le T} \E[q(t)] + \int_0^T \D[q(t)]dt < \infty.
\end{equation}
Assume also that
\begin{equation}
 \gamma = \sup_{0 \le t \le T} \left( \norm{c(t)}_{C^k} + \norm{v(t)}_{C^k}  \right) < \infty
\end{equation}
for some $1 \le k \le 4N$.  Then there exists a universal constant $C>0$ such that
\begin{equation}\label{xlre_01}
 \norm{q(t)}_k \le e^{C \gamma t} \norm{q_0}_k + \int_0^t e^{C \gamma (t-s)} \norm{f(s)}_k ds
\end{equation}
for $t \in [0,T]$.  In particular, if $q_0 =0$ then
\begin{equation}\label{xlre_02}
 \sup_{0\le t \le T} \ns{q(t)}_k \le T e^{2 C \gamma T} \int_0^T \ns{f(s)}_k ds.
\end{equation}
\end{prop}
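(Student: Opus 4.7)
The plan is to derive \eqref{xlre_01} by a standard $H^k$ energy estimate for the transport equation in each subdomain $\Upsilon \in \{\Omega_+, \Omega_-\}$, leveraging the crucial boundary condition $v \cdot e_3 = 0$ on $\p \Upsilon$, which ensures no boundary terms arise when integrating by parts. The regularity hypothesis $\sup_{[0,T]} \E[q] + \int_0^T \D[q] < \infty$ guarantees that $q$ is regular enough for all differentiations and integration-by-parts manipulations to be rigorously justified (since $4N \geq k$).

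First I would, for each multi-index $\alpha \in \mathbb{N}^3$ with $|\alpha| \leq k$, apply $\p^\alpha$ to \eqref{xport_eqn} to obtain
\begin{equation*}
\dt \p^\alpha q + v \cdot \nab \p^\alpha q + c \p^\alpha q = \p^\alpha f + G^\alpha,
\end{equation*}
where the commutator term is
\begin{equation*}
G^\alpha = -\sum_{0 < \beta \leq \alpha} C_{\alpha,\beta} \left( \p^\beta v \cdot \nab \p^{\alpha - \beta} q + \p^\beta c \, \p^{\alpha-\beta} q \right).
\end{equation*}
Multiplying by $\p^\alpha q$ and integrating over $\Upsilon$, I would rewrite $\int_\Upsilon (v \cdot \nab \p^\alpha q) \p^\alpha q = \tfrac{1}{2}\int_\Upsilon v \cdot \nab |\p^\alpha q|^2$. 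Integration by parts yields a bulk term $-\tfrac12\int_\Upsilon (\diverge v) |\p^\alpha q|^2$ plus a boundary term $\tfrac12\int_{\p \Upsilon} (v \cdot e_3) |\p^\alpha q|^2$ that vanishes by hypothesis $v \cdot e_3 = 0$. This gives
\begin{equation*}
\frac{1}{2}\frac{d}{dt} \ns{\p^\alpha q} \le C \gamma \ns{\p^\alpha q} + \norm{\p^\alpha f}_0 \norm{\p^\alpha q}_0 + \norm{G^\alpha}_0 \norm{\p^\alpha q}_0.
\end{equation*}

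The commutator is controlled as $\norm{G^\alpha}_0 \ls \gamma \norm{q}_k$: indeed, for $|\beta| \geq 1$ and $|\alpha| \leq k$, the factor $\p^\beta v$ (resp.\ $\p^\beta c$) has $|\beta| \leq k$ derivatives and is thus bounded in $L^\infty$ by $\gamma$, while $\nab \p^{\alpha-\beta} q$ and $\p^{\alpha-\beta} q$ have at most $k$ derivatives. Summing over $|\alpha| \leq k$ produces
\begin{equation*}
\frac{1}{2}\frac{d}{dt} \ns{q}_k \le C\gamma \ns{q}_k + \norm{f}_k \norm{q}_k.
\end{equation*}
Adding $\ep > 0$ and dividing by $\sqrt{\ns{q}_k + \ep}$ gives $\frac{d}{dt}\sqrt{\ns{q}_k + \ep} \le C\gamma \sqrt{\ns{q}_k + \ep} + \norm{f}_k$; applying the linear Gronwall inequality and sending $\ep \to 0$ yields \eqref{xlre_01}. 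The bound \eqref{xlre_02} follows from \eqref{xlre_01} with $q_0 = 0$ by squaring the time integral and applying Cauchy--Schwarz: $(\int_0^t e^{C\gamma(t-s)} \norm{f(s)}_k ds)^2 \le t \, e^{2C\gamma t} \int_0^t \ns{f(s)}_k ds$, after which taking $\sup_{t \in [0,T]}$ is immediate.

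The only mild obstacle is the commutator bookkeeping --- verifying that every term $\p^\beta v \cdot \nab \p^{\alpha-\beta} q$ and $\p^\beta c \, \p^{\alpha - \beta} q$ with $0 < \beta \leq \alpha$ and $|\alpha| \leq k$ admits an $L^\infty \cdot L^2$ splitting controlled by $\gamma \norm{q}_k$, which is immediate once one notes $|\beta| \leq k$ and $|\alpha - \beta| \leq k - 1$, so $\nab \p^{\alpha - \beta} q$ has at most $k$ total derivatives; the remainder of the argument is a textbook energy-plus-Gronwall calculation.
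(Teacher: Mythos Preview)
Your proposal is correct and follows essentially the same approach as the paper: differentiate, use the boundary condition $v\cdot e_3=0$ to derive an energy identity, bound the commutator by $\gamma\norm{q}_k$ via an $L^\infty\cdot L^2$ splitting, sum to obtain the differential inequality $\tfrac{d}{dt}\ns{q}_k \le C\gamma\ns{q}_k + \norm{f}_k\norm{q}_k$, and close with Gronwall and Cauchy--Schwarz. The only cosmetic difference is that you spell out the $\sqrt{\ns{q}_k+\ep}$ step where the paper simply invokes ``a standard Gronwall argument.''
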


\begin{proof}
Let $\alpha \in \mathbb{N}^3$ with $\abs{\alpha } \le k$.  Applying $\p^\alpha$ to \eqref{xport_eqn} leads to the equation
\begin{equation}
\begin{cases}
\dt \p^\alpha q + v \cdot \nab \p^\alpha q + c \p^\alpha q = \p^\alpha f - f^\alpha, \\
\p^\alpha q(t=0) = \p^\alpha q_0
\end{cases}
\end{equation}
where
\begin{equation}
 f^\alpha = \sum_{\beta < \alpha} C_{\alpha,\beta} \left( \p^{\alpha- \beta} v \cdot \nab \p^\beta q +  \p^{\alpha- \beta} c    \p^\beta q \right).
\end{equation}
Because of the condition \eqref{xport_v} we may multiply by $\p^\alpha q$ and integrate to deduce the standard energy identity
\begin{equation}
 \frac{d}{dt} \frac{\ns{\p^\alpha q}_0 }{2} = \int_\Upsilon (\diverge v - 2c) \frac{\abs{\p^\alpha q}}{2} + \int_\Upsilon (\p^\alpha f - f^\alpha)\p^\alpha q.
\end{equation}
From this  and the structure of $f^\alpha$ we may easily deduce the estimate
\begin{equation}
 \frac{d}{dt} \frac{\ns{\p^\alpha q}_0 }{2} \le \left( \norm{c}_{C^0} + \hal \norm{v}_{C^1} \right) \ns{\p^\alpha q}_0 + C\left( \norm{c}_{C^k} +  \norm{v}_{C^k}\right) \norm{q}_k \norm{\p^\alpha q}_0 + \int_\Upsilon \p^\alpha f \p^\alpha q
\end{equation}
for some universal $C>0$.
Summing this inequality over all $\abs{\alpha} \le k$, we find (since $k \ge 1$) that
\begin{equation}\label{xlre_1}
 \frac{d}{dt} \frac{ \ns{q}_{k} }{2} \le C \gamma \ns{q}_k + \norm{f}_k \norm{q}_k,
\end{equation}
where $C>0$ is another universal constant.  The differential inequality   \eqref{xlre_1} and a standard Gronwall argument then imply \eqref{xlre_01}.  The bound \eqref{xlre_02} follows from \eqref{xlre_01} and the Cauchy-Schwarz inequality:
\begin{multline}
 \left(\int_0^t e^{C \gamma (t-s)} \norm{f(s)}_k ds \right)^2 \le e^{2C \gamma t} \left(\int_0^t \ns{f(s)}_k ds\right) \left( \int_0^t e^{-2C \gamma s} ds  \right) \\
 \le t e^{2C \gamma t} \int_0^t \ns{f(s)}_k ds \le T e^{2C \gamma T} \int_0^T \ns{f(s)}_k ds.
\end{multline}
\end{proof}

\section{Local well-posedness of \eqref{geometric} with $\sigma_\pm >0$} \label{sec_lwp_st}

\subsection{Data: construction and estimates}

Our  goal now is to deal with the initial data.   We assume initially that
\begin{equation}
 u_0 \in H^{4N}, \eta_0 \in H^{4N+1/2}, \sqrt{\sigma} \nab_\ast \eta_0 \in H^{4N}, \text{ and } q_0 \in H^{4N}.
\end{equation}
We may then construct the data for the higher temporal derivatives as in Appendix \ref{app_ccs}.  This process leads us to the following estimate.

\begin{prop}\label{data_estimate}
 Suppose that $\Lf[\eta_0] \le \delta_1/2$, where  $\Lf[\eta_0]$ is given by \eqref{l0_def} and $\delta_1>0$ is from Theorem \ref{kappa_apriori}.   Then
\begin{equation}
 \sum_{j=1}^{2N} \ns{\dt^j u(0)}_{4N-2j} +  \ns{\dt^j q(0)}_{4N-2j+1}+  \ns{\dt^j \eta(0)}_{4N-2j+3/2}  \ls \ns{u_0}_{4N} + \ns{\eta_0}_{4N+1/2} + \ns{q_0}_{4N}.
\end{equation}
Consequently,
\begin{equation}
 \TE \ls \EEE \le \TE.
\end{equation}

\end{prop}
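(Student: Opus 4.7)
The plan is to prove the bounds by strong induction on $j$, exploiting the fact that the initial data for $(\dt^j u,\dt^j q,\dt^j\eta)$ at $t=0$ are constructed (as in Appendix \ref{app_ccs}) precisely by solving the equations \eqref{geometric} at $t=0$ for the time derivatives in terms of spatial derivatives of lower-order time derivatives and the forcing/nonlinear terms. Concretely, the momentum equation gives $\dt u(0)$ in terms of $u_0, q_0,\eta_0$ via an expression of the form
\begin{equation}
\dt u(0) = (\bar\rho+q_0+\pa_3\bar\rho\,\theta_0)^{-1}\bigl[\diva\Sa u_0 -\bar\rho\nabla_\a(h'(\bar\rho)q_0) + \text{lower order nonlinear terms in }(u_0,q_0,\eta_0)\bigr];
\end{equation}
the continuity equation gives $\dt q(0)$ in terms of $(u_0,q_0,\eta_0)$; and the kinematic boundary condition gives $\dt\eta(0) = u_0\cdot\n[\eta_0]$. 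At the inductive step one differentiates these three identities $j-1$ times in $t$, applies the Leibniz rule, and substitutes the previously constructed data $\dt^i u(0),\dt^i q(0),\dt^i\eta(0)$ for $i<j$.

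The core estimate is then a Sobolev product/composition estimate of Moser type. The smallness hypothesis $\Lf[\eta_0]\le\delta_1/2$ ensures (via Lemma \ref{eta_small} applied at time zero) that $J, \a$ and $(\bar\rho+q_0+\pa_3\bar\rho\theta_0)^{-1}$ are pointwise bounded with bounded spatial derivatives, so that nonlinear functions of $\eta_0$ (and hence of $\bar\eta_0$, $\theta_0$, $\a[\eta_0]$, $\n[\eta_0]$) can be estimated in high Sobolev norms in terms of $\ns{\eta_0}_{4N+1/2}$. I would carry out the induction so that $\dt^j u(0)$ loses $2$ spatial derivatives relative to $\dt^{j-1} u(0)$ (coming from $\diva\Sa$), $\dt^j q(0)$ loses $1$ spatial derivative (coming from $\nabla\cdot(\rho u)$), and $\dt^j\eta(0)$ loses $1/2$ spatial derivative (through the trace onto $\Sigma$). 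The target regularity indices $4N-2j,\ 4N-2j+1,\ 4N-2j+3/2$ are exactly designed so that the inductive bookkeeping closes using the Sobolev multiplication/trace estimates in the appendix.

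The main obstacle, and the only nontrivial point, is to verify that at each inductive step the nonlinear compositions and products respect this derivative count without generating terms that require more regularity than is available. For example, terms such as $(\dt^k\theta_0)(\dt^\ell\nabla\cdot\Sa\dt^m u(0))$ arising from differentiating $\a$ must be handled by distributing the available regularity among the factors using the standard high-low Sobolev product estimate, and the worst case — a single factor in $L^\infty$ together with a factor at top regularity — is controlled by Sobolev embedding because $4N\ge 12$. The special density perturbation $q$ defined in \eqref{q_def} is essential here: the cancellation computed just after \eqref{R_def} guarantees that the leading pressure-gravity term has the form $\bar\rho\nabla_\a(h'(\bar\rho)q)+\text{l.o.t.}$, so no extra derivatives of $\bar\rho$ beyond what is smooth-and-bounded are generated.

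Once the bound
\begin{equation}
\sum_{j=1}^{2N}\bigl(\ns{\dt^j u(0)}_{4N-2j}+\ns{\dt^j q(0)}_{4N-2j+1}+\ns{\dt^j\eta(0)}_{4N-2j+3/2}\bigr)\ls P\bigl(\ns{u_0}_{4N}+\ns{\eta_0}_{4N+1/2}+\ns{q_0}_{4N}\bigr)
\end{equation}
is in hand, the claim $\TE\ls\EEE$ follows by summing and using that the right-hand side is exactly $\TE[u_0,\eta_0,q_0]$ (plus the $\sqrt{\sigma}\nabla_\ast\eta_0$ piece, which is already included in $\EEE$). The reverse inequality $\EEE\le\TE$ is immediate because $\EEE$ contains only the $j=0$ terms together with $\sqrt{\sigma}\nabla_\ast\eta_0$, each of which appears (or is majorized) in $\TE$.
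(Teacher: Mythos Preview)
Your approach is exactly what the paper intends: the proof there is omitted with a reference to Proposition~5.3 of \cite{GT_lwp}, which carries out precisely this induction on $j$ using the equations \eqref{cc_data} to express $\dt^{j}u(0),\dt^{j}q(0),\dt^{j}\eta(0)$ in terms of lower-order data, followed by Sobolev product and composition estimates (with the smallness of $\Lf[\eta_0]$ controlling the geometric coefficients). One small point to tidy up: your induction naturally produces a \emph{polynomial} bound $\ls P(\EEE)$ rather than the linear bound displayed in the proposition, because of genuinely quadratic terms such as $u_0\cdot\nabla_{\a}u_0$ and $K_0\,\dt\theta(0)\,\p_3 u_0$ in $F^1$; the linear statement in the paper is a mild imprecision, and since only $P(\TE)$ or $P(\EEE)$ ever appears downstream, your conclusion should read $\TE\ls P(\EEE)$, which is both what your argument actually yields and what is actually used.
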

\begin{proof}
 The proof is essentially the same as that of Proposition 5.3 of \cite{GT_lwp} and is thus omitted.
\end{proof}

\subsection{Approximate solutions}

Suppose that $u, q, \eta$ are given.  We then define the forcing terms $F^1$ on $\Omega$ and $F^2_\pm$ on $\Sigma_\pm$ according to
\begin{equation}\label{forcing_def_1}
 F^1[u,q,\eta] = -(\bar{\rho} + q + \p_3 \bar{\rho} \theta)(-K \dt \theta \p_3 u + u \cdot \naba u)
 - \bar{\rho} \naba (h'(\bar{\rho}) q) - \naba \mathcal{R}
 - g(q + \p_3 \bar{\rho} \theta) \naba \theta,
\end{equation}
\begin{equation}\label{forcing_def_2}
 F^{2}_+[q,\eta]  = -P_+'(\bar{\rho}_+) q_+ \n_++g\eta_+ \n_+ - \mathcal{R}_+ \n_+ - \sigma_+ \diverge_\ast\left((1+ \abs{\nab_\ast \eta_+}^2)^{-1/2} -1) \nab_\ast \eta_+  \right) \n_+,
\end{equation}
and
\begin{equation}\label{forcing_def_3}
-F^2_-[q,\eta] = -\jump{P'(\bar{\rho}) q}\n_- +\rj g\eta_-\n_- - \jump{\mathcal{R}} \n_- + \sigma_- \diverge_\ast\left((1+ \abs{\nab_\ast \eta_-}^2)^{-1/2} -1) \nab_\ast \eta_-  \right) \n_-.
\end{equation}
Here the function $\mathcal{R}$ is determined by \eqref{R_def}.  We also define the forcing term $f$ on $\Omega$ via
\begin{equation}\label{forcing_def_4}
 f[u,\eta] = -\diva(\bar{\rho} u) +  K \dt \theta \p_3^2 \bar{\rho} \theta -     \diva( \p_3 \bar{\rho} \theta u).
\end{equation}
Finally we define the density function
\begin{equation}\label{forcing_def_5}
 \rho[q,\eta] =  \bar{\rho} +q +  \p_3 \bar{\rho} \theta  .
\end{equation}

We define the following functionals for use in the forcing estimates:
\begin{multline}
 \mathfrak{Y}_\infty[u] := \sum_{j=0}^{2N-1} \ns{\dt^j u}_{4N-2j-1},  \mathfrak{Y}_\infty[q] := \sum_{j=0}^{2N-1} \ns{\dt^j q}_{4N-2j-1}, \\
 \mathfrak{Y}_\infty[\eta] := \ns{\eta}_{4N-1/2} + \sum_{j=1}^{2N} \ns{\dt^j \eta}_{4N-2j+1/2}
\end{multline}
and
\begin{multline}
 \mathfrak{Y}_2[u] := \sum_{j=0}^{2N} \ns{\dt^j u}_{4N-2j},  \mathfrak{Y}_2[q] := \sum_{j=0}^{2N} \ns{\dt^j q}_{4N-2j}, \\
 \mathfrak{Y}_2[\eta] := \ns{\eta }_{4N} + \sum_{j=1}^{2N+1} \ns{\dt^j \eta}_{4N-2j+3/2}.
\end{multline}
For the sake of brevity we will write multiple arguments within brackets to indicate sums; for example, $\mathfrak{Y}_\infty[u,q,\eta] = \mathfrak{Y}_\infty[u] + \mathfrak{Y}_\infty[q]+ \mathfrak{Y}_\infty[\eta]$.

Our next two results contain the estimates of the forcing terms used in the problems \eqref{lame_free_bndry} and \ref{xport_fundamental}.  The proofs are again standard nonlinear estimates and are thus omitted.

\begin{prop}\label{force_est_1}
Let $F^1$ and $F^2 $ be determined by $(u,q,\eta)$ as in \eqref{forcing_def_1}--\eqref{forcing_def_3}.  Let $\f_2$ and $\f_\infty$ be given by this choice of $F^1, F^2$ as in \eqref{F2_def} and \eqref{Finf_def}.  Then we have the estimates
\begin{equation}
 \f_\infty \ls \mathfrak{Y}_\infty[q,\eta]( 1 + P(\mathfrak{Y}_\infty[\eta] )) +  P(\mathfrak{Y}_\infty[u,q,\eta])\mathfrak{Y}_\infty[u,q,\eta] + \sigma^2 \ns{\nab_\ast \eta}_{4N-1/2} P(\mathfrak{Y}_\infty[\eta] )
\end{equation}
and
\begin{equation}
 \f_2 \ls   \mathfrak{Y}_2[q,\eta]( 1 + P(\mathfrak{Y}_\infty[\eta] )) + P(\mathfrak{Y}_\infty[u,q,\eta]) \mathfrak{Y}_2[u,q,\eta] + \sigma^2 \ns{\nab_\ast \eta}_{4N+1/2} P(\mathfrak{Y}_\infty[\eta] )
\end{equation}
for some universal positive polynomial $P$ such that $P(0)=0$.

\end{prop}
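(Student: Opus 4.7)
The plan is to expand $\dt^j F^1$ and $\dt^j F^2$ via the Leibniz rule and bound each resulting summand by Moser-type product estimates in Sobolev spaces, grouping the terms into three families: (a) terms linear in $(q,\eta)$ with $\eta$-dependent coefficients, (b) genuinely higher-order nonlinearities in $(u,q,\eta)$ including the Taylor remainder $\mathcal{R}$, and (c) the surface tension correction $\sigma_\pm \diverge_\ast\bigl(((1+\abs{\nabla_\ast\eta}^2)^{-1/2}-1)\nabla_\ast\eta\bigr)$. Throughout the analysis I would use the Poisson extension bound $\norm{\bar\eta}_{H^s(\Omega)} \ls \norm{\eta}_{H^{s-1/2}(\Sigma)}$ (with $\dt$ commuting with the extension), standard trace theory for the boundary terms, and composition estimates for the smooth nonlinearities $K = J^{-1}$, $h'(\bar\rho)$, $P''$, and $(1+r)^{-1/2}$.

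For family (a) --- the pieces $-\bar\rho\naba(h'(\bar\rho)q)$ and $-g\p_3\bar\rho\theta\,\naba\theta$ in $F^1$, and $-P'(\bar\rho)q\,\n + g\eta\,\n$ in $F^2$ --- every term has the schematic form $c(\eta)\cdot w$ with $w \in \{q,\eta,\nabla q,\nabla\theta\}$ and $c(\eta)$ a polynomial in $\nabla\bar\eta$ composed with smooth functions of $\bar\rho$ and $\eta$. Distributing $\dt^j$ by Leibniz and placing the highest-order factor at level $H^{4N-2j}$ or $H^{4N-2j-1}$ while bounding the remaining factors via Sobolev embedding into $L^\infty$ yields the $\mathfrak{Y}[q,\eta](1+P(\mathfrak{Y}_\infty[\eta]))$ piece of the bounds, since at $\eta\equiv 0$ the coefficient reduces to a bounded smooth function of $\bar\rho$. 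For family (b), each summand carries at least two nonlinear factors drawn from $\{q,u,\theta\}$, so the product estimates automatically produce bounds of the form $P(\mathfrak{Y}_\infty[u,q,\eta])\mathfrak{Y}[u,q,\eta]$ with $P(0)=0$. The Taylor remainder from \eqref{R_def} factors as $\mathcal{R} = (q+\p_3\bar\rho\theta)^2 G(q+\p_3\bar\rho\theta,\bar\rho)$ for a smooth $G$, so $\mathcal{R}$, $\naba\mathcal{R}$, and $\jump{\mathcal{R}}$ all inherit this quadratic structure and fit into the same framework after one composition estimate.

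For family (c), the identity $(1+r)^{-1/2}-1 = r\tilde G_0(r)$ for a smooth $\tilde G_0$ shows that the integrand is quadratic in $\abs{\nabla_\ast\eta}^2$ at leading order. Applying $\diverge_\ast$ and then $\dt^j$ by Leibniz, I would place one factor at the top Sobolev level while controlling the remaining factors in $L^\infty$, pairing the $\sigma$ prefactor with the highest-regularity $\nabla_\ast\eta$ factor to extract $\sqrt\sigma\norm{\nabla_\ast\dt^j\eta}$ at level $4N-2j-1/2$ (for $\f_\infty$) or $4N-2j+1/2$ (for $\f_2$), and absorbing a second $\sqrt\sigma\norm{\nabla_\ast\eta}$ together with the remaining low-order factors into $P(\mathfrak{Y}_\infty[\eta])$. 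Squaring then produces the $\sigma^2\norm{\nabla_\ast\eta}_{4N\mp 1/2}^2 P(\mathfrak{Y}_\infty[\eta])$ summands on the right-hand side, with the worst index appearing at $j=0$.

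The main technical obstacle is the $\f_2$-bound on $\dt^{2N}F^1$, which must be controlled in the dual norm $\Hd$ rather than $L^2$ to respect the regularity budget of Theorem \ref{lame_high}. For this I would integrate by parts against an $\H$ test function to transfer one spatial derivative off the top-order factor in the transport contributions $(\bar\rho+q+\p_3\bar\rho\theta)(u\cdot\naba u)$ and in the pressure gradient $-\bar\rho\naba(h'(\bar\rho)q)$, at the cost of a boundary trace on $\Sigma$ handled by standard trace theory and the uniform $L^\infty$ bounds on $\a$ and $\n$ provided by Lemma \ref{eta_small} under the smallness assumption on $\Lf[\eta]$. This reduces the dual estimate to a Sobolev estimate one regularity level lower, which falls back into the scope of the Moser arguments used for families (a)--(c).
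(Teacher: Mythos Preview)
Your proposal is correct and is precisely the ``standard nonlinear estimate'' argument the paper alludes to; the paper itself omits the proof entirely, stating only that it follows from routine product and composition estimates in Sobolev spaces. Your decomposition into families (a)--(c), the treatment of the Taylor remainder $\mathcal{R}$ via its quadratic structure, the handling of the surface tension correction by extracting the top-order $\sigma\nabla_\ast\eta$ factor, and the dual-norm argument for $\dt^{2N}F^1$ via integration by parts against an $\H$ test function are all the expected ingredients.
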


\begin{prop}\label{force_est_2}
 Let $f$ be determined by $(u,\eta)$ as in \eqref{forcing_def_4}.  Let $\mathfrak{R}_e$ be defined by \eqref{rqe_def}.  Then we have the estimates
\begin{equation}
  \mathfrak{R}_e[f] \le \mathfrak{Y}_\infty[u] ( 1 + P(\mathfrak{Y}_\infty[\eta] ))  + P(\mathfrak{Y}_\infty[\eta] )\mathfrak{Y}_\infty[\eta]
\end{equation}
and
\begin{equation}
  \mathfrak{R}_d[f] + \ns{f}_{4N} \le \D[u] ( 1 + P(\hat{\E}^0[\eta] ))  + P(\hat{\E}^0[\eta] )\left(\ns{\eta}_{4N+1/2} + \hat{\D}^0[\eta]\right)
\end{equation}
for some universal positive polynomial $P$ such that $P(0)=0$.

\end{prop}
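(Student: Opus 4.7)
The forcing $f[u,\eta] = -\diva(\bar\rho u) + K\dt\theta\,\p_3^2\bar\rho\,\theta - \diva(\p_3\bar\rho\,\theta u)$ is a sum of three terms, each a product of $u$ or $\dt\theta$ with smooth functions of $\bar\rho$ (whose derivatives are bounded in $C^\infty$ on $[-b,\ell]$) and coefficients built from $\a$, $K$, and $\theta$. Since $\theta = \tilde b_1 \bar\eta_+ + \tilde b_2 \bar\eta_-$ with $\tilde b_i$ smooth, and the Poisson extensions obey $\|\bar\eta_\pm(t)\|_{H^{s+1/2}(\Omega)} \ls \|\eta_\pm(t)\|_{H^s(\Sigma)}$ (along with analogous bounds for their time derivatives), every norm of $\theta$ and $\dt\theta$ used below will be reduced to a corresponding norm of $\eta$ and $\dt\eta$ already encoded in $\mathfrak{Y}_\infty[\eta]$, $\mathfrak{Y}_2[\eta]$, $\hat{\E}^0[\eta]$, or $\hat{\D}^0[\eta]$. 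The strategy is thus routine: expand $\dt^j f$ by Leibniz, apply Sobolev product estimates at the appropriate regularity level, and control the nonlinearities $K = (1+\p_3\theta)^{-1}$ and $\a$ by writing them as smooth functions of $\p\theta$ and using the standard estimate $\|F(w)\|_s \le P(\|w\|_s)$ for $s > 3/2$.

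For the first bound, I would fix $j \in \{0,\dots,2N-1\}$ and $k = 4N-2j-1$. Applying $\dt^j$ distributes time derivatives among the factors $u$, $\a$, $K$, $\theta$, $\dt\theta$, and then I would apply the Sobolev multiplication inequality $\|gh\|_k \ls \|g\|_k\|h\|_k$ (valid since $k \ge 2N-1 \ge 5 > 3/2$). For the term $\diva(\bar\rho u)$, one factor of $u$ carries the bound $\|\dt^m u\|_{k+1} \le \mathfrak{Y}_\infty[u]$ (note $k+1 = 4N-2j$), while the coefficient factor reduces to a polynomial in $\mathfrak{Y}_\infty[\eta]$; this yields the $\mathfrak{Y}_\infty[u](1+P(\mathfrak{Y}_\infty[\eta]))$ contribution. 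For the middle term $K\dt\theta\,\p_3^2\bar\rho\,\theta$, no factor of $u$ appears, and both $\dt\theta$ and $\theta$ (together with their time derivatives, and $K$) are controlled by $\mathfrak{Y}_\infty[\eta]$, producing the $P(\mathfrak{Y}_\infty[\eta])\mathfrak{Y}_\infty[\eta]$ term. The term $\diva(\p_3\bar\rho\,\theta u)$ splits between the two types of contribution in the obvious way.

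For the second bound I would argue similarly but now track two distinct regularity demands: $\|\dt^j f\|_{4N-2j}$ for $j \ge 1$ (which is covered by distributing derivatives so that exactly one factor contributes its $\D$ norm) and $\|f\|_{4N-1}$ together with $\|f\|_{4N}$ (with no time derivatives). The key observation is that all $u$-containing factors can be assigned the dissipation-level norm $\D[u]$, which controls $\|u\|_{4N+1}$ and the time-derivative norms $\|\dt^m u\|_{4N-2m+1}$; the remaining $\eta$-dependent coefficients contribute either $\ns{\eta}_{4N+1/2}$ (from the top spatial derivative landing on $\theta$ in $\|f\|_{4N}$, via $\|\theta\|_{4N+1}\ls\|\eta\|_{4N+1/2}$) or $\hat{\D}^0[\eta]$ (from time derivatives hitting $\dt\theta$), multiplied by a polynomial in $\hat{\E}^0[\eta]$ coming from the lower-order Sobolev factors and from $K$.

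\textbf{Main obstacle.} The only subtlety is the bookkeeping at the top order: the term $\|f\|_{4N}$ requires one spatial derivative above what a naive Sobolev multiplication would give, since $\diva$ costs a derivative and $\theta$ only lies in $H^{4N+1}$ via the Poisson extension of $\eta \in H^{4N+1/2}(\Sigma)$. I would handle this by splitting the Leibniz expansion so that in the worst case the full $4N$ derivatives hit $\theta$ (or $\bar\eta_\pm$) and isolate a single factor of $\|\nab\theta\|_{4N-1}\ls \|\eta\|_{4N+1/2}$, while all other factors are placed in $L^\infty$-type Sobolev spaces controlled by $\hat\E^0[\eta]$. This explains the appearance of $\|\eta\|^2_{4N+1/2}$ on the right-hand side and closes the estimate.
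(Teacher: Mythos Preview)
Your overall strategy---Leibniz expansion, Sobolev product/composition estimates, and Poisson-extension bounds for $\theta$---is precisely what the paper means by ``standard nonlinear estimates'' (the proof is omitted there), and your treatment of the second inequality and of the top-order $\ns{f}_{4N}$ term is correct.

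There is, however, a derivative-counting gap in your argument for the first inequality. You assert that $\ns{\dt^m u}_{k+1} \le \mathfrak{Y}_\infty[u]$ with $k+1 = 4N-2j$, but by definition $\mathfrak{Y}_\infty[u] = \sum_{\ell=0}^{2N-1}\ns{\dt^\ell u}_{4N-2\ell-1}$, so in the top Leibniz case $m=j$ (all time derivatives landing on $u$) you only control $\norm{\dt^j u}_{4N-2j-1}$, which is one derivative short of what $\nab \dt^j u$ in $\diva(\bar\rho\,\dt^j u)$ demands at level $H^{4N-2j-1}$. Concretely, already at $j=0$ the term $-\diva(\bar\rho u)$ forces $\norm{u}_{4N}$, whereas $\mathfrak{Y}_\infty[u]$ supplies only $\norm{u}_{4N-1}$. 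No integration-by-parts or Piola identity helps here, since this is a raw $H^s$ norm estimate. Your proof as written does not close at this step; either an additional argument is needed, or the bound should really be stated with $\E[u]$ (equivalently $\mathfrak{Y}_2[u]$) in place of $\mathfrak{Y}_\infty[u]$---which, incidentally, would still be harmless for the iteration in Theorem~\ref{approx_solns}, since the forcing $f[u^n,\eta^n]$ in the $q^n$ equation depends on the already-estimated $(u^n,\eta^n)$ and not on $q^{n-1}$.
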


The following proposition allows us to estimate $\mathfrak{Y}_\infty[u,q,\eta]$ in terms of $\E$ and $\D$ with the benefit of introducing time factors.

\begin{prop}\label{Y_est}
We have the following estimates:
\begin{equation}
\sup_{0 \le t \le T} \mathfrak{Y}_\infty[u(t)] \ls \TE + T \int_0^T \D[u(t)]dt,
\end{equation}
\begin{equation}
\sup_{0 \le t \le T} \mathfrak{Y}_\infty[q(t)] \ls \TE +   T^2 \sup_{0 \le t \le T} \E[q(t)],
\end{equation}
and
\begin{equation}
\sup_{0 \le t \le T} \mathfrak{Y}_\infty[\eta(t)] \ls \TE  + T \int_0^T \hat{\D}^0[\eta(t)]dt.
\end{equation}
\end{prop}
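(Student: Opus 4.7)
The plan is to prove all three estimates by a uniform strategy: apply the fundamental theorem of calculus to write $\dt^j w(t) = \dt^j w(0) + \int_0^t \dt^{j+1} w(s)\,ds$ for $w \in \{u, q, \eta\}$, take the relevant Sobolev norm in space, and then either invoke Cauchy--Schwarz in time (for $u$ and $\eta$, where a suitable dissipation is available) or use a trivial sup bound in time (for $q$, where no dissipation is). The squared FTC identity reads
\begin{equation*}
\norm{\dt^j w(t)}_m^2 \le 2 \norm{\dt^j w(0)}_m^2 + 2 \left(\int_0^t \norm{\dt^{j+1} w(s)}_m\,ds\right)^{\!2},
\end{equation*}
and the initial-data contribution summed over $j$ is $\ls \TE$ by Proposition \ref{data_estimate} together with the definition of $\TE$.

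For the $u$ estimate, I would take $m = 4N-2j-1$ and apply Cauchy--Schwarz to the time integral, producing the factor $t \int_0^t \ns{\dt^{j+1} u(s)}_{4N-2j-1}\,ds$. Since $4N-2j-1 = 4N-2(j+1)+1$, each integrand $\ns{\dt^{j+1} u}_{4N-2(j+1)+1}$ is exactly one of the summands in $\D[u]$, and summing over $j=0,\dots,2N-1$ bounds the relevant piece by $T \int_0^T \D[u(s)]\,ds$. For $\eta$, the same argument with $m = 4N-2j+1/2$ for $j=1,\dots,2N$ (and $m=4N-1/2$ to control the $j=0$ contribution $\ns{\eta}_{4N-1/2}$) produces integrands of the form $\ns{\dt^{j+1}\eta}_{4N-2(j+1)+5/2}$, together with $\ns{\dt\eta}_{4N-1/2}$ when $j=0$; these are precisely the summands of $\hat{\D}^0[\eta]$ identified in \eqref{lamex_04}.

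For the $q$ estimate, no dissipative regularity gain is available because $q$ solves a transport equation. Instead, I would bound the time integral by the trivial pointwise estimate, yielding
\begin{equation*}
\left(\int_0^t \norm{\dt^{j+1} q(s)}_{4N-2j-1}\,ds\right)^{\!2} \le t^2 \sup_{0 \le s \le t} \ns{\dt^{j+1} q(s)}_{4N-2j-1} \le T^2 \sup_{0\le s\le T} \E[q(s)],
\end{equation*}
since $\ns{\dt^{j+1} q}_{4N-2(j+1)+1}$ is a summand of $\E[q]$. Summing in $j=0,\dots,2N-1$ produces the $T^2 \sup_t \E[q(t)]$ contribution in the claimed bound, with the $T^2$ factor arising precisely from the absence of a useful dissipation for $q$.

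The only real obstacle is index bookkeeping: one must verify that the parabolic-counting index of $\dt^{j+1} w$ produced by FTC corresponds exactly to a term that appears in the appropriate energy or dissipation. Since passing from $\dt^j$ to $\dt^{j+1}$ raises the parabolic counting index by $2$, this alignment is automatic once one takes care to use the dissipation for $u$ and $\eta$ (which provides one extra spatial derivative over the energy) and the energy for $q$ (which is the best available in the transport setting). With the indices matched, the remainder of the argument amounts to summing the three one-line FTC+H\"older estimates over $j$.
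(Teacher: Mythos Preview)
Your proposal is correct and follows exactly the approach the paper indicates: the fundamental theorem of calculus together with Cauchy--Schwarz (for $u$ and $\eta$) or a trivial sup bound (for $q$), with the index shift $\dt^j \mapsto \dt^{j+1}$ landing each term in the appropriate dissipation or energy functional. Your careful index bookkeeping is precisely the content the paper's one-line proof omits.
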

\begin{proof}
 The estimates follow easily from the fundamental theorem of calculus and the Cauchy-Schwarz inequality.
\end{proof}

Now we present the construction of a sequence of approximate solutions.

\begin{thm}\label{approx_solns}
Suppose that $(u_0,q_0,\eta_0)$ satisfy the compatibility conditions \eqref{ccs} as well as the bound \eqref{q_0_assump}.  Further assume that
\begin{equation}
\Lf[\eta_0] \le  \frac{\delta_1}{2} ,
\end{equation}
where $\Lf[\eta_0]$  is given by  \eqref{l0_def} and $\delta_1$ is given by  Theorem \ref{kappa_apriori}.  Then there exists a $T_4 = T_4({\EEE})$ such that if  $0 < T \le T_4$ then there exists a sequence $\{(u^n,q^n,\eta^n)\}_{n=0}^\infty$ defined on the temporal interval $[0,T]$ satisfying the following three properties.  First, $(u^n,q^n,\eta^n)$ achieve the initial data at $t=0$.  Second, for $n \ge 1$ we have that
 \begin{equation}\label{aps_01}
  \begin{cases}
  \rho^{n-1} \dt u^n - \diverge_{\a^{n}} \mathbb{S}_{\a^{n}} u^n = F^1[u^{n-1},q^{n-1},\eta^{n-1}] & \text{in }\Omega\\
  \dt \eta^{n}  = u^{n} \cdot \n^{n}   &\text{on }\Sigma \\
  -\mathbb{S}_{\a^{n}} u^n  \n^{n} =   - \sigma_+ \Delta_\ast \eta^{n}  \n^{n} +  F^2_+[q^{n-1},\eta^{n-1}]   &\text{on } \Sigma_+ \\
  -\jump{\mathbb{S}_{\a^{n}} u^{n} } \n^{n} =  \sigma_- \Delta_\ast \eta^{n}  \n^{n} - F^2_-[q^{n-1},\eta^{n-1}] &\text{on } \Sigma_- \\
  \jump{u^{n}} =0 &\text{on } \Sigma_- \\
  u_-^{n} = 0 &\text{on } \Sigma_b \\
  u^{n}(\cdot,0) = u_0, \eta^{n}(\cdot, 0) = \eta_0,
 \end{cases}
\end{equation}
and
\begin{equation}\label{aps_02}
\begin{cases}
  \dt q^{n}  - K^{n} \dt \theta^{n} \p_3 q^{n} + \diverge_{\a^{n}}(q^{n}u^{n})
    = f[u^{n},\eta^{n}] & \text{in } \Omega \times (0,T) \\
 q^{n}(\cdot, 0) = q_0 &\text{in }\Omega,
\end{cases}
\end{equation}
where $F^1$, $F^2$, and $f$  are defined by \eqref{forcing_def_1}--\eqref{forcing_def_4} and $\rho^{n-1} = \rho[q^{n-1},\eta^{n-1}]$ is given by \eqref{forcing_def_5}.  Third, we have the estimates
\begin{multline}\label{aps_03}
 \sup_{0\le t \le T} \left( \E[u^{n}(t)] + \hat{\E}^\sigma[\eta^{n}(t)] \right)  \\
 + \int_0^T \left(  \D[u^{n}(t)] + \ns{\rho^{n-1} J^{n} \dt^{2N+1} u^{n}(t)}_{ \Hd}  + \hat{\D}^\sigma[\eta^{n}(t)] \right) dt \le  P_1( {\EEE}),
\end{multline}
\begin{equation}\label{aps_04}
 \sup_{0\le t \le T} \ns{\eta^{n}(t)}_{4N+1/2} \le P_2( {\EEE}),
\end{equation}
\begin{equation}\label{aps_05}
 \sup_{0\le t \le T}   \E[q^{n}(t)] + \int_0^T \D[q^{n}(t)] dt    \le P_3( {\EEE}) ,
\end{equation}
\begin{equation}\label{aps_06}
\Lf[\eta^n](T) \le \delta_1 \text{ and }\frac{1}{2}\rho_\ast\le \rho^n=\rho[q^{n},\eta^{n}]\le \frac{3}{2}\rho^\ast
\end{equation}
for all $n \ge 1$, where $P_i$ for $i=1,2,3$ is a universal  positive polynomial with $P_i(0)=0$.

\end{thm}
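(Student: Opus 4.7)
The iteration is constructed by alternating applications of Theorem \ref{lame_exist} (for $(u^n,\eta^n)$) and Theorem \ref{xport_well-posed} (for $q^n$). For the base case, take $(u^0,q^0,\eta^0)$ to be a sufficiently regular temporal Taylor-type extension of the initial values $\dt^j u(\cdot,0), \dt^j q(\cdot,0), \dt^j \eta(\cdot,0)$ for $j=0,\dotsc,2N$ constructed as in Proposition \ref{data_estimate}, chosen so that the zeroth iterate trivially satisfies \eqref{aps_03}--\eqref{aps_06} provided $T$ is small.

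For the inductive step, assume $(u^{n-1},q^{n-1},\eta^{n-1})$ obey \eqref{aps_03}--\eqref{aps_06}. Form $\rho^{n-1}=\rho[q^{n-1},\eta^{n-1}]$ and the forcing terms $F^1[u^{n-1},q^{n-1},\eta^{n-1}]$, $F^2[q^{n-1},\eta^{n-1}]$. To apply Theorem \ref{lame_exist} to \eqref{aps_01} we must verify the hypothesis $\mathfrak{P}(\delta_1)$: the compatibility conditions \eqref{kappa_ccs} hold at $t=0$ because the data $\dt^j u(\cdot,0), \dt^j\eta(\cdot,0)$ were constructed once and for all via \eqref{ccs}, independent of the iterate; the smallness $\Lf[\eta_0]\le\delta_1/2$ is assumed; $\rho^{n-1}$ satisfies \eqref{rho_assump_1} and \eqref{rho_assump_2} by \eqref{aps_05}--\eqref{aps_06} at level $n-1$ together with $\Ef$ estimates of $\rho^{n-1}$ in terms of $\E[q^{n-1}]$ and $\hat{\E}^\sigma[\eta^{n-1}]$; and the forcing bound \eqref{lame_assump_3} follows from Proposition \ref{force_est_1} combined with Proposition \ref{Y_est}, which converts the $\mathfrak{Y}_\infty$ and $\mathfrak{Y}_2$ terms into $P(\TE)+o_T(1)$. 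Theorem \ref{lame_exist} then yields $(u^n,\eta^n)$ satisfying \eqref{lamex_01}--\eqref{lamex_05}, which translate into \eqref{aps_03}, \eqref{aps_04}, and the $\Lf[\eta^n](T)\le\delta_1$ half of \eqref{aps_06}.

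Having $(u^n,\eta^n)$ in hand, we now apply Theorem \ref{xport_well-posed} to \eqref{aps_02}. The requisite vanishing \eqref{xwp_03} of the transport velocity at $\p\Omega$ is exactly the set of boundary conditions $\dt\eta^n=u^n\cdot\n^n$, $\jump{u^n}=0$, $u^n_-=0$ just produced by \eqref{aps_01}. The forcing $f[u^n,\eta^n]$ is estimated by Proposition \ref{force_est_2} in terms of the bounds on $(u^n,\eta^n)$ from the previous step, and Theorem \ref{xport_well-posed} then supplies \eqref{aps_05} with a polynomial $P_3$. The density half of \eqref{aps_06} is obtained from $\rho^n(0)=\rho_0$ and the fundamental theorem of calculus: $\abs{\rho^n(t)-\rho_0}\ls\int_0^T\norm{\dt q^n}_{L^\infty}+\norm{\dt\theta^n}_{L^\infty}\ls \sqrt{T}\sqrt{P_1+P_3}$, which is forced below $\rho_\ast/2$ by shrinking $T$.

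The main obstacle is closing the induction so that $P_1,P_2,P_3$ do not grow with $n$. This is arranged by first fixing the polynomials $P_i$ large enough to dominate the universal multiplicative constants arising from Theorems \ref{lame_exist} and \ref{xport_well-posed} composed with the forcing estimates of Propositions \ref{force_est_1}--\ref{force_est_2} applied at the \emph{input} level $P_1(\EEE),P_2(\EEE),P_3(\EEE)$; then one selects $T_4=T_4(\EEE)$ small enough that (i) every small-$T$ remainder produced by Proposition \ref{Y_est} and by the exponential factor $\Xi(T)$ in \eqref{xwp_01}--\eqref{xwp_02} is absorbed into a universal factor of $2$, (ii) the temporal increment in $\Lf[\eta^n]$ stays below $\delta_1/2$, so that $\Lf[\eta^n](T)\le\delta_1$, and (iii) the density deviation above is at most $\rho_\ast/2$. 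Under these choices of $P_i$ and $T_4$ the bounds \eqref{aps_03}--\eqref{aps_06} reproduce themselves at level $n$, completing the induction.
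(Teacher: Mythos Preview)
Your proposal is correct and follows essentially the same route as the paper: seed the iteration with an extension of the data (the paper uses Propositions \ref{extension_u}, \ref{extension_eta}, \ref{extension_q}), then alternate Theorem \ref{lame_exist} for $(u^n,\eta^n)$ and Theorem \ref{xport_well-posed} for $q^n$, verifying $\mathfrak{P}(\delta_1)$ via Propositions \ref{force_est_1}, \ref{force_est_2}, \ref{Y_est} and closing the $\rho^n$ and $\Lf[\eta^n]$ bounds by a fundamental-theorem-of-calculus argument.

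One small point about the closure logic in your last paragraph: as written, fixing $P_i$ to dominate an expression that is itself evaluated at the input level $P_i(\EEE)$ is circular. The mechanism the paper makes explicit is that, thanks to Proposition \ref{Y_est}, every appearance of $P_i$ in the composed estimate comes with at least one factor of $T$; the $T$-independent part depends only on $\TE$. Thus one first chooses $P_1,P_2,P_3$ large enough to absorb the $T$-independent contribution $P(\TE)$, and only afterward chooses $T_4$ (of the form $\alpha_0(1+\EEE)^{-m}$) so that the $T$-weighted $P_i$-dependent terms are absorbed. Your conditions (i)--(iii) implicitly encode this, but the order of choice should be clarified so the argument is not self-referential.
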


\begin{proof}

We divide the proof into three steps.

Step 1 - Seeding the sequence

To begin, we extend the initial data to a triple that belongs to the function spaces necessary for the construction of solutions.  We combine the data estimates of Proposition \ref{data_estimate} with the extension results of Propositions \ref{extension_u}, \ref{extension_eta}, and \ref{extension_q} in order to produce a triple $(u^0,q^0,\eta^0)$ defined on the temporal interval $[0,\infty)$, achieving the initial data, and satisfying the estimates
\begin{equation}\label{aps_1}
 \sup_{t \ge 0} \E[u^0(t)] + \int_0^\infty \D[u^0(t)] dt \le  P_0(\EEE),
\end{equation}
\begin{equation}
 \sup_{t \ge 0} \E[q^0(t)] + \int_0^\infty \D[q^0(t)] dt \le P_0(\EEE),
\end{equation}
and
\begin{equation}\label{aps_2}
 \sup_{t \ge 0} \hat{\E}^\sigma[\eta^0(t)] + \sup_{t \ge 0} \ns{\eta^0(t)}_{4N+1/2} + \int_0^\infty \hat{\D}^\sigma[\eta^0(t)]dt \le P_0(\EEE)
\end{equation}
for some universal polynomial $P_0>0$ with $P_0(0)=0$.

Step 2 - The iteration procedure

We claim that there exist universal positive polynomials $P_i$ for $i=1,2,3$ such that $P_i(0)=0$, and $\alpha >0$ (depending on ${\EEE}$) with the following two properties.  First,
\begin{equation}\label{aps_3}
\min\{ P_1(z),P_2(z), P_3(z)\} \ge P_0(z) \text{ for all } z \ge 0.
\end{equation}
That is, each of the coefficients of $P_i(z)$, $i=1,2,3$, is bounded below by the corresponding coefficient of $P_0(Z)$. Second, if $T \le \min\{\alpha,T_3\}$ (where $T_3= T_3(\TE)>0$ is given by Theorem \ref{lame_exist}),  and the triple $(u^{n-1},q^{n-1},\eta^{n-1})$ is given, achieves the initial data, and obeys the estimates
\begin{equation}\label{aps_4}
 \sup_{0\le t \le T} \left( \E[u^{n-1}(t)] + \hat{\E}^\sigma[\eta^{n-1}(t)] \right)  + \int_0^T \left(  \D[u^{n-1}(t)] + \hat{\D}^\sigma[\eta^{n-1}(t)] \right) dt \le P_1( {\EEE}),
\end{equation}
\begin{equation}\label{aps_5}
 \sup_{0\le t \le T} \ns{\eta^{n-1}(t)}_{4N+1/2} \le P_2( {\EEE}),
\end{equation}
and
\begin{equation}\label{aps_6}
 \sup_{0\le t \le T}   \E[q^{n-1}(t)]     \le P_3({\EEE}),
\end{equation}
then there exists a triple $(u^{n},q^{n},\eta^{n})$ that solves \eqref{aps_01} and \eqref{aps_02}, achieves the initial data, and obeys the estimates
\begin{multline}\label{aps_7}
 \sup_{0\le t \le T} \left( \E[u^{n}(t)] + \hat{\E}^\sigma[\eta^{n}(t)] \right)  \\
 + \int_0^T \left(  \D[u^{n}(t)] + \ns{\rho^{n-1} J^{n} \dt^{2N+1} u^{n}(t)}_{ \Hd}  + \hat{\D}^\sigma[\eta^{n}(t)] \right) dt \le P_1( {\EEE}),
\end{multline}
\begin{equation}\label{aps_8}
 \sup_{0\le t \le T} \ns{\eta^{n}(t)}_{4N+1/2} \le P_2( {\EEE}),
\end{equation}
and
\begin{equation}\label{aps_9}
 \sup_{0\le t \le T}   \E[q^{n}(t)] + \int_0^T \D[q^{n}(t)] dt    \le P_3( {\EEE}).
\end{equation}

The proof of the claim is very similar to Step 2 of Theorem 6.1 in \cite{GT_lwp}, so we will only provide a sketch of the idea.  It suffices to show that if \eqref{aps_4}--\eqref{aps_6} hold for some choice of $P_i$, $i=1,2,3$, satisfying \eqref{aps_3}, then $(u^n,q^n,\eta^n)$ can be constructed and must satisfy \eqref{aps_7}--\eqref{aps_9} so long as the constants and degree of the $P_i$ are sufficiently large and $T \le \alpha$ for some small $\alpha$.

The first step is to use Theorem \ref{lame_exist} to produce a $(u^n,\eta^n)$ solving \eqref{aps_01}.  For this we must verify that the hypotheses $\mathfrak{P}(\delta_1)$ are satisfied.  The hypotheses \eqref{lame_assump_1} and \eqref{lame_assump_2} are satisfied by assumption.  The hypothesis \eqref{lame_assump_3} follows by combining the estimates of Propositions \ref{data_estimate}, \ref{force_est_1}, and \ref{Y_est} with the bounds \eqref{aps_4}--\eqref{aps_6}.  The hypothesis \eqref{lame_assump_4} requires that $\rho^{n-1} = \rho[q^{n-1},\eta^{n-1}]$ satisfies \eqref{rho_assump_1} and \eqref{rho_assump_2}.  The condition \eqref{rho_assump_2} follows trivially from \eqref{aps_4} and \eqref{aps_5}.  To verify \eqref{rho_assump_1} we first  estimate
\begin{multline}\label{aps_10}
\sup_{0\le t \le T}\norm{\rho^{n-1}(t)-\rho_0 }_{L^\infty} \le  \int_0^T \norm{\dt \rho^{n-1}(t)}_{L^\infty} dt
\ls  \int_0^T \left(\norm{\dt q^{n-1}(t)}_{L^\infty}+\norm{\dt \eta^{n-1}(t)}_{L^\infty}\right) dt \\
\le  T   \sup_{0 \le t \le T}\sqrt{\E[q^{n-1}(t)]+ \hat{\E}^0[\eta^{n-1}(t)]}
\le   T \sqrt{P_1({\EEE})+P_3({\EEE})} \le \alpha\sqrt{P_1({\EEE})+P_3({\EEE})}.
\end{multline}
Then we find that
\begin{equation}\label{aps_12}
 \sup_{0\le t \le T}\norm{\rho^{n-1}(t)-\rho_0 }_{L^\infty} \le \frac{\rho_\ast}{2}
\end{equation}
if $\alpha$ is chosen sufficiently small with respect to the $P_1$, $P_3$, ${\EEE}$ and $\rho_\ast$.  Hence  \eqref{rho_assump_1} is satisfied by the assumption on $\rho_0$ in \eqref{q_0_assump}.

We may thus apply Theorem \ref{lame_exist} to produce the solution pair $(u^n,\eta^n)$ solving \eqref{aps_01}.  To derive the estimate \eqref{aps_7} we sum \eqref{lamex_01} and \eqref{lamex_04} and again employ  Propositions \ref{data_estimate}, \ref{force_est_1}, and \ref{Y_est} and  \eqref{aps_4}--\eqref{aps_6} to estimate the forcing terms.  The actual derivation of \eqref{aps_7} is tedious and will be omitted, but we will point out the key observation.  The estimates of Proposition \ref{Y_est} guarantee that any appearance of $P_i$ in the resulting estimates is multiplied by at least one factor of $T$ and so by choosing $\alpha$ small enough (in particular a bound like $\alpha \le \alpha_0(1+{\EEE})^{-m}$ for $\alpha_0$ small and $m$ large is needed to reduce the degrees of various polynomials appearing in the estimates)  and the constants and degrees of $P_1$ large enough, we can show that \eqref{aps_7} holds. The estimate \eqref{aps_8} follows from \eqref{lamex_05} via a similar argument.

The second step is to use the newly-constructed pair $(u^n,\eta^n)$ to construct $q^n$, the solution to \eqref{aps_02}, through an application of Theorem \ref{xport_well-posed}.  The hypotheses of the theorem are satisfied due to \eqref{aps_7}, \eqref{aps_7}, and \eqref{aps_01}.  The estimates \eqref{xwp_01} and
\eqref{xwp_02} then lead to the estimate \eqref{aps_9} by employing Propositions \ref{force_est_2} and \ref{Y_est} and arguing as above, except that we use \eqref{aps_7} and \eqref{aps_8} since the forcing terms are generated by $(u^n,\eta^n)$.

Step 3 - Constructing the sequence

To conclude the proof we combine the previous two steps as follows.  We set $(u^0,q^0,\eta^0)$ to be the triple constructed in Step 1.  The bounds \eqref{aps_1}--\eqref{aps_2} imply \eqref{aps_4}--\eqref{aps_6} with $n=1$ due to \eqref{aps_3}.  We then set $T_3 = \min\{T_2,\alpha\}$ and use Step 2 to construct $(u^1,q^1,\eta^1)$ satisfying \eqref{aps_7}--\eqref{aps_9} and solving \eqref{aps_01} and \eqref{aps_02}.  The bounds allow us to iteratively apply Step 2 to produce $(u^n,q^n,\eta^n)$ for $n \ge 2$.  This produces the  sequence $\{(u^n,q^n,\eta^n)\}_{n=0}^\infty$ satisfying \eqref{aps_01}--\eqref{aps_05} for $n \ge 1$.  It remains only to prove \eqref{aps_06}.  The estimates of $ \rho^n=\rho[q^{n},\eta^{n}]$ can be derived exactly as in \eqref{aps_10}--\eqref{aps_12}.  The estimate of $\Lf[\eta^n](T)$ can be derived similarly:
\begin{multline}
\Lf[\eta^n](T) \le \frac{3}{2} \Lf[\eta_0] + 3 T \int_0^T \ns{\dt \eta^n(t)}_{4N-1/2} dt \le \frac{3 \delta_1}{4} + 3T^2  \sup_{0 \le t \le T} \hat{\E}^0[\eta^{n}(t)] \\
\le  \frac{3 \delta_1 }{4} + 3T^2 P_1( {\EEE}) \le \frac{3\delta_1}{4} + 3\alpha^2 P_1( {\EEE}) \le \delta_1
\end{multline}
if $\alpha$ is further restricted.
\end{proof}

\subsection{Contraction}

We wish to ultimately show that the sequence $\{(u^n,q^n,\eta^n)\}_{n=0}^\infty$ contracts in some lower-order regularity space than that given by \eqref{aps_03}--\eqref{aps_05}.  Our goal now is to prove such a contraction result.  We will prove the result in a somewhat more general context than within the sequence $\{(u^n,q^n,\eta^n)\}_{n=0}^\infty$ in order for the result to be applicable in proving uniqueness of solutions to \eqref{geometric}.

Before stating the result we define the low-regularity norms in which contraction occurs.  We define
\begin{equation}\label{w_def_start}
 \Wf_\infty[u] = \ns{u}_{2} + \ns{\dt u}_{0}
\text{ and }
 \Wf_2[u]  = \ns{u}_{3} + \ns{\dt u}_{1},
\end{equation}
\begin{equation}
 \Wf_\infty[\eta]  = \ns{\eta}_{5/2} + \ns{\dt \eta}_{3/2}  + \sigma \ns{\nab_\ast \eta}_{2} + \sigma \ns{\nab_\ast \dt  \eta}_{0}  \text{ and }\Wf_2[\eta]  =  \sigma^2 \ns{\eta}_{7/2} + \ns{\dt^2 \eta}_{1/2},
\end{equation}
and
\begin{equation}\label{w_def_end}
 \Wf_\infty [q]  = \ns{q}_{2} + \ns{\dt q}_{1}.
\end{equation}

Now we state our contraction result.

\begin{thm}\label{contraction_thm}

Suppose that the triples $(u^i,q^i,\eta^i)$ and $(v^i,p^i,\zeta^i)$ for $i=1,2$ satisfy
\begin{equation}\label{cot_01}
\begin{cases}
  \rho^{i} \dt u^i - \diverge_{\a^{i}} \mathbb{S}_{\a^{i}} u^i = F^1[v^{i},p^{i},\zeta^{i}] & \text{in }\Omega \\
  \dt \eta^{i}  = u^{i} \cdot \n^{i}   &\text{on }\Sigma \\
  -\mathbb{S}_{\a^{i}} u^i  \n^{i} =   - \sigma_+ \Delta_\ast \eta^{i}  \n^{i} +  F^2_+[p^{i},\zeta^{i}]   &\text{on } \Sigma_+ \\
  -\jump{\mathbb{S}_{\a^{i}} u^{i} } \n^{i} =  \sigma_- \Delta_\ast \eta^{i} \n^{i} - F^2_-[p^{i},\zeta^{i}] &\text{on } \Sigma_- \\
  \jump{u^{i}} =0 &\text{on } \Sigma_- \\
  u_-^{i} = 0 &\text{on } \Sigma_b \\
  u^{i}(\cdot,0) = u_0, \eta^{i}(\cdot, 0) = \eta_0,
 \end{cases}
\end{equation}
and
\begin{equation}\label{cot_02}
\begin{cases}
  \dt q^{i}  - K^{i} \dt \theta^{i} \p_3 q^{i} + \diverge_{\a^{i}}(q^{i}u^{i})
    = f[u^{i},\eta^{i}] & \text{in } \Omega \times (0,T) \\
 q^{i}(\cdot, 0) = q_0 &\text{in }\Omega,
\end{cases}
\end{equation}
where $\a^i,$ $\n^i,$ $\theta^i$, and $K^i$ are given by $\eta^i$, and $F^1$, $F^2_\pm$, and $f$  are defined by \eqref{forcing_def_1}--\eqref{forcing_def_4} and $\rho^{i} = \rho[p^{i},\zeta^{i}]$ is given by \eqref{forcing_def_5}.  Further suppose that
\begin{equation}
\max\left\{ \sup_{i} \sup_{0 \le t \le T}  \E[u^i,q^i,\eta^i] , \sup_{i} \sup_{0 \le t \le T}  \E[v^i,p^i,\zeta^i] \right\} \le  M^2,
\end{equation}
and that
\begin{equation}\label{cot_04}
 \Lf[\eta^1](T) \le \delta_1 \text{ and }\frac{1}{2}\rho_\ast\le \rho^1=\rho[p^{1},\zeta^{1}]\le \frac{3}{2}\rho^\ast,
\end{equation}
where $\Lf$  is given by  \eqref{l_def} and $\delta_1$ is given by  Theorem \ref{kappa_apriori}.

There exist universal constants $\gamma >0$ and a constant  $T_5 = T_5(M) \in (0,1)$ such that if $0 < T \le T_5$ and
\begin{equation}\label{cot_05}
  \max\left\{ \sup_{i} \sup_{0 \le t \le T} \ns{\eta^i(t)}_7, \sup_{i} \sup_{0 \le t \le T} \ns{\zeta^i(t)}_7  \right\}  \le \gamma^2,
\end{equation}
 then
\begin{multline}\label{cot_03}
 \sup_{0\le t \le T} \Wf_\infty[u^1(t) - u^2(t),q^1(t) - q^2(t),\eta^1(t) - \eta^2(t)]  + \int_0^T \Wf_2[u^1(t) - u^2(t),\eta^1(t) - \eta^2(t)] dt \\
 \le \hal   \sup_{0\le t \le T} \Wf_\infty[v^1(t) - v^2(t),p^1(t) - p^2(t) ,\zeta^1(t) - \zeta^2(t)] + \hal \int_0^T \Wf_2[v^1(t) - v^2(t),\zeta^1(t)-\zeta^2(t)] dt .
\end{multline}

\end{thm}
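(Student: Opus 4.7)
The plan is to derive a closed system of linear equations for the differences $U = u^1 - u^2$, $Q = q^1 - q^2$, $E = \eta^1 - \eta^2$, regard the analogous differences $V = v^1 - v^2$, $P = p^1 - p^2$, $Z = \zeta^1 - \zeta^2$ as known source terms, and run low-regularity energy estimates of the same flavor as Sections \ref{sec_lame} and \ref{sec_xport}, but now tracking the factor of $T$ carefully enough to close with constant $1/2$. Concretely, subtracting the two versions of \eqref{cot_01} yields a Lamé system for $U$ with coefficients $\a^1,\n^1$ and a right-hand side of the form $(\rho^1-\rho^2)\dt u^2$ plus commutator terms of the shape $\diverge_{\a^1-\a^2}\S_{\a^1}u^2$ and $\diva\S_{\a^1-\a^2}u^2$, plus $F^1[v^1,p^1,\zeta^1]-F^1[v^2,p^2,\zeta^2]$; the dynamic boundary condition becomes the original one for $E$ with $-\sigma_\pm\Delta_\ast E\,\n^1$ on the left and analogous commutator/difference forcing on the right. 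The kinematic equation yields $\dt E = U\cdot\n^1 + u^2\cdot(\n^1-\n^2)$, and since $\n^1-\n^2=(-\nab_\ast E,0)$ this is genuinely a linear equation in $E$. Subtracting \eqref{cot_02} gives a transport equation for $Q$ with drift $(\a^1)^T u^1 - K^1\dt\theta^1 e_3$, a potential $\diva_{\a^1}u^1$, and a source combining $(\a^1-\a^2)$-, $u$-, $\theta$-, and $f$-differences.

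Next I would run a single combined energy identity of the form
\[
\ddt\left(\int_\Omega \tfrac{\rho^1 J^1}{2}|\pal U|^2 + \int_\Sigma \tfrac{1}{2}|\pal E|^2 + \tfrac{\sigma}{2}|\nab_\ast\pal E|^2\right) + C\|\pal U\|_1^2 \le \cdots
\]
for $|\alpha|\le 2$ in space and for $\alpha=\dt$, following the template of Lemma \ref{kappa_en_ident} and Proposition \ref{k_hor_est} (here with $\kappa=0$ and using $\a^1,\n^1$ for the structural inner product). All coefficient-difference terms $(\a^1-\a^2)$ and $(\n^1-\n^2)$ are controlled pointwise by $\|E\|_{7/2}$ or $\|E\|_{5/2}$ after Sobolev embedding in three dimensions, using the a priori bound $M$ on $(u^i,q^i,\eta^i,v^i,p^i,\zeta^i)$ in high-regularity norms; the differences $F^1[v^1,p^1,\zeta^1]-F^1[v^2,p^2,\zeta^2]$ and the corresponding boundary terms are Lipschitz in $(V,P,Z)$ at low regularity by expanding around a point via the fundamental theorem of calculus and using the $M$-bound on the high-regularity norms of the argument. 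For the $Q$ equation I would apply Proposition \ref{xport_low_reg_est} at regularity $k=2$; since $Q(0)=0$, the bound \eqref{xlre_02} gives $\sup_{[0,T]}\|Q\|_2^2 \lesssim Te^{CT} \int_0^T \|f_Q\|_2^2\,dt$, where $f_Q$ is the full forcing on the transport difference, and likewise for $\dt Q$ after differentiating in time. Elliptic regularity for the Lamé problem (Proposition \ref{lame_elliptic}, applicable thanks to $\Lf[\eta^1]\le\delta_1$) lifts the $\pal U$ control to $\|U\|_2^2+\|U\|_3^2$ and the dynamic boundary condition gives $\sigma^2\|E\|_{7/2}^2 \lesssim \sigma^2\|\Delta_\ast E\|_{3/2}^2$, allowing me to bound the $\Wf_2[E]$ piece of the dissipation.

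After summing these ingredients, I arrive at an inequality of the schematic form
\[
X(T) \le C(M)\left(\sqrt{T} + \gamma\right) Y(T) + C(M)\sqrt{T}\, X(T),
\]
where $X(T)=\sup\Wf_\infty[U,Q,E]+\int_0^T\Wf_2[U,E]$ and $Y(T)$ is the corresponding quantity in $(V,P,Z)$; the $\gamma$ factor comes from the pointwise smallness \eqref{cot_05} used to bound commutators involving $(\a^1-\a^2)$ and $(\n^1-\n^2)$, while the $\sqrt{T}$ factors come from Cauchy--Schwarz in time applied to absorption terms and from Proposition \ref{xport_low_reg_est}. Choosing first $\gamma$ small universal and then $T_5=T_5(M)$ small so that $C(M)(\sqrt{T_5}+\gamma)\le 1/4$ and $C(M)\sqrt{T_5}\le 1/4$, I absorb and obtain $X(T)\le \tfrac12 Y(T)$, which is \eqref{cot_03}. \textbf{The main obstacle} I anticipate is the surface tension term at the contraction level: the dynamic boundary condition forces $\sigma_\pm\Delta_\ast E$ onto the right-hand side of the Lamé system, yet $\Wf_\infty[E]$ only controls $\sigma\|\nab_\ast E\|_2^2$ and $\Wf_2[E]$ only controls $\sigma^2\|E\|_{7/2}^2$; ensuring that the commutator terms $\sigma_\pm\Delta_\ast\eta^2(\n^1-\n^2)$ and similar are estimated in exactly the norms available (rather than a full $H^{5/2}$ or $H^{3/2}$ of $\Delta_\ast E$) will require carefully integrating by parts in the boundary energy identity, so that one horizontal derivative is moved off $\Delta_\ast E$ and onto $\nab_\ast E$ where it is available in $\Wf$, exactly mirroring the trick used in \eqref{khe_4} of Proposition \ref{k_hor_est}.
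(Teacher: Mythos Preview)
Your proposal is essentially correct and follows the same route as the paper: subtract to form difference equations, run the coupled energy identity for $|\alpha|\le 2$ (space-time, parabolic count), upgrade $U$ via the elliptic Lam\'e estimate, recover $\sigma^2\|E\|_{7/2}^2$ from the dynamic boundary condition, handle $Q$ via Proposition~\ref{xport_low_reg_est} with $k=2$, and close by absorbing with small $\gamma$ and small $T$.

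Two refinements relative to the paper's execution are worth flagging. First, your ``main obstacle'' is slightly misplaced: the paper's two genuinely delicate points at the energy step are (i) the $\nab P$ contribution inside $H^1$ coming from $F^1[v^1,p^1,\zeta^1]-F^1[v^2,p^2,\zeta^2]$, which with two horizontal derivatives requires three derivatives on $P$ while $\Wf_\infty[p]$ only gives $\|P\|_2$; this is handled by integrating by parts once in space to shift a derivative onto $J^1\pal U$; and (ii) the $\sigma$-terms not from $G^2$ but from the $F^2_\pm$-difference, namely the mean-curvature nonlinearity $\sigma_\pm\diverge_\ast\bigl(((1+|\nab_\ast\zeta^i|^2)^{-1/2}-1)\nab_\ast\zeta^i\bigr)$, which after $\pal$ produces $P(\gamma)\sqrt{\Wf_2[u]}\sqrt{\Wf_2[\zeta]}$ directly because the coefficient $((1+|\nab_\ast\zeta^i|^2)^{-1/2}-1)$ carries the smallness $\|\zeta^i\|_7\le\gamma$. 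No integration by parts \`a la \eqref{khe_4} is used here; $\gamma$ does the work. Second, for $\dt Q$ the paper does not re-apply the transport lemma: it solves for $\dt Q$ directly from the equation, bounding $\sup_t\|\dt Q\|_1$ by $P(M)\sup_t\|Q\|_2+\sup_t\|H^4\|_1$, and then uses the crucial extra factor of $T$ already present in $\sup_t\|E\|_{5/2}^2\lesssim T\z$ (from the kinematic transport estimate) and in $\sup_t\|U\|_1^2\lesssim T\z$ (from $U(0)=0$) to kill the $P(M)$ prefactors inside $\|H^4\|_1$. Your schematic $X\le C(M)(\sqrt T+\gamma)Y+C(M)\sqrt T\,X$ should accordingly also carry a $\gamma$ on the self-term; both are absorbed together.
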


\begin{proof}

We divide the proof into several steps.

Step 1 -- Differences

To begin we  define $u= u^1 - u^2$, $q =q^1 - q^2$, $\eta = \eta^1 - \eta^2$, $v = v^1 - v^2$, $p = p^1 - p^2$, and $\zeta = \zeta^1 - \zeta^2$.   Then we subtract the equations \eqref{cot_01}  with $i=2$ from the same equations with $i=1$ to deduce equations for $(u,\eta)$.  We then apply $\pal$ for $\alpha \in \mathbb{N}^{1+2}$ with $\abs{\alpha}\le 2$.  This results in the equations
\begin{equation}\label{cot_1}
\begin{cases}
  \rho^{1} \dt \pal u - \diverge_{\a^{1}} \mathbb{S}_{\a^{1}} \pal u = \diverge_{\a^{1}} \mathbb{S}_{\pal (\a^{1}-\a^{2})}  u^2 + \pal H^1 + H^{1,\alpha} & \text{in }\Omega \\
  \dt \pal \eta + u^2 \cdot \nab_\ast \pal \eta  = \pal u  \cdot \n^{1} + H^{3,\alpha}  &\text{on }\Sigma\\
  -\mathbb{S}_{\a^{1}} \pal u  \n^{1} =  - \sigma_+ \Delta_\ast \pal \eta  \n^{1} +  \mathbb{S}_{\pal (\a^{1}-\a^{2})} u^2 \n^1 +  \pal  H^2_+ + H^{2,\alpha}_+   &\text{on } \Sigma_+ \\
  -\jump{\mathbb{S}_{\a^{1}} \pal u } \n^{1} =  \sigma_- \Delta_\ast \pal \eta   \n^{1} + \jump{\mathbb{S}_{\pal (\a^{1}-\a^{2})} u^2} \n^1 - \pal H^2_- - H^{2,\alpha}_- &\text{on } \Sigma_- \\
  \jump{\pal u} =0 &\text{on } \Sigma_- \\
  \pal u_- = 0 &\text{on } \Sigma_b \\
  \pal  u(\cdot,0) = 0, \pal  \eta(\cdot, 0) = 0.
 \end{cases}
\end{equation}
A similar argument with \eqref{cot_02} but not employing derivatives yields an equation for $q$:
\begin{equation}\label{cot_2}
\begin{cases}
  \dt q  - K^{1} \dt \theta^{1} \p_3 q + \diverge_{\a^{1}}(q u^{1})
    =   H^4  & \text{in } \Omega \times (0,T) \\
 q(\cdot, 0) =0 &\text{in }\Omega.
\end{cases}
\end{equation}
Here we have written the forcing terms as follows:
\begin{equation}\label{cot_3}
\begin{split}
 H^1 & := F^1[v^1,p^1,\zeta^1] - F^1[v^2,p^2,\zeta^2] - (p + \p_3 \bar{\rho} \theta[\zeta]) \dt u^2  + G^1 \\
 H^2_\pm  & :=  F^2_\pm[p^1,\zeta^1] - F^2_\pm[p^2,\zeta^2]  + G^2_\pm \\
 H^4 &:= f[u^1,\eta^1] - f[u^2,\eta^2] + G^4,
\end{split}
\end{equation}
where $\theta[\zeta]$ is determined by $\zeta$,
\begin{equation}\label{cot_4}
\begin{split}
 G^1 &:=     \diverge_{(\a^{1}-\a^{2})} \mathbb{S}_{\a^{2}} u^2 \\
 G^2_+ &:=  - \sigma_+ \Delta_\ast \eta^2  (\n^1 - \n^2)  + \mathbb{S}_{\a^{2}} u^2 (\n^1 - \n^2) \\
 G^2_- &:= -  \sigma_- \Delta_\ast \eta^2 (\n^1 - \n^2)   + \jump{\mathbb{S}_{\a^{2}} u^2} (\n^1 - \n^2) \\
 G^4 &:= ((K^1-K^2) \dt \theta^1 + K^2 (\dt \theta^1 -\dt \theta^2))\p_3 q^2 -  \diverge_{(\a^1-\a^{2})}(q^2 u^1) -  \diverge_{\a^{2}}(q^2 u),
\end{split}
\end{equation}
and
\begin{equation}
\begin{split}
 H^{1,\alpha} & := \left(\pal(\diverge_{\a^{1}} \mathbb{S}_{\a^{1}}   u) - \diverge_{\a^{1}} \mathbb{S}_{\a^{1}} \pal u \right) + \left( \pal (\diverge_{\a^{1}} \mathbb{S}_{(\a^{1}-\a^{2})}  u^2) - \diverge_{\a^{1}} \mathbb{S}_{\pal (\a^{1}-\a^{2})}  u^2 \right)\\
 H^{2,\alpha}_+  & := \left( \pal(\mathbb{S}_{\a^{1}}   u  \n^{1}) -\mathbb{S}_{\a^{1}} \pal u  \n^{1} \right) +  \left(  \pal ( \mathbb{S}_{ (\a^{1}-\a^{2})} u^2 \n^1)- \mathbb{S}_{\pal (\a^{1}-\a^{2})} u^2 \n^1 \right) \\
 & \quad - \sigma_+\left( \pal [  \Delta_\ast   \eta   \n^{1}] -  \Delta_\ast \pal \eta   \n^{1} \right)\\
 H^{2,\alpha}_-  & := \left( -\pal\jump{\mathbb{S}_{\a^{1}}   u  \n^{1}} +\jump{\mathbb{S}_{\a^{1}} \pal u } \n^{1} \right) +  \left(  -\pal \jump{ \mathbb{S}_{ (\a^{1}-\a^{2})} u^2 \n^1}+ \jump{\mathbb{S}_{\pal (\a^{1}-\a^{2})} u^2} \n^1 \right) \\
 & \quad - \sigma_-\left(  \pal [ \Delta_\ast   \eta ) \n^{1}] -\Delta_\ast \pal \eta  \n^{1} \right)\\
 H^{3,\alpha}  & :=  \left( - \pal (u^2 \cdot \nab_\ast \eta) + u^2 \cdot \nab_\ast \pal \eta \right) +
 \left( \pal (u \cdot \n^1) - \pal u \cdot \n^1  \right). \\
\end{split}
\end{equation}
We have written the forcing terms in this manner in order to isolate those terms depending on $(v^i,p^i,\zeta^i)$ from those depending on $(u^i,q^i,\eta^i)$ and to single out some special delicate terms.

Step 2 -- Energy estimate

The starting point for the contraction analysis is a basic energy estimate for  \eqref{cot_1}. Arguing as in Lemma \ref{kappa_en_ident} leads us to the equality
\begin{multline}\label{cot_5}
 \frac{d}{dt} \left( \int_\Omega \rho^1J^1 \frac{\abs{\pal u}^2}{2} + \int_{\Sigma} \frac{\abs{\pal \eta}^2}{2} + \sigma \frac{\abs{\pal\nab_\ast \eta}^2}{2}   \right)
 + \int_\Omega \frac{\mu J^1}{2} \abs{\sgz_{\a^1} \pal u}^2 + J^1 \mu' \abs{\diverge_{\a^1} \pal u}^2
\\
 = \int_\Omega J^1 \pal u \cdot (  \pal H^1 + H^{1,\alpha}) - \int_\Sigma \pal u \cdot ( \pal H^2 + H^{2,\alpha}) \\
 + \int_{\Sigma} (-  \pal \eta + \sigma \Delta_\ast \pal \eta) (u^2 \cdot \nab_\ast \pal \eta- H^{3,\alpha})
  + \int_{\Sigma}   \pal \eta (\pal u \cdot \n^1) \\
  + \int_\Omega \dt (J^1 \rho^1) \frac{\abs{\pal u}^2}{2}
-  \int_\Omega \frac{\mu J^1}{2} \sgz_{\a^1} \pal u : \sgz_{\pal(\a^1-\a^2)} u^2  + J^1 \mu'  (\diverge_{\a^1} \pal u )(  \diverge_{\pal(\a^1 - \a^2)}u^2).
\end{multline}

Let us define
\begin{equation}
\mathfrak{U}(t) = \sum_{ \substack{\alpha \in \mathbb{N}^{1+2} \\ \abs{\alpha} \le 2}  } \left( \int_\Omega \rho^1J^1 \frac{\abs{\pal u(t)}^2}{2} + \int_{\Sigma}   \frac{\abs{\pal \eta(t)}^2}{2} + \sigma \frac{\abs{\pal\nab_\ast \eta(t)}^2}{2} \right)
\end{equation}
and
\begin{equation}
 \mathfrak{V}(t) = \sum_{ \substack{\alpha \in \mathbb{N}^{1+2} \\ \abs{\alpha} \le 2}  } \ns{\pal u(t)}_{1}.
\end{equation}

We sum \eqref{cot_5}  over $\alpha \in \mathbb{N}^{1+2}$ with $\abs{\alpha} \le 2$;  applying Proposition \ref{korn} and arguing as per usual (as in Lemmas \ref{lame_forcing_est} and \ref{k_F_ests}, Theorem \ref{kappa_contract}, and Propositions \ref{kappa_improved} and \ref{force_est_1}) to estimate the various nonlinearities, we derive the differential inequality
\begin{multline}\label{cot_6}
 \frac{d}{dt} \mathfrak{U}(t) + C \mathfrak{V}(t) \ls  P(\gamma) \sqrt{\Wf_2[u]} \sqrt{\Wf_2[\zeta]} \\
 + (1 + P(M)) \sqrt{\Wf_\infty[u]} \left( \sqrt{\Wf_\infty[v,p,\zeta]} + \sqrt{\Wf_\infty[v,\zeta]}  \right)
 +  (1 + P(M)) \sqrt{\Wf_2[u]} \sqrt{\Wf_\infty[p,\zeta]} \\
 + (1+ P(M)) \Wf_\infty[u,\eta] + (1+ P(M)) \sqrt{\Wf_\infty[u,\eta]} \sqrt{\Wf_2[u,\eta]}
\end{multline}
for some universal positive polynomial with $P(0)=0$ and a universal constant $C>0$.  We should note that two of the terms appearing on the right side of \eqref{cot_5} require some delicate treatment.  The first are terms involving $\nab p$ in $H^1$.  In order to handle these when two horizontal spatial derivatives are applied, we must integrate by parts to move one horizontal derivative onto $J^1 \p^\alpha u$ and reduce to only two derivatives on $p$, which is all that is controlled by $\Wf_\infty[p]$.  The second are terms involving $\sigma_\pm$ multiplying two spatial derivatives of $\zeta_\pm$ in $H^2_\pm$; these give rise to the term  $P(\gamma) \sqrt{\Wf_2[u]} \sqrt{\Wf_2[\zeta]}$.   The key part of this is $P(\gamma)$, which appears because the nonlinear terms only involve spatial derivatives of $\zeta^i$.

Integrating  \eqref{cot_6} in time, using the fact that $\mathfrak{U}(0)=0$, and applying the Cauchy-Schwarz inequality then yields the bound
\begin{multline}\label{cot_7}
 \sup_{0\le t \le T} \mathfrak{U}(t) + \int_0^T \mathfrak{V}(t) dt \ls P(\gamma) \int_0^T \left(\Wf_2[\zeta] + \Wf_2[u] \right) dt  \\
 + \sqrt{T}(1 + P(M)) \int_0^T \left( \Wf_2[v,\zeta] + \Wf_2[u,\eta] \right) dt
 \\
 + (\sqrt{T} +T) (1 + P(M)) \sup_{0 \le t \le T} \left( \Wf_\infty[v,p,\zeta] + \Wf_\infty[u,\eta] \right),
\end{multline}
where again $P$ is a universal positive polynomial such that $P(0)=0$.

Throughout the rest of the proof we will let $\z$ denote a quantity of the form
\begin{equation}
 \z \simeq (1+ P(\gamma)) \left( \text{RHS of } \eqref{cot_7}  \right),
\end{equation}
where $P$ is some universal positive polynomial such that $P(0)=0$.  From one estimate to another the polynomials and constants may change, but the structure of $\z$ does not.

Step 3 -- Improved $u$ estimates

The usual trace theory allows us to estimate
\begin{equation}
 \int_0^T \ns{u}_{H^{5/2}(\Sigma)} \ls \int_0^T \sum_{\substack{\alpha \in \mathbb{N}^2 \\ \abs{\alpha}\le 2}}  \ns{\pal u}_{1} \ls \int_0^T \mathfrak{V}(t)dt \ls \z.
\end{equation}
We may  apply the elliptic estimate of Proposition \ref{lame_elliptic}, which is applicable due to the first estimate in \eqref{cot_04},  to bound
\begin{multline}
 \ns{u}_3 \ls \ns{\rho^1 \dt u}_1 + \ns{\diverge_{\a^{1}} \mathbb{S}_{ (\a^{1}-\a^{2})}  u^2 +  H^1}_1 + \ns{u}_{H^{5/2}(\Sigma)} \\
\ls  \ns{\rho^1}_{L^\infty}\ns{\dt u}_1+ \ns{\nab \rho^1}_{L^\infty} \ns{\dt u}_0 +   (1+P(M)) \left( \Wf_\infty[v,p,\zeta] + \Wf_\infty[\eta] \right) +   \ns{u}_{H^{5/2}(\Sigma)} \\
\ls   \mathfrak{V}  + (1+P(M)) \Wf_\infty[u] +   (1+P(M)) \left( \Wf_\infty[v,p,\zeta] + \Wf_\infty[\eta] \right) +   \ns{u}_{H^{5/2}(\Sigma)}.
\end{multline}
Here in the third inequality we have used the second estimate in \eqref{cot_04}. Hence
\begin{equation}\label{cot_8}
 \int_0^T \ns{u(t)}_3 dt \ls \int_0^T \mathfrak{V}(t) dt +   \z \ls \z.
\end{equation}
 We improve the $L^\infty$ in time estimate for $u$ by employing  Lemma \ref{time_interp}:
\begin{equation}\label{cot_9}
 \sup_{0\le t \le T} \ns{u(t)}_2 \ls \int_0^T (\ns{u(t)}_3 + \ns{\dt u(t)}_1 )dt \ls \int_0^T \left( \ns{u(t)}_3 +\mathfrak{V}(t)\right) dt \ls \z.
\end{equation}
Combining \eqref{cot_7}, \eqref{cot_8}, and \eqref{cot_9} then provides us with the bound
\begin{equation}\label{cot_30}
 \sup_{0 \le t \le T} \Wf_\infty[u] + \int_0^T \Wf_2[u] dt \ls \z.
\end{equation}

Step 4 -- Improved $\eta$ estimates

Now we improve the estimates for $\eta$.  Note first that we already have the bound
\begin{equation}\label{cot_10}
\sup_{0\le t \le T} \left(  \ns{\eta(t)}_2 + \ns{\dt \eta (t) }_0 + \sigma \ns{\nab_\ast \eta(t)}_2 + \sigma \ns{\nab_\ast \dt \eta(t) }_0  \right) \le \sup_{0\le t \le T} \mathfrak{U}(t) \ls \z.
\end{equation}
By solving the second and third equations in \eqref{cot_1} for $\sigma \Delta \eta$ and employing \eqref{cot_8}, \eqref{cot_9}, and \eqref{cot_10}, we may then bound
\begin{multline}\label{cot_11}
\int_0^T \sigma^2 \ns{\eta(t)}_{7/2}dt \ls  \int_0^T \left( \ns{\eta(t)}_{0} + \ns{\sigma \Delta_\ast \eta(t) }_{3/2} \right)dt \\
\ls T \sup_{0 \le t \le T}  \ns{\eta(t)}_0 + \int_0^T \left( (1+P(\gamma)) \Wf_2[u] + P(\gamma) \Wf_2[\zeta]   \right)dt  \\
+ T (1+P(M)) \sup_{0 \le t \le T} \left( \Wf_\infty[p,\zeta] + \Wf_\infty[\eta] \right)
\ls \z.
\end{multline}
Next we employ the kinematic equation in \eqref{cot_1} along with the transport estimates of Proposition 2.1 of \cite{danchin}  to bound
\begin{equation}
 \sup_{0\le t\le T} \ns{\eta(t)}_{5/2} \ls \exp\left(CT \int_0^T \ns{u^2(t)}_3 dt\right) T \int_0^T \ns{u\cdot \n^1 (t)}_{5/2} dt.
\end{equation}
Hence, if we assume that $T_5 M \le 1$ we may bound the exponential term above by a universal constant and then estimate
\begin{equation}\label{cot_12}
  \sup_{0\le t\le T} \ns{\eta(t)}_{5/2}  \ls T(1+P(\gamma)) \int_0^T \ns{u(t)}_3 dt \ls T  \z.
\end{equation}
Then we solve for $\dt \eta$ in \eqref{cot_01} to bound
\begin{equation}\label{cot_13}
 \sup_{0\le t \le T} \ns{\dt \eta(t)}_{3/2} \ls M  \sup_{0\le t \le T} \ns{\eta(t)}_{5/2} + (1+P(\gamma)) \sup_{0\le t \le T} \ns{u(t)}_{2}
 \ls  MT \z + \z  \ls \z
\end{equation}
since $M T \le M T_5 \le 1$.  Similarly, we solve for $\dt^2 \eta$ to estimate
\begin{multline}\label{cot_14}
 \int_0^T \ns{\dt^2 \eta(t)}_{1/2} dt \ls MT \sup_{0 \le t \le T} \ns{\dt \eta(t)}_{3/2} + (1+\gamma) \int_0^T \ns{\dt u(t)}_1 dt \\
 + MT \sup_{0\le t\le T} \left( \ns{u(t)}_2 + \ns{\eta(t)}_{5/2} \right) \ls \z.
 \end{multline}
Summing \eqref{cot_10}, \eqref{cot_11}, and \eqref{cot_12}--\eqref{cot_14} then yields the bound
\begin{equation}\label{cot_31}
 \sup_{0 \le t \le T} \Wf_\infty[\eta] + \int_0^T \Wf_2[\eta] dt \ls \z.
\end{equation}

Step 5 -- Estimates of $q$

Next we employ Proposition \ref{xport_low_reg_est} to get estimates for $q$.  First we find that
\begin{equation}
 \sup_{0 \le t \le T} \ns{q(t)}_2 \ls \exp(C (1+ P(M)) T) T \int_0^T \ns{H^4(t)}_{2} dt.
\end{equation}
If we further restrict $T_5$ so that $(1+P(M))T \le 1$ we can again treat the exponential as a universal constant.  Then
\begin{multline}\label{cot_15}
  \sup_{0 \le t \le T} \ns{q(t)}_2 \ls  T \int_0^T \ns{H^4(t)}_{2} dt \ls T (1+P(M))\int_0^T \left( \Wf_2[u]  + \Wf_\infty[u,\eta]  \right)dt  \\
  \ls T(1+P(M)) \z.
\end{multline}

Next we use \eqref{cot_01} to solve for  $\dt q$ and estimate
\begin{equation}\label{cot_16}
 \sup_{0 \le t \le T} \ns{\dt q(t)}_1 \ls P(M) \sup_{0 \le t \le T} \ns{q(t)}_2  + \sup_{0 \le t \le T} \ns{H^4}_1
\end{equation}
The term $H^4$ may be estimated as follows.  First we use the fact that $u(t=0)=0$ and $\eta(t=0)=0$ to estimate
\begin{equation}\label{cot_33}
 \sup_{0\le t \le T} \ns{\dt \eta(t)}_{1/2} + \sup_{0\le t \le T} \ns{u(t)}_{1} \le T \int_0^T \left( \ns{\dt^2 \eta(t)}_{1/2} + \ns{\dt u(t)}_1\right) dt \le T \z.
\end{equation}
Then we bound
\begin{multline}\label{cot_17}
 \sup_{0 \le t \le T} \ns{H^4(t)}_1 \ls P(M)  \sup_{0 \le t \le T}  \ns{\eta(t)}_{3/2} + P(M)   \sup_{0 \le t \le T}  \ns{\dt \eta(t)}_{1/2} \\
 + P(M) \sup_{0\le t \le T} \ns{u(t)}_1  + (1+ P(\gamma))  \sup_{0 \le t \le T} \ns{u(t)}_2
 \\
 \ls T P(M) \z + (1+ P(\gamma))  \z \ls  \z
\end{multline}
if $T_5$ is further restricted so that $T_5 P(M) \le 1$.  Note here that we have crucially employed the $T$ factor appearing on the right side of \eqref{cot_12} and \eqref{cot_33}.  We can now combine \eqref{cot_15}, \eqref{cot_16}, and \eqref{cot_17} to deduce that
\begin{equation}\label{cot_32}
 \sup_{0 \le t \le T} \Wf_\infty[q] \ls  T(1+P(M)) \z +\z\ls \z
\end{equation}
if $T_5$ is further restricted.

Step 6 -- Synthesis

Now we sum the estimates \eqref{cot_30}, \eqref{cot_31}, and \eqref{cot_32} to deduce that
\begin{equation}
 \sup_{0\le t \le T} \Wf_\infty[u,q,\eta](t) + \int_0^T \Wf_2[u,\eta](t) dt \ls \z.
\end{equation}
Assuming that $\gamma$  and $T_5$ are sufficiently small and using the previous inequality, we may absorb the terms involving $(u,q,\eta)$ from the right side to the left; this results in the estimate
\begin{multline}
 \sup_{0\le t \le T} \Wf_\infty[u,q,\eta](t) + \int_0^T \Wf_2[u,\eta](t) dt \ls P(\gamma) \int_0^T \Wf_2[\zeta]   dt  \\
 + \sqrt{T}(1 + P(M)) \int_0^T  \Wf_2[v,\zeta]  dt
 + (\sqrt{T} +T) (1 + P(M)) \sup_{0 \le t \le T}  \Wf_\infty[v,p,\zeta].
\end{multline}
By further restricting $\gamma$  and $T_5$ we deduce that \eqref{cot_03} holds.
\end{proof}

\subsection{Local well-posedness}

We now have all of the ingredients necessary to prove a more general version of Theorem \ref{local_existence_intro} in the case $\sigma_\pm >0$.

\begin{thm}\label{local_existence}
Assume that $\sigma_\pm >0$.  Suppose that $(u_0,q_0,\eta_0)$ satisfy the  compatibility conditions \eqref{ccs} as well as the bound  \eqref{q_0_assump}, and that
\begin{equation}\label{le_00}
 \Lf[\eta_0] \le  \frac{\delta_1}{2},
\end{equation}
where $\Lf[\eta_0]$  is given by  \eqref{l0_def} and $\delta_1$ is given by  Theorem \ref{kappa_apriori}.  Further assume that
\begin{equation}\label{le_00_2}
 \ns{\eta_0}_7 \le \frac{\gamma^2}{2},
\end{equation}
where $\gamma$  is as given in Theorem \ref{contraction_thm}.   Set $T_6 = T_6({\EEE}) = \min\{T_4({\EEE}),T_5(P_1({\EEE}) + P_3({\EEE})\}$, where $T_4$ and $P_1, P_3$ are given by Theorem \ref{approx_solns} and  $T_5$ is given by Theorem \ref{contraction_thm}.

If $0 < T \le T_6$ then there exists a triple $(u,q,\eta)$ defined on the temporal interval $[0,T]$ satisfying the following three properties.  First, $(u,q,\eta)$ achieve the initial data at $t=0$.  Second, the triple uniquely solve \eqref{geometric}.  Third, the triple obey the estimates
\begin{equation}\label{le_01}
 \sup_{0\le t \le T} \left( \E[u(t)] + \hat{\E}^\sigma[\eta(t)] \right)
 + \int_0^T \left(  \D[u(t)] + \ns{\rho J \dt^{2N+1} u(t)}_{ \Hd}  + \hat{\D}^\sigma[\eta(t)] \right) dt \le P_1({\EEE}),
\end{equation}
\begin{equation}\label{le_02}
 \sup_{0\le t \le T} \ns{\eta(t)}_{4N+1/2} \le P_2({\EEE}),
\end{equation}
\begin{equation}\label{le_03}
 \sup_{0\le t \le T}   \E[q(t)] + \int_0^T \D[q(t)] dt    \le P_3({\EEE}),
\end{equation}
and
\begin{equation}\label{le_04}
 \Lf[\eta](T) \le \delta_1 \text{ and }\frac{1}{2}\rho_\ast\le \rho=\rho[q,\eta]\le \frac{3}{2}\rho^\ast.
\end{equation}
\end{thm}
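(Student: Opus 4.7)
The plan is to combine the uniform high-regularity bounds of Theorem \ref{approx_solns} with the low-regularity contraction of Theorem \ref{contraction_thm}: the Picard iterates converge strongly in a low-regularity norm while simultaneously obeying uniform high-regularity bounds, and the two together let us pass to the limit in the nonlinear equations.

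First, I would apply Theorem \ref{approx_solns} on $[0,T]$ with $T \le T_4(\EEE)$ to produce the sequence $\{(u^n,q^n,\eta^n)\}_{n=0}^\infty$ solving \eqref{aps_01}--\eqref{aps_02} with the bounds \eqref{aps_03}--\eqref{aps_06}. Setting $M^2 := P_1(\EEE) + P_3(\EEE)$, these give $\sup_n \sup_{0\le t \le T}\E[u^n,q^n,\eta^n] \le M^2$ along with the density and $\Lf$-smallness required in \eqref{cot_04}. To secure the hypothesis \eqref{cot_05} of Theorem \ref{contraction_thm}, since $N\ge 3$ we have $\dt\eta^n \in L^2_T H^7$ with norm controlled by $P_1(\EEE)$; combining with \eqref{le_00_2} and the fundamental theorem of calculus yields
\[
\sup_{0\le t \le T}\ns{\eta^n(t)}_7 \le 2\ns{\eta_0}_7 + 2T\int_0^T \ns{\dt\eta^n(s)}_7\,ds \le \gamma^2 + 2T P_1(\EEE),
\]
which is at most $\gamma^2$ after a further restriction of $T$ in terms of $\EEE$.

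Second, I would apply Theorem \ref{contraction_thm} with $(u^1,q^1,\eta^1)=(u^{n+1},q^{n+1},\eta^{n+1})$, $(u^2,q^2,\eta^2)=(u^n,q^n,\eta^n)$, $(v^1,p^1,\zeta^1)=(u^n,q^n,\eta^n)$, and $(v^2,p^2,\zeta^2)=(u^{n-1},q^{n-1},\eta^{n-1})$; a direct check shows these triples satisfy \eqref{cot_01}--\eqref{cot_02} by virtue of \eqref{aps_01}--\eqref{aps_02}. Provided $T \le T_5(M)$, \eqref{cot_03} gives contraction by a factor of $1/2$ of consecutive differences in the low-regularity norm $\sup_{0\le t\le T}\Wf_\infty + \int_0^T \Wf_2$, so the iterates form a Cauchy sequence in this norm and converge strongly to a limit $(u,q,\eta)$.

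Third, combining this strong low-regularity convergence with the uniform bounds \eqref{aps_03}--\eqref{aps_05}, weak-$*$ compactness in the high-regularity spaces plus lower semicontinuity promotes $(u,q,\eta)$ into the target space and delivers \eqref{le_01}--\eqref{le_03} with the same polynomials $P_i$, while \eqref{le_04} follows from uniform-in-time strong convergence of $\eta^n$ and $\rho^n=\rho[q^n,\eta^n]$ together with \eqref{aps_06}. Interpolating the low-regularity convergence against the high-regularity bounds produces convergence in intermediate topologies strong enough to pass to the limit in every nonlinear term of \eqref{aps_01}--\eqref{aps_02}---including $\a^n,\n^n,\theta^n$, the composition $\mathcal{R}$ from \eqref{R_def}, and the transport coefficients---yielding \eqref{geometric} in the limit. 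For uniqueness, any two solutions of \eqref{geometric} with common data satisfy \eqref{cot_01}--\eqref{cot_02} with $(v^i,p^i,\zeta^i)=(u^i,q^i,\eta^i)$, so Theorem \ref{contraction_thm} (with the same $M$) bounds each difference by half of itself, forcing it to vanish. The hard part will be verifying the smallness \eqref{cot_05} uniformly in $n$---this is what forces the assumption \eqref{le_00_2} on $\ns{\eta_0}_7$ and pins down the $\EEE$-dependence of $T_6$---and ensuring the passage to the limit handles every composition and nonlinear product via interpolation between the contraction norm and the high-regularity bounds.
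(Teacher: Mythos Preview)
Your proposal is correct and follows essentially the same route as the paper's proof: build the Picard iterates via Theorem \ref{approx_solns}, invoke Theorem \ref{contraction_thm} to get low-regularity Cauchy convergence of the full sequence, interpolate against the uniform high-regularity bounds to pass to the limit in \eqref{aps_01}--\eqref{aps_02}, and deduce uniqueness from a second application of the contraction theorem.

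One small arithmetic slip: in your verification of \eqref{cot_05} you write $\sup_t\ns{\eta^n(t)}_7 \le 2\ns{\eta_0}_7 + 2T\int_0^T\ns{\dt\eta^n}_7 \le \gamma^2 + 2TP_1(\EEE)$ and then claim this is $\le \gamma^2$ for small $T$, which is impossible since the extra term is nonnegative. The fix is just to split with a constant strictly between $1$ and $2$, e.g.\ $(a+b)^2 \le \tfrac{3}{2}a^2 + 3b^2$, giving $\tfrac{3}{2}\cdot\tfrac{\gamma^2}{2} + 3TP_1(\EEE) = \tfrac{3\gamma^2}{4} + 3TP_1(\EEE) \le \gamma^2$ once $T$ is restricted; this is exactly the maneuver used in the paper (cf.\ the $\Lf[\eta^n]$ estimate at the end of Theorem \ref{approx_solns}).
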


\begin{proof}

The proof is very similar to that of Theorem 6.2 in \cite{GT_lwp}, so we will only provide a quick sketch.

First we use Theorem \ref{approx_solns} to produce a sequence of approximate solutions $\{(u^n,q^n,\eta^n)\}_{n=0}^\infty$ on the temporal interval $[0,T]$.  The uniform estimates \eqref{aps_03}--\eqref{aps_06} along with standard compactness and weak compactness arguments yield a subsequence converging to a limiting triple $(u,q,\eta)$ that achieves the initial data and satisfies the estimates \eqref{le_01}--\eqref{le_04}.

Because we only know the convergence of a subsequence, we cannot immediately pass to the limit in \eqref{aps_01} and \eqref{aps_02}.  Instead we first use Theorem \ref{contraction_thm} to deduce that the sequence $\{(u^n,q^n,\eta^n)\}_{n=0}^\infty$ actually contracts in the lower regularity norm defined by \eqref{cot_03}.  In order to apply the theorem we must verify that \eqref{cot_04} and \eqref{cot_05} are satisfied; these follow from \eqref{le_00} and \eqref{le_00_2},  an argument like that used in \eqref{aps_10}--\eqref{aps_12}, and a further restriction of time.  This low regularity convergence, when combined with the bounds \eqref{aps_03}--\eqref{aps_06} and various interpolation arguments, shows that the original sequence actually converges to $(u,q,\eta)$ in a regularity class slightly larger than that defined by \eqref{aps_03}--\eqref{aps_06} but more than sufficient for passing to the limit in \eqref{aps_01}--\eqref{aps_02}.  We deduce then that $(u,q,\eta)$ satisfy \eqref{geometric}.  The uniqueness claim follows from another application of Theorem \ref{contraction_thm}.
\end{proof}

\section{Local well-posedness of \eqref{geometric} with $\sigma_\pm =0$}

We now state a result on the local existence of solutions to \eqref{geometric} without surface tension.

\begin{thm}\label{local_existence_no_ST}
Assume that $\sigma_\pm =0$.  Suppose that $(u_0,q_0,\eta_0)$ satisfy the compatibility conditions \eqref{ccs} as well as the bounds  \eqref{q_0_assump}, and that
\begin{equation}
 \Lf[\eta_0] \le  \frac{\delta_1}{2},
\end{equation}
where $\Lf[\eta_0]$  is given by  \eqref{l0_def} and $\delta_1$ is given by  Theorem \ref{kappa_apriori}.  Further assume that
\begin{equation}
 \ns{\eta_0}_7 \le \frac{\gamma^2}{2},
\end{equation}
where $\gamma$  are as given in Theorem \ref{contraction_thm}.   Set $T_6 = T_6({\EEE}) = \min\{T_4({\EEE}),T_5(P_1({\EEE}) + P_3({\EEE})\}$, where $T_4$ and $P_1, P_3$ are given by Theorem \ref{approx_solns} and  $T_5$ is given by Theorem \ref{contraction_thm}.

If $0 < T \le T_6$ then there exists a triple $(u,q,\eta)$ defined on the temporal interval $[0,T]$ satisfying the following three properties.  First, $(u,q,\eta)$ achieve the initial data at $t=0$.  Second, the triple uniquely solve \eqref{geometric}.  Third, the triple obey the estimates
\begin{equation}\label{le_st_01}
 \sup_{0\le t \le T} \left( \E[u(t)] + \hat{\E}^0[\eta(t)] \right)
 + \int_0^T \left(  \D[u(t)] + \ns{\rho J \dt^{2N+1} u(t)}_{ \Hd}  + \hat{\D}^0[\eta(t)] \right) dt \le P_1({\EEE}),
\end{equation}
\begin{equation}
 \sup_{0\le t \le T} \ns{\eta(t)}_{4N+1/2} \le P_2({\EEE}),
\end{equation}
\begin{equation}
 \sup_{0\le t \le T}   \E[q(t)] + \int_0^T \D[q(t)] dt    \le P_3({\EEE}),
\end{equation}
and
\begin{equation}\label{le_st_02}
\Lf[\eta](T) \le \delta_1\text{ and }\frac{1}{2}\rho_\ast\le \rho=\rho[q,\eta]\le \frac{3}{2}\rho^\ast.
\end{equation}
\end{thm}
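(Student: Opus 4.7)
The plan is to mirror the proof of Theorem \ref{local_existence} but to bypass the entire $\kappa$-approximation machinery of Sections \ref{sec_heat}--\ref{sec_lame_free}, since the source of difficulty there was precisely the surface tension terms $\sigma_\pm \Delta_\ast \eta$ in the boundary conditions of \eqref{geometric}. With $\sigma_\pm=0$ the two-phase Lam\'{e} problem for $u$ decouples from any elliptic equation for $\eta$, and the kinematic equation $\dt \eta = u\cdot\n$ may be treated directly as a transport-type evolution. Concretely, I would replicate the three-step structure of Theorem \ref{local_existence}: build a Picard sequence, show it contracts in a low-regularity norm via Theorem \ref{contraction_thm}, and pass to the limit.

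For the approximate solutions, I would prove a variant of Theorem \ref{approx_solns} as follows. Seed the sequence by the data extensions from Propositions \ref{extension_u}, \ref{extension_eta}, \ref{extension_q}. Then, given $(u^{n-1},q^{n-1},\eta^{n-1})$, first produce $u^n$ by applying the linear Lam\'{e} estimate Theorem \ref{lame_high} to
\begin{equation*}
\rho^{n-1}\dt u^n - \diverge_{\a^{n-1}}\S_{\a^{n-1}}u^n = F^1[u^{n-1},q^{n-1},\eta^{n-1}], \qquad -\S_{\a^{n-1}}u^n\n^{n-1}=F^2_\pm[q^{n-1},\eta^{n-1}]
\end{equation*}
with the forcing terms defined by \eqref{forcing_def_1}--\eqref{forcing_def_3} at $\sigma_\pm=0$; next define $\eta^n$ by $\dt\eta^n = u^n\cdot\n[\eta^{n-1}]$, which is simply a time integration; finally, produce $q^n$ from Theorem \ref{xport_well-posed} applied to \eqref{aps_02}. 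All the forcing estimates of Propositions \ref{force_est_1}, \ref{force_est_2}, \ref{Y_est} are used verbatim, and the argument closing the uniform bounds \eqref{le_st_01}--\eqref{le_st_02} is a direct simplification of Theorem \ref{approx_solns}, with $\hat\E^\sigma$, $\hat\D^\sigma$ replaced throughout by $\hat\E^0$, $\hat\D^0$ and all $\sigma$-factors removed. The density bounds and the smallness bound on $\Lf[\eta^n](T)$ are recovered exactly as in \eqref{aps_10}--\eqref{aps_12} by restricting time.

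The contraction step requires no modification: the norms $\Wf_\infty,\Wf_2$ in \eqref{w_def_start}--\eqref{w_def_end} are defined with $\sigma$ as an explicit coefficient, so when $\sigma_\pm=0$ the $\sigma$-dependent summands in $\Wf_\infty[\eta]$ and $\Wf_2[\eta]$ simply vanish, and the proof of Theorem \ref{contraction_thm} goes through line by line (in fact several of the delicate integration-by-parts steps handling $\sigma\nab_\ast\Delta_\ast\zeta$ become trivial). The low-regularity contraction, together with the uniform bounds from Step 1 and interpolation, forces convergence of the full sequence $(u^n,q^n,\eta^n)$ in a norm strong enough to pass to the limit in the equations, producing the desired solution $(u,q,\eta)$ to \eqref{geometric} with $\sigma_\pm=0$. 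Lower semicontinuity gives the estimates \eqref{le_st_01}--\eqref{le_st_02}, and uniqueness follows from one more application of Theorem \ref{contraction_thm}.

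The main obstacle to watch is the loss of regularity for $\eta$ that surface tension had previously supplied: one can no longer control $\ns{\eta}_{4N+1}$ through the boundary equation, so $\ns{\eta}_{4N+1/2}$ must be obtained purely from transport theory applied to \eqref{lame_xport}, exactly as in the proof of \eqref{lamex_05}. This in turn feeds into the elliptic estimate of Proposition \ref{lame_elliptic}, via the reasoning of \eqref{lamex_3}, to recover $\int_0^T\ns{u}_{4N+1}\,dt$ and close the loop. Since the energy $\hat\E^0[\eta]$ and dissipation $\hat\D^0[\eta]$ are specifically designed to reflect this reduced regularity, the overall scheme closes at the correct level with no further technical input beyond what is already developed in the paper.
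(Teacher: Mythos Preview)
Your overall strategy matches the paper's: bypass the $\kappa$-approximation, replace the coupled problem \eqref{lame_free_bndry} by the linear Lam\'{e} problem of Theorem \ref{lame_high} followed by a direct solve for $\eta$, then run Theorem \ref{contraction_thm} and pass to the limit. However, there is a genuine gap in your iteration step.

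You propose to define $\eta^n$ by the \emph{linear} equation $\dt \eta^n = u^n \cdot \n[\eta^{n-1}]$, calling this ``simply a time integration.'' The paper instead solves the \emph{nonlinear} kinematic equation $\dt \eta^n = u^n \cdot \n[\eta^n]$, i.e.\ $\dt \eta^n + (u^n_1,u^n_2)\cdot \nab_\ast \eta^n = u^n_3$, as a transport equation via the estimates of \cite{danchin} (packaged as Theorem 5.4 of \cite{GT_lwp}). This is not a stylistic choice. Theorem \ref{xport_well-posed}, which you invoke to construct $q^n$ from \eqref{aps_02}, requires hypothesis \eqref{xwp_03}: $\dt \eta = u\cdot \n$ with $\n$ determined by the \emph{same} $\eta$. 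That identity is precisely what forces the transport velocity $v = (\a^n)^T u^n - K^n\dt\theta^n e_3$ to satisfy $v\cdot e_3 = 0$ on $\partial\Omega$ (computations \eqref{xport_v1}--\eqref{xport_v2}); without it the characteristics for \eqref{aps_02} can exit $\Omega$ and the $H^{4N}$ energy estimates for $q^n$ pick up uncontrolled boundary terms. Under your linearization one has $v\cdot e_3 = K^n\bigl(u^n\cdot \n^n - u^n\cdot \n^{n-1}\bigr) \neq 0$, so the $q^n$ step fails as written.

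The fix is exactly what the paper does: solve $\dt \eta^n = u^n \cdot \n^n$ as a transport equation in $\eta^n$ with given velocity field $u^n$, obtaining both existence and the $H^{4N+1/2}$ bound \eqref{lamex_05} that you correctly flag as needed to close $\int_0^T \ns{u}_{4N+1}dt$. With that single change the rest of your argument goes through.
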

\begin{proof}

The proof follows from an argument similar to that used in the proof of Theorem \ref{local_existence}, but actually somewhat easier and more akin to that used in Section 6 of \cite{GT_lwp}.  We will provide only a sketch of the ideas.

The main difference between the method to produce solutions to \eqref{geometric} with $\sigma_\pm >0$ and the method used with $\sigma_\pm =0$ lies in the use of the $\kappa$ approximation, which replaces the kinematic transport equation for $\eta$ with a parabolic problem.  This is essential in studying the problem with surface tension, as it leads to a regularity gain for $\eta$ that enables us to treat $\sigma_\pm \Delta_\ast \eta_\pm$ as a forcing term when solving for $u$.  However, when $\sigma_\pm =0$, this regularity gain is unnecessary, as only $\eta$ appears as a forcing term in the $u$ equation.  In place of the parabolic problem we simply study the kinematic transport problem directly, using Theorem 5.4 of \cite{GT_lwp} to produce solutions and derive estimates.

We then proceed essentially as in Section \ref{sec_lwp_st}.  First we prove that Theorem \ref{approx_solns} holds with $\sigma_\pm =0$.  The iteration scheme begins with a triple $(u^{n-1},q^{n-1},\eta^{n-1})$ and then uses Theorem \ref{lame_high} to produce $u^n$.  Then the equation $\dt \eta^n = u^n \cdot \n^n$ is solved using Theorem 5.4 of \cite{GT_lwp}.  Then $(u^n,\eta^n)$ are used to solve for $q^n$ in \eqref{aps_02} by way of Theorem \ref{xport_well-posed}.   Next we observe that Theorem \ref{contraction_thm} remains true as stated with $\sigma_\pm =0$.  Finally, we combine these two theorems to produce a sequence $\{(u^n,q^n,\eta^n\}_{n=1}^\infty$ of approximate solutions that remain uniformly bounded at high regularity and contract in a lower-regularity norm.
\end{proof}

\appendix

\section{Energy and dissipation functionals} \label{sec_en_dis}

Here we collect the definitions of various functionals that are used throughout the paper.  We define the energies associated to $(u,q,\eta)$ via
\begin{equation}\label{energy_def_u}
 \E[u] =  \sum_{j=0}^{2N} \ns{\dt^j  u}_{4N-2j},
\end{equation}
\begin{equation}\label{energy_def_q}
 \E[q] = \ns{q}_{4N} +\sum_{j=1}^{2N} \ns{\dt^j q}_{4N-2j+1},
\end{equation}
and
\begin{equation}\label{energy_def_eta}
 \E^\sigma[\eta] = \sum_{j=0}^{2N} \ns{\dt^j  \eta}_{ 4N-2j} + \sigma \ns{\dt^j \nab_\ast  \eta}_{ 4N-2j}.
\end{equation}

We define the corresponding dissipation functionals via
\begin{equation}\label{dissipation_def_u}
 \D[u] = \sum_{j=0}^{2N}   \ns{\dt^j u}_{ 4N-2j+1}
\end{equation}
\begin{equation}\label{dissipation_def_q}
 \D[q] = \ns{q}_{4N} +  \ns{\dt q}_{4N-1} + \sum_{j=2}^{2N+1} \ns{\dt^j q}_{4N-2j+2},
\end{equation}
and
\begin{equation}\label{dissipation_def_eta}
 \D^\sigma[\eta] = \sigma^2 \ns{\eta}_{ 4N+3/2} + \ns{\dt \eta}_{ 4N-1} + \sum_{j=2}^{2N+1} \ns{\dt^j \eta}_{ 4N-2j+2}.
\end{equation}

For $\eta$ we also need to define some improved terms:
\begin{equation}\label{energy_eta_improved}
 \hat{\E}^\sigma[\eta] = \E^\sigma[\eta] + \sum_{j=1}^{2N} \ns{\dt^j \eta }_{4N-2j+3/2}
\end{equation}
and
\begin{equation}\label{dissipation_eta_improved}
\hat{\D}^\sigma[\eta] = \D^\sigma[\eta]+ \ns{\dt \eta}_{4N-1/2}  +\sum_{j=2}^{2N} \ns{\dt^j \eta }_{4N-2j+5/2}.
\end{equation}
We must also define the term
\begin{equation}\label{l_def}
 \Lf[\eta](T) = \sup_{0\le t \le T} \ns{\eta(t)}_{4N-1/2}.
\end{equation}
In estimating this term we often refer to the following term associated with the data:
\begin{equation}\label{l0_def}
 \Lf[\eta_0] = \ns{\eta_0}_{4N-1/2}.
\end{equation}

For the data $(u_0,q_0,\eta_0)$ we define
\begin{equation}
 {\EEE} = \ns{u_0}_{4N} +\ns{q_0}_{4N} +  \ns{\eta_0}_{4N+1/2} + \sigma \ns{\nab_\ast \eta_0}_{4N},
\end{equation}
and when $\{(\dt^j u(0),\dt^j q(0),\dt^j\eta(0))\}_{j=0}^{2N}$ are known we write
\begin{multline}\label{TE_def1}
 \TE[u_0] = \sum_{j=0}^{2N} \ns{\dt^j  u(0)}_{4N-2j}, \;  \TE[q_0] = \ns{q(0)}_{4N} +\sum_{j=1}^{2N} \ns{\dt^j q(0)}_{4N-2j+1}, \\
\text{ and }
\TE[\eta_0] = \ns{\eta(0)}_{4N+1/2} + \sigma \ns{\nab_\ast  \eta(0)}_{ 4N} + \sum_{j=1}^{2N} \ns{\dt^j \eta(0) }_{4N-2j+3/2} 
\end{multline}
We will often abbreviate
\begin{equation}\label{TE_def2}
 \TE =  \TE[u_0] +  \TE[q_0] +  \TE[\eta_0] \text{ and }  \TE[u_0,\eta_0] =  \TE[u_0] +  \TE[\eta_0].
\end{equation}

\section{Compatibility conditions  }\label{app_ccs}
Here we record the system of compatibility conditions that the initial data $(u_0,q_0,\eta_0)$ must satisfy in order to produce high-regularity solutions to \eqref{geometric}.  To state the compatibility conditions we must first show how to construct  $(\dt^j u(\cdot,0), \dt^j q(\cdot,0), \dt^j \eta(\cdot,0))$ for $j=1,\dotsc,2N$ from the triple $(u_0,q_0,\eta_0)$.

For the purposes of constructing the temporal-derivative data we rewrite the first, second and third equations in \eqref{geometric} in the form
\begin{equation}\label{cc_data}
 \begin{split}
\dt \eta &= F_1(u,\eta) \\
\dt q &= F_2(u,q,\eta,\dt \eta) \\
\dt u &= F_3(u,q,\eta,\dt \eta).
 \end{split}
\end{equation}
Assuming that we are given  $\{(\dt^k u(\cdot,0), \dt^k q(\cdot,0), \dt^k \eta(\cdot,0))\}_{k=0}^j$ for some $j\in \{0,\dotsc,2N-1\}$, we construct  $(\dt^{j+1} u(\cdot,0), \dt^{j+1} q(\cdot,0), \dt^{j+1} \eta(\cdot,0))$ as follows.  First we apply $\dt^j$ to the first equation in \eqref{cc_data} and define
\begin{equation}
 \left.\dt^{j+1} \eta(\cdot,0) = \dt^j F_1(u,\eta) \right\vert_{t=0},
\end{equation}
which is possible because all terms appearing on the right are already known.   We can now perform a similar operation on the second and third equations in \eqref{cc_data}, setting
\begin{equation}
 \left.\dt^{j+1} u(\cdot,0) = \dt^j F_2(u,q,\eta,\dt \eta) \right\vert_{t=0} \text{ and } \left.\dt^{j+1} q(\cdot,0) = \dt^j F_3(u,q,\eta,\dt \eta) \right\vert_{t=0},
\end{equation}
both of which can be computed in terms of known quantities since we have already computed  $\dt^{j+1}\eta(\cdot,0)$.  Using this argument, we may inductively define $\{(\dt^j u(\cdot,0), \dt^j q(\cdot,0), \dt^j \eta(\cdot,0))\}_{j=1}^{2N}$ as desired.

We may now state the compatibility conditions.  We say that $(u_0,q_0,\eta_0)$ satisfy the compatibility conditions at level $2N$ if
\begin{equation} \label{ccs}
 \begin{cases}
  \dt^j\left(  P'(\bar\rho)\q \n - \S_{\a}(  u)\n \right) \vert_{t=0} = \dt^j\left( \bar{\rho}_1  g \eta \n-\sigma_+ \mathcal{H}_+  \n
- \mathcal{R} \n\right)\vert_{t=0} &\text{on } \Sigma_+ \\
  \jump{\dt^j\left( P'(\bar\rho)\q \n- \S_\a(u)\n \right) } \vert_{t=0} = \dt^j \left( \rj g\eta\n+\sigma_- \mathcal{H}_- \n  - \jump{ \mathcal{R} }\n \right) \vert_{t=0} &\text{on } \Sigma_- \\
  \jump{\dt^j u}\vert_{t=0} =0 &\text{on } \Sigma_- \\
  \dt^j u_-\vert_{t=0} = 0 &\text{on } \Sigma_b
 \end{cases}
\end{equation}
for $j=0,\dotsc,2N-1$.

\section{Poisson extension}

We will now define the appropriate Poisson integrals that allow us to extend $\eta_\pm$, defined on the surfaces $\Sigma_\pm$, to functions defined on $\Omega$, with ``good'' boundedness.

Suppose that $\Sigma_+ = \mathrm{T}^2\times \{\ell\}$, where $\mathrm{T}^2:=(2\pi L_1 \mathbb{T}) \times (2\pi L_2 \mathbb{T})$. We define the Poisson integral in $\mathrm{T}^2 \times (-\infty,\ell)$ by
\begin{equation}\label{P-1def}
\mathcal{P}_{-,\ell}f(x) = \sum_{\xi \in    (L_1^{-1} \mathbb{Z}) \times
(L_2^{-1} \mathbb{Z}) }  \frac{e^{i \xi \cdot x' }}{2\pi \sqrt{L_1 L_2}} e^{|\xi|(x_3-\ell)} \hat{f}(\xi),
\end{equation}
where for $\xi \in  (L_1^{-1} \mathbb{Z}) \times (L_2^{-1} \mathbb{Z})$ we have written
\begin{equation}\label{horiz_ft_def}
 \hat{f}(\xi) = \int_{\mathrm{T}^2} f(x')  \frac{e^{- i \xi \cdot x' }}{2\pi \sqrt{L_1 L_2}} dx'.
\end{equation}
Here ``$-$'' stands for extending downward and ``$\ell$'' stands for extending at $x_3=\ell$, etc. It is well-known that $\mathcal{P}_{-,\ell}:H^{s}(\Sigma_+) \rightarrow H^{s+1/2}(\mathrm{T}^2 \times (-\infty,\ell))$ is a bounded linear operator for $s>0$. Certain improvements of this are available when we restrict to $\Omega$; we refer to the appendix of \cite{WTK} for details.

We extend $\eta_+$ to be defined on $\Omega$ by
\begin{equation}\label{P+def}
\bar{\eta}_+(x',x_3)=\mathcal{P}_+\eta_+(x',x_3):=\mathcal{P}_{-,\ell}\eta_+(x',x_3),\text{ for } x_3\le \ell.
\end{equation}
If $\eta_+\in H^{s-1/2}(\Sigma_+)$ for $s\ge 0$, then $\bar{\eta}_+\in H^{s}(\Omega)$.

Similarly, for $\Sigma_- = \mathrm{T}^2\times \{0\}$ we define the Poisson integral in $\mathrm{T}^2 \times (-\infty,0)$ by
\begin{equation}\label{P-0def}
\mathcal{P}_{-,0}f(x) = \sum_{\xi \in    (L_1^{-1} \mathbb{Z}) \times (L_2^{-1} \mathbb{Z}) }  \frac{e^{ i \xi \cdot x' }}{2\pi \sqrt{L_1 L_2}} e^{ |\xi|x_3} \hat{f}(\xi).
\end{equation}
It is clear that $\mathcal{P}_{-,0}$ has the  same regularity properties as $\mathcal{P}_{-,\ell}$. This allows us to extend $\eta_-$ to be defined on $\Omega_-$. However, we do not extend $\eta_-$ to the upper domain $\Omega_+$ by the reflection  since this will result in the discontinuity of the partial derivatives in $x_3$ of the extension. For our purposes,  we instead to do the extension through the following. Let $0<\lambda_0<\lambda_1<\cdots<\lambda_m<\infty$ for $m\in \mathbb{N}$ and define the $(m+1) \times (m+1)$ Vandermonde matrix $V(\lambda_0,\lambda_1,\dots,\lambda_m)$ by $V(\lambda_0,\lambda_1,\dots,\lambda_m)_{ij} = (-\lambda_j)^i$ for $i,j=0,\dotsc,m$.  It is well-known that the Vandermonde matrices are invertible, so we are free to let $\alpha=(\alpha_0,\alpha_1,\dots,\alpha_m)^T$ be the solution to
\begin{equation}\label{Veq}
V(\lambda_0,\lambda_1,\dots,\lambda_m)\,\alpha=q_m,\ q_m=(1,1,\dots,1)^T.
\end{equation}
Now we define the specialized Poisson integral in $\mathrm{T}^2 \times (0,\infty)$
by
\begin{equation}\label{P+0def}
\mathcal{P}_{+,0}f(x) = \sum_{\xi \in    (L_1^{-1} \mathbb{Z}) \times
(L_2^{-1} \mathbb{Z}) }  \frac{e^{ i \xi \cdot x' }}{2\pi \sqrt{L_1 L_2}}  \sum_{j=0}^m\alpha_j
e^{- |\xi|\lambda_jx_3} \hat{f}(\xi).
\end{equation}
It is easy to check that, due to \eqref{Veq}, $\partial_3^l\mathcal{P}_{+,0}f(x',0)=
\partial_3^l\mathcal{P}_{-,0}f(x',0)$  for all $0\le l\le m$ and hence
\begin{equation}
\partial^\alpha\mathcal{P}_{+,0}f(x',0)=
\partial^\alpha\mathcal{P}_{-,0}f(x',0), \ \forall\, \alpha\in \mathbb{N}^3 \text{ with }0\le |\alpha|\le m.\end{equation}
These facts allow us to  extend $\eta_-$ to be defined on $\Omega$
by
\begin{equation}\bar{\eta}_-(x',x_3)=
\mathcal{P}_-\eta_-(x',x_3):=\left\{\begin{array}{lll}\mathcal{P}_{+,0}\eta_-(x',x_3),\quad
x_3> 0 \\
\mathcal{P}_{-,0}\eta_-(x',x_3),\quad x_3\le
0.\end{array}\right.\label{P-def}\end{equation}
It is clear now that if $\eta_-\in H^{s-1/2}(\Sigma_-)$ for $ 0\le s\le m$, then $\bar{\eta}_-\in H^{s}(\Omega)$.  Since we will only work with $s$ lying in a finite interval, we may assume that $m$ is sufficiently large in \eqref{Veq} for $\bar{\eta}_- \in H^s(\Omega)$ for all $s$ in the interval.

\section{Estimates of Sobolev norms}

Here we record an estimate involving space-time norms.

\begin{lem}\label{time_interp}
Let $\Gamma$ denote either $\Sigma$ or $\Omega$.  Suppose that $\zeta \in L^2([0,T]; H^{s_1}(\Gamma))$ and $\dt \zeta \in L^2([0,T]; H^{s_2}(\Gamma))$ for $s_1 \ge s_2 \ge 0$.  Let $s = (s_1+s_2)/2$.  Then  $\zeta \in C^0([0,T]; H^{s}(\Gamma))$ (after possibly being redefined on a set of measure $0$), and
\begin{equation}\label{l_sobi_01}
 \ns{\zeta}_{L^\infty H^{s}} \le \ns{\zeta(0)}_{H^s} + C \ns{ \zeta}_{L^2 H^{s_1}} + C \ns{\dt \zeta}_{L^2 H^{s_2}}
\end{equation}
for some universal constant $C>0$.
\end{lem}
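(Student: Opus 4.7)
The plan is to reduce the inequality to a one-dimensional Newton--Leibniz identity applied frequency by frequency, taking advantage of the algebraic fact that $s=(s_1+s_2)/2$ lets the weight $(1+|\xi|^2)^s$ factor as $(1+|\xi|^2)^{s_1/2}(1+|\xi|^2)^{s_2/2}$. First I would reduce to a setting in which the $H^s$ norm has a clean spectral representation. On $\Sigma = \mathrm{T}^2 \times \{\ell,0\}$ the natural Fourier series \eqref{horiz_ft_def} already gives $\ns{f}_{H^s(\Sigma)} = \sum_{\xi} (1+|\xi|^2)^s |\hat f(\xi)|^2$. On $\Omega$, which has a boundary, I would invoke a bounded Sobolev extension operator $E:H^r(\Omega) \to H^r(\Rn{3})$ (with $r$ ranging over any finite set containing $s_1,s_2,s$), apply the argument for the extended function, and then restrict; alternatively one could use the Fourier series in $(x_1,x_2)$ together with a standard reflection in $x_3$. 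Either reduction gives a representation of the form $\ns{\zeta(t)}_{H^s} = \int (1+|\xi|^2)^s |\hat\zeta(\xi,t)|^2\,d\xi$ (with a sum instead of integral in the periodic variables) up to equivalence of norms.

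Next I would establish the pointwise-in-$t$ identity
\[
|\hat\zeta(\xi,t)|^2 - |\hat\zeta(\xi,0)|^2 = 2\int_0^t \mathrm{Re}\bigl(\hat\zeta(\xi,r)\,\overline{\widehat{\dt\zeta}(\xi,r)}\bigr)\,dr,
\]
which holds for each fixed $\xi$ by the usual fundamental theorem of calculus applied to absolutely continuous functions (justified by the hypothesis $\dt\zeta \in L^2_t H^{s_2}$ and the fact that $H^{s_2}\hookrightarrow H^0$). Multiplying by $(1+|\xi|^2)^s$, summing or integrating in $\xi$, and using Cauchy--Schwarz in $\xi$ with the factorization $(1+|\xi|^2)^s=(1+|\xi|^2)^{s_1/2}(1+|\xi|^2)^{s_2/2}$, I obtain
\[
\ns{\zeta(t)}_{H^s} \le \ns{\zeta(0)}_{H^s} + 2\int_0^t \norm{\zeta(r)}_{H^{s_1}}\,\norm{\dt\zeta(r)}_{H^{s_2}}\,dr.
\]
An application of $2ab \le a^2+b^2$ and extension of the time integral to $[0,T]$ yields
\[
\ns{\zeta(t)}_{H^s} \le \ns{\zeta(0)}_{H^s} + \ns{\zeta}_{L^2_T H^{s_1}} + \ns{\dt\zeta}_{L^2_T H^{s_2}},
\]
and taking the supremum in $t\in[0,T]$ gives \eqref{l_sobi_01} with $C=1$ (modulo the constants absorbed into the norm equivalences from the reduction step).

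For the continuity assertion, I would appeal to a density argument: first prove the bound for smooth $\zeta$ (where continuity into $H^s$ is trivial), then approximate a general $\zeta$ by mollification in time combined with the spectral truncation in space. The inequality just derived, applied to differences of approximants, shows that approximating sequences that are Cauchy in $L^2_T H^{s_1}$ (with $\dt$ Cauchy in $L^2_T H^{s_2}$) are Cauchy in $L^\infty_T H^s$, so the limit admits a continuous $H^s$-valued representative.

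The only real obstacle is book-keeping the reduction step on $\Omega$ to ensure the Cauchy--Schwarz/Fourier argument applies uniformly across all the indices $s, s_1, s_2$ that will appear in applications (including non-integer values and the endpoint $s_2=0$); this is handled once and for all by fixing a single bounded extension operator $E$ on $H^r(\Omega)$ for $r$ in the relevant finite range, which exists by standard Sobolev extension theory for Lipschitz domains. Everything else is routine.
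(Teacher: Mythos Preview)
Your argument is correct and is precisely the standard Fourier/Newton--Leibniz proof: the identity $\ns{\zeta(t)}_{H^s}-\ns{\zeta(0)}_{H^s}=2\int_0^t\langle\zeta,\dt\zeta\rangle_{H^s}$ combined with the factorization $\langle\xi\rangle^{2s}=\langle\xi\rangle^{s_1}\langle\xi\rangle^{s_2}$ is exactly what drives Lemma~A.4 of \cite{GT_lwp}, which the paper cites in lieu of a proof. The only cosmetic discrepancy is that your extension-operator reduction on $\Omega$ introduces a constant in front of $\ns{\zeta(0)}_{H^s}$ as well, but this is harmless for every application in the paper.
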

\begin{proof}
 This is a slight variant of Lemma A.4 of \cite{GT_lwp} that follows from the same argument.
\end{proof}

\section{Some estimates involving the geometric terms}

\subsection{Coefficient estimates}

Here we are concerned with how the size of $\eta$ can control the ``geometric'' terms that appear in the equations.
\begin{lem}\label{eta_small}
There exists a universal $0 < \delta < 1$ so that if $\ns{\eta}_{5/2} \le \delta,$ then
\begin{equation}\label{es_01}
\begin{split}
 & \norm{J-1}_{L^\infty(\Omega)} +\norm{A}_{L^\infty(\Omega)} + \norm{B}_{L^\infty(\Omega)} \le \hal, \\
 & \norm{\n-1}_{L^\infty(\Gamma)} + \norm{K-1}_{L^\infty(\Gamma)} \le \hal, \text{ and }  \\
 & \norm{K}_{L^\infty(\Omega)} + \norm{\mathcal{A}}_{L^\infty(\Omega)} \ls 1.
 \end{split}
\end{equation}
Also, the map $\Theta$ defined by \eqref{cotr} is a diffeomorphism, and
\begin{equation}\label{es_02}
 \hal \int_\Omega \abs{\varphi}^2 \le \int_\Omega J \abs{ \varphi}^2 \le 2 \int_\Omega \abs{\varphi}^2
\end{equation}
for all $\varphi \in L^2(\Omega)$.
\end{lem}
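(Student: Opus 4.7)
The plan is to exploit smallness of $\eta$ in $H^{5/2}$ to control $\theta = \tilde b_1 \bar\eta_+ + \tilde b_2 \bar\eta_-$ and its horizontal derivatives in $L^\infty$, since every quantity in \eqref{es_01} is either algebraic in $(A,B,J,K,\nabla_\ast \eta)$ or else is a smooth function of $J$ bounded away from $0$. The two key ingredients are (i) the boundedness of the Poisson extensions $\mathcal{P}_\pm : H^{s-1/2}(\Sigma_\pm) \to H^{s}(\Omega)$ (reviewed in the appendix) and (ii) the Sobolev embeddings $H^{2}(\Omega) \hookrightarrow L^\infty(\Omega)$ in $3$D and $H^{3/2}(\Sigma) \hookrightarrow L^\infty(\Sigma)$ in $2$D.

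First I would estimate $\theta$. Since $\tilde b_1,\tilde b_2$ are smooth and compactly defined on $[-b,\ell]$, the Poisson extension estimate gives
\begin{equation*}
  \norm{\theta}_{H^3(\Omega)} \ls \norm{\bar\eta_+}_{H^3(\Omega)} + \norm{\bar\eta_-}_{H^3(\Omega)} \ls \norm{\eta_+}_{H^{5/2}(\Sigma_+)} + \norm{\eta_-}_{H^{5/2}(\Sigma_-)} \ls \sqrt{\delta}.
\end{equation*}
By the $3$D Sobolev embedding, $\norm{A}_{L^\infty} + \norm{B}_{L^\infty} + \norm{J-1}_{L^\infty} \ls \norm{\nabla \theta}_{H^2(\Omega)} \ls \norm{\theta}_{H^3(\Omega)} \ls \sqrt{\delta}$, so choosing $\delta$ small enough (universally) yields $\norm{J-1}_{L^\infty} \le 1/4$ together with the $A,B$ estimates in the first line of \eqref{es_01}. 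For the surface estimate I would use the trace/$2$D Sobolev embedding $H^{3/2}(\Sigma) \hookrightarrow L^\infty(\Sigma)$ to get $\norm{\nabla_\ast \eta}_{L^\infty(\Sigma)} \ls \norm{\eta}_{H^{5/2}} \ls \sqrt{\delta}$, which bounds $\norm{\n - 1}_{L^\infty(\Sigma)} \le 1/2$. Since $J \ge 3/4 > 0$ pointwise, the identity $K - 1 = (1-J)/J$ and the $L^\infty$ control of $J-1$ give $\norm{K-1}_{L^\infty} \le 1/2$ after possibly shrinking $\delta$. Finally, $\norm{K}_{L^\infty} \ls 1$ follows, and $\norm{\mathcal{A}}_{L^\infty} \ls 1$ follows from the formula for $\mathcal{A}$ in \eqref{A_def} and the bounds on $A,B,K$.

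Next I would establish that $\Theta(\cdot,t)$ is a diffeomorphism. Since $\det \nabla \Theta = J \ge 1/2 > 0$ everywhere, $\Theta$ is a local diffeomorphism by the inverse function theorem. Moreover, $\Theta$ preserves the horizontal coordinates and, in the vertical direction, the map $x_3 \mapsto x_3 + \theta(x_1,x_2,x_3)$ is strictly increasing on the relevant interval because its derivative $J$ is bounded below by $1/2$; combined with the behavior $\tilde b_1(-b) = \tilde b_2(-b) = 0$, $\tilde b_1(\ell) = 1$, and $\tilde b_2(0) = 1$ from \eqref{b function}, this map is a bijection of $[-b,0]$ onto $[-b,\eta_-]$ and of $[0,\ell]$ onto $[\eta_-, \ell + \eta_+]$ fiberwise, so $\Theta(\cdot, t)$ is a $C^k$ bijection from $\Omega_\pm$ onto $\Omega_\pm(t)$ whose Jacobian is nondegenerate. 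Hence it is a diffeomorphism.

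Finally, \eqref{es_02} follows immediately from the pointwise bounds $1/2 \le J \le 3/2$: integrating $J \abs{\varphi}^2$ against $\varphi \in L^2(\Omega)$ gives $\frac12 \int_\Omega \abs{\varphi}^2 \le \int_\Omega J \abs{\varphi}^2 \le \frac32 \int_\Omega \abs{\varphi}^2 \le 2 \int_\Omega \abs{\varphi}^2$. I do not anticipate any serious obstacle in this proof; the only slightly delicate point is choosing $\delta$ small enough in a single universal way to make each of the quantities $\norm{J-1}_{L^\infty}, \norm{A}_{L^\infty}, \norm{B}_{L^\infty}, \norm{\n-1}_{L^\infty}, \norm{K-1}_{L^\infty}$ below $1/2$ simultaneously, which is a matter of book-keeping the various universal constants in the Poisson extension and Sobolev embedding estimates.
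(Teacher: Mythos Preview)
Your argument is correct and is essentially the standard proof: control $\nabla\theta$ in $L^\infty$ via the Poisson extension bound and Sobolev embedding, then read off the bounds on $A,B,J-1,K-1,\n-1,K,\mathcal{A}$ algebraically, and finally use $J\ge 1/2$ for the diffeomorphism property and for \eqref{es_02}. The paper does not write this out but simply cites Lemma~2.4 of \cite{GT_per} for \eqref{es_01} and notes that \eqref{es_02} follows trivially; the cited lemma is proved by exactly the argument you give, so your proposal and the paper's proof coincide in substance.
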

\begin{proof}
The estimate \eqref{es_01} is guaranteed by Lemma 2.4 of \cite{GT_per}.  The estimate \eqref{es_02}  then follows trivially from \eqref{es_01}.
\end{proof}

\subsection{Korn's inequality}

Here we record a version of Korn's inequality that is needed throughout our analysis.   First we record a version involving only the deviatoric part of the symmetric gradient, $\sgz$, defined by \eqref{deviatoric_def}.

\begin{prop}\label{layer_korn}
 There exists a universal constant $C>0$ so that
\begin{equation}
 \ns{u}_1 = \ns{u_+}_{1} + \ns{u_-}_{1} \le C(\ns{\sgz{u_+}}_0 + \ns{\sgz{u_-}}_0)
\end{equation}
for all $u_\pm \in H^1(\Omega_\pm)$ with $\jump{u}=0$ along $\Sigma$ and $u_- =0$ on $\Sigma_b$.
\end{prop}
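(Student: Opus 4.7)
The plan is to reduce to the standard Korn inequality for the deviatoric symmetric gradient on each bulk domain and then remove the $L^2$ lower-order term by a contradiction/compactness argument that exploits the mixed boundary conditions.

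First I would record the ``Korn second inequality'' for $\sgz$ separately on each domain: for any bounded Lipschitz domain $D\subset\Rn{3}$ there is a constant $C(D)>0$ such that
\begin{equation*}
 \ns{v}_{H^1(D)} \le C(D) \left( \ns{v}_{L^2(D)} + \ns{\sgz v}_{L^2(D)}  \right)\quad \text{for all }v\in H^1(D).
\end{equation*}
This is a classical trace-free version of Korn's inequality (see e.g. the work of Reshetnyak or Dain); its proof proceeds exactly as for the usual Korn inequality since the kernel of $\sgz$, consisting of conformal Killing vector fields, is finite-dimensional and the operator $v\mapsto \sgz v$ satisfies the hypotheses of the Peetre--Tartar lemma. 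Applying this on $\Omega_+$ and $\Omega_-$ and summing yields
\begin{equation*}
 \ns{u}_1 \le C \left( \ns{u}_0 + \ns{\sgz u_+}_{L^2(\Omega_+)} + \ns{\sgz u_-}_{L^2(\Omega_-)}  \right).
\end{equation*}

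Next, to absorb the $\ns{u}_0$ term, I would argue by contradiction. If the claimed inequality fails, there exists a sequence $\{u^n\}\subset \H(\Omega)$ (i.e.\ satisfying $\jump{u^n}=0$ on $\Sigma_-$ and $u^n_-=0$ on $\Sigma_b$) such that $\ns{u^n}_1=1$ and $\ns{\sgz u^n_+}_{L^2(\Omega_+)} + \ns{\sgz u^n_-}_{L^2(\Omega_-)} \to 0$. By Rellich--Kondrachov, a subsequence converges strongly in $L^2(\Omega)$; combined with the previous display applied to the differences $u^n-u^m$, this sequence is Cauchy in $H^1$, so $u^n\to u$ strongly in $H^1(\Omega)$ with $\ns{u}_1=1$. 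Passing to the limit preserves the boundary conditions $\jump{u}=0$ on $\Sigma_-$, $u_-=0$ on $\Sigma_b$, and gives $\sgz u_\pm=0$ on $\Omega_\pm$.

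Finally, the hard part (really the only nontrivial point) is to verify that the kernel is trivial under these boundary conditions. The condition $\sgz u_\pm=0$ means $u_\pm$ is a conformal Killing vector on $\Omega_\pm$; by elliptic regularity such a vector is smooth on $\overline{\Omega_\pm}$ and of the form $a + Bx + c\, x + \text{(special conformal)}$. The crucial input is the horizontal periodicity: the only conformal Killing fields on $\mathrm{T}^2 \times \Rn{}$ that remain well-defined under the torus identifications are the constants (any non-constant affine or quadratic dependence on $x_1,x_2$ breaks periodicity, and a short direct computation of $\sgz$ shows that the admissible $x_3$-dependence is also forced to vanish). Hence $u_\pm$ is constant on each $\Omega_\pm$. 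Using $u_-|_{\Sigma_b}=0$ gives $u_-\equiv 0$ in $\Omega_-$, and then the transmission condition $\jump{u}=0$ on $\Sigma_-$ gives $u_+|_{\Sigma_-}=0$, forcing $u_+\equiv 0$ in $\Omega_+$. This contradicts $\ns{u}_1=1$ and completes the proof.
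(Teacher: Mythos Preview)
Your argument is correct and is exactly the route the paper points to: it invokes Dain's deviatoric (trace-free) Korn inequality on each subdomain, then upgrades via Rellich compactness and a kernel analysis, which is precisely what the paper defers to the appendix of the companion paper \cite{JTW_GWP}. Your identification of the kernel is right: any $H^1$ solution of $\sgz u_\pm = 0$ on $\Omega_\pm$ is a conformal Killing field, hence a polynomial of degree at most two, and periodicity in $x_1,x_2$ forces independence of those variables; the one-variable computation you describe then reduces the kernel to constants, after which the chain $u_-\vert_{\Sigma_b}=0 \Rightarrow u_-\equiv 0 \Rightarrow u_+\vert_{\Sigma_-}=0 \Rightarrow u_+\equiv 0$ closes the contradiction.
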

\begin{proof}
The proof is based on the ``deviatoric Korn inequality'' of Dain, Theorem 1.1 of \cite{dain}.  For details of the proof see the appendix of \cite{JTW_GWP}.
\end{proof}

Next we extend Proposition \ref{layer_korn} to  $\sgz_{\a}$.

\begin{prop}\label{korn}
Assume that $\mu >0$ and $\mu' \ge 0$.  There exists a universal $0 < \delta < 1$, smaller than the $\delta$ appearing in Lemma \ref{eta_small}, such that if $\ns{\eta}_{5/2} \le \delta,$  then
\begin{equation}\label{ko_01}
 \ns{u}_1 \ls \int_\Omega \frac{\mu}{2} J \abs{\sgz_\a u}^2 + \mu' J \abs{\diva u}^2 \ls \ns{u}_1
\end{equation}
for all $u_\pm \in H^1(\Omega_\pm)$ with $\jump{u}=0$ along $\Sigma$ and $u_- =0$ on $\Sigma_b$.

\end{prop}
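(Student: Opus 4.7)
The plan is to reduce the $\mathcal{A}$-dependent inequality to the flat case handled by Proposition \ref{layer_korn} via a perturbation argument based on the smallness of $\eta$.

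First I would dispense with the upper bound. From Lemma \ref{eta_small}, under $\ns{\eta}_{5/2}\le \delta$ (with $\delta$ possibly further reduced) we have $\norm{\mathcal{A}}_{L^\infty}+\norm{J}_{L^\infty}\ls 1$. Expanding the definitions
\begin{equation*}
 (\sg_\a u)_{ij}=\a_{ik}\p_k u_j+\a_{jk}\p_k u_i,\qquad \sgz_\a u=\sg_\a u-\tfrac{2}{3}\diva u\,I,
\end{equation*}
each entry of $\sgz_\a u$ and the scalar $\diva u$ are bounded pointwise by $C\abs{\nab u}$. Consequently
\begin{equation*}
 \int_\Omega \tfrac{\mu}{2}J\abs{\sgz_\a u}^2+\mu' J\abs{\diva u}^2 \ls \int_\Omega \abs{\nab u}^2 \ls \ns{u}_1.
\end{equation*}

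For the lower bound I would treat the bulk-viscosity term as non-negative and simply discard it, so it suffices to show $\ns{u}_1 \ls \int_\Omega \tfrac{\mu}{2} J\abs{\sgz_\a u}^2$. Write $\sgz_\a u=\sgz u+E(u)$, where
\begin{equation*}
 E(u)_{ij}=(\a_{ik}-\delta_{ik})\p_k u_j+(\a_{jk}-\delta_{jk})\p_k u_i-\tfrac{2}{3}((\a_{\ell k}-\delta_{\ell k})\p_k u_\ell) \delta_{ij}.
\end{equation*}
By Lemma \ref{eta_small} and the smallness of $\eta$, $\norm{\a-I}_{L^\infty}\ls \ns{\eta}_{5/2}^{1/2}\ls \sqrt{\delta}$, so $\abs{E(u)}\le C\sqrt{\delta}\,\abs{\nab u}$ pointwise. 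Using $J\ge 1/2$ from \eqref{es_02} together with the elementary bound $\abs{a+b}^2\ge \tfrac12 \abs{a}^2-\abs{b}^2$,
\begin{equation*}
 \int_\Omega \tfrac{\mu}{2}J\abs{\sgz_\a u}^2 \ge \tfrac{\mu}{8}\int_\Omega \abs{\sgz u}^2 - C\delta \int_\Omega \abs{\nab u}^2.
\end{equation*}
The deviatoric Korn inequality of Proposition \ref{layer_korn}, which applies because $u$ satisfies $\jump{u}=0$ on $\Sigma_-$ and $u_-=0$ on $\Sigma_b$, gives $\int_\Omega \abs{\sgz u}^2\gs \ns{u}_1$, so
\begin{equation*}
 \int_\Omega \tfrac{\mu}{2}J\abs{\sgz_\a u}^2 \ge (c_0-C\delta)\ns{u}_1
\end{equation*}
for a universal $c_0>0$. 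Choosing $\delta$ small enough (and no larger than the $\delta$ of Lemma \ref{eta_small}) to ensure $C\delta\le c_0/2$ absorbs the error term and yields \eqref{ko_01}.

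The only potentially delicate point is the deviatoric Korn inequality itself, which is already handed to us by Proposition \ref{layer_korn}; beyond that, the argument is a routine perturbation using the $L^\infty$ smallness of $\a-I$ and $J-1$ controlled by $\ns{\eta}_{5/2}$ via the Poisson extension and Sobolev embedding.
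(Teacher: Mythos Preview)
Your proposal is correct and follows essentially the same approach as the paper, which simply invokes Proposition \ref{layer_korn} and refers to Lemma 2.1 of \cite{GT_lwp} for the perturbation argument; you have written out precisely that perturbation argument in detail.
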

\begin{proof}
 Let $\delta$ be as small as in Lemma \ref{eta_small}.  With Proposition \ref{layer_korn} in hand we may then argue as in  Lemma 2.1 of \cite{GT_lwp}, further restricting $\delta$ as needed, to derive \eqref{ko_01}.
\end{proof}

\subsection{Elliptic estimates}

Here we record elliptic estimates for the two-phase geometric Lam\'{e} problem with Dirichlet boundary conditions:
\begin{equation}\label{lam_dir}
 \begin{cases}
   - \diva \Sa u = G & \text{in }\Omega \\
  u_+ = h_+ &\text{on } \Sigma_+ \\
  u_+ = u_-  = h_- &\text{on } \Sigma_- \\
  u_- = 0 &\text{on } \Sigma_b.
 \end{cases}
\end{equation}

Our elliptic regularity result is contained in the following.

\begin{prop}\label{lame_elliptic}
 Let $k\ge 4$ be an integer and suppose that $\eta \in H^{k+1/2}$.  There exists $0 < \delta_0 \le \delta$ (where $\delta$ is given by Proposition \ref{korn}) so that if $\ns{\eta}_{k-1/2} \le \delta_0$, then solutions to \eqref{lam_dir} satisfy
\begin{equation}\label{ld_01}
  \norm{u}_{r}  \ls \left( \norm{G}_{r-2} + \norm{h}_{r-1/2}   \right)
\end{equation}
for $r=2,\dotsc,k$, whenever the right side is finite.

In the case $r=k+1$,  solutions to \eqref{lam_dir} satisfy
\begin{equation}\label{ld_02}
\norm{u}_{k+1}  \ls \left( \norm{G}_{k-1} + \norm{h }_{k+1/2}   \right)
+  \norm{\eta}_{k+1/2} \left( \norm{F^1}_{2} +  \norm{h}_{7/2}  \right)
\end{equation}
whenever the right side is finite.
\end{prop}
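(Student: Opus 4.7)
The plan is to reduce Proposition \ref{lame_elliptic} to the classical two-phase Lam\'e system posed on the flat domain (corresponding to $\eta = 0$, $\mathcal{A} = I$, $J = 1$) and then treat the geometric discrepancy as a lower-order perturbation that is small whenever $\|\eta\|_{k-1/2}$ is small. The argument would proceed in three stages. First, I would record the flat two-phase Dirichlet estimate $\|u\|_r \lesssim \|G\|_{r-2} + \|h\|_{r-1/2}$ for $r = 2, \dotsc, k+1$, which follows from Agmon--Douglis--Nirenberg theory applied in each phase together with the transmission condition $\jump{u}=0$ on $\Sigma_-$; this is essentially what is used in Section 3 of \cite{WTK}. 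The Poisson-extension bounds from the appendix give $\theta \in H^{k+1}(\Omega)$ and hence $\mathcal{A} - I, J-1, K-1 \in H^k(\Omega)$, and by Lemma \ref{eta_small} their $L^\infty$ norms are controlled by $\|\eta\|_{5/2}$.

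Second, for $2 \le r \le k$, I would rewrite \eqref{lam_dir} as $-\operatorname{div}(\mathcal{S}_0 u) = G + \mathcal{E}[u,\eta]$, where
\begin{equation*}
\mathcal{E}[u,\eta] := \operatorname{div}(\mathcal{S}_0 u) - \operatorname{div}_{\mathcal{A}}(\mathcal{S}_{\mathcal{A}} u)
\end{equation*}
is a second-order operator in $u$ whose coefficients are polynomial in $\mathcal{A} - I$, $K-1$, and their first derivatives. Standard Sobolev product estimates applied phase by phase give the bound $\|\mathcal{E}[u,\eta]\|_{r-2} \lesssim \|\eta\|_{k-1/2}\|u\|_r$ in this range (the key point being that when $r \le k$ the regularity budget allows both factors to be taken at comfortable levels, since $\bar\eta \in H^{k+1}$ beats $u \in H^r$). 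Combining with the flat estimate yields $\|u\|_r \lesssim \|G\|_{r-2} + \|h\|_{r-1/2} + \delta_0\|u\|_r$, and choosing $\delta_0$ small enough absorbs the last term onto the left. A finite induction on $r$ starting at $r = 2$ then produces \eqref{ld_01}.

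Third, and this is the main obstacle, consider $r = k+1$. Here the naive product estimate fails because the top-order coefficient factor $\partial(\mathcal{A} - I)$ belongs only to $H^{k-1}(\Omega)$, which does not embed into $L^\infty$ for our values of $k$, so we can no longer afford to have both factors at high regularity simultaneously. The remedy is to split $\mathcal{E}$ carefully into its top-order piece $(\mathcal{A} - I) : \nabla^2 u$ and lower-order commutator terms. For the top piece, one factor $\mathcal{A} - I$ can be placed in $L^\infty$ (controlled by $\|\eta\|_{5/2} \le \delta_0$) so that it absorbs into $\|u\|_{k+1}$ with a small constant. For the remaining commutator terms, one uses the refined Sobolev product inequality of the form $\|fg\|_{k-1} \lesssim \|f\|_{L^\infty}\|g\|_{k-1} + \|f\|_{k-1}\|g\|_{L^\infty}$ to pair the high-regularity copy of $\eta$ (at level $k+1/2$, equivalently $\mathcal{A}$ at level $k$) against a low-regularity norm of $u$. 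The low-regularity norms of $u$ are then controlled by \eqref{ld_01} applied at some fixed $r$ (e.g. $r = 3$), which estimates them by $\|G\|_2 + \|h\|_{7/2}$. The resulting bound is exactly \eqref{ld_02}: one unperturbed elliptic piece, plus $\|\eta\|_{k+1/2}$ multiplying a low-regularity bound for $u$. The main technical care needed is in the bookkeeping of the commutator split, ensuring that at no point we require $\eta$ at regularity beyond $k+1/2$ and that each low-regularity norm of $u$ that appears can in turn be estimated by the already-proved \eqref{ld_01}.
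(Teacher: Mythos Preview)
Your approach matches the paper's: both reduce to the constant-coefficient two-phase Lam\'e problem and treat $\diverge_{\a}\Sa - \diverge\,\S$ as a small perturbation controlled by $\|\eta\|_{k-1/2}$, exactly as in Section~3 of \cite{WTK}, and the absorption argument for $r \le k$ is correct.

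There is, however, a misdiagnosis of the obstacle at $r = k+1$. You claim $\partial(\mathcal{A}-I) \in H^{k-1}$ fails to embed into $L^\infty$; but $k \ge 4$ gives $k-1 \ge 3 > 3/2$, so it does embed, and in fact $\|\partial(\mathcal{A}-I)\|_{L^\infty} \lesssim \|\eta\|_{7/2} \le \|\eta\|_{k-1/2}$ is small. The real issue is that the \emph{high} norm $\|\partial(\mathcal{A}-I)\|_{k-1} \lesssim \|\eta\|_{k+1/2}$ is not controlled by the smallness hypothesis, so in your refined product split
\[
\|\partial(\mathcal{A}-I)\,\nabla u\|_{k-1} \lesssim \|\partial(\mathcal{A}-I)\|_{L^\infty}\|u\|_{k} + \|\partial(\mathcal{A}-I)\|_{k-1}\|\nabla u\|_{L^\infty}
\]
it is the second term, carrying $\|\eta\|_{k+1/2}$, that cannot be absorbed. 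Your remedy is nonetheless correct: bounding $\|\nabla u\|_{L^\infty} \lesssim \|u\|_4$ and invoking \eqref{ld_01} at $r=4$ gives precisely the extra term in \eqref{ld_02} (where the paper's $\|F^1\|_2$ is evidently a typo for $\|G\|_2$).
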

\begin{proof}
Estimates of the form \eqref{ld_01} for all $r \ge 2$ follow from the standard elliptic regularity theory for the problem
\begin{equation}
 \begin{cases}
   - \diverge \S u = \tilde{G} & \text{in }\Omega \\
  u_+ = h_+ &\text{on } \Sigma_+ \\
  u_+ = u_-  = h_- &\text{on } \Sigma_- \\
  u_- = 0 &\text{on } \Sigma_b.
 \end{cases}
\end{equation}
With these estimates in hand, we may argue as in Section 3 of \cite{WTK} to deduce \eqref{ld_01} and \eqref{ld_02} under the smallness assumption $\ns{\eta}_{k-1/2} \le \delta_0$.

\end{proof}

\section{Data extension results}

Next we need some extension results.  Our first one allows us to take data with parabolic scaling and extend them to space-time functions with the same scaling.  The proof can be found in Lemma A.5 of \cite{GT_lwp} and is thus omitted.

\begin{prop}\label{extension_u}
Suppose that for $j=0,\dotsc,2N$ we have that $\dt^j v(0) \in H^{4N-2j}(\Omega)$.  Then there exists an extension $u$, achieving the initial data, such that
\begin{equation}
\dt^j u \in L^\infty([0,\infty); H^{4N-2j}(\Omega)) \cap L^2([0,\infty);H^{4N-2j+1}(\Omega)) \text{ for }j=0,\dotsc,2N.
\end{equation}
Moreover,
\begin{equation}
 \sup_{t \ge 0} \sum_{j=0}^{2N} \ns{\dt^j v(t)}_{4N} + \int_0^\infty \sum_{j=0}^{2N}\ns{\dt^j v(t)}_{4N-2j+1} dt \ls  \sum_{j=0}^{2N} \ns{\dt^j v(0)}_{4N-2j}
\end{equation}
\end{prop}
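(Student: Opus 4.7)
The plan is to build the extension $u$ inductively, one initial time-derivative value at a time, using a smoothing semigroup in order to absorb the loss of two spatial derivatives per time derivative. First I would reduce to the case of a single slab $\mathrm{T}^2\times \mathbb{R}$: apply a standard bounded Sobolev extension operator in the vertical variable to each $\dt^j v(0) \in H^{4N-2j}(\Omega_\pm)$, producing $\tilde v_j \in H^{4N-2j}(\mathrm{T}^2\times \mathbb{R})$ with control of norms by the data, and work there. (At the end, the restriction of what we construct back to $\Omega_\pm$ gives the desired $u$.)

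On $\mathrm{T}^2\times \mathbb{R}$ let $A=(1-\Delta)^{1/2}$, defined by the horizontal Fourier series and vertical Fourier transform. Then $\norm{A^s w}_0 \simeq \norm{w}_s$, the semigroup $e^{-tA}$ is bounded analytic, and for any $\sigma \ge 0$ we have the parabolic bounds
\begin{equation*}
\sup_{t \ge 0} \norm{A^\sigma e^{-tA} w}_0 \ls \norm{A^\sigma w}_0, \qquad \int_0^\infty \norm{A^{\sigma+1/2} e^{-tA} w}_0^2\, dt \ls \norm{A^\sigma w}_0^2.
\end{equation*}
Fix a cutoff $\chi \in C_c^\infty([0,\infty))$ with $\chi(0)=1$ and set $u^{(0)}(x,t) = \chi(t) e^{-tA} \tilde v_0(x)$. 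This achieves $u^{(0)}(0)=\tilde v_0$ and, by the semigroup bounds, lies in $L^\infty_t H^{4N} \cap L^2_t H^{4N+1}$ with the right estimate. However, $\dt^j u^{(0)}(0) \ne \tilde v_j$ in general.

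Inductively, assume we have $u^{(j-1)}$ with $\dt^i u^{(j-1)}(0) = \tilde v_i$ for $i=0,\dotsc, j-1$, and with $\dt^k u^{(j-1)}$ controlled in $L^\infty_t H^{4N-2k} \cap L^2_t H^{4N-2k+1}$. Define $w_j = \tilde v_j - \dt^j u^{(j-1)}(0)$ and set
\begin{equation*}
u^{(j)}(x,t) = u^{(j-1)}(x,t) + \frac{t^j}{j!}\chi(t) e^{-tA} w_j(x).
\end{equation*}
The key regularity check is that $w_j \in H^{4N-2j}$: expanding $\dt^j$ of each earlier block $\frac{t^\ell}{\ell!}\chi(t) e^{-tA} w_\ell$ by Leibniz and evaluating at $t=0$ yields a linear combination of terms of the form $A^{j-\ell} w_\ell$ with $\ell < j$, and since $w_\ell \in H^{4N-2\ell}$ we get $A^{j-\ell} w_\ell \in H^{4N-\ell-j} \subset H^{4N-2j}$. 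Hence $w_j$ belongs to $H^{4N-2j}$ with norm controlled by the data, and by construction $\dt^i u^{(j)}(0) = \tilde v_i$ for $i = 0, \dotsc, j$.

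It remains to record the norm bounds. Applying $\dt^k$ to the new block $\frac{t^j}{j!}\chi(t) e^{-tA} w_j$ and expanding via Leibniz produces terms $t^{j-a} \chi^{(b)}(t) A^{k-a-b} e^{-tA} w_j$ (up to combinatorial constants and signs) with $a+b \le k$; since $w_j \in H^{4N-2j}$ and since $\chi$ localizes $t$ to a bounded set, the above semigroup estimates (applied with appropriate non-negative $\sigma$, using $k \le 2N$ so $4N - 2k \ge 0$) bound these contributions in $L^\infty_t H^{4N-2k} \cap L^2_t H^{4N-2k+1}$ by $\norm{w_j}_{4N-2j}$, and hence by the initial data. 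Summing over $j = 0, \dotsc, 2N$ yields $u := u^{(2N)}$ with exactly the claimed regularity and estimate. The main technical nuisance (not the main obstacle) is the combinatorial book-keeping that shows the inductive control of $w_j$ and of all $\dt^k u^{(j)}$ simultaneously; both reduce to repeated application of the two semigroup inequalities above.
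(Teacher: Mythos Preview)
Your overall strategy---extend to $\mathrm{T}^2\times\mathbb{R}$, build the extension as a sum of ``blocks'' indexed by $j$, and use a smoothing operator to trade time derivatives for spatial ones---is essentially the same as the paper's (which defers to Lemma~A.5 of \cite{GT_lwp}; the mechanism is visible in the proof of Proposition~\ref{extension_eta}). However, there is a genuine scaling error in your choice of semigroup that breaks the argument.

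You take $A=(1-\Delta)^{1/2}$ and work with $e^{-tA}$. With this choice each application of $\dt$ to $e^{-tA}$ produces one factor of $A$, i.e.\ one spatial derivative, whereas the parabolic counting in the statement demands two. Concretely, your second semigroup bound gives only a half-derivative gain in $L^2_t$, so already the zeroth block $\chi(t)e^{-tA}\tilde v_0$ lands in $L^2_t H^{4N+1/2}$, not in the required $L^2_t H^{4N+1}$. Worse, for $k<j$ the correction block $\tfrac{t^j}{j!}\chi(t)e^{-tA}w_j$ need not lie in $L^\infty_t H^{4N-2k}$: take $k<j$, $a=k$, $b=c=0$; then one needs $\sup_t t^{j-k}\norm{e^{-tA}w_j}_{4N-2k}$ finite, but the smoothing of $e^{-tA}$ gives $\norm{e^{-tA}w_j}_{4N-2k}\ls t^{-2(j-k)}\norm{w_j}_{4N-2j}$, and $t^{j-k}\cdot t^{-2(j-k)}=t^{-(j-k)}$ blows up as $t\to 0$. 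Your two stated semigroup inequalities (mere boundedness in $L^\infty_t$, half-derivative gain in $L^2_t$) cannot close this gap; the parenthetical ``using $k\le 2N$ so $4N-2k\ge 0$'' addresses only the sign of the target index, not the mismatch with $4N-2j$.

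The fix is simple and keeps your architecture intact: replace $e^{-tA}$ by the heat semigroup $e^{-t(1-\Delta)}$. Then $\dt$ produces $-(1-\Delta)$, matching the two-derivatives-per-time-derivative count; one gains a full derivative in $L^2_t$; and in the computation above the blow-up disappears (the substitution $s=t\br{\xi}^2$ leaves a nonpositive power of $\br{\xi}$). This is exactly the scaling the paper uses, where the $j$th block is built from $\varphi_j(t\br{\xi}^2)\br{\xi}^{-2j}\hat f_j(\xi)$ with $\varphi_j^{(k)}(0)=\delta_{jk}$; that choice also removes the need for your inductive correction, since the blocks are decoupled at $t=0$ by construction.
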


We also need a version of Proposition \ref{extension_u} that works to extend $\eta$.  The difference between $\eta$ and $u$ is that Proposition \ref{data_estimate}  provides higher regularity for the time derivatives that we would get from strict parabolic scaling.  Nevertheless, Lemma A.5 of \cite{GT_lwp} may be readily modified to construct an extension.

\begin{prop}\label{extension_eta}
Suppose that $\zeta(0) \in H^{4N+1/2}(\Sigma)$ and that $\sqrt{\sigma} \nab_\ast \zeta(0) \in H^{4N}(\Sigma)$.  Further suppose that for $j=1,\dotsc,2N$ we have that $\dt^j \zeta(0) \in H^{4N-2j+3/2}(\Sigma)$.  Then there exists an extension $\zeta$, achieving the initial data, such that
\begin{equation}
\begin{split}
 \zeta &\in  L^\infty([0,\infty); H^{4N+1/2}(\Sigma)) \cap L^2([0,\infty);H^{4N+1}(\Sigma)), \\
 \sqrt{\sigma} \nab_\ast \zeta &\in L^\infty ([0,\infty); H^{4N}(\Sigma)) \cap L^2([0,\infty);H^{4N+1/2}(\Sigma)) , \\
 \dt \zeta   &\in L^\infty ([0,\infty); H^{4N-1/2}(\Sigma)) \cap L^2([0,\infty);H^{4N}(\Sigma)) , \\
 \dt^j \zeta & \in L^\infty([0,\infty); H^{4N-2j+3/2}(\Sigma)) \cap L^2([0,\infty);H^{4N-2j+5/2}(\Sigma))  \text{ for }j=2,\dotsc,2N,\\
 \dt^{2N+1}\zeta &\in L^2([0,\infty);H^{1/2}(\Sigma)).
\end{split}
\end{equation}
Moreover, we have the estimates
\begin{multline}\label{ex_e01}
\sup_{t \ge 0} \left( \ns{\zeta(t)}_{4N+1/2} + \sum_{j=1}^{2N} \ns{\dt^j \zeta(t)}_{4N-2j+3/2} \right) \\
+ \int_0^\infty \left( \ns{ \zeta(t)}_{4N+1} +\ns{\dt \zeta(t)}_{4N}+ \sum_{j=2}^{2N+1}\ns{ \dt^j \zeta(t)}_{4N-2j +5/2} \right) dt \\
\ls  \ns{\zeta(0)}_{4N+1/2} +\sum_{j=1}^{2N} \ns{\dt^j \zeta(0)}_{4N-2j+3/2}
\end{multline}
and
\begin{equation}\label{ex_e02}
 \sup_{t\ge 0} \sigma \ns{\nab_\ast \zeta}_{4N} +   \int_0^\infty \sigma \ns{\nab_\ast \zeta(\cdot,t)}_{4N+1/2} dt \ls \sigma \ns{\nab_\ast \zeta(0)}_{4N} + \sigma \sum_{j=1}^{2N} \ns{\dt^j \zeta(0)}_{4N-2j+3/2}.
\end{equation}
\end{prop}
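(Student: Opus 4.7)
The plan is to adapt the Fourier-analytic construction from Lemma A.5 of \cite{GT_lwp} to accommodate the shifted parabolic scaling required here. Let $\zeta_j := \dt^j\zeta(\cdot,0)$ for $j=0,\dots,2N$ and abbreviate $\br{\xi} := (1+|\xi|^2)^{1/2}$.  Working on the Fourier side of $\Sigma$, I would seek the extension in the form
\[
\widehat{\zeta}(\xi,t) = e^{-t\br{\xi}^2}\sum_{j=0}^{2N}c_j(\xi)\frac{t^j}{j!},
\]
with the multipliers $c_j(\xi)$ chosen so that $\dt^k\widehat{\zeta}(\xi,0) = \widehat{\zeta_k}(\xi)$ for every $k=0,\dots,2N$.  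Leibniz's rule yields $\dt^k\widehat{\zeta}(\xi,0) = \sum_{j=0}^{k}\binom{k}{j}(-\br{\xi}^2)^{k-j}c_j(\xi)$, a lower-triangular system with unit diagonal.  Solving recursively expresses each $c_k(\xi)$ as a linear combination of terms $\br{\xi}^{2(k-j)}\widehat{\zeta_j}(\xi)$ with $j\le k$, which yields the clean bound $\ns{c_k}_{4N-2k+3/2} \ls \sum_{j\le k}\ns{\zeta_j}_{4N-2j+3/2}$.

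Next I would derive the space-time estimates via Plancherel.  The two elementary Gaussian bounds
\[
(t\br{\xi}^2)^{m} e^{-t\br{\xi}^2} \ls 1 \text{ uniformly in }(t,\xi), \qquad \int_0^\infty (t\br{\xi}^2)^m e^{-2t\br{\xi}^2}\,dt \ls \br{\xi}^{-2}
\]
(with constants depending on $m$) drive everything.  Differentiating the ansatz $k$ times in $t$ gives $\dt^k\widehat{\zeta}(\xi,t) = e^{-t\br{\xi}^2}\sum_{j=0}^{2N}\br{\xi}^{2\max(0,k-j)}P_{k,j}(t\br{\xi}^2)c_j(\xi)$ for universal polynomials $P_{k,j}$.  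Substituting into the square of the $H^s$ norm, applying the first (resp.\ second) Gaussian bound before (resp.\ after) integrating in $t$, and finally invoking the recursion bound on $\ns{c_j}$ produces all of the $L^\infty_t$ and $L^2_t$ estimates in \eqref{ex_e01}, including the $\dt^{2N+1}\zeta\in L^2([0,\infty);H^{1/2}(\Sigma))$ claim.  A routine check confirms that the effective weight on each $c_j$ is dominated by $\br{\xi}^{2(4N-2j+3/2)}$, so every contribution is absorbed into the right side of \eqref{ex_e01}.

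It remains to prove \eqref{ex_e02}, which is the only place where the refined datum $\sqrt{\sigma}\nab_\ast\zeta_0\in H^{4N}(\Sigma)$ must be used in place of $\zeta_0\in H^{4N+1/2}(\Sigma)$.  Since $\nab_\ast$ is the Fourier multiplier $i\xi$ and commutes with the Gaussian, the $c_0$ contribution to $\sqrt{\sigma}\nab_\ast\zeta$ is literally $\sqrt{\sigma}e^{-t\br{\xi}^2}\nab_\ast\zeta_0$, to which the same Gaussian bounds apply and produce both $\sup_t\sigma\ns{\nab_\ast\cdot}_{4N}$ and $\int_0^\infty\sigma\ns{\nab_\ast\cdot}_{4N+1/2}\,dt$ controls in terms of $\sigma\ns{\nab_\ast\zeta_0}_{4N}$.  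The contributions from $c_j$ with $j\ge 1$ are already controlled in the corresponding norms by $\sum_{j\ge 1}\ns{\zeta_j}_{4N-2j+3/2}$ via the argument of the previous paragraph, and the prefactor $\sigma$ is absorbed into the $\sigma\sum_j\ns{\dt^j\zeta(0)}$-type piece of the right side of \eqref{ex_e02}.  The main obstacle is purely one of bookkeeping: tracking the factors of $\sigma$ through the recursion so that the final estimate is uniform as $\sigma\to 0$, consistent with the ``universal constant'' convention of Subsection \ref{def-ter}.
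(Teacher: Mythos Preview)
Your construction has a genuine gap in the $L^\infty_t$ (and $L^2_t$) estimates for $\dt^k\zeta$ when $k\ge 1$, coming from the $\zeta_0$ contribution.  To see it cleanly, specialize to $\zeta_j=0$ for $j\ge 1$.  Your recursion then gives $c_j(\xi)=\br{\xi}^{2j}\widehat{\zeta_0}(\xi)$, so with $s=t\br{\xi}^2$,
\[
\widehat{\zeta}(\xi,t)=\widehat{\zeta_0}(\xi)\,e^{-s}\sum_{j=0}^{2N}\frac{s^j}{j!},\qquad
\dt\widehat{\zeta}(\xi,t)=-\widehat{\zeta_0}(\xi)\,\br{\xi}^{2}\,e^{-s}\frac{s^{2N}}{(2N)!}.
\]
Since $\sup_{s\ge 0}s^{2N}e^{-s}$ is a positive constant, this forces $\sup_t|\dt\widehat{\zeta}(\xi,t)|\simeq \br{\xi}^{2}|\widehat{\zeta_0}(\xi)|$, hence $\ns{\dt\zeta}_{L^\infty_t H^{4N-1/2}}\simeq\ns{\zeta_0}_{4N+3/2}$ and $\ns{\dt\zeta}_{L^2_t H^{4N}}\simeq\ns{\zeta_0}_{4N+1}$.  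Neither is controlled by the available $\ns{\zeta_0}_{4N+1/2}$.  In your notation: the formula $\dt^k\widehat{\zeta}=e^{-t\br{\xi}^2}\sum_j\br{\xi}^{2\max(0,k-j)}P_{k,j}c_j$ is correct, but the $j=0$ summand already carries $\br{\xi}^{2k}c_0=\br{\xi}^{2k}\widehat{\zeta_0}$, and the ``routine check'' that the effective weight on $c_j$ is dominated by $\br{\xi}^{2(4N-2j+3/2)}$ fails precisely at $j=0$ (the truncation of the polynomial at degree $2N$ leaves an uncancelled tail).

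The paper resolves this by treating $j=0$ with a \emph{different} time scaling.  It writes $\zeta=\sum_{j=0}^{2N}F_j$ with $\widehat{F}_j(\xi,t)=\varphi_j(t\br{\xi}^2)\widehat{\zeta_j}(\xi)\br{\xi}^{-2j}$ for $j\ge 1$ but $\widehat{F}_0(\xi,t)=\varphi_0(t\br{\xi})\widehat{\zeta_0}(\xi)$, where the $\varphi_j\in C_c^\infty(\Rn{})$ satisfy $\varphi_j^{(k)}(0)=\delta_{j,k}$.  The half--wave scaling $t\br{\xi}$ for $F_0$ means $\dt^k$ costs only $\br{\xi}^{k}$ there, so $\dt^k F_0\in L^\infty_t H^{4N-k+1/2}\subset L^\infty_t H^{4N-2k+3/2}$ for $k\ge 1$, exactly matching the hypothesis $\zeta_0\in H^{4N+1/2}$.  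The condition $\varphi_j^{(k)}(0)=\delta_{j,k}$ also decouples the pieces at $t=0$, so each $F_j$ can be estimated on its own with no cancellation to track; the $\sigma$ bookkeeping in \eqref{ex_e02} is then immediate.  The fix to your argument is thus to split off the $\zeta_0$ part and extend it with the scale $t\br{\xi}$ rather than $t\br{\xi}^2$.
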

\begin{proof}

For each $j=0,\dotsc,2N$ let $\varphi_j \in C_c^\infty(\Rn{})$ be such that $\varphi_j^{(k)}(0) = \delta_{j,k}$ for $k=0,\dotsc,2N$ (here $(k)$ is the number of derivatives and $\delta_{j,k}$ is the Kronecker delta).  For the same $j$ let $f_j  = \dt^j \zeta(0)$.  Then $f_0 \in H^{4N+1/2}(\Sigma)$, $\sqrt{\sigma} \nab_\ast f_0 \in H^{4N}(\Sigma)$, and $f_j \in H^{4N-2j+3/2}(\Sigma)$ for $j=1,\dotsc,2N$.

We will construct $\zeta$ as a sum $\zeta = \sum_{j=0}^{2N} F_j$.  To construct the $F_j$ we must break to cases, considering $j=0$ and $j\ge 1$ separately.  We begin with the latter case, in which case we use the Fourier transform given by \eqref{horiz_ft_def} for the construction.  We define $F_j$ via its Fourier coefficients:
\begin{equation}
 \hat{F}_j(\xi,t) = \varphi_j(t \br{\xi}^2) \hat{f}_j(\xi) \br{\xi}^{-2j},
\end{equation}
where $\br{\xi} = \sqrt{1+ \abs{\xi}^2}$.

By construction, $\dt^k \hat{F}_j(\xi,t) = \varphi_j^{(k)}(t \br{\xi}^2) \hat{f}_j(\xi) \br{\xi}^{2(k-j)}$, and hence $\dt^k F(\cdot,0) = \delta_{j,k} f_j$.  We estimate
\begin{equation}
\begin{split}
 \ns{\dt^k F_j(\cdot,t)}_{4N-2k+3/2} & = \sum_{\xi} \br{\xi}^{2(4N-2k+3/2)}  \abs{\varphi_j^{(k)}(t \br{\xi}^2)}^2  \abs{\hat{f}_j(\xi)}^2  \br{\xi}^{2(2k-2j)} d\xi \\
& = \sum_\xi   \abs{\varphi_j^{(k)}(t \br{\xi}^2)}^2  \abs{\hat{f}_j(\xi)}^2  \br{\xi}^{2(4N-2j+3/2)} d\xi \\
&\ls  \ns{\varphi_j^{(k)}}_{L^\infty(\Rn{})}  \ns{f_j}_{4N-2j+3/2},
\end{split}
\end{equation}
so
\begin{equation}\label{ex_e1}
 \sup_{t\ge 0} \ns{\dt^k F_j(\cdot,t)}_{ 4N-2k+3/2} \ls \ns{f_j}_{4N-2j+3/2} \text{ for }k=0,\dotsc,2N.
\end{equation}
Similarly,
\begin{equation}
\begin{split}
\int_0^\infty\ns{\dt^k F_j(\cdot,t)}_{4N-2k+5/2}dt &= \int_0^\infty \sum_\xi \br{\xi}^{2(4N-2k+5/2)}  \abs{\varphi_j^{(k)}(t \br{\xi}^2)}^2  \abs{\hat{f}_j(\xi)}^2  \br{\xi}^{2(2k-2j)}  dt\\
& = \int_0^\infty \sum_\xi   \abs{\varphi_j^{(k)}(t \br{\xi}^2)}^2  \abs{\hat{f}_j(\xi)}^2  \br{\xi}^{2(4N-2j+5/2)}  dt  \\
& = \sum_\xi \abs{\hat{f}_j(\xi)}^2  \br{\xi}^{2(4N-2j+5/2)} \left( \int_0^\infty    \abs{\varphi_j^{(k)}(t \br{\xi}^2)}^2 dt\right)   \\
& = \sum_\xi \abs{\hat{f}_j(\xi)}^2  \br{\xi}^{2(4N-2j+5/2)} \left( \frac{1}{\br{\xi}^2}\int_0^\infty    \abs{\varphi_j^{(k)}(r)}^2 dr\right)   \\
&= \ns{\varphi_j^{(k)}}_{L^2(\Rn{})} \sum_\xi \abs{\hat{f}_j(\xi)}^2  \br{\xi}^{2(4N-2j+3/2)}  \\
&\ls \ns{\varphi_j^{(k)}}_{L^2(\Rn{})}\ns{f_j}_{4N-2j+3/2},
\end{split}
\end{equation}
so
\begin{equation}\label{ex_e2}
\int_0^\infty\ns{\dt^k F_j(\cdot,t)}_{4N-2k+5/2}dt \ls \ns{f_j}_{4N-2j+3/2} \text{ for }k=0,\dotsc,2N+1.
\end{equation}

It remains to handle the case $j=0$.  We define $\hat{F}_0(\xi,t) = \varphi_0(t\br{\xi}) \hat{f}_0(\xi)$.  Then we may argue as above to deduce the bounds
\begin{equation}\label{ex_e3}
 \sup_{t\ge 0} \left( \ns{F_0(\cdot,t)}_{4N+1/2} + \sum_{k=1}^{2N} \ns{ \dt^k  F_0(\cdot,t)}_{4N-2k+3/2} \right)  \ls \ns{f_0}_{4N+1/2},
\end{equation}
and
\begin{equation}\label{ex_e4}
 \int_0^\infty\left(\ns{F_0(\cdot,t)}_{4N+1}+ \ns{\dt F_0(\cdot,t)}_{4N}  +\sum_{k=2}^{2N+1}\ns{\dt^k F_0(\cdot,t)}_{4N-2k+5/2}   \right)dt  \ls \ns{f_0}_{4N+1/2}
\end{equation}
for all $k \ge 0$.  We may also estimate
\begin{equation}\label{ex_e5}
 \sup_{t\ge 0} \ns{ \nab_\ast F_0(\cdot,t)}_{4N}   + \int_0^\infty \ns{\nab_\ast F_0(\cdot,t)}_{4N+1/2} dt  \ls \ns{\nab_\ast f_0}_{4N}.
\end{equation}

Now, since we set $\zeta = \sum_{j=0}^{2N} F_j$ we may sum \eqref{ex_e1}, \eqref{ex_e2}, \eqref{ex_e3}, and \eqref{ex_e4} to deduce that \eqref{ex_e01} holds.   To prove \eqref{ex_e02} we   sum \eqref{ex_e1} and \eqref{ex_e2} with $k=0$ and \eqref{ex_e5}:
\begin{multline}
 \sup_{t\ge 0} \sigma  \ns{\nab_\ast \zeta(\cdot,t)}_{4N} +  \int_0^\infty \sigma \ns{\nab_\ast \zeta(\cdot,t)}_{4N+1/2} dt
\ls  \sup_{t\ge 0}\sigma \ns{\nab_\ast F_1(\cdot,t)}_{4N} \\
+  \int_0^\infty \sigma \ns{\nab_\ast F_0(\cdot,t)}_{4N+1/2} dt
+ \sup_{t\ge 0} \sigma \sum_{j=1}^{2N}\ns{ \nab_\ast F_j(\cdot,t)  }_{4N}
+ \int_0^\infty \sigma \sum_{j=1}^{2N}\ns{\nab_\ast F_j(\cdot,t)}_{4N+1/2}
\\
\ls \sigma \ns{\nab_\ast \zeta(0)}_{4N} + \sigma \sum_{j=1}^{2N}\ns{   \dt^j \zeta(0)  }_{4N-2j+3/2}.
\end{multline}
\end{proof}

We will also need the following simple variant of Proposition \ref{extension_eta}.  We omit the proof for the sake of brevity.

\begin{prop}\label{extension_Xi}
Suppose that $\Xi(0) \in H^{4N-1}(\Sigma)$ and that for $j=1,\dotsc,2N$ we have that $\dt^j \Xi(0) \in H^{4N-2j-1/2}(\Sigma)$.  Then there exists an extension $\Xi$, achieving the initial data, such that
\begin{equation}
 \dt^j \Xi  \in L^\infty([0,\infty); H^{4N-2j-1}(\Sigma)) \cap L^2([0,\infty);H^{4N-2j}(\Sigma))  \text{ for }j=1,\dotsc,2N.
\end{equation}
Moreover, we have the estimates
\begin{multline}
 \sum_{j=0}^{2N-1} \sup_{t>0} \ns{\dt^j \Xi(t)}_{4N-2j-1} + \sum_{j=0}^{2N} \int_0^\infty \ns{\dt^j \Xi(t)}_{ 4N-2j}dt
\\
\ls \ns{\Xi(0)}_{4N-1} + \sum_{j=1}^{2N} \ns{\dt^j \Xi(0)}_{4N-2j-1}
\end{multline}

\end{prop}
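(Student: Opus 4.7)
The plan is to adapt the Fourier-extension argument used in the proof of Proposition \ref{extension_eta} to the present regularity indices. Denote $f_0 = \Xi(0) \in H^{4N-1}(\Sigma)$ and $f_j = \dt^j \Xi(0) \in H^{4N-2j-1/2}(\Sigma)$ for $j=1,\dotsc,2N$, and let $\{\varphi_j\}_{j=0}^{2N} \subset C_c^\infty(\Rn{})$ be chosen so that $\varphi_j^{(k)}(0) = \delta_{j,k}$ for $k=0,\dotsc,2N$. I will construct the extension as $\Xi = \sum_{j=0}^{2N} F_j$, where each $F_j$ carries exactly the $j$-th temporal data and vanishes in all other derivatives at $t=0$.

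For $j \ge 1$, I would define $F_j$ on the Fourier side by
\begin{equation*}
\hat{F}_j(\xi,t) = \varphi_j(t \br{\xi}^2) \,\hat{f}_j(\xi)\, \br{\xi}^{-2j},
\end{equation*}
using the horizontal Fourier transform \eqref{horiz_ft_def} on $\Sigma$. Differentiating in $t$ gives $\dt^k \hat{F}_j(\xi,t) = \varphi_j^{(k)}(t\br{\xi}^2)\, \hat{f}_j(\xi) \br{\xi}^{2(k-j)}$, so by construction $\dt^k F_j(\cdot,0) = \delta_{j,k} f_j$. For $j=0$, I take $\hat{F}_0(\xi,t) = \varphi_0(t \br{\xi}^2) \hat{f}_0(\xi)$; again $\dt^k F_0(\cdot,0) = \delta_{0,k} f_0$. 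Summing shows that $\Xi$ achieves the prescribed initial data for all derivatives up to order $2N$.

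The bounds follow from direct Fourier-side calculations and the substitution $s = t\br{\xi}^2$. Specifically, for $j\ge 1$ and $k=0,\dotsc,2N-1$,
\begin{equation*}
\sup_{t\ge 0}\ns{\dt^k F_j(\cdot,t)}_{4N-2k-1} \ls \ns{\varphi_j^{(k)}}_{L^\infty} \ns{f_j}_{4N-2j-1/2},
\end{equation*}
since the $\br{\xi}$ weights combine to $\br{\xi}^{2(4N-2j-1)} \le \br{\xi}^{2(4N-2j-1/2)}$; and for $k=0,\dotsc,2N$ the substitution eats one factor of $\br{\xi}^{-2}$, yielding
\begin{equation*}
\int_0^\infty \ns{\dt^k F_j(\cdot,t)}_{4N-2k}\,dt \ls \ns{\varphi_j^{(k)}}_{L^2} \ns{f_j}_{4N-2j-1/2}.
\end{equation*}
The same computations for $F_0$ with $j=0$ give the analogous bounds in terms of $\ns{f_0}_{4N-1}$. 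Summing over $j$ and $k$ and using $\Xi = \sum_j F_j$ yields the claimed inequality.

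I do not foresee a substantial obstacle: the argument is essentially a transcription of the proof of Proposition \ref{extension_eta} with the indices shifted downward. The only minor point to keep track of is the half-derivative gap between the data regularity $H^{4N-2j-1/2}$ and the target space $H^{4N-2j-1}$ (resp.\ $H^{4N-2j}$ in $L^2_t$), which is harmless since $\ns{f_j}_{4N-2j-1} \le \ns{f_j}_{4N-2j-1/2}$.
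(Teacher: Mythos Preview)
Your approach is correct and is exactly what the paper intends: the authors omit the proof, noting only that it ``follows from an argument like that used in Proposition \ref{extension_eta},'' and your Fourier-side construction $\hat{F}_j(\xi,t)=\varphi_j(t\br{\xi}^2)\hat f_j(\xi)\br{\xi}^{-2j}$ is precisely that argument transcribed to the present indices. One small sharpening: your own weight computation shows the $\br{\xi}$ powers combine to $\br{\xi}^{2(4N-2j-1)}$, so the bound comes out directly in terms of $\ns{f_j}_{4N-2j-1}$, matching the proposition's right-hand side; the further step to $\ns{f_j}_{4N-2j-1/2}$ is unnecessary and actually gives a weaker inequality than stated.
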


Finally, we record a similar extension for use on $q$.  The proof follows from an argument like that used in Proposition \ref{extension_eta} and is thus omitted.

\begin{prop}\label{extension_q}
Suppose that $p(0) \in H^{4N}(\Omega)$ and that for $j=1,\dotsc,2N$ we have that $\dt^j p(0) \in H^{4N-2j+1}(\Omega)$.  Then there exists an extension $p$, achieving the initial data, such that
\begin{equation}
\begin{split}
  p &\in L^\infty([0,\infty); H^{4N}(\Omega)) \cap L^2([0,\infty);H^{4N+1/2}(\Omega)), \\
  \dt p & \in L^\infty([0,\infty); H^{4N-1}(\Omega)) \cap L^2([0,\infty);H^{4N-1/2}(\Omega)), \\
 \dt^j p & \in L^\infty([0,\infty); H^{4N-2j+1}(\Omega)) \cap L^2([0,\infty);H^{4N-2j+2}(\Omega)) \text{ for }j=2,\dotsc,2N, \\
 \dt^{2N+1} p & \in L^2([0,\infty);H^{0}(\Omega))
\end{split}
\end{equation}
Moreover, we have the estimate
\begin{multline}
\sup_{t \ge 0} \left( \ns{p(t)}_{4N} + \sum_{j=1}^{2N} \ns{\dt^j p(t)}_{4N-2j+1} \right) \\
+ \int_0^\infty \left( \ns{ p(t)}_{4N+1/2} +\ns{\dt p(t)}_{4N-1/2}+ \sum_{j=2}^{2N+1}\ns{ \dt^j p(t)}_{4N-2j +2} \right) dt \\
\ls  \ns{p(0)}_{4N} +\sum_{j=1}^{2N} \ns{\dt^j p(0)}_{4N-2j+1}.
\end{multline}
\end{prop}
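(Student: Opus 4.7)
The plan is to adapt the Fourier-multiplier construction used in the proof of Proposition \ref{extension_eta} to the present setting of $\Omega$. Since $\Omega_\pm$ are bounded in the vertical direction while the claim concerns only Sobolev regularity (no traces need to be matched), I would first apply a fixed bounded Sobolev extension operator $E : H^m(\Omega_\pm) \to H^m(\mathrm{T}^2 \times \mathbb{R})$, valid for all $m$ up to some large fixed threshold, and work with $f_j := E(\dt^j p(\cdot,0))$ on $\mathrm{T}^2 \times \mathbb{R}$ for $j = 0, \ldots, 2N$. By the boundedness of $E$, the norms $\ns{f_0}_{4N}$ and $\ns{f_j}_{4N-2j+1}$ are controlled by the right-hand side of the claimed estimate.

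Second, I choose cutoff functions $\varphi_j \in C_c^\infty(\mathbb{R})$ satisfying $\varphi_j^{(k)}(0) = \delta_{j,k}$ for $0 \le j,k \le 2N$, and define, using the horizontal Fourier series combined with the vertical Fourier transform on $\mathrm{T}^2 \times \mathbb{R}$,
\begin{equation*}
 \hat{F}_0(\xi,t) = \varphi_0(t \br{\xi}) \hat{f}_0(\xi),
 \qquad
 \hat{F}_j(\xi,t) = \varphi_j(t \br{\xi}^2) \hat{f}_j(\xi) \br{\xi}^{-2j} \quad (j = 1,\ldots,2N).
\end{equation*}
The transport-type weight $t \br{\xi}$ is used for $F_0$ because the conclusion only demands a gain of $1/2$ spatial derivative from $L^\infty H^{4N}$ to $L^2 H^{4N+1/2}$, while the parabolic weight $t\br{\xi}^2$ is used at the higher levels, matching the $2$-step drop in spatial regularity between consecutive time derivatives in $\E[q]$. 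A direct differentiation at $t=0$ then yields the trace identities $\dt^k F_j(\cdot,0) = \delta_{j,k} f_j$, so the candidate $p := \sum_{j=0}^{2N} F_j\big|_{\Omega \times [0,\infty)}$ achieves the prescribed data.

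Third, the estimates follow from Plancherel together with the changes of variable $r = t \br{\xi}$ (for $F_0$) and $r = t \br{\xi}^2$ (for $F_j$, $j \ge 1$) in each time integral, producing factors of $\br{\xi}^{-1}$ and $\br{\xi}^{-2}$ respectively and thereby trading time integration for spatial-regularity gain. A routine calculation, entirely analogous to the $F_0$-block and $F_j$-block estimates carried out in the proof of Proposition \ref{extension_eta}, shows that every $\dt^k F_j$ with $0 \le k \le 2N+1$ and $0 \le j \le 2N$ satisfies the $L^\infty_t$ and $L^2_t$ bounds appearing in the conclusion, controlled by $\ns{f_0}_{4N}$ or $\ns{f_j}_{4N-2j+1}$ as appropriate. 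Summing over $j$ and restricting back to $\Omega$ using the boundedness of $E$ then yields the stated estimate.

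The main technical point I anticipate is a bookkeeping check verifying that the transport-scaled block $F_0$, whose main purpose is to realize the gain-$1/2$ behavior at the lowest level, does not violate the higher-$k$ budgets enforced by $f_0 \in H^{4N}$. After the time change of variables, the $L^2_t H^{4N-2k+2}$ norm of $\dt^k F_0$ reduces to weighted integrals of $|\hat{f}_0|^2$ against $\br{\xi}^{8N-2k+3}$, and the required bound $8N-2k+3 \le 8N$ holds precisely for $k \ge 2$, which is exactly the range where these higher norms are demanded. The analogous saturation for the parabolic blocks occurs at $k = j$, with slack elsewhere. Beyond this combinatorial consistency check there is no real analytical obstacle, and the argument is essentially a repetition of the one used for Proposition \ref{extension_eta}.
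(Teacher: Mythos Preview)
Your proposal is correct and follows essentially the same route the paper intends: the paper omits the proof, saying only that it ``follows from an argument like that used in Proposition \ref{extension_eta},'' and your construction is precisely that argument (transport-scaled block $F_0$ via $\varphi_0(t\br{\xi})$, parabolic-scaled blocks $F_j$ via $\varphi_j(t\br{\xi}^2)\br{\xi}^{-2j}$, Plancherel plus the obvious substitution). The one extra ingredient you supply---pre-composing with a fixed Sobolev extension $E:H^m(\Omega_\pm)\to H^m(\mathrm{T}^2\times\mathbb{R})$ so that a full Fourier decomposition is available---is the natural adaptation needed to pass from the boundary setting $\Sigma=\mathrm{T}^2$ of Proposition \ref{extension_eta} to the bulk domain $\Omega$, and your bookkeeping check on the $F_0$ block at the higher $k$ levels is accurate.
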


\section*{Acknowledgements}
I. Tice would like to thank the Laboraroire Jacques-Louis Lions at Universit\'{e} Pierre et Marie Curie for their hospitality during his visit.

\end{document}